\definecolor{verydarkblue}{rgb}{0,0,0.4}
\newcommand\blfootnote[1]{%
   \begingroup
  \renewcommand\thefootnote{}\footnote{#1}%
  \addtocounter{footnote}{-1}%
  \endgroup
}
\newcommand{\noproof}{\hfill\qedsymbol}
\renewcommand{\bar}{\overline}
\newcommand{\into}{\hookrightarrow}
\newcommand{\tensor}{\otimes}
\newcommand{\Z}{\mathbb{Z}}
\newcommand{\R}{\mathbb{R}}
\newcommand{\C}{\mathbb{C}}
\newcommand{\card}[1]{|{#1}|}
\newcommand{\bigcard}[1]{\left|{#1}\right|}
\newcommand{\restrict}[2]{\left.{#1}\right|_{#2}}
\newcommand{\suchthat}{ \: : \: }
\renewcommand{\L}{\mathcal{L}}
\DeclareMathOperator{\Pic}{Pic}
\DeclareMathOperator{\re}{Re}
\DeclareMathOperator{\Sym}{Sym}
\DeclareMathOperator{\Isom}{Isom}
\DeclareMathOperator{\Span}{span}
\renewcommand{\Re}{\re}
\DeclareMathOperator{\hdim}{\mathrm{dim_H}}
\DeclareMathOperator{\umdim}{\mathrm{\overline{dim}_{M}}}
\newcommand{\sing}{\mathrm{sing}}
\newcommand{\smooth}{\mathrm{smooth}}
\newcommand{\cohdim}{\mathrm{cd}}
\renewcommand{\O}{\mathcal{O}}
\newcommand{\V}{\mathcal{V}}
\newcommand{\W}{\mathcal{W}}
\newcommand{\A}{\mathcal{A}}
\newcommand{\B}{\mathcal{B}}
\newcommand{\F}{\mathcal{F}}
\newcommand{\CP}{\mathbb{P}_\C}
\newcommand{\RP}{\mathbb{P}_\mathbb{R}}
\newcommand{\T}{\mathcal{T}}
\renewcommand{\H}{\mathbb{H}}
\newcommand{\half}{\frac{1}{2}}
\renewcommand{\Tilde}[1]{\widetilde{#1}}
\renewcommand{\tilde}{\Tilde}
\renewcommand{\sl}{\mathfrak{sl}}
\newcommand{\character}{\chi}
\newcommand{\weylvec}{\delta}
\renewcommand{\phi}{\varphi}
\renewcommand{\rho}{\varrho}
\newcommand{\eps}{\varepsilon}
\renewcommand{\leq}{\leqslant}
\renewcommand{\geq}{\geqslant}
\renewcommand{\setminus}{-}
\newcommand{\h}{\mathfrak{h}}
\newcommand{\s}{\mathfrak{s}}
\renewcommand{\b}{\mathfrak{b}}
\newcommand{\p}{\mathfrak{p}}
\newcommand{\g}{\mathfrak{g}}
\newcommand{\PO}{\mathrm{PO}}
\newcommand{\SO}{\mathrm{SO}}
\newcommand{\PSL}{\mathrm{PSL}}
\newcommand{\SL}{\mathrm{SL}}
\newcommand{\PSU}{\mathrm{PSU}}
\newcommand{\SU}{\mathrm{SU}}
\newcommand{\PSp}{\mathrm{PSp}}
\newcommand{\Hom}{\mathrm{Hom}}
\newcommand{\rank}{\mathrm{rank}}
\newcommand{\pos}{\mathrm{pos}}
\newcommand{\Ad}{\mathrm{Ad}}
\newcommand{\ad}{\mathrm{ad}}
\newcommand{\QH}{\mathcal{QH}}
\renewcommand{\hat}{\widehat}
\numberwithin{equation}{section}
\theoremstyle{plain}
\newtheorem{thm}{Theorem}[section]
\newtheorem{cor}[thm]{Corollary}
\newtheorem{conj}[thm]{Conjecture}
\newtheorem{lem}[thm]{Lemma}
\newtheorem{prop}[thm]{Proposition}
\newtheorem{defn}[thm]{Definition}
\newtheorem{bigthm}{Theorem}
\theoremstyle{definition}
\theoremstyle{definition}
\newenvironment{rmenumerate}{\begin{enumerate}}{\end{enumerate}}
\begin{document}

\title{Geometry of compact complex manifolds associated to generalized
  quasi-Fuchsian representations}

\author{David Dumas and Andrew Sanders}

\date{}

\maketitle

\blfootnote{\emph{Date:} June 25, 2019.  (v2: December 21, 2018.  v1: April 4, 2017)}

\section{Introduction}
This paper is concerned with the following general question: which
aspects of the complex-analytic study of discrete subgroups of $\PSL_2\C$
can be generalized to discrete subgroups of other semisimple complex
Lie groups?

To make this more precise, we recall the classical situation that
motivates our discussion.  A torsion-free cocompact Fuchsian group
$\Gamma < \PSL_2\R$ acts freely, properly discontinuously, and
cocompactly by isometries on the symmetric space
$\PSL_2\R / \textnormal{PSO}(2) \simeq \H^{2}$.  The quotient
$S = \Gamma \backslash \H^2$ is a closed surface of genus $g \geq 2$.
When considering $\Gamma$ as a subgroup of $\PSL_2\C$, it is natural
to consider either its isometric action on the symmetric space
$\H^3 \simeq \PSL_2\C / \PSU(2)$ or its holomorphic action on the
visual boundary $\CP^1 \simeq \PSL_2\C / B_{\PSL_2\C}$.  The latter
action has a limit set $\Lambda = \RP^1$ and a disconnected domain of
discontinuity $\Omega = \H \sqcup \bar{\H}$.  The quotient
$\Gamma \backslash \Omega$ is a compact K\"ahler manifold---more concretely, it
is the union of two complex conjugate Riemann surfaces.

Quasiconformal deformations of such groups $\Gamma$ give
\emph{quasi-Fuchsian groups} in $\PSL_2\C$.  Each such group acts on
$\CP^1$ in topological conjugacy with a Fuchsian group, hence the
limit set $\Lambda$ is a Jordan curve, the domain of discontinuity has
two contractible components, and the quotient manifold is a union of
two Riemann surfaces (which are not necessarily complex conjugates of one
another).

If $G$ is a complex simple Lie group of adjoint type (such as
$\PSL_n\C$, $n \geq 2$), there is a distinguished homomorphism
$\iota_{G}:\PSL_2\C \rightarrow G$ introduced by Kostant \cite{KOS59}
and called the \emph{principal three-dimensional embedding}.  Applying
$\iota_{G}$, a discrete subgroup of $\PSL_2\R$ or $\PSL_2\C$ gives
rise to a discrete subgroup of $G$.  When this construction is applied
to a torsion-free cocompact Fuchsian group $\pi_1S \simeq \Gamma$, the
resulting \emph{$G$-Fuchsian representation} $\pi_1S \to G$ lies in
the \emph{Hitchin component} of the split real form $G_\R<G$.
Representations in the Hitchin component have been extensively studied
in recent years, and the resulting rich geometric theory has shown
them to be a natural higher-rank generalization of Fuchsian groups.
In the same way, we propose to generalize the theory of quasi-Fuchsian
groups by studying complex deformations of these $G$-Fuchsian and
Hitchin representations and the associated holomorphic actions on parabolic
homogeneous spaces of $G$.

The existence of domains of proper discontinuity for such actions
follows from a theory developed by Kapovich-Leeb-Porti \cite{KLP13}
and Guichard-Wienhard \cite{GW12}, which applies in the more general
setting of \emph{Anosov representations} of word-hyperbolic groups in
a semisimple\footnote{For this paper, a semisimple Lie
  group $G$ is a real Lie group with finite center, finitely many
  connected components, semisimple Lie algebra, and with no compact
  factors.  For the reader who prefers algebraic groups, one may also
  work with the $\mathbb{K}$-points of a semisimple linear algebraic
  group defined over $\mathbb{K}$ where $\mathbb{K}=\R$ or
  $\mathbb{K}=\C$ depending on the situation.} Lie group $G$.
In fact, a key component of this theory, as developed in \cite{KLP13}, is the construction of
many distinct cocompact domains of proper discontinuity for the action of a given Anosov representation on a
parabolic homogeneous space $G/P$, each labeled by a certain
combinatorial object---\emph{a Chevalley-Bruhat ideal} in the Weyl group of $G$.

Applying this theory to a $G$-Fuchsian representation of a surface
group, or more generally to an Anosov representation of a
word-hyperbolic group in a complex semisimple group $G$, we consider the
compact, complex quotient manifold $\W = \Gamma \backslash \Omega$
associated to a cocompact domain of discontinuity $\Omega \subset G/P$
arising from the construction of \cite{KLP13}.  Concerning such a
manifold, we ask:
\begin{itemize}
\item What is the homology of $\W$?
\item Does $\W$ admit a K\"{a}hler metric?  Is it a projective algebraic variety?
\item What is the Picard group of $\W$?
\item What are the cohomology groups of holomorphic line bundles on $\W$?
\item Does $\W$ admit non-constant meromorphic functions?
\end{itemize}

In considering these questions, our restriction to complex Lie groups
has the simultaneous advantage that it simplifies topological
questions, and that it paves the way for the rich holomorphic geometry of
generalized flag varieties over $\C$ to assume a prominent role.

Our answers to these questions rest on the fact that if $\W$ were
replaced by one of the complex partial flag varieties $G/P$, classical
Lie theory would give a complete answer: The homology
of $G/P$ admits a preferred basis in terms of \emph{Schubert cells},
which are $B$-orbits on $G/P$ where $B<G$ is a Borel subgroup.  The
classification of line bundles on $G/B$ and their sheaf cohomology is
the content of the Borel-Bott-Weil theorem \cite{BOT57}.

In the remainder of this introduction we survey our results, after
introducing enough terminology to formulate them precisely.

Choosing Cartan and Borel subgroups $H < B < G$, we obtain
the Weyl group $W$ and a natural partial order on it, the
\emph{Chevalley-Bruhat order}.  A subset $I \subset W$ which is
downward-closed for this order is a \emph{Chevalley-Bruhat ideal} (or
briefly, an \emph{ideal}).  An ideal $I$ is \emph{balanced} if
$W=I \sqcup w_{0}I$ where $w_{0}\in W$ is the unique element of
maximal length.

Each element of $W$ corresponds to a \emph{Schubert cell} in the
space $G/B$.  The union of the cells corresponding to elements of an
ideal $I$ gives a closed set $\Phi^I \subset G/B$, the \emph{model
  thickening}.  For a general parabolic subgroup $P < G$, there
is a similar construction of a model thickening $\Phi^I \subset G/P$
if we also assume that $I$ is invariant under right multiplication by
$W_P < W$, the Weyl group of the parabolic.

Now let $\pi$ be a word hyperbolic group.  A homomorphism
$\rho: \pi\rightarrow G$ is $B$-Anosov if there exists a
$\rho$-equivariant continuous map
\[
\xi: \partial_{\infty}\pi\rightarrow G/B
\]
which satisfies certain additional properties that are described in
Section \ref{anosov reps}; roughly speaking, these conditions say
that $\rho$ is ``undistorted'' at a large scale; in particular such a
representation is a discrete, quasi-isometric embedding with finite
kernel.  The map $\xi$ is the \emph{limit curve} associated to the
Anosov representation $\rho$.  (Section \ref{anosov reps} also describes a
more general notion of Anosov representation where $B$ is replaced by
an arbitrary symmetric parabolic subgroup of $G$.)

The work of Kapovich-Leeb-Porti \cite{KLP13} establishes that if
$\rho:\pi\rightarrow G$ is $B$-Anosov, then for every balanced and
right-$W_P$-invariant ideal $I\subset W$ one obtains a
$\Gamma:=\rho(\pi)$-invariant open set $\Omega\subset G/P$ upon which
the action of $\Gamma$ is properly discontinuous and cocompact.  The
set $\Omega$ is defined as the complement
$\Omega = (G/P) \setminus \Lambda$ where the \emph{limit set}
$\Lambda$ is a union over points in the limit curve $\xi$ of
$G$-translates of the model thickening $\Phi^{I}$.

Using the continuous variation of the limit curve as a function of the
Anosov representation (established in \cite{GW12}), and the fact that
the Anosov property is an open condition among representations
(ibid.), elementary arguments establish that if $\rho$ and $\rho'$ are in the same path
component of the space of Anosov representations, then the
corresponding compact quotient manifolds are homotopy equivalent.
In fact, we provide a slightly more sophisticated argument which shows that the resulting
compact quotient manifolds are diffeomorphic.  

We focus on the path component of the space of $B$-Anosov
representations $\pi_{1} S \to G$ that contains the $G$-Fuchsian
representations, which we regard as a complex analogue of the Hitchin
component of $G_\R$.  This component also contains the compositions of
quasi-Fuchsian representations with $\iota_G$, which we call
\emph{$G$-quasi-Fuchsian representations}.  Using the invariance of
topological type described above, when studying topological invariants
of quotient manifolds for representations in this component, it
suffices to consider the $G$-Fuchsian case. Concerning homology, we
find:
\begin{bigthm}
\label{introthm:fuchsian-quotient-homology}
Let $G$ be a complex simple Lie group of adjoint type and let
$\rho: \pi_{1}S\rightarrow G$ be a $G$-Fuchsian representation.  Let
$I \subset W$ be a balanced and right-$W_P$-invariant ideal, where
$P < G$ is parabolic.  Then if $\Omega_{\rho}^{I}\subset G/P$ is the
corresponding cocompact domain of discontinuity, the quotient manifold
$\W_{\rho}^{I} = \rho(\pi_{1}S) \backslash \Omega_{\rho}^{I}$
satisfies
\[
H_{*}(\W_{\rho}^{I}, \Z) \simeq H_{*}(S, \Z)\otimes_{\Z} H_{*}(\Omega_{\rho}^{I}, \Z).
\]
\end{bigthm}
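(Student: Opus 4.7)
I would approach the theorem via the Serre spectral sequence for a fiber bundle obtained from the Borel construction. Since $\rho(\pi_1 S)$ acts freely and properly discontinuously on $\Omega_{\rho}^{I}$, the Borel construction produces a homotopy equivalence $\W_{\rho}^{I} \simeq \tilde S \times_{\pi_1 S} \Omega_{\rho}^{I}$, and projection onto the first factor descends to a fiber bundle $\Omega_{\rho}^{I} \to \W_{\rho}^{I} \to S$. Because the limit set is a union of $G$-translates of the model thickening $\Phi^{I}$ by points on the $\iota_G(\PSL_2\R)$-invariant limit curve, both $\Lambda$ and its complement $\Omega_{\rho}^{I}$ are $\iota_G(\PSL_2\R)$-invariant, and the structure group of the bundle reduces to $\iota_G(\PSL_2\R)$. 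Since this group is path-connected, each deck transformation of $\Omega_{\rho}^{I}$ is isotopic to the identity, so the monodromy representation of $\pi_1 S$ on $H_{*}(\Omega_{\rho}^{I};\Z)$ is trivial.

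\textbf{Spectral sequence reduction.} With trivial monodromy and freeness of $H_{*}(S;\Z)$, the Serre spectral sequence of the fibration has untwisted $E_{2}$-page
\[
E_{2}^{p,q} = H_{p}(S;\Z) \otimes_{\Z} H_{q}(\Omega_{\rho}^{I};\Z).
\]
Because $S$ is two-dimensional, the $E_{2}$-page is concentrated in columns $p \in \{0,1,2\}$, and the only possibly nonzero differential is a single $d_{2}$. Collapse at $E_{2}$, combined with the fact that the tensor factors are free abelian groups so that the filtration has no extension problem, then yields the claimed decomposition of $H_{*}(\W_{\rho}^{I};\Z)$.

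\textbf{Vanishing of $d_{2}$.} To complete the argument I would exploit the fact that $\W_{\rho}^{I} \to S$ is pulled back from the universal fibration $\Omega_{\rho}^{I} \to E\iota_G(\PSL_2\R) \times_{\iota_G(\PSL_2\R)} \Omega_{\rho}^{I} \to B\iota_G(\PSL_2\R) \simeq \CP^{\infty}$ along the classifying map of $\rho_0$ (of degree $\chi(S) = 2-2g$ on $H^{2}$). The $d_{2}$ on $S$ is then $\chi(S)$ times a universal transgression, which I would kill by constructing a $\pi_1 S$-equivariant continuous map $\tilde S = \H^{2} \to \Omega_{\rho}^{I}$, obtained by exhibiting a fixed point $\omega_{0} \in \Omega_{\rho}^{I}$ of the maximal compact subgroup $\iota_G(\textnormal{PSO}(2))$ and taking the orbit map $g\,\textnormal{PSO}(2) \mapsto \iota_G(g)\cdot\omega_{0}$. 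Combined with $\iota_G(\textnormal{PSO}(2))$-equivariant formality of the holomorphic circle action on the complex manifold $\Omega_{\rho}^{I} \subset G/P$ (established via an equivariant localization argument using the nonempty $\iota_G(\textnormal{PSO}(2))$-fixed locus), this forces the universal transgression, and hence $d_{2}$, to vanish.

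\textbf{Main obstacle.} The decisive point is the existence of the fixed point $\omega_{0}$ in $\Omega_{\rho}^{I}$, rather than merely in $G/P$. That the fixed locus is nonempty in $G/P$ follows from the Lefschetz fixed point theorem together with $\chi(G/P) \neq 0$, but one must show the fixed locus is not entirely absorbed into $\Lambda$. I would address this by analyzing $\iota_G(\textnormal{PSO}(2))$-fixed points through the weight decomposition of $\mathfrak{g}$ under the principal $\mathfrak{sl}_2$-triple defining $\iota_G$, and comparing the resulting fixed-point set with the Chevalley-Bruhat combinatorial description of $\Lambda$ provided by the balanced ideal $I$ and the limit curve of the Fuchsian representation.
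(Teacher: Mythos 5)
Your setup is the right one: the Serre spectral sequence of the fibration $\Omega_{\rho}^{I}\to\W_{\rho}^{I}\to S$, and your argument that the $\pi_1S$-module $H_*(\Omega_{\rho}^I,\Z)$ is trivial (because $\Omega_{\rho}^I$ is $\iota_G(\PSL_2\R)$-invariant, the group $\iota_G(\PSL_2\R)$ is connected, and $\rho(\pi_1S)\subset\iota_G(\PSL_2\R)$) coincides exactly with the paper's argument. The $E_2$-page is therefore the asserted tensor product, and since $S$ is a surface the only possibly nonzero differentials are the $d_2$'s.

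Where your proposal diverges---and where it has a genuine gap---is in the collapse of the spectral sequence. The paper does not need a section of the bundle, an equivariant fixed point, or any form of equivariant formality. The observation that kills $d_2$ is purely a parity statement about the fiber: the domain $\Omega_{\rho}^I$, being the complement in $G/P$ of a thickened Fuchsian limit curve, has vanishing \emph{odd} homology (this is Theorem~\ref{introthm:fuchsian-domain-properties}(i), established via Poincar\'e--Alexander--Lefschetz duality and the product structure of the limit set $\Lambda_\rho^I\simeq\Phi^I\times S^1$). With $E^2_{p,q}=H_p(S)\otimes H_q(\Omega_\rho^I)$ nonzero only for $p\in\{0,1,2\}$ and $q$ even, the map $\partial^2_{p,q}:E^2_{p,q}\to E^2_{p-2,q+1}$ changes the parity of $q$, so either its source or target vanishes. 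Degeneration is immediate, and the absence of extension problems follows because $H_*(\Omega_\rho^I)$ and $H_*(S)$ are free abelian. Your proposal never invokes the parity of $H_*(\Omega_\rho^I)$, and instead tries to deduce vanishing of $d_2$ from a pullback to $B\PSL_2\R$, a $\iota_G(\textnormal{PSO}(2))$-fixed point $\omega_0\in\Omega_\rho^I$, and an equivariant-localization argument. That route is incomplete as written: you acknowledge that the existence of $\omega_0$ in $\Omega$ (rather than in $\Lambda$) is left unresolved; and even granting it, the existence of a $\pi_1S$-equivariant map $\H^2\to\Omega_\rho^I$ (hence a section of $\W_\rho^I\to S$) is not by itself sufficient to kill all of the transgressions $d_2:E_2^{0,q}\to E_2^{2,q-1}$---a section forces the bottom row to survive, not the higher ones. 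The appeal to equivariant formality is meant to fill this, but it is asserted rather than argued, and would in any case be a substantially heavier tool than the problem demands. In short, the clean path to degeneration is through the even-concentration of $H_*(\Omega_\rho^I)$, which is available at this stage of the paper and which your argument does not use.
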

\noindent Furthermore, we calculate the homology of the domain of discontinuity
$\Omega_{\rho}^{I}$:
\begin{bigthm}
\label{introthm:fuchsian-domain-homology}
Let $\rho$ and $I$ be as in the previous theorem, and let
$\Phi^I \subset G/P$ be the associated model thickening.  
Then for any integer $k\geq 0$ the homology of the domain of discontinuity
$\Omega_{\rho}^{I}\subset G/P$ fits in a split exact sequence
\[ 0\rightarrow H^{2n-2-k}(\Phi^{I}, \Z)\rightarrow H_{k}(\Omega_{\rho}^{I}, \Z)\rightarrow H_{k}(\Phi^{I}, \Z)\rightarrow 0,
\]
where $n = \dim_\C G/P$ is the complex dimension of the flag variety.
\end{bigthm}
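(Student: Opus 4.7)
The plan is to analyze $\Omega_\rho^I$ via Alexander--Lefschetz duality inside the compact oriented real $2n$-manifold $G/P$, using the combinatorial and topological structure of the limit set $\Lambda = G/P \setminus \Omega_\rho^I$ that comes from the $G$-Fuchsian geometry.

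First I would establish that the tautological assignment sending a point of $\Lambda$ to the unique $x \in \partial_\infty \pi_1 S \simeq S^1$ such that the point lies in the translate $g_x\Phi^I$ (with $g_x \in G$ lifting $\xi(x) \in G/B$) defines a locally trivial fiber bundle $\pi \colon \Lambda \to S^1$ with fiber $\Phi^I$. Well-definedness here uses the balanced condition together with the transversality properties of $B$-Anosov limit curves, which force disjointness of the translates $g_x\Phi^I$ for distinct boundary points. Since $G$ is connected it acts trivially on $H^*(\Phi^I, \Z)$, so the monodromy of this bundle acts trivially on the fiber cohomology and the associated Wang sequence yields a canonical decomposition
\[ H^j(\Lambda, \Z) \cong H^j(\Phi^I, \Z) \oplus H^{j-1}(\Phi^I, \Z). \]

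Next, Alexander--Lefschetz duality for the closed subcomplex $\Lambda \subset G/P$ gives $H_k(\Omega_\rho^I, \Z) \cong H^{2n-k}(G/P, \Lambda; \Z)$, and the cohomology long exact sequence of the pair extracts the short exact sequence
\[ 0 \to \coker(r^{2n-k-1}) \to H_k(\Omega_\rho^I, \Z) \to \ker(r^{2n-k}) \to 0 \]
where $r^j \colon H^j(G/P) \to H^j(\Lambda)$ is the restriction. Because $G$ acts trivially on $H^*(G/P)$, the map $r^j$ factors through the first Künneth summand $H^j(\Phi^I) \subset H^j(\Lambda)$, coinciding there with the usual restriction along $\Phi^I \hookrightarrow G/P$. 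Schubert calculus shows this restriction is surjective with kernel spanned by the Schubert classes $\sigma_w^*$ for $w \in W^P$ not in the subset $I \cap W^P$ indexing $\Phi^I$. The balanced, right-$W_P$-invariant condition produces the length-reversing bijection $w \mapsto w_0 w$ between $I \cap W^P$ and its complement in $W^P$, which identifies $\{w \notin I : \ell(w) = n - k/2\}$ with $\{v \in I : \ell(v) = k/2\}$; this gives a canonical isomorphism $\ker(r^{2n-k}) \cong H_k(\Phi^I, \Z)$, while the image of $r^{2n-k-1}$ fills the entire first Künneth factor so that $\coker(r^{2n-k-1}) \cong H^{2n-k-2}(\Phi^I, \Z)$. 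Splitting is automatic because $H_k(\Phi^I, \Z)$ is free abelian, carrying the Schubert basis.

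The main technical obstacle is the rigorous verification of the fiber bundle structure on $\Lambda$ and the triviality of its monodromy on cohomology. While KLP dynamics make the bundle picture very natural, proving local triviality requires a careful analysis of how the map $(x, y) \mapsto g_x y$ from $S^1 \times \Phi^I$ to $G/P$ interacts with the stabilizers; the balanced condition is what ensures this map is an embedding. Once that structure is secured, together with the identification of $r^j$ via $G$-invariance, the remainder reduces to a standard diagram chase combined with Schubert calculus.
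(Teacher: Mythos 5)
Your overall strategy coincides with the paper's: Poincar\'e--Alexander--Lefschetz duality to convert $H_k(\Omega_\rho^I)$ into $H^{2n-k}(G/P,\Lambda)$, then the long exact sequence of the pair (which truncates to five-term sequences because $H^{\mathrm{odd}}(G/P)=0$), then a computation of $H^*(\Lambda)$ from the bundle structure, and finally an identification of the kernel and cokernel of the restriction $H^*(G/P)\to H^*(\Lambda)$ via the Schubert cell combinatorics of the balanced ideal. Your Schubert-calculus identification of $\ker(r^{2n-k})$ with $H_k(\Phi^I)$ via $w\mapsto w_0w$ is exactly the content of the paper's Lemma~\ref{lem:homology-splitting} (there derived from the intersection pairing on $H_*(G/P)$), so that part of your argument and the paper's are equivalent.

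The one place where the two arguments diverge is the triviality of the bundle $\Lambda\to\partial_\infty\pi_1S$, and your version has a gap. You assert that the monodromy acts trivially on fiber cohomology because ``$G$ is connected so it acts trivially on $H^*(\Phi^I,\Z)$'' --- but $G$ does \emph{not} act on $\Phi^I$; it only permutes the translates $g\Phi^I$. The group that acts on $\Phi^I$ itself, and therefore the structure group of the bundle, is the stabilizer $P_A=\mathrm{Stab}_G(eP_A)$ (or a subgroup thereof). To make your Wang-sequence argument work you would need to observe that parabolic subgroups of a connected complex semisimple group are connected, so the monodromy is an element of the connected group $P_A$ and hence homotopic to the identity. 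The paper sidesteps this entirely by a different and more geometric argument (Lemma~\ref{thm:triviality}): for a $G$-Fuchsian representation the limit curve $f_G(\RP^1)$ sits inside the full principal curve $f_G(\CP^1)$, which consists of pairwise-opposite points (Proposition~\ref{prop: opposite}); thus the bundle $\Lambda\to\RP^1$ is the restriction to the equator of a bundle over $\CP^1\simeq S^2$, and since $S^1$ bounds a disk in $S^2$, the bundle is actually trivial, not merely cohomologically so. The paper's argument is where the $G$-Fuchsian hypothesis and the principal three-dimensional subgroup enter the proof in an essential way; your connectedness argument would avoid the disk-extension trick but must correctly identify the structure group, and you should also address where the reduction from quasi-Hitchin to Fuchsian (Corollary~\ref{cor: hitchin diff quotient}) enters, since that step is needed to apply any of this to $\rho$ as hypothesized in the theorem.

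One smaller point of imprecision: you write that ``the image of $r^{2n-k-1}$ fills the entire first K\"unneth factor''. In the degree that matters ($k$ even, so $2n-k-1$ odd), $H^{2n-k-1}(G/P)=0$, so $r^{2n-k-1}$ is the zero map and its cokernel is all of $H^{2n-k-1}(\Lambda)\simeq H^{2n-k-2}(\Phi^I)$; the first K\"unneth factor $H^{2n-k-1}(\Phi^I)$ is already zero by even-dimensionality of Schubert cells, so nothing is being ``filled''. The conclusion $\coker(r^{2n-k-1})\simeq H^{2n-k-2}(\Phi^I)$ is still correct, but the phrasing suggests a mechanism that is not actually in play.
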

The correspondence between Weyl group elements, Schubert cells, and
cohomology classes in $G/P$ makes the calculation of the outer terms
in the exact sequence above an entirely combinatorial matter.  More
precisely, we find:
\begin{bigthm}
\label{introthm:fuchsian-domain-properties}
The domains $\Omega_{\rho}^{I} \subset G/P$ as above have the following
properties:
\begin{rmenumerate}
\item The odd homology groups of $\Omega_\rho^I$ vanish.
\item The even cohomology groups of $\Omega_\rho^I$ are free abelian.
\item The rank of $H_{2k}(\Omega_\rho^I)$ is equal to $r_k +
r_{n-1-k}$, where $n = \dim_\C G/P$ and where $r_j$ denotes the
number of elements of $I/W_P$ of length $j$ with respect to the
Chevalley-Bruhat order on $W/W_P$.
\item \label{suspected-pd} For each $k\geq 0$ there is a natural isomorphism 
$ H_k(\Omega_{\rho}^I, \Z) \simeq H^{2n-2-k}(\Omega_{\rho}^I,
\Z) $.
\end{rmenumerate}
\end{bigthm}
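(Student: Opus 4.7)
The plan is to deduce all four statements from Theorem \ref{introthm:fuchsian-domain-homology} together with the Schubert cell structure of the model thickening. Since $I$ is downward closed in the Chevalley-Bruhat order and the closure of each Schubert cell is a union of Schubert cells of lower length, $\Phi^I \subset G/P$ is a closed union of Schubert cells, one for each element of $I/W_P$, where the cell corresponding to $w$ has complex dimension equal to its length. Because all cells are of even real dimension, the cellular boundary of $\Phi^I$ vanishes in every degree, so $H_*(\Phi^I, \Z)$ is free abelian, concentrated in even degrees, with rank $r_j$ in degree $2j$. Universal coefficients transfers the same description to $H^*(\Phi^I, \Z)$.

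Substituting into the split exact sequence of Theorem \ref{introthm:fuchsian-domain-homology} yields (i), (ii), and (iii) at once. For odd $k$ both outer terms vanish (since $k$ and $2n-2-k$ are simultaneously odd), giving (i). For $k = 2j$ the middle term is a split extension of two free abelian groups of ranks $r_{n-1-j}$ and $r_j$, so it is free abelian of rank $r_{n-1-j} + r_j$; this is (iii). Once $H_*(\Omega_\rho^I, \Z)$ is known to be free abelian in every degree, a second application of universal coefficients gives freeness of $H^*(\Omega_\rho^I, \Z)$, which is (ii).

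For part (iv), the same accounting shows that $H_k(\Omega_\rho^I, \Z)$ and $H^{2n-2-k}(\Omega_\rho^I, \Z)$ are abstractly isomorphic: apply Theorem \ref{introthm:fuchsian-domain-homology} in degree $2n-2-k$ and dualize via universal coefficients to obtain $H^{2n-2-k}(\Omega_\rho^I, \Z) \simeq H^k(\Phi^I, \Z) \oplus H_{2n-2-k}(\Phi^I, \Z)$, which by the Schubert-basis identification $H^j(\Phi^I, \Z) \simeq H_j(\Phi^I, \Z)$ has the same summands as $H_k(\Omega_\rho^I, \Z) \simeq H^{2n-2-k}(\Phi^I, \Z) \oplus H_k(\Phi^I, \Z)$. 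The main obstacle will be upgrading this rank-level matching to a \emph{natural} isomorphism. I would approach this by tracing through the construction of the split exact sequence in Theorem \ref{introthm:fuchsian-domain-homology} to check that the splittings and the universal-coefficient identifications assemble compatibly, with the expectation that the result admits an interpretation as a Poincaré--Lefschetz-type duality for the open $2n$-manifold $\Omega_\rho^I \subset G/P$, the shift by $2$ reflecting the complex codimension-one character of the limit set $\Lambda$.
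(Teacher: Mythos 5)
Your proposal is correct and matches the paper's approach: the paper likewise derives all four statements from the split exact sequence of Theorem \ref{introthm:fuchsian-domain-homology} together with the fact that $\Phi^I$ is a finite CW complex with only even-dimensional (Schubert) cells, so that $H_*(\Phi^I,\Z)\simeq\Z_{I/W_P}$ graded by twice the length; see Theorem \ref{thm:omega-homology} and Corollary \ref{cor:omega-betti}. Your caution about part (iv) is warranted: the paper's own argument (Theorem \ref{thm:omega-homology}(ii)) also produces the isomorphism $H_k(\Omega_\rho^I)\simeq H^{2n-2-k}(\Omega_\rho^I)$ simply by dualizing the split exact sequence and matching free summands, and the adjective ``natural'' in the introductory statement is not given a stronger categorical justification in the body; the Poincar\'e--Lefschetz heuristic you mention is precisely what the authors offer, informally, via Conjecture \ref{conjecture fiber}.
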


Taken together, these results are consistent with the possibility that
$\W_{\rho}^{I}$ is a bundle over the surface $S$ with fiber a compact,
oriented manifold of dimension $(2n-2)$ homotopy equivalent to
$\Omega_{\rho}^{I}$; if so, property \ref{suspected-pd} would follow
from Poincar\'{e} duality for this fiber manifold.  We conjecture a weaker
form of this:
\begin{conj}\label{conjecture fiber}
There exists a compact $(2n-2)$-dimensional Poincar\'e duality space
$F_{\rho}^{I}$ homotopy equivalent to $\Omega_{\rho}^{I}$ and a
continuous fiber bundle
\[
F_{\rho}^{I}\rightarrow \W_{\rho}^{I}\rightarrow S.
\]
\end{conj}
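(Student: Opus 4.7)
The natural approach exploits the factorization $\rho = \iota_{G} \circ \rho_{0}$ of a $G$-Fuchsian representation through a Fuchsian $\rho_{0} : \pi_{1}S \to \PSL_{2}\R$. The limit curve $\xi$ is then the composition of the boundary map of $\rho_{0}$ with $\iota_{G}$, so it is $\PSL_{2}\R$-equivariant, and consequently both $\Lambda$ and $\Omega_{\rho}^{I}$ are invariant under the ambient action of the principal subgroup $\iota_{G}(\PSL_{2}\R) < G$, which extends the action of $\Gamma$. The plan is to construct a continuous $\PSL_{2}\R$-equivariant locally trivial fibration $f : \Omega_{\rho}^{I} \to \H^{2} = \PSL_{2}\R / \textnormal{PSO}(2)$. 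If this succeeds, then $F_{\rho}^{I} := f^{-1}(p_{0})$ is compact and homotopy equivalent to $\Omega_{\rho}^{I}$ (since $\H^{2}$ is contractible), and the induced quotient map $\W_{\rho}^{I} \to S$ is automatically a locally trivial bundle with fiber $F_{\rho}^{I}$, because $\rho_{0}(\pi_{1}S)$ acts on $\Omega_{\rho}^{I}$ by bundle automorphisms covering the deck action on $\H^{2}$.

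Constructing $f$ amounts to exhibiting a $\textnormal{PSO}(2)$-invariant compact subspace $F \subset \Omega_{\rho}^{I}$ for which the orbit map $\PSL_{2}\R \times_{\textnormal{PSO}(2)} F \to \Omega_{\rho}^{I}$ is a homeomorphism. I would try to build $f$ directly from the limit curve: for each $x \in \Omega_{\rho}^{I}$ the relative Chevalley-Bruhat position of $x$ with respect to the flag $\xi(\theta)$ varies with $\theta \in \RP^{1}$ and records rich combinatorial data. Selecting a canonical subset $S(x) \subset \RP^{1}$ from this data---for instance the locus where the relative position attains a prescribed extremal value compatible with the ideal $I$---and applying a $\PSL_{2}\R$-equivariant barycenter construction (in the spirit of Douady-Earle) should produce a point $f(x) \in \H^{2}$. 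The hypotheses that $I$ is balanced and right-$W_{P}$-invariant are precisely what one expects to guarantee that $S(x)$ is nonempty, proper in $\RP^{1}$, and depends continuously and equivariantly on $x$.

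The main obstacle is verifying that such an $f$ is a locally trivial fibration rather than merely a continuous equivariant surjection; this reduces to showing that the $\PSL_{2}\R$-orbits on $\Omega_{\rho}^{I}$ all have conjugate compact stabilizers, a stratification statement governed by $I$ and the Bruhat order on $W/W_{P}$. Once $f$ is in hand, promoting $F_{\rho}^{I}$ to a $(2n-2)$-dimensional Poincar\'{e} duality space should follow either from smoothness of the bundle with manifold fibers or from a cap-product argument that realizes the abstract cohomological duality of Theorem \ref{introthm:fuchsian-domain-properties}\ref{suspected-pd} via a fundamental class on $F_{\rho}^{I}$, with the orientation of $\W_{\rho}^{I}$ and a transfer construction providing the input. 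In my view the canonical choice of $S(x)$ is the combinatorial-geometric heart of the problem: it must be intrinsic enough to yield a continuous equivariant $f$ while extracting enough information from the ideal $I$ to separate the $\PSL_{2}\R$-orbits and exhibit uniform slice behavior.
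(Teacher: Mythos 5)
This statement is a conjecture that the paper leaves open in general; the authors verify it only for $G = \PSL_3\C$ (Theorem \ref{thm:sl3-fibering}). Your high-level strategy---build a proper $\PSL_2\R$-equivariant continuous map $f : \Omega_{\rho}^{I} \to \H^2$, take $F_{\rho}^{I} = f^{-1}(p_0)$, and descend to a bundle $\W_{\rho}^{I} \to S$---is precisely the skeleton of that $\PSL_3\C$ proof, and the formal reductions you state (equivariance via the $\iota_G(\PSL_2\R)$-invariance of Proposition \ref{PSL invariance}, properness giving a compact fiber, contractibility of $\H^2$ giving the homotopy equivalence, descent to the quotient) are all correct and appear there.

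The gap is in the construction of $f$, which you rightly flag as the core difficulty but do not resolve. Your mechanism---select an extremal locus $S(x) \subset \RP^1$ from the relative positions $\theta \mapsto \pos(\xi(\theta),x)$ and apply a Douady-Earle-type barycenter---faces obstacles you do not address: the relative-position function is only semicontinuous in the Chevalley-Bruhat order (it jumps across Schubert strata), so $S(x)$ will generically vary discontinuously with $x$; a conformal barycenter is undefined or unstable when $S(x)$ degenerates to one or two points, and balancedness of $I$ does not obviously prevent this; and properness of $f$ is a genuine estimate, not a formal consequence of equivariance. It is also worth noting that the paper's $\PSL_3\C$ verification uses no barycenter at all: it identifies the dense open stratum $\Omega_0$ with a finite quotient of the oriented frame bundle of $\H^3$, defines $\tilde\pi$ there as footpoint projection followed by orthogonal projection $\H^3 \to \H^2$, and then extends $\tilde\pi$ to the lower-dimensional boundary strata by explicit case analysis in the divisor model, with properness established via half-space estimates. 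None of this visibly generalizes to other $G$---which is exactly why the statement remains a conjecture. Your proposal reformulates the problem in a plausible framework but does not close it, and the barycenter idea would need substantial development and verification before it could establish even a single new case.
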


In Section \ref{subsec:sl3} we verify this conjecture in the case
$G = \PSL_3\C$.  We have been informed of work in progress by
Alessandrini-Li \cite{alessandrini-li} and
Alessandrini-Maloni-Wienhard \cite{alessandrini-maloni-wienhard} that
provides other examples in which Conjecture \ref{conjecture fiber}
holds.  Some of these results are announced in \cite{alessandrini:survey}.

These homological results also yield a simple formula for the Euler
characteristic of the quotient manifold:
\begin{cor}
\label{introcor:euler}
The Euler characteristic of $\W_{\rho}^{I}$ satisfies
\[
\chi(\W_{\rho}^{I})=\chi(S)\chi(G/P).
\]
\end{cor}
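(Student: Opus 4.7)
The plan is to chain together Theorems A, B, and C and then reduce the computation of $\chi(\Omega_\rho^I)$ to a combinatorial count of Schubert cells via the balanced condition on $I$.

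First I would apply Theorem A. Because $H_*(S,\Z)$ is free abelian, the tensor-product isomorphism $H_*(\W_\rho^I,\Z)\simeq H_*(S,\Z)\tensor_\Z H_*(\Omega_\rho^I,\Z)$ identifies the Betti numbers of $\W_\rho^I$ as a convolution of those of $S$ and $\Omega_\rho^I$, so passing to alternating sums of ranks yields at once the factorization
\[
\chi(\W_\rho^I)=\chi(S)\,\chi(\Omega_\rho^I).
\]
Thus the corollary reduces to showing $\chi(\Omega_\rho^I)=\chi(G/P)$.

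Next I would invoke parts (i) and (iii) of Theorem C. Part (i) says all odd homology of $\Omega_\rho^I$ vanishes, so $\chi(\Omega_\rho^I)$ is simply the sum (with all positive signs) of the ranks of the even homology. Part (iii) then gives
\[
\chi(\Omega_\rho^I)=\sum_{k\geq 0}(r_k+r_{n-1-k})=2\sum_{k\geq 0}r_k=2\,\card{I/W_P},
\]
where $r_k$ counts the elements of $I/W_P$ of length $k$.

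Finally I would match this against $\chi(G/P)$ by way of the Schubert decomposition: $G/P$ is the disjoint union of Schubert cells indexed by $W/W_P$, each of even real dimension, so $\chi(G/P)=\card{W/W_P}$. Because $I$ is right-$W_P$-invariant, it is a union of $W_P$-cosets, and the balanced condition $W=I\sqcup w_0I$ descends to $W/W_P=(I/W_P)\sqcup w_0\cdot(I/W_P)$, giving $\card{I/W_P}=\tfrac12\card{W/W_P}$. Hence $\chi(\Omega_\rho^I)=\card{W/W_P}=\chi(G/P)$, and substitution into the display above completes the proof. There is no genuine obstacle here; the only thing to be careful about is the sign bookkeeping in the step $\sum_k(-1)^k[\mathrm{rk}\,H^{2n-2-k}(\Phi^I)+\mathrm{rk}\,H_k(\Phi^I)]$ if one prefers to derive the Betti number formula directly from Theorem B instead of quoting Theorem C\,(iii), but either route uses only that $2n-2$ is even.
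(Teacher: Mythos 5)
Your proof is correct and follows essentially the same route as the paper: apply Theorem~\ref{introthm:fuchsian-quotient-homology} (i.e.~Theorem~\ref{thm:homology-product}) to factor $\chi(\W_\rho^I)=\chi(S)\chi(\Omega_\rho^I)$, then use the vanishing of odd homology and the Betti-number formula $b_{2k}(\Omega_\rho^I)=r_k+r_{n-1-k}$ from Theorem~\ref{introthm:fuchsian-domain-properties} together with the balanced condition to get $\chi(\Omega_\rho^I)=2\card{I/W_P}=\card{W/W_P}=\chi(G/P)$, which is exactly the argument the paper gives in Corollary~\ref{cor:chi-omega}. The only small difference is that you spell out explicitly why $W=I\sqcup w_0I$ descends to cosets in $W/W_P$, a step the paper states without elaboration.
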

Note in particular that the Euler characteristic is independent of the
choice of balanced ideal $I\subset W$.  It also follows that an
affirmative answer to Conjecture \ref{conjecture fiber} would
necessarily produce a fiber space $F_{\rho}^{I}$ which satisfies
$\chi(F_{\rho}^{I})=\chi(G/P).$

In Section \ref{complex geometry}, we turn to the complex geometry of
quotients.  Here our work parallels the study of quotient manifolds
associated to complex Schottky groups by a number of authors,
e.g.~\cite{larusson98}, \cite{seade-verjovsky}, and especially
\cite{MO15}.  As in \cite{larusson98} and \cite{seade-verjovsky}, one
of our main techniques is to use a bound on the Hausdorff dimension of
the limit set to apply complex-analytic extension results (e.g.~from
\cite{SHI68}, \cite{HAR74}) and show that the quotient manifold
inherits holomorphic characteristics from $G/P$.  The more recent
results of \cite{MO15} in the Schottky case are probably the most
analogous to our study of Anosov representations, though their results
are stated with hypotheses about extensions of sheaves in place of the 
Hausdorff dimension assumptions we use.

In this complex-geometric part of the paper it is natural for us to
work in the more general setting of a complex Lie group $G$ and
$N = G/H$ a complex homogeneous space (where $H<G$ is a closed
complex Lie subgroup).  We say that a complex manifold $\W$ is a
\emph{uniformized $(G,N)$-manifold with data $(\Omega, \Gamma)$} if:
\begin{itemize}
\item There exists a discrete torsion-free group $\Gamma<G$ and a
$\Gamma$-invariant domain of proper discontinuity $\Omega \subset N$
upon which $\Gamma$ acts freely with compact quotient, and
\item There is a biholomorphism $\W\simeq \Gamma\backslash \Omega.$
\end{itemize}
(Such manifolds are sometimes called \emph{Kleinian} in the
literature.)  Note that a uniformized $(G,N)$-manifold is a special
case of a locally homogeneous geometric structure modeled on $(G,N)$,
and that the manifold $\W_\rho^I$ associated to a
right-$W_P$-invariant ideal $I$ is a uniformized $(G,G/P)$-manifold
with data $(\Omega_\rho^I, \rho(\pi))$.  Following terminology from
the study of convex-cocompact group actions, we call
$\Lambda:=N\setminus \Omega$ the \emph{limit set} of $\W$.  Denote by
$m_k(\Lambda)$ the $k$-dimensional Riemannian Hausdorff measure of $\Lambda$.

\begin{restatable}{bigthm}{embeddednoholo}
\label{introthm:embedded-no-holo}
Let $\W$ be a uniformized $(G,N)$-manifold with data
$(\Omega, \Gamma)$ and limit set $\Lambda$.  Suppose that $N$ is
compact and $1$-connected, and that $m_{2n-2}(\Lambda)=0$ where
$n=\dim_{\C}N$.  If $X$ is a Riemann surface and
$X \not \simeq \CP^1$, then every holomorphic map $\W \rightarrow X$
is constant.  More generally, if $Y$ is a complex manifold whose
universal cover is biholomorphic to an open subset of $\C^k$, then any
holomorphic map $\W \rightarrow Y$ is constant.
\end{restatable}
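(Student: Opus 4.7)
\emph{Reduction.} Any Riemann surface $X\not\simeq\CP^{1}$ has universal cover biholomorphic to $\C$ or to the upper half plane $\H$, each an open subset of $\C$. The Riemann surface statement therefore reduces to the more general one about complex manifolds $Y$ whose universal cover embeds in $\C^{k}$, which I address.

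\emph{Setup and strategy.} Fix an embedding $\tilde Y\subset\C^{k}$ of the universal cover $p\colon\tilde Y\to Y$, and let $f\colon\W\to Y$ be a holomorphic map. Pulling back along $\Omega\to\W$ yields a $\Gamma$-invariant holomorphic map $F\colon\Omega\to Y$ whose image equals $f(\W)$ and is compact in $Y$. The plan has three steps: (1) extend $F$ to a holomorphic map $\bar F\colon N\to Y$; (2) lift $\bar F$ through $p$ to $\tilde F\colon N\to\tilde Y\subset\C^{k}$ using the $1$-connectedness of $N$; (3) conclude via the maximum principle on the compact manifold $N$ that each of the $k$ coordinates of $\tilde F$, and therefore $f$ itself, is constant.

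\emph{The extension step.} Cover $f(\W)$ by finitely many relatively compact, simply connected open sets $V_{1},\ldots,V_{m}\subset Y$ that are evenly covered by $p$; each sheet of $p^{-1}(V_{i})$ is then biholomorphic to $V_{i}$ with closure bounded in $\C^{k}$. For any connected component $U$ of $F^{-1}(V_{i})\subset\Omega$, choose a sheet $\tilde V\subset p^{-1}(V_{i})$ and lift $F|_{U}$ to $\tilde F_{U}\colon U\to\tilde V\subset\C^{k}$; its $k$ coordinate functions are then bounded holomorphic on $U$. Since $m_{2n-2}(\Lambda)=0$ implies in particular that $m_{2n-1}(\Lambda)=0$, the removable singularity theorems for bounded holomorphic functions across such thin closed sets (see \cite{SHI68,HAR74}) extend each coordinate holomorphically across $\Lambda\cap\overline U$, producing a holomorphic extension of $F|_{U}$ to an open set $U^{\ast}\supset\overline U$ in $N$ after projecting back through $p$. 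These local extensions agree with $F$ on the overlap of their domains with $\Omega$, so by the identity principle---with the $\pi_{1}(Y)$-ambiguity of the lifts absorbed upon projection to $Y$---they patch into a single holomorphic map $\bar F\colon N\to Y$. The domains $U^{\ast}$ cover all of $N$ because $\Omega$ is dense in $N$ and $f(\W)$ is compact.

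\emph{Main obstacle.} The extension step is the technical heart of the argument. The principal subtlety is that $F$ need not extend continuously to any point of $\Lambda$, which forces a component-by-component analysis of local lifts rather than a pointwise one, and the compatibility of the extensions on overlaps must be carefully handled using the deck action on $\tilde Y$. The hypothesis $m_{2n-2}(\Lambda)=0$ is invoked at its critical strength to apply the removable singularity theorems to the bounded coordinates of each local lift; no weaker assumption on the size of $\Lambda$ would suffice.
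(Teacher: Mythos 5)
Your strategy diverges from the paper's at the crucial point and, as written, has a real gap in the extension step.

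The paper's proof is shorter and avoids the difficulties you encountered: it observes that since $N$ is $1$-connected and $m_{2n-2}(\Lambda)=0$ (hence $\dim_{\mathrm{top}}\Lambda \leq 2n-3$), the domain $\Omega = N \setminus \Lambda$ is itself simply connected, and is therefore the universal cover $\tilde{\W}$. One then lifts $f\colon \W \to Y$ \emph{directly} to $\tilde{f}\colon \Omega \to \tilde{Y} \subset \C^k$, applies Shiffman's extension theorem (Theorem~\ref{shiffman}) to each scalar coordinate of $\tilde f$ on $N \setminus \Lambda$, and concludes by the maximum principle on the compact connected $N$. You use the $1$-connectedness of $N$ only at the very end (to lift a hypothetically already-extended map $\bar{F}\colon N\to Y$), and you never exploit the simple connectivity of $\Omega$; this is the observation that lets one bypass the patching entirely.

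The gap: in your extension step you take a connected component $U$ of $F^{-1}(V_i) \subset \Omega$, lift $F|_U$ into $\C^k$, and invoke removable-singularity theorems to ``extend each coordinate holomorphically across $\Lambda \cap \overline{U}$, producing a holomorphic extension of $F|_U$ to an open set $U^*\supset\overline{U}$.'' But the extension theorems require a holomorphic function defined on $W \setminus A$ for some \emph{open} $W \subset N$ with $A = W \cap \Lambda$ closed in $W$. What you have is a function on $U$, which is open in $\Omega$ but not of the form $W \setminus \Lambda$. To apply the theorem near a point $x \in \Lambda\cap\overline{U}$ you would need an open ball $B \ni x$ with $B \cap \Omega \subset U$, i.e.\ with $B \cap \Omega$ contained in $F^{-1}(V_i)$ and in the single component $U$. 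Nothing guarantees this: points of $\Omega$ arbitrarily close to $x$ may have $F$-image outside $V_i$, or lie in other components. In fact the existence of such balls is essentially equivalent to continuity of $F$ across $\Lambda$, which is what you are trying to prove, so the argument is circular at this point. A secondary inconsistency: your closing sentence claims $m_{2n-2}(\Lambda)=0$ is used at critical strength, but the only thing your extension step invokes is the consequence $m_{2n-1}(\Lambda)=0$ (for bounded functions), a strictly weaker hypothesis — so your own accounting of where the hypothesis enters does not match the argument given.

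Once you add the observation that $\Omega$ is $1$-connected (this is where $m_{2n-2}(\Lambda)=0$ is genuinely used twice, via Szpilrajn's inequality $\dim_{\mathrm{top}} \leq \hdim$ and general position for loops and disks in $N$), your steps (1)--(3) collapse to the paper's proof: a single global lift $\Omega \to \tilde{Y} \subset \C^k$, unconditional Shiffman extension on $N$, maximum principle.
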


Using a theorem of Eyssidieux, we also show that under mild conditions
on the complexity of $\pi_1\W$, such a uniformized manifold does not
admit a K\"{a}hler metric:

\begin{restatable}{bigthm}{embeddednotkahler}
\label{introthm:embedded-not-kahler}
Let $\W$ be a uniformized $(G,N)$-manifold with data
$(\Omega, \Gamma)$ and limit set $\Lambda$. Suppose that $N$ is
compact and $1$-connected, and that $m_{2n-2}(\Lambda)=0$ where $n = \dim_\C N$.
If $\pi_1\W$ has an infinite linear group (e.g.~a surface group) as a
quotient, then $\W$ does not admit a K\"ahler metric.  In particular,
$\W$ is not a complex projective variety.
\end{restatable}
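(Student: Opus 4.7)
The plan is a proof by contradiction. Suppose $\W$ admits a Kähler metric. The hypothesis provides a surjection $\pi_1 \W \twoheadrightarrow \Gamma_0$ onto an infinite linear group; composing with an embedding $\Gamma_0 \hookrightarrow \GL_n(\C)$ gives a linear representation $\rho: \pi_1\W \to \GL_n(\C)$ with infinite image. I will invoke a theorem of Eyssidieux from the theory of the Shafarevich conjecture for compact Kähler manifolds with linear fundamental group to produce a non-constant holomorphic map $\phi: \W \to Y$, where $Y$ is a (possibly singular) complex analytic space whose universal cover is biholomorphic to an open subset of some $\C^k$. After passing to a resolution of singularities of $Y$ if needed, Theorem \ref{introthm:embedded-no-holo} forces $\phi$ to be constant, contradicting the non-constancy built into Eyssidieux's construction. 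Since every compact complex projective variety is Kähler, the non-projectivity conclusion follows at once.

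The parenthetically highlighted case where $\Gamma_0$ is a surface group $\pi_1(\Sigma_g)$ with $g \geq 2$ can in fact be handled without invoking the full Eyssidieux machinery, providing a useful sanity check on the strategy. If $\pi_1\W$ surjects onto such a $\pi_1(\Sigma_g)$ and $\W$ is Kähler, then by the theorem of Siu (as refined by Beauville and Catanese on isotrivial fibrations) there exists a non-constant holomorphic map from $\W$ onto a Riemann surface of genus $\geq 2$. Such a surface is not $\CP^1$, so this already contradicts the first clause of Theorem \ref{introthm:embedded-no-holo}, and hence the conclusion of the present theorem in this case.

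The main obstacle is formulating Eyssidieux's theorem in a form directly applicable to Theorem \ref{introthm:embedded-no-holo}. The Shafarevich reduction of Eyssidieux naturally yields a target whose universal cover is holomorphically convex, in particular Stein, and some bridging argument is required to arrange that this cover can be realized as an open subset of some $\C^k$ in the sense of Theorem \ref{introthm:embedded-no-holo} — either by exploiting the specific bounded-domain structure available when Eyssidieux's construction is applied to reductive representations (e.g.\ through period-domain or arithmetic-quotient targets), or by noting that Theorem \ref{introthm:embedded-no-holo} admits a natural generalization to targets with Stein universal cover via the same Hartogs-type extension machinery used in its original proof. Once the bridge between the Eyssidieux output and the hypotheses of Theorem \ref{introthm:embedded-no-holo} is established, the contradiction is automatic.
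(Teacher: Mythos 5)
Your plan and the paper's proof both hinge on Eyssidieux's theorem, but the paper invokes a cleaner form of it that dissolves the ``main obstacle'' you flag at the end. The paper's formulation (citing~\cite{eyssidieux}) is: if the fundamental group of a compact K\"ahler manifold admits an infinite linear quotient, then its \emph{universal cover} admits a non-constant holomorphic map to some $\C^k$. With this phrasing the proof bypasses Theorem~\ref{introthm:embedded-no-holo} entirely and never needs a target space $Y$: one simply notes that $m_{2n-2}(\Lambda)=0$ and simple-connectedness of $N$ imply $\Omega$ is the universal cover of $\W$, and that Lemma~\ref{lem:adapted-shiffman} (the Shiffman-type extension result) forces every holomorphic map $\Omega \to \C^k$ to be constant. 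The two conclusions are directly contradictory, and no resolution of singularities or Stein-versus-domain-in-$\C^k$ discussion is required.

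Your version, by contrast, tries to extract a map $\phi:\W\to Y$ onto a (possibly singular) analytic space with holomorphically convex universal cover and then feed this into Theorem~\ref{introthm:embedded-no-holo}. You correctly identify that this requires a bridge: Theorem~\ref{introthm:embedded-no-holo} as stated assumes the universal cover of $Y$ embeds in some $\C^k$, whereas the Shafarevich-type output is a priori only Stein (and may be singular). That gap is genuine; a Stein universal cover need not be realizable as an open subset of $\C^k$, and passing to a resolution of $Y$ can change the universal cover. Either fix you sketch (exploiting period-domain structure or generalizing Theorem~\ref{introthm:embedded-no-holo} to Stein targets) would require additional work not present in the proposal. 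The paper avoids this altogether by quoting a version of Eyssidieux's result that already speaks about maps \emph{from the universal cover} into $\C^k$, which meshes perfectly with the extension lemma without any intermediate target. Your surface-group sanity check via Siu's theorem is correct and is a nice cross-validation, but it does not extend to the general linear-quotient case.
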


In order to apply Theorems \ref{introthm:embedded-no-holo} and
\ref{introthm:embedded-not-kahler} to examples arising from Anosov
representations, it is necessary to verify the hypothesis
concerning the Hausdorff measure of the limit set.  We do this in the
technical Section \ref{size limit set}, which relies on a
combinatorial property of balanced ideals in Weyl groups.  Namely,
except for some low rank aberrations, every balanced ideal
$I\subset W$ contains every element $w\in W$ of length at most $2$.
(We note that a similar result was proved in \cite{ST15} for a similar
purpose, but only for a certain class of Chevalley-Bruhat ideals.)
This translates to a lack of high-dimension cells in $\Phi^I$, which
allows us to show that $m_{2n-2}(\Lambda_\rho^I)$ vanishes in the
$G$-quasi-Fuchsian case.  We conclude: 

\begin{restatable}{bigthm}{anosovquotientnegatives}
\label{introthm:anosov-quotient-negatives}
Let $\rho : \pi_1 S \to G$ be a $G$-quasi-Fuchsian representation,
where $G$ is a complex simple adjoint Lie group that is not isomorphic
to $\PSL_2\C$, and let $P < G$ be a parabolic subgroup.  Let
$I\subset W$ be a balanced and right-$W_P$-invariant ideal in the Weyl
group.  Then the associated compact quotient manifold $\W_\rho^I$ has
the following properties:
\begin{rmenumerate}
\item Any holomorphic map from $\W_{\rho}^{I}$ to a manifold
whose universal cover embeds in $\C^k$ (e.g.~any Riemann surface not
isomorphic to $\CP^1$) is constant.  In particular, $\W$ is
not a holomorphic fiber bundle over such a manifold.

\item The complex manifold $\W_{\rho}^{I}$ does not admit a K\"ahler
metric, and in particular it is not a complex projective variety.
\end{rmenumerate}
\end{restatable}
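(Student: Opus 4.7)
The plan is to deduce the theorem from Theorems \ref{introthm:embedded-no-holo} and \ref{introthm:embedded-not-kahler}, applied to the uniformized $(G, G/P)$-manifold $\W_\rho^I$ with data $(\Omega_\rho^I, \rho(\pi_1 S))$. The ambient space $N = G/P$ is compact, being a projective variety, and simply connected, for example because the Bruhat decomposition endows it with a CW structure having only even-dimensional cells. Hence the topological hypotheses of those theorems hold automatically, and the task reduces to verifying the Hausdorff measure condition on $\Lambda_\rho^I$ together with, for the K\"ahler conclusion, the linear-quotient condition on $\pi_1 \W_\rho^I$.

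To invoke Theorem \ref{introthm:embedded-not-kahler} I also need $\pi_1 \W_\rho^I$ to surject onto an infinite linear group. The covering $\Omega_\rho^I \to \W_\rho^I$ with deck group $\rho(\pi_1 S)$ gives a surjection $\pi_1 \W_\rho^I \twoheadrightarrow \rho(\pi_1 S)$ via the holonomy representation. Since $\rho$ is Anosov (so has finite kernel) and $\pi_1 S$ is torsion-free, $\rho$ is injective, so this quotient is isomorphic to $\pi_1 S$, an infinite linear group. Thus part (ii) will follow from Theorem \ref{introthm:embedded-not-kahler} once the Hausdorff measure hypothesis is established.

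The main obstacle, and the content of Section \ref{size limit set}, is to show that $m_{2n-2}(\Lambda_\rho^I) = 0$, where $n = \dim_\C G/P$. The strategy is to combine H\"older regularity of the Anosov limit curve $\xi: \partial_\infty \pi_1 S \to G/B$ with a dimension bound on the model thickening $\Phi^I$. Writing
\[
\Lambda_\rho^I = \bigcup_{x \in \partial_\infty \pi_1 S} \Phi^I_{\xi(x)}
\]
as a one-parameter family of $G$-translates of $\Phi^I$, a standard H\"older-covering argument bounds the Hausdorff dimension of $\Lambda_\rho^I$ by $2 \dim_\C \Phi^I + 1/\alpha$, where $\alpha$ is the H\"older exponent of $\xi$. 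It therefore suffices to prove $\dim_\C \Phi^I \leq n - 2$, since $\alpha$ can be taken close enough to $1$ in the $G$-quasi-Fuchsian case.

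This dimension bound rests on the combinatorial fact, proved separately in the paper and failing only for a short list of low-rank exceptions including $\PSL_2 \C$, that every balanced ideal $I \subset W$ contains every Weyl group element of length at most $2$. Since $W = I \sqcup w_0 I$, the complement of $I$ then contains every $w_0 u$ with $\ell(u) \leq 2$, hence every element of length at least $\ell(w_0) - 2$; the maximum length in $I$ is thus at most $\ell(w_0) - 3$. Right-$W_P$-invariance promotes this to the analogous bound in $W/W_P$, yielding $\dim_\C \Phi^I \leq n - 2$. Assembling these ingredients, Theorems \ref{introthm:embedded-no-holo} and \ref{introthm:embedded-not-kahler} then deliver conclusions (i) and (ii) respectively.
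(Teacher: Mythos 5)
Your overall strategy matches the paper exactly: both reduce the theorem to Theorems~\ref{introthm:embedded-no-holo} and~\ref{introthm:embedded-not-kahler}, check that $G/P$ is compact and simply connected, observe that $\pi_1\W_\rho^I$ surjects onto the surface group, and then verify the Hausdorff measure hypothesis $m_{2n-2}(\Lambda_\rho^I)=0$. However, the verification of that last hypothesis contains a real gap.

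The problem is the clause ``$\alpha$ can be taken close enough to $1$ in the $G$-quasi-Fuchsian case.'' That is not true. The limit map of a quasi-Fuchsian representation is the boundary value of a $K$-quasiconformal deformation of a Fuchsian group, and as $\rho$ ranges over all $G$-quasi-Fuchsian representations, $K$ ranges over $[1,\infty)$; a $K$-quasiconformal map is only guaranteed to be $(1/K)$-H\"older, so the H\"older exponent $\alpha$ of $\xi$ can be arbitrarily small. In particular the inequality $1/\alpha < 2$ required to close your estimate $\hdim(\Lambda_\rho^I) \le 2\dim_\C\Phi^I + 1/\alpha < 2n-2$ does \emph{not} hold for all quasi-Fuchsian $\rho$. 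What the paper uses instead is the sharper fact that the limit set of a quasi-Fuchsian group is a quasi-circle in $\CP^1$, and quasi-circles always have Hausdorff dimension strictly less than $2$ (e.g.~by Astala's theorem) --- a bound that is uniformly better than what any H\"older exponent estimate gives. Combined with the general estimate $\hdim(\Lambda_\rho^I) \le \hdim(\xi(\partial_\infty\pi_1S)) + 2\max_{w\in I/W_D}\ell(w)$ from Theorem~\ref{thm: hdimbound}, this is how Theorem~\ref{thm: QFbound} establishes $m_{2n-2}(\Lambda_\rho^I)=0$ for \emph{all} $G$-quasi-Fuchsian $\rho$.

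A secondary issue is that you invoke the wrong part of the combinatorial lemma. You cite the statement that balanced ideals contain every element of length $\le 2$, which is Theorem~\ref{thm:short-small}(ii) and requires excluding not just $A_1$ but also $A_2$, $A_3$, $B_2$. The theorem at hand only excludes $\PSL_2\C$, so you would fail to prove it for $G$ of those remaining low-rank types. The correct ingredient here is Theorem~\ref{thm:short-small}(i): for $G$ not of type $A_1$, every balanced ideal contains all elements of length $\le 1$, which already gives $\max_{w\in I/W_D}\ell(w) \le n-2$ as needed. (Incidentally, your own derivation is off by one: containing all length-$\le 2$ elements yields $\max\ell \le n-3$, not $n-2$; it is the length-$\le 1$ version that gives $n-2$.)
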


We remark that results announced in a recent preprint of Pozzetti,
Sambarino and Wienhard \cite{PSW19} would allow this theorem to be
extended to an open neighborhood of the space of $G$-quasi-Fuchsian
representations.  We discuss this further in Section \ref{hdim
  subsec}.
 
While Theorems
\ref{introthm:embedded-no-holo}--\ref{introthm:anosov-quotient-negatives}
are essentially negative results---they rule out the use of certain
techniques in understanding these manifolds---our methods also lead to
positive results concerning the behavior of holomorphic line bundles
on uniformized $(G,G/P)$-manifolds $\W \simeq \Gamma \backslash \Omega$.
Specifically, we find that the behavior of such holomorphic line
bundles is closely related to the representation theory of the
discrete group $\Gamma<G$.

Let $\Pic^\Gamma(G/P)$ denote the space of $\Gamma$-equivariant isomorphism classes of
$\Gamma$-equivariant line bundles on $G/P.$ Then there is a homomorphism
\[
p_{*}^{\Gamma}: \Pic^\Gamma(G/P)\rightarrow \Pic(\W),
\]
the \emph{invariant direct image}.  In favorable circumstances,
the extension theorems of Harvey (see \cite{HAR74} and Theorem
\ref{harvey} below) allow us to show that $p_*^\Gamma$ is an
isomorphism.  In fact, we have:
\begin{restatable}{bigthm}{picard}
\label{introthm:picard}
Let $G$ be a connected semisimple complex Lie group, $P<G$ a parabolic subgroup,
and $\W$ a uniformized $(G,G/P)$-manifold with data $(\Omega, \Gamma)$
and limit set $\Lambda$.  Suppose that $m_{2n-4}(\Lambda)=0$ where
$n=\dim_{\C}G/P$.  Then there is a natural isomorphism
\begin{equation}
\label{picard-exact}
 \Pic(\W) \xrightarrow{\simeq}
\Pic^\Gamma(G/P)
\end{equation}
which is split by the invariant direct image homomorphism
$p_{*}^{\Gamma}: \Pic^\Gamma(G/P)\rightarrow \Pic(\W)$.

Moreover, the kernel of the composition
\begin{equation}\label{Picard kernel}
 \Pic(\W) \xrightarrow{\simeq}
\Pic^\Gamma(G/P)\rightarrow \Pic(G/P)
\end{equation}
is naturally isomorphic to $\textnormal{Hom}(\Gamma, \mathbb{C}^{*}).$
\end{restatable}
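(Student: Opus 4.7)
The plan is to construct the claimed isomorphism as the composite
\[
\Pic(\W) \xrightarrow{p^{*}} \Pic^{\Gamma}(\Omega) \xleftarrow{\;\mathrm{res}\;} \Pic^{\Gamma}(G/P),
\]
in which both arrows are isomorphisms. The first arrow is pullback along the étale covering $p \colon \Omega \to \W$; since $\Gamma$ acts freely and properly discontinuously on $\Omega$, standard descent identifies holomorphic line bundles on $\W$ with $\Gamma$-equivariant holomorphic line bundles on $\Omega$, so $p^{*}$ is an isomorphism without any hypothesis on $\Lambda$. Tracing definitions, the invariant direct image $p_{*}^{\Gamma}$ is the composite of restriction $\Pic^{\Gamma}(G/P) \to \Pic^{\Gamma}(\Omega)$ with $(p^{*})^{-1}$, so once the restriction arrow is shown to be an isomorphism, $p_{*}^{\Gamma}$ will automatically be the inverse of the constructed isomorphism $\Pic(\W) \to \Pic^{\Gamma}(G/P)$, providing the desired splitting.

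The heart of the proof is showing that restriction $\Pic^{\Gamma}(G/P) \to \Pic^{\Gamma}(\Omega)$ is an isomorphism, for which I invoke Harvey's extension theorem (Theorem \ref{harvey}). The hypothesis $m_{2n-4}(\Lambda) = 0$ gives $\Lambda$ Hausdorff codimension at least four in $G/P$, which is the relevant quantitative input. For surjectivity, given a holomorphic line bundle $L$ on $\Omega$, I would split the extension problem via the exponential sequence: the Chern class $c_{1}(L) \in H^{2}(\Omega, \Z)$ extends to $H^{2}(G/P, \Z)$ because Alexander duality gives an isomorphism there under the codimension bound, and the holomorphic structure extends by Harvey's theorem applied to $H^{1}(-, \O^{*})$ (noting that $H^{i}(G/P, \O_{G/P}) = 0$ for $i \geq 1$, as $G/P$ is a rational homogeneous variety). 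For injectivity, an equivariant isomorphism between restrictions of two bundles on $G/P$ is a holomorphic section of an auxiliary line bundle, which extends across $\Lambda$ by Harvey. Equivariance of every extension follows from the uniqueness built into Harvey's theorem: each $\gamma \in \Gamma$ acts on $G/P$ by a biholomorphism preserving $\Omega$ and $\Lambda$, so the extension of $\gamma^{*} L$ must coincide with $\gamma^{*}$ applied to the extension of $L$.

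For the kernel of \eqref{Picard kernel}, the isomorphism just established identifies it with the kernel of the forgetful map $\Pic^{\Gamma}(G/P) \to \Pic(G/P)$. Since $G/P$ is compact and connected, $H^{0}(G/P, \O_{G/P}^{*}) = \C^{*}$ with trivial $\Gamma$-action, and the standard five-term exact sequence for equivariant cohomology identifies this kernel with $H^{1}(\Gamma, \C^{*}) = \Hom(\Gamma, \C^{*})$; concretely, an equivariant structure on the trivial bundle is determined by a character $\chi \colon \Gamma \to \C^{*}$, and distinct characters give inequivalent equivariant bundles because $\C^{*}$ is abelian. The principal technical obstacle throughout is the correct application of Harvey's theorem to extend sections of $\O$ and classes in $H^{1}(-, \O^{*})$ across $\Lambda$; once such extensions are available and unique, all of the functorial and equivariant enhancements follow formally.
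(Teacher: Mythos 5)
Your strategy mirrors the paper's: factor the claimed map through the descent isomorphism $\Pic(\W) \simeq \Pic^\Gamma(\Omega)$, use Harvey's theorem to extend equivariantly across $\Lambda$, and identify the kernel of the forgetful map with $\Hom(\Gamma,\C^*)$ via constancy of automorphy factors on the compact connected $G/P$. The one substantive divergence is your treatment of the surjectivity of restriction $\Pic^\Gamma(G/P) \to \Pic^\Gamma(\Omega)$: you re-derive line-bundle extension by hand via the exponential sequence, comparing $H^2(\Omega,\Z)$ with $H^2(G/P,\Z)$ through Alexander duality and treating the coherent cohomology terms separately. This is an unnecessary detour, because the first clause of Theorem \ref{harvey} already states that a holomorphic line bundle on $Y \setminus A$ extends uniquely to $Y$ when $m_{2n-4}(A)=0$; this is exactly what the paper's Proposition \ref{class lb}(i) invokes. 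Moreover, the phrase ``Harvey's theorem applied to $H^1(-,\mathcal{O}^*)$'' is not quite right as stated, since Harvey's theorem concerns cohomology with coefficients in coherent sheaves (holomorphic line bundles) and $\mathcal{O}^*$ is not coherent; the correct exponential-sequence route would run through $H^1(-,\mathcal{O})$ and $H^2(-,\mathcal{O})$, which the rest of your sketch essentially does. Your injectivity argument (extend the isomorphism as a section of $\mathcal{L}_1\otimes\mathcal{L}_2^{-1}$; its zero set is a hypersurface contained in $\Lambda$, hence empty) and your use of uniqueness of extension to transport equivariance are both correct and match the paper. For the kernel you appeal to the five-term sequence in group cohomology, whereas the paper gives a shorter direct argument: an equivariant trivialization on $G/P$ produces an automorphy factor $j(\cdot,\gamma)\colon G/P\to\C^*$ which is constant by compactness and connectedness, hence a character, and conversely any character yields a kernel element. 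Both are correct; the paper's version is more elementary and avoids the equivariant-cohomology machinery.
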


As before, after excluding some low dimensional cases, this allows us
to compute the Picard group of manifolds arising from
$G$-quasi-Fuchsian representations.
\begin{restatable}{bigthm}{picardqf}
\label{introthm:quasi-fuchsian-picard}
Let $\rho : \pi_1 S \to G$ be a $G$-quasi-Fuchsian representation,
where $G$ is a complex simple adjoint Lie group that is not of type
$A_{1}, A_{2}, A_{3}$ or $B_{2}$.  Let $P<G$ be a parabolic subgroup,
$I\subset W$ a balanced and right-$W_P$-invariant ideal in the Weyl
group, and $\W_{\rho}^{I}$ the uniformized $(G, G/P)$-manifold associated
to these data.  Then, there is a short exact sequence
\begin{equation}
1\rightarrow \textnormal{Hom}(\pi_{1}S, \mathbb{C}^{*})\rightarrow
\Pic(\W_{\rho}^{I})\rightarrow \Pic(G/P)\rightarrow 1.
\end{equation}
\end{restatable}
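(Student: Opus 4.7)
The plan is to derive Theorem~\ref{introthm:quasi-fuchsian-picard} as an essentially formal consequence of Theorem~\ref{introthm:picard} once the Hausdorff-measure hypothesis $m_{2n-4}(\Lambda_\rho^I)=0$ is verified in the $G$-quasi-Fuchsian case. Granting that hypothesis, Theorem~\ref{introthm:picard} supplies an isomorphism $\Pic(\W_\rho^I)\simeq\Pic^{\Gamma}(G/P)$, where $\Gamma=\rho(\pi_1S)$, and identifies the kernel of the composition with $\Pic(G/P)$ as $\Hom(\Gamma,\C^{*})$. To convert this into the desired short exact sequence I need two further observations: (a) as a $G$-quasi-Fuchsian (hence $B$-Anosov) representation, $\rho$ has finite kernel, so torsion-freeness of $\pi_1S$ forces injectivity and $\Gamma\simeq\pi_1S$; (b) every holomorphic line bundle on the generalized flag variety $G/P$ is $G$-equivariant (it is pulled back from a character of $P$), hence $\Gamma$-equivariant by restriction, so the forgetful map $\Pic^{\Gamma}(G/P)\to\Pic(G/P)$ is surjective.

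The main work is therefore verifying $m_{2n-4}(\Lambda_\rho^I)=0$, and this is precisely where the type restriction on $G$ enters. My plan is to combine the combinatorial input of Section~\ref{size limit set}---that, outside the excluded small types, every balanced ideal $I\subset W$ contains all Weyl elements of length at most $2$---with a dimensional estimate on the model thickening $\Phi^I\subset G/P$. Using the balanced decomposition $W=I\sqcup w_0I$ and the length-reversing identity $\ell(w_0w)=\ell(w_0)-\ell(w)$, the statement ``$I$ contains every element of length $\leq 2$'' translates dually into $\ell(w)\leq\ell(w_0)-3$ for every $w\in I$. Right-$W_P$-invariance of $I$ applied to a coset $[w]\in I/W_P$ with minimal representative $w_{\min}$ then forces the maximal-length element $w_{\min}\cdot w_0^{W_P}$ of the coset to lie in $I$, so $\ell(w_{\min})+\ell(w_0^{W_P})\leq\ell(w_0)-3$ and hence $\ell(w_{\min})\leq n-3$, where $n=\dim_\C G/P$. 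Thus every Schubert cell of $\Phi^I\subset G/P$ has complex dimension at most $n-3$, giving $\Phi^I$ real dimension at most $2n-6$.

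Finally, the $G$-quasi-Fuchsian limit map $\xi:\partial_\infty\pi_1S\to G/B$ is a H\"older embedding of a topological circle, and $\Lambda_\rho^I$ is the corresponding $1$-parameter family of $G$-translates of $\Phi^I$. A Hausdorff-dimension estimate for such a family (together with the H\"older regularity of $\xi$ afforded by the Anosov property) bounds $\dim_H\Lambda_\rho^I$ strictly below $2n-4$, which yields $m_{2n-4}(\Lambda_\rho^I)=0$. The anticipated main obstacle is neither the Weyl-combinatorial statement (which lives in Section~\ref{size limit set}) nor the abstract Picard computation (Theorem~\ref{introthm:picard}), but the intermediate bookkeeping: tracking length bounds in $W$ through the parabolic quotient to obtain the complex codimension bound on $\Phi^I\subset G/P$, and ensuring the H\"older regularity of the limit curve is sharp enough to convert this pointwise bound into a Hausdorff-measure vanishing statement at the critical dimension $2n-4$.
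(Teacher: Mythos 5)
Your overall strategy matches the paper's: use the Weyl-group combinatorics of balanced ideals plus a Hausdorff-dimension estimate on the limit set to verify the measure hypothesis $m_{2n-4}(\Lambda_\rho^I)=0$, then feed the result into Theorem~\ref{introthm:picard}. The length-counting calculation that caps the complex dimension of $\Phi^I$ at $n-3$ is carried out correctly, and the paper executes exactly that in Theorems~\ref{thm:short-small} and \ref{thm: QFbound}. (A small mismatch in mechanism: the paper does not appeal to H\"older regularity of the Anosov limit map. Instead it observes that the $G$-quasi-Fuchsian limit curve is the image of a $\PSL_2\C$ quasi-circle — which has Hausdorff dimension strictly less than $2$ by quasiconformal theory — under the \emph{smooth} principal embedding $f_G$, and it bounds $\hdim(\Lambda_\rho^I)$ via Lipschitz local trivializations of the bundle $\Lambda_\rho^I\to\partial_\infty\pi$ as in Proposition~\ref{prop:lip}. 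This is cleaner than trying to quantify the regularity of $\xi$ itself.)

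However, there is a genuine gap in your step (b). You claim that every holomorphic line bundle on $G/P$ is $G$-equivariant because it is ``pulled back from a character of $P$.'' This is true when $G$ is \emph{simply connected} but false when $G$ is adjoint, which is the standing hypothesis of the theorem. For adjoint $G$ the character lattice of $P$ is strictly smaller than $\Pic(G/P)$ (e.g.\ for $G=\PGL_n\C$ the $G$-equivariant bundles on $G/B$ are indexed by the root lattice, a proper sublattice of the full weight lattice indexing $\Pic(G/B)$). So the surjectivity of $\Pic^\Gamma(G/P)\to\Pic(G/P)$ does not follow formally from $G$-equivariance, and your argument stalls here. The paper repairs this by lifting: a quasi-Fuchsian representation $\eta:\pi_1S\to\PSL_2\C$ lifts to $\widetilde\eta:\pi_1S\to\SL_2\C$ (Culler), hence $\rho$ lifts to $\widetilde\rho:\pi_1S\to\widetilde G$ with $\widetilde G$ the simply connected cover. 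Since $\widetilde G/\widetilde P\simeq G/P$ equivariantly, the quotient manifolds for $\rho$ and $\widetilde\rho$ are biholomorphic, and now every line bundle is $\widetilde G$-equivariant, hence $\widetilde\rho(\pi_1S)$-equivariant, which is what the argument actually requires. This lifting step is not cosmetic: for a general Anosov subgroup of an adjoint group with no such lift, the conclusion of the theorem is not known to hold, which is why the statement is restricted to $G$-quasi-Fuchsian representations rather than arbitrary Anosov ones satisfying the Hausdorff-measure bound.
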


Having calculated the Picard group, in Section \ref{cohomology line
  bundles} we turn to calculations of sheaf cohomology groups of line
bundles on $\W$ in the image of the invariant direct image
homomorphism.  Here we restrict to the case $P=B$ to simplify the
discussion, though analogous statements could be derived for any
parabolic subgroup.

Recall that a holomorphic line bundle $\mathcal{L}$ on $G/B$ is
$G$-equivariant if it admits an action of $G$ by bundle automorphisms
covering the $G$-action on $G/B.$  Isomorphism classes of $G$-equivariant
bundles on $G/B$ are in bijection with 1-dimensional representations
$B\rightarrow \C^{*}.$  We say a line bundle $\mathcal{L}$ is effective if it admits
a non-zero holomorphic section.

\begin{bigthm}
\label{introthm:cohomology-line-bundles}
Let $\mathcal{L}$ be a $G$-equivariant effective line bundle on $G/B$ and let $\W$ be a uniformized $(G, G/B)$-manifold with data $(\Omega, \Gamma)$ and  limit set
$\Lambda$ satisfying $m_{2n-2k-2}(\Lambda)=0$ for some $k\geq 1$, where $n=\dim_{\C}G/B$.
Then for all $0 \leq i < k$,
\[
H^{i}(\W, p_{*}^{\Gamma}(\mathcal{L}))\simeq
H^{i}(\Gamma, H^{0}(G/B, \mathcal{L})).
\]
\end{bigthm}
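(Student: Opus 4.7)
The plan is to combine the Cartan--Leray spectral sequence of the Galois covering $q \colon \Omega \to \W$ with group $\Gamma$, the Borel--Bott--Weil theorem on $G/B$, and the Harvey extension theorem (Theorem~\ref{harvey}) to transfer cohomology from $G/B$ to $\Omega$ in the range of degrees controlled by the Hausdorff-measure hypothesis on $\Lambda$.

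First I would observe that, since $\Gamma$ acts freely and properly discontinuously on $\Omega$ and $q^{*}(p_{*}^{\Gamma} \mathcal{L}) \simeq \mathcal{L}|_{\Omega}$ as $\Gamma$-equivariant coherent sheaves, there is a convergent Cartan--Leray spectral sequence
\[
E_{2}^{p,q} = H^{p}\bigl(\Gamma,\, H^{q}(\Omega, \mathcal{L}|_{\Omega})\bigr) \;\Longrightarrow\; H^{p+q}\bigl(\W,\, p_{*}^{\Gamma} \mathcal{L}\bigr).
\]
Next, because $\mathcal{L}$ is $G$-equivariant and effective, its defining character of $B$ is dominant; the Borel--Bott--Weil theorem then gives that $H^{0}(G/B, \mathcal{L})$ is the irreducible $G$-representation of that highest weight and that $H^{q}(G/B, \mathcal{L}) = 0$ for all $q > 0$. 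I would then apply Harvey's extension theorem to the locally free sheaf $\mathcal{L}$ and the closed set $\Lambda \subset G/B$ satisfying $m_{2n-2k-2}(\Lambda) = 0$: this produces a $\Gamma$-equivariant restriction isomorphism $H^{q}(G/B, \mathcal{L}) \xrightarrow{\sim} H^{q}(\Omega, \mathcal{L}|_{\Omega})$ for all $q < k$. Combining these two inputs yields $H^{0}(\Omega, \mathcal{L}|_{\Omega}) \simeq H^{0}(G/B, \mathcal{L})$ as $\Gamma$-modules and $H^{q}(\Omega, \mathcal{L}|_{\Omega}) = 0$ for $0 < q < k$.

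Substituting this vanishing back into the spectral sequence, all $E_{2}$-entries with $p+q = i$ and $0 < q < k$ are zero, so for $i < k$ the only surviving term is $E_{\infty}^{i,0} = E_{2}^{i,0} = H^{i}(\Gamma,\, H^{0}(G/B, \mathcal{L}))$, giving the desired identification. The delicate point will be the precise deployment of Harvey's theorem: I need that with the stated Hausdorff-measure vanishing the restriction map is actually bijective (not merely injective) in every degree $q < k$, and that this isomorphism is compatible with the $G$-action coming from $\Gamma < G$, so that it induces an isomorphism of the $E_{2}$-pages as $\Gamma$-modules and hence of the abutments.
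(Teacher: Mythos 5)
Your proof is correct and matches the paper's strategy: the paper's own argument (given as the proof of Theorem~\ref{thm: sheaf cohomology}, of which the stated result is part~(iv)) runs the same three steps — the spectral sequence $E_2^{p,q}=H^p(\Gamma,H^q(\Omega,p^*\mathcal{F}))\Rightarrow H^{p+q}(\W,\mathcal{F})$, Harvey's extension theorem to replace $H^q(\Omega,\mathcal{L})$ by $H^q(G/B,\mathcal{L})$ in low degree, and Borel--Bott--Weil to collapse all rows except $q=0$. The ``delicate point'' you flag at the end is not actually an issue: Theorem~\ref{harvey} as stated already gives that the restriction is an \emph{isomorphism} for $0\leq i\leq k$, and its naturality with respect to the $\Gamma$-action on $(G/B,\Lambda)$ makes it a map of $\Gamma$-modules automatically.
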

In this theorem, the expression
$H^{i}(\Gamma, H^{0}(G/B, \mathcal{L}))$ denotes the group cohomology
of $\Gamma$ with twisted coefficients.  Since $\mathcal{L}$ is
$G$-equivariant and $\Gamma < G$, the space $H^{0}(G/B, \mathcal{L})$
has the structure of a $\Gamma$-module.

When $i$ exceeds the cohomological dimension $\cohdim(\Gamma)$, the
previous theorem becomes the vanishing result:
\begin{equation}
\label{vanishing result}
H^{i}(\W, p_{*}^{\Gamma}(\mathcal{L}))=0 \text{ for }  \cohdim(\Gamma)<i<k.
\end{equation}

We close the discussion of the complex geometry of quotients
with the following theorem regarding the existence of meromorphic
functions on uniformized $(G, G/B)$-manifolds arising from $G$-quasi-Fuchsian
representations.  Recall that an ample line bundle $\mathcal{L}$ on $G/B$ is
one which gives rise to a projective embedding.
\begin{restatable}{bigthm}{meromorphic}
\label{introthm:meromorphic}
Let $\rho : \pi_1 S \to G$ be a $G$-quasi-Fuchsian representation with
image $\Gamma$, where $G$ is a complex simple adjoint Lie group that
is not of type $A_{1}$, $A_{2}$, $A_{3}$ or $B_{2}$. Let $I$ be a
balanced ideal in the Weyl group $W$ of $G$.  Let $\W_\rho^I$ denote
the uniformized $(G, G/B)$-manifold associated to these data.  For any
ample, $G$-equivariant line bundle $\mathcal{L}$ on $G/B$, the following properties
hold:
\begin{rmenumerate}
\item There exists a $k>0$ such that
\[
H^{0}(\W_{\rho}^{I}, p_{*}^{\Gamma}(\mathcal{L}^{k}))\simeq
H^{0}(\Gamma, H^{0}(G/B, \mathcal{L}^{k})) \neq 0.
\]
\item The manifold $\W_{\rho}^{I}$ admits a non-constant meromorphic function.
\end{rmenumerate}
\end{restatable}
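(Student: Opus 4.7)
The plan is to use Theorem \ref{introthm:cohomology-line-bundles} to translate the question about sections on $\W_\rho^I$ into one about $\Gamma$-invariants in a finite-dimensional $G$-representation, and then to exploit the fact that $\Gamma$ is contained in a principal $\iota_G(\PSL_2\C)$ subgroup to reduce to a question of classical invariant theory on the flag variety.

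For each integer $m \geq 1$ the bundle $\mathcal{L}^{m}$ is $G$-equivariant and effective, so Theorem \ref{introthm:cohomology-line-bundles} applies. Taking the index $0$ in that theorem yields the natural isomorphism
\[
H^{0}(\W_\rho^I, p_*^\Gamma(\mathcal{L}^m)) \;\simeq\; V_m^\Gamma, \qquad V_m := H^{0}(G/B, \mathcal{L}^m),
\]
provided $m_{2n-4}(\Lambda)=0$. This hypothesis is precisely where the exclusion of types $A_1, A_2, A_3, B_2$ enters: the combinatorial analysis of balanced ideals in Section \ref{size limit set} delivers the required Hausdorff-measure vanishing for $G$-quasi-Fuchsian limit sets in the remaining types. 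By Borel--Weil each $V_m$ is an irreducible $G$-representation attached to a strictly dominant weight.

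Now write $\rho = \iota_G \circ \sigma$ with $\sigma : \pi_{1}S \to \PSL_2\C$ quasi-Fuchsian. As a discrete faithful representation of a non-solvable group, $\sigma(\pi_{1}S)$ is Zariski dense in $\PSL_2\C$, so $\Gamma$ is Zariski dense in the principal subgroup $H := \iota_G(\PSL_2\C)$; because the fixed-point set of an algebraic action of a Zariski dense subgroup equals that of its closure, $V_m^\Gamma = V_m^H$. The representation-theoretic heart of the proof is then to show that $\dim V_m^H \geq 2$ for some $m > 0$. Assemble the graded section ring $R := \bigoplus_{m \geq 0} V_m$, which is the coordinate ring of the affine cone $\hat{X}$ over $G/B$ in its projective embedding by $\mathcal{L}$, so $\dim \hat{X} = n+1$. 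Since $H$ is reductive, the graded subring of invariants $R^H = \bigoplus_m V_m^H$ is the coordinate ring of the affine GIT quotient $\hat{X}/\!/H$, and
\[
\dim R^H \;\geq\; \dim \hat{X} - \dim H \;=\; n-2.
\]
The smallest value of $n$ among the allowed types is $6$ (attained by $G_2$), so $\dim R^H \geq 4$. Choosing algebraically independent homogeneous elements $a \in R^H_{d_a}$ and $b \in R^H_{d_b}$, the products $a^{d_b}$ and $b^{d_a}$ are linearly independent elements of $R^H_{d_a d_b}$, which proves (i) with $k = d_a d_b$ and simultaneously exhibits two linearly independent global sections of $p_*^\Gamma(\mathcal{L}^k)$.

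Statement (ii) follows immediately: the ratio $s_1/s_2$ of two such linearly independent sections is a meromorphic section of $p_*^\Gamma(\mathcal{L}^k) \otimes p_*^\Gamma(\mathcal{L}^k)^{-1} \simeq \O_{\W_\rho^I}$, i.e.\ a non-constant meromorphic function on $\W_\rho^I$. The main obstacle I expect to handle carefully is the interaction of the two rank-dependent ingredients: the Hausdorff-measure vanishing $m_{2n-4}(\Lambda)=0$, which Section \ref{size limit set} provides, and the Krull-dimension estimate for $R^H$. For the latter, the crude bound $\dim R^H \geq n-2$ is robust in that it holds regardless of any delicate analysis of orbit dimensions or generic stabilizers of the $H$-action on $\hat{X}$.
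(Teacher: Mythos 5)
Your overall architecture matches the paper: apply Theorem~\ref{introthm:cohomology-line-bundles} (or Theorem~\ref{thm: sheaf cohomology}(iv)) together with the Hausdorff-measure bound of Theorem~\ref{thm: QFbound} to convert $H^{0}(\W_\rho^I, p_*^\Gamma(\mathcal{L}^k))$ into $\Gamma$-invariants in $V_k = H^0(G/B,\mathcal{L}^k)$, and then produce two non-proportional $\iota_G(\PSL_2\C)$-invariant sections whose quotient is the desired meromorphic function. Your reduction $V_m^\Gamma = V_m^{\iota_G(\PSL_2\C)}$ via Zariski density of the quasi-Fuchsian group is correct and is essentially what the paper uses (the paper only needs the inclusion of $\mathfrak{S}$-invariants into $\Gamma$-invariants, but density gives equality).

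The gap is in how you produce the invariants. You assert, for $H=\iota_G(\PSL_2\C)$ reductive acting on the affine cone $\hat X$ over $G/B$, the Krull-dimension bound $\dim R^H \geq \dim\hat X - \dim H$. This is \emph{not} a general theorem for reductive group actions. It holds only when the action is ``stable,'' i.e.\ when the generic orbit is closed of dimension $\dim H$; otherwise the generic fiber of the quotient map $\hat X \to \hat X/\!/H$ can be much larger than an $H$-orbit and the invariant ring can collapse. (Elementary counterexample in the reductive category: $\C^*$ scaling on $\C^2$ gives $\dim R^H = 0$ while $\dim X - \dim H = 1$; for $H=\SL_2$ on $\C^2$ every nonzero vector is unstable and $R^H = \C$.) Determining whether the principal $\PSL_2\C$-action on the cone over $G/B$ is stable is precisely the delicate GIT question that requires work, and your closing remark that the bound ``holds regardless of any delicate analysis of orbit dimensions or generic stabilizers'' is the opposite of the truth: the validity of the bound is \emph{equivalent} to such an analysis. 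The paper handles this by citing Sepp\"annen--Tsanov \cite{ST15}, who prove (excluding types $A_1$, $A_2$, $B_2$) that the common vanishing locus $Y(\lambda)$ of all $\mathfrak{S}$-invariant sections has complex codimension at least two in $G/B$; since vanishing divisors of non-zero holomorphic sections have pure codimension one, this immediately yields two invariant sections with distinct vanishing loci, hence non-proportional sections of some power $\mathcal{L}^k$ (Theorem~\ref{S-ample}). That codimension estimate is the substantive representation-theoretic input your argument is missing, and without it your proof of the existence of the two invariant sections does not go through.
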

The same techniques show that the transcendence degree over $\C$ of
the field of meromorphic functions on $\W_{\rho}^{I}$ is large
whenever the rank of
$H^{0}(\W_{\rho}^{I}, p_{*}^{\Gamma}(\mathcal{L}^{k}))$ is large;
however, whether or not there are any cases where this transcendence
degree is equal to the complex dimension of $\W_{\rho}^{I},$ so that
$\W_{\rho}^{I}$ is Moishezon, is yet to be seen.  In the analogous
setting of quotient manifolds associated to complex Schottky groups,
these questions are studied in \cite{larusson98} and \cite{MO15}.

\subsection{The role of Hausdorff dimension and $G$-quasi-Fuchsian assumptions} \label{hdim subsec}

Most of our results include a hypothesis concerning the Hausdorff
dimension of the limit set or an assumption that the representation
is $G$-quasi-Fuchsian.  We briefly discuss the prospects for weakening
or removing these hypotheses.

In Theorems \ref{introthm:anosov-quotient-negatives} and
\ref{introthm:quasi-fuchsian-picard}, the Anosov representation is
required to be $G$-quasi-Fuchsian, but this hypothesis is only used to
obtain a bound on the Hausdorff dimension of the limit curve.  In a
recent preprint, Pozzetti, Sambarino, and Wienhard \cite{PSW19} have
announced results that in particular imply continuous variation of the
Hausdorff dimension of the limit curve as a function of the
representation, for a particular sub-class of Anosov representations.
This would allow Theorems \ref{introthm:anosov-quotient-negatives} and
\ref{introthm:quasi-fuchsian-picard} to be immediately extended to a
neighborhood of the $G$-quasi-Fuchsian locus.

Theorem \ref{introthm:meromorphic} is also stated for
$G$-quasi-Fuchsian representations, but here that hypothesis is more
fundamental, as it is used to ensure the existence of vectors in
irreducible representations of $G$ which are invariant under a
principal $\textnormal{PSL}(2,\mathbb{C})$.  It seems likely that a
generic uniformized $(G,N)$-manifold has no non-constant meromorphic
functions.

Theorems \ref{introthm:embedded-not-kahler}, \ref{introthm:picard},
and \ref{introthm:cohomology-line-bundles} require specific upper
bounds on the Hausdorff dimension of the limit set, but we do not know
if the threshold dimensions in those statements are optimal.
Producing examples with limit sets of \emph{large} Hausdorff
dimension, as might be used to show the necessity of the hypothesis,
seems to be out of reach of current methods.  Furthermore, the
delicate nature of extension problems in several complex variables
could make analyzing such examples quite challenging.

\subsection{An illustrative example}

In formulating the main results of this paper, we strive for the
maximum level of generality that our arguments allow.  However, in
reading the proofs it may be helpful to have a concrete
example in mind.  While Section \ref{examples} develops various
aspects of certain examples in detail, here we discuss how all of the
main results apply to one class of examples (which is
also discussed in Sections \ref{sec:sln-constructions}--\ref{subsec:incidence} and in \cite[Section 10.2.2]{GW12}).

Consider a torsion-free cocompact Fuchsian group $\Gamma_0 < \SL_2\R$
and fix $n \geq 2$.  Let $\Gamma$ denote the image of $\Gamma_0$ in
$\SL_n\R$ using the $n$-dimensional irreducible representation of
$\SL_2\R$.  Thus $\Gamma$ acts on $\CP^{n-1}$ preserving a rational
normal curve $X$ of degree $n-1$, and it also preserves the set of
real points $X_\R \subset X$.

Let $\F_{1,n-1}$ denote the $\SL_n\C$-homogeneous manifold consisting
of pairs $(\ell,H)$ where $\ell \subset \C^n$ is a line and
$H \subset \C^n$ is a hyperplane containing $\ell$.  Define
$\Lambda_1 \subset \F_{1,n-1}$ as the set of all pairs $(\ell,H)$
where $[\ell] \in X_\R$, and $\Lambda_{n-1}$ as the set of all
$(\ell,H)$ where $[H] \subset \CP^{n-1}$ is an osculating hyperplane
of $X$ at a point of $X_\R$. Then $\Gamma$ acts properly
discontinuously and cocompactly on
$\Omega_{1,n-1} = \F_{1,n-1} \setminus (\Lambda_1 \cup \Lambda_{n-1})$
by \cite[Theorem 8.6 and Section 10.2.2]{GW12}.  As we explain in
Section \ref{subsec:incidence}, the set $\Omega_{1,n-1}$ is the domain
corrsponding to the ideal in $W(\SL_n\C) \simeq S_n$ consisting of
permutations $x$ with $x(1) < x(n)$.  Letting
$M_{1,n-1} = \Omega_{1,n-1} / \Gamma$, we have:
\begin{itemize}
\item
Theorems~\ref{introthm:fuchsian-domain-homology}--\ref{introthm:fuchsian-domain-properties}
allow the computation of the (free abelian) homology of $\Omega_{1,n-1}$; explicitly,
the Betti numbers are
\[ b_{2k}(\Omega_{1,n-1}) = \begin{cases}
2n-2 & \text{ if } k = n-2 \\
\max \left ( 0,n-1- \left|n-k-2\right| \right ) & \text{ else.}
\end{cases} \]
and $b_{2k-1}(\Omega_{1,n-1}) = 0$.  The details of this calculation can be found in
Theorem \ref{thm:example-incidence-betti}.

\item Theorem~\ref{introthm:fuchsian-quotient-homology} then gives the
homology of $M_{1,n-1}$ itself, and in particular implies that
$\chi(M_{1,n-1}) = (2g-2)\chi(\F_{1, n-1}) = (2g-2)(n^2-n)$ (an application of Corollary \ref{introcor:euler}) where $g$ is the genus of the Riemann surface uniformized by $\Gamma_{0}.$

\item For $n>2$, Theorems \ref{introthm:embedded-no-holo}--\ref{introthm:anosov-quotient-negatives}
show that any holomorphic map from $M_{1,n-1}$ to a manifold
uniformized by a domain in $\C^k$ is constant, and in particular that
$M_{1,n-1}$ is not a holomorphic fiber bundle over a Riemann surface
of positive genus.

\item On the other hand, for $n=3$ we show in Theorem
\ref{thm:sl3-fibering} that the
conclusion of Conjecture \ref{conjecture fiber} holds, i.e.~that
$M_{1,2}$ is a fiber bundle over the surface $\mathbb{H}^2 /
\Gamma_0$.  A related special feature of $n=3$ is that $M_{1,2}$ is a
compactification of a finite quotient of $\SL_2\C / \Gamma_0$.

\item For $n>3$, Theorems
\ref{introthm:picard}--\ref{introthm:quasi-fuchsian-picard} show that
the Picard group of $M_{1,n-1}$ is isomorphic to
$\Hom(\Gamma_0,\C^*) \times \Pic(\F_{1,n-1})$.

\end{itemize}
While Theorems
\ref{introthm:cohomology-line-bundles}--\ref{introthm:meromorphic} do
not apply directly to this example, they can be applied to its
natural lift to a domain of proper discontinuity in the full flag variety
$\SL_n\C/B$ to conclude e.g.~vanishing of cohomology of line bundles
on the quotient manifold in large degree (when $n$ is correspondingly
large) and also that the quotient manifold admits meromorphic
functions (again, for $n$ large).

\subsection{Outline}

In Section \ref{back and prelim} we recall some facts from Lie theory
and introduce the notion of an Anosov representation of a word
hyperbolic group.

In Section \ref{KLP machine} we review the geometry of flag varieties
and discuss the construction of Kapovich-Leeb-Porti which produces
domains of proper discontinuity for Anosov representations.  For the
benefit of readers familiar with \cite{KLP13}, we note that in some
cases our notation and terminology are different from that of the
above-cited paper; this is done to adapt their theory to suit the
specific cases we study (i.e.~complex Lie groups).

In Section \ref{size limit set} we derive estimates for the Hausdorff
dimension of the complement of a domain of discontinuity for an Anosov
representation.  While these estimates are essential in Section
\ref{complex geometry}, their derivation represents a technical
excursion into combinatorial and geometric considerations that are not
used elsewhere in the paper.  (A reader might skip this section on
first reading if seeking an efficient route to the results of Section
\ref{complex geometry}.)

Section \ref{topology} contains the main results concerning the
topology of domains of discontinuity and of quotient manifolds,
including the proofs of Theorems
\ref{introthm:fuchsian-quotient-homology},
\ref{introthm:fuchsian-domain-homology}, and
\ref{introthm:fuchsian-domain-properties}.  The results on homology
and cohomology of flag varieties from Section \ref{subsec:pd-flag} are
used extensively here.

In Section \ref{complex geometry} we turn to the complex geometry of
quotients, proving Theorems \ref{introthm:embedded-no-holo},
\ref{introthm:embedded-not-kahler}, \ref{introthm:picard}, and
\ref{introthm:cohomology-line-bundles} on embedded
$(G,G/P)$-manifolds, and their consequences for $G$-quasi-Fuchsian
representations, Theorems \ref{introthm:anosov-quotient-negatives},
\ref{introthm:quasi-fuchsian-picard}, and \ref{introthm:meromorphic}.
The Borel-Bott-Weil theorem and related notions that are used in our
analysis of holomorphic line bundles and sheaves on uniformized
$(G,G/P)$-manifolds are also recalled here.  This section does not use
the results of Section \ref{topology}, and could be read independently
of that one.

Finally, in Section \ref{examples} we present some explicit examples
of ideals in the Weyl group.  We apply the results of Section
\ref{topology} to these examples, in some cases obtaining explicit formulas for the
Betti numbers of these domains and their quotient manifolds.  We also give an
alternative description of the unique cocompact domain of
discontinuity in $G/B$ for a $G$-Fuchsian representation
$\pi_1S \to G$ in the case $G = \PSL_3\C$, showing that it is a
compactification of a finite quotient of the frame bundle of
$S \times \R$.  Using this description, we verify that Conjecture
\ref{conjecture fiber} holds in this case.

\subsection{Acknowledgments}

The authors thank Benjamin Antieau, Izzet Coskun, William Goldman,
Michael Kapovich, Bridget Tenner, Richard Wentworth, and Anna Wienhard
for helpful conversations relating to this work.  They also thank the
Mathematical Sciences Research Institute in Berkeley, California; a
crucial part of this work was completed during a Spring 2015 semester
program which the authors attended.  The second author thanks the
University of Illinois at Chicago where he spent 2013--2016 as a
postdoctoral fellow; he is very grateful for the freedom and
hospitality provided by the mathematics department.  The authors were
supported in part by the U.S.~National Science Foundation, through
individual grants DMS-0952869 and DMS-1709877 (DD) and DMS-1304006 (AS), and the
second author's participation in the 2015 MSRI program was supported
in part by DMS 1107452, 1107263, 1107367, ``RNMS: GEometric structures
And Representation varieties'' (the GEAR Network).

Finally, the authors thank the anonymous referees for their careful
reading of the paper and many helpful suggestions and corrections; in
particular, these suggestions allowed Theorems
\ref{introthm:embedded-no-holo}--\ref{introthm:anosov-quotient-negatives}
to be significantly strengthened.

\section{Lie groups and Anosov representations}\label{back and prelim}

\subsection{Complex semisimple groups}
\label{complex ss groups}

This section serves as a rapid review of the basic Lie theory which we will use
throughout this paper.  

We use the term \emph{complex semisimple Lie group} to mean a complex Lie
$G$ group with finitely many connected components and semisimple Lie
algebra.  If $G$ is connected and its Lie algebra is
simple, we say $G$ is a \emph{complex simple Lie group}.

Let $G$ be a complex semisimple Lie group with Lie algebra $\g$. A
\emph{Cartan subalgebra} $\h \subset \g$ is a maximal abelian subalgebra such
that the linear map $\ad(X):\g\rightarrow \g$ is
diagonalizable for every $X\in \h$. 

There is a unique Cartan subalgebra up to adjoint action of $G$. The
\emph{rank} of $G$ is the dimension (over $\C$) of any Cartan subalgebra.

Given $\alpha \in \h^{*}\backslash \{0\}$, define
\[
\g_{\alpha}:=\{ X\in \g \suchthat \text{ad}(Y)(X)=\alpha(Y)X \text{
  for all } Y\in \h\}.
\]
An element $\alpha \in \h^{*}$ is a \emph{root} if
$\g_{\alpha}\neq \{0\}$ and $\g_\alpha$ is the associated \emph{root
  space}.  The set of all roots is denoted by $\Sigma$.  It is
possible to partition the set of roots as
$\Sigma = \Sigma^+ \sqcup \Sigma^-$ so that $\Sigma^- = -\Sigma^+$ and
so that the sets $\Sigma^\pm$ are separated by a hyperplane in the
$\R$-span of $\Sigma$.  Fix such a partition.  Elements of $\Sigma^+$
are \emph{positive roots}, and those of $\Sigma^-$ are \emph{negative
  roots}.  A positive root $\alpha$ is \emph{simple} if it cannot be
written as a sum of two positive roots.  The set of simple roots is
denoted by $\Delta\subset \Sigma^{+}$.

These data define the \emph{standard Borel
  subalgebra}
\[
\b:= \h \oplus \bigoplus_{\alpha \in \Sigma^{+}} \g_{\alpha},
\]
which is a maximal solvable subalgebra of $\g$.

Next, let $\Theta \subset \Delta$ be a subset of the simple
roots.  Let $\Sigma_\Theta^- \subset \Sigma^-$ denote the set of
negative roots that can be expressed as an integer linear combination
of elements of $\Delta \setminus \Theta$ with non-positive
coefficients.  The subset $\Theta$ defines a
\emph{standard parabolic subalgebra}  via $\p_{\Theta} = \left ( \bigoplus_{\alpha
  \in \Sigma_\Theta^-} \g_\alpha \right ) \oplus \b$.

We define the corresponding Lie subgroups by
\begin{equation*}
\begin{split}
H&=C_{G}(\h), \\ 
B&=N_{G}(\b), \\
P_{\Theta}&= N_{G}(\p_{\Theta}).
\end{split}
\end{equation*}
It is a standard fact that $\mathfrak{h}, \b,$ and $\mathfrak{p}_{\Theta}$ are the Lie
algebras of the above defined Lie groups.

The subgroup $H<G$ is called a \emph{Cartan subgroup}, and is a
maximal torus\footnote{A maximal torus $H<G$ is an abelian
  subgroup which is isomorphic to $(\C^{*})^{\rank(G)}.$} in $G$.
A subgroup $P<G$ is \emph{parabolic} if it conjugate to $P_{\Theta}$ for some
subset of simple roots $\Theta\subset \Delta$. We call
$P_{\Theta}$ a \emph{standard parabolic subgroup}.

Two parabolic subgroups $P^{+}, P^{-}$ are \emph{opposite} if
$P^{+}\cap P^{-}=L$ is a maximal reductive subgroup of both $P^{+}$
and $P^{-}$: that is, the subgroup $L$ is a common \emph{Levi factor} of $P^{+}$ and
$P^{-}.$

Next, choose a maximal compact subgroup $K<G$ with Lie algebra $\mathfrak{k}$ such that
$\mathfrak{k}\cap \h$ is a maximal compact torus inside of $\mathfrak{k}.$  Let
$\g=\mathfrak{k}\oplus \mathfrak{m}$ be the associated Cartan decomposition of $\mathfrak{g}.$
The (real) subspace $\mathfrak{a}:=\h \cap \mathfrak{m}$ is a maximal abelian subspace of
$\mathfrak{m}$ consisting of semisimple elements, called a \emph{Cartan subspace}.
Furthermore, if $\alpha\in \Sigma$ is any root, the restriction of $\alpha$ to $\mathfrak{a}$
is real valued, and this restriction (a \emph{restricted root})
uniquely determines $\alpha$.

A \emph{positive Weyl chamber} $\mathfrak{a}^{+}\subset \mathfrak{a}$ is defined by
$X\in \mathfrak{a}^{+}$ if and only if $\alpha(X)>0$ for all $\alpha\in \Delta.$  
Let $A\subset G$ be defined by $\textnormal{exp}(\overline{\mathfrak{a}}^{+}).$
This gives rise to a Cartan decomposition $G=KAK$ on the group level.

If $g=k_{1}\exp(X)k_{2},$ then the element $X\in \overline{\mathfrak{a}}^{+}$ is uniquely
determined which defines a continuous, proper map

\[
\mu: G\rightarrow \overline{\mathfrak{a}}^{+} 
\]
called the \emph{Cartan projection}.

The \emph{Weyl group} $W$ associated to these data is the group
$N_{K}(\mathfrak{a})/Z_{K}(\mathfrak{a})$ which acts on the Cartan
subspace $\mathfrak{a}$ via the adjoint action, and thus also on the
space $\Hom_\R(\mathfrak{a}, \R)$ containing the simple restricted
roots.  The \emph{restricted simple roots} are the restrictions of the
simple roots $\Delta$ to the Cartan subspace $\mathfrak{a}.$ The Weyl
group is a Coxeter group which is generated by reflections in the
kernels of the restricted simple roots (the \emph{simple root
  hyperplanes}).  The action of $W$ on $\Hom_\R(\mathfrak{a}, \R)$
permutes the restricted roots, and through the bijection of this set
with $\Sigma$, we can regard $W$ as a group of permutations of
$\Sigma$.  Finally, by construction there is an inclusion
$N_{K}(\mathfrak{a})\rightarrow N_{G}(H)$ which induces an isomorphism
$W\simeq N_{G}(H)/H.$ Note that in this isomorphism, the left hand
side acts on restricted roots, while the right hand side acts on
roots.  Since these determine one another, we will freely use this
isomorphism without further comment when it is clear from the context.

As a Coxeter group, $W$ has a unique element of maximal length $w_{0}$
which has order two.  The
\emph{opposite involution} acting on the set of roots is defined
by $\iota(\alpha)=-w_{0}(\alpha).$

A subset $\Theta\subset \Delta$ is symmetric if
$\iota(\Theta)=\Theta.$ A parabolic subgroup is symmetric if and only
if it is conjugate to any (hence all) of its opposite parabolic
subgroups.  This is equivalent to $P$ being conjugate to $P_{\Theta}$
for $\Theta\subset \Delta$ symmetric.  We remark that if all simple
factors of $G$ are of type $A_{1}$, $B_{n\geq 2}$, $D_{2k\geq 4}$,
$E_{7}$, $E_8$, $F_{4}$ or $G_{2},$ then $\iota$ is the identity and
all parabolic subgroups are symmetric.

If $\g$ is a complex semisimple Lie algebra, then a \emph{split real
  form} $\g_{\mathbb{R}}$ is a real form of $\g$ such that the
restriction of the Killing form to $\g_{\R}$ has maximal index.  There
is a single equivalence class of split real forms under the adjoint
$G$-action on $\g$; choosing a representative of this class, we refer
to \emph{the} split real form $\g_\R \subset \g$.  When $G$ is
connected, the connected Lie subgroup $G_{\R}<G$ with lie algebra
$\g_{\R}$ is the split real form of $G.$

\subsection{Principal three-dimensional subgroups}
For more information on the objects in this section, see the
discussion in \cite{ST15} and the original paper of Kostant
\cite{KOS59}.

Let $\mathfrak{g}$ be a complex simple Lie algebra of rank
$\ell$ and Borel subalgebra $\b<\g$. Choose a nilpotent element $e_1\in \b$ which has
$\ell$-dimensional centralizer (a \emph{regular nilpotent}). By the
Jacobson-Morozov theorem (\cite{JAC51} \cite{MOR42}), there exist
elements $x, f_{1} \in \g$ such that the triple $\{f_{1},x, e_{1}\}$
spans a subalgebra $\s$ isomorphic to $\sl_{2}\C$, with
$f_1$, $x$, and $e_1$ respectively corresponding to 
$\left(\begin{smallmatrix}0 & 0\\1 & 0
\end{smallmatrix}\right)$, $\left(\begin{smallmatrix}1 & 0\\0 & -1
\end{smallmatrix}\right)$, and $\left(\begin{smallmatrix}0 & 1\\0 & 0
\end{smallmatrix}\right)$.
Such a subalgebra $\s$ is called a \emph{principal three-dimensional
  subalgebra}.  There is a single conjugacy class of principal
three-dimensional subalgebras under the adjoint $G$-action on $\g$,
corresponding to the single conjugacy class of regular nilpotents.
Abusing terminology, we use this uniqueness to refer to \emph{the}
principal three-dimensional subalgebra of $\g$.

If $G\simeq \mathrm{Aut}_{0}(\g)$ is the adjoint complex simple group
associated to $\g$, and $\s \subset \g$ is the principal
three-dimensional subalgebra, then associated to the isomorphism
$\sl_2\C \simeq \s$ described above is a unique injective homomorphism
\[
\iota_{G}: \PSL_{2}\C\rightarrow G
\]
Moreover, the restriction of $\iota_{G}$ to $\PSL_{2}\R$ takes
values in the split real form of $G$. The image $\mathfrak{S}$ of this
homomorphism is the \emph{principal three-dimensional subgroup} of
$G$.

Given a maximal torus and Borel subgroup
$H_{\mathfrak{S}} < B_{\mathfrak{S}}<\mathfrak{S}$ in the principal
three-dimensional subgroup, there exists a unique maximal torus and
Borel subgroup $H < B < G$ in $G$ such that $H_{\mathfrak{S}} < H$ and
$B_{\mathfrak{S}} < B$.  When considering the principal
three-dimensional subgroup, we always assume that the maximal tori and
Borel subgroups for $\mathfrak{S}$ and $G$ have been chosen in this
compatible way.  We further assume that the isomorphism $\iota_G$ is
chosen so that $H_{\mathfrak{S}}$ and $B_{\mathfrak{S}}$ correspond,
respectively, to the set of diagonal and upper-triangular matrices in
$\PSL_2\C$.  Then, identifying the quotient of $\PSL_2\C$ by its
upper-triangular subgroup with $\CP^1$ we obtain an equivariant
holomorphic embedding
\[
f_{G}: \CP^{1} \simeq \mathfrak{S} / B_{\mathfrak{S}} \rightarrow G/B
\]
called the \emph{principal rational curve}, following \cite{ST15}.  The
principal rational curve can also be characterized as the unique
closed orbit of the action of $\mathfrak{S}$ on $G/B$.

Since $B$ is self normalizing, the space $G/B$ is equivariantly
isomorphic to the space of Borel subgroups of $G$ where $G$ acts on
the latter space by conjugation.  Using this isomorphism, two points
$p,p'\in G/B$ are defined to be \emph{opposite} if the corresponding
Borel subgroups are opposite.  More generally, a pair of points
$p \in G/P^+$ and $p' \in G/P^-$ corresponds to a pair of parabolic
subgroups conjugate, respectively, to $P^+$ and $P^-$; we say in this
case that $p,p'$ are opposite if the corresponding parabolic subgroups
are opposite.

We will need the following essential property of the principal
rational curve.

\begin{prop}\label{prop: opposite}
Given distinct points $z, z'\in \CP^1,$ the images $f_{G}(z),
f_{G}(z')\in G/B$ are opposite.
\end{prop}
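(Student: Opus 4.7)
The plan is to exploit the $2$-transitivity of $\PSL_2\C$ acting on $\CP^1$ to reduce the statement to a single convenient pair of distinct points, and then verify oppositeness at that pair by a direct computation inside the principal three-dimensional subgroup.

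First observe that the relation of being opposite is $G$-invariant on $(G/B)\times(G/B)$, since conjugation sends opposite Borel subgroups to opposite Borel subgroups.  Combined with the $\mathfrak{S}$-equivariance of $f_G$ and the transitivity of $\PSL_2\C$ on ordered pairs of distinct points of $\CP^1$, this reduces the proposition to the case of any one such pair.  I would choose $z = [1:0]$, the point fixed by the upper-triangular subgroup $B_{\mathfrak{S}}$, and $z' = [0:1]$.  With the identifications of the preceding subsection, $f_G(z) = eB$, while $f_G(z') = \tau\cdot eB$, where $\tau := \iota_G(\sigma)$ for any lift $\sigma\in \mathfrak{S}$ of the nontrivial Weyl element of $\mathfrak{S}$.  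Identifying $G/B$ with the set of Borel subgroups of $G$ via $gB\mapsto gBg^{-1}$, the claim becomes that $\tau B\tau^{-1}$ is opposite to $B$, or equivalently that $\Ad(\tau)(\b)$ is the Lie algebra of a Borel subgroup of $G$ opposite to $B$.

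To verify this last statement, I would use the principal $\sl_2$-triple $\{f_1, x, e_1\}$ from the preceding subsection, with the normalization $x\leftrightarrow \bigl(\begin{smallmatrix}1 & 0\\ 0 & -1\end{smallmatrix}\bigr)$.  Kostant's construction places $x$ in $\h$ as a regular semisimple element with $\alpha(x) > 0$ for every simple root $\alpha\in \Delta$, and hence for every $\alpha\in \Sigma^{+}$.  The $\ad(x)$-eigenspace decomposition of $\g$ therefore has $\h$ as its zero eigenspace, $\bigoplus_{\alpha\in \Sigma^{+}}\g_\alpha$ as the sum of its strictly positive eigenspaces, and $\bigoplus_{\alpha\in \Sigma^{-}}\g_\alpha$ as the sum of its strictly negative eigenspaces.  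A direct $\sl_2$ computation inside $\mathfrak{S}$ gives $\Ad(\sigma)(x) = -x$, hence $\Ad(\tau)(x) = -x$ in $\g$, so $\Ad(\tau)$ interchanges the positive and negative $\ad(x)$-eigenspaces while preserving $\h$.  It follows that $\Ad(\tau)(\b) = \h\oplus\bigoplus_{\alpha\in \Sigma^{-}}\g_\alpha$, which is the Lie algebra of the Borel subgroup of $G$ containing $H$ and opposite to $B$; thus $\tau B \tau^{-1}$ meets $B$ precisely in $H$, completing the argument.

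I do not foresee any real obstacle.  The only genuinely Lie-theoretic ingredient, beyond the definition of the principal three-dimensional subalgebra, is the fact that its semisimple element $x$ is regular with strictly positive values on $\Delta$; this is a standard feature of Kostant's construction.  Everything else is either $2$-transitivity of $\PSL_2\C$ acting on $\CP^1$ or eigenspace bookkeeping forced by $\Ad(\tau)(x) = -x$.
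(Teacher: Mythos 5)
Your proof is correct and is essentially the paper's argument: both reduce to a single pair of distinct points via the $2$-transitivity of $\PSL_2\C$ on $\CP^1$ together with $G$-equivariance, and both hinge on the identity $\Ad(\tau)(x) = -x$ for the lifted Weyl element $\tau$, combined with the normalization $\alpha(x) > 0$ for simple roots. The only difference is in the final bookkeeping: the paper shows $\iota_G(u)$ represents the longest element $w_0 \in W(G)$ (by verifying it sends each simple root to a negative simple root, using $\iota_G(u)(\alpha)(x_0) = -2$) and then cites the standard fact that $w_0 B w_0^{-1}$ is opposite to $B$, whereas you compute $\Ad(\tau)(\b)$ directly from the $\ad(x)$-eigenspace decomposition and read off that it equals $\h \oplus \bigoplus_{\alpha \in \Sigma^-}\g_\alpha$. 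These are two phrasings of the same calculation; yours is marginally more self-contained since it never needs to identify the image in the Weyl group.
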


\begin{proof}
The statement is invariant under conjugation of $\mathfrak{S}$ by
elements of $G$, hence we can fix a convenient choice of principal
three-dimensional subalgebra $\mathfrak{s} = \Span(e_0,x_0,f_0)$ as in
\cite[Proposition 1.1]{ST15} so that $\alpha(x_0) = 2$ for all
$\alpha \in \Delta$.  Recall that in terms of the derivative of
$\iota_G$, the element $x_0$ is given by $(\iota_G)_*\left(\begin{smallmatrix}
1 & 0 \\
0 & -1
\end{smallmatrix}\right)$.

Let $H_0 \subset \PSL_2\C$ denote the diagonal subgroup.  
Identify the Weyl group of $\PSL_2\C$ with $\Z/2$, with the non-trivial element
represented by $u=
\left(\begin{smallmatrix}
0 & 1 \\
-1 & 0
\end{smallmatrix}\right)$.
Since $u$ normalizes $H_0$, the image $\iota_G(u)$ normalizes 
$\iota_G(H_0)$.  Since $H$ is the unique maximal torus containing
$\iota_G(H_0)$, it follows that $\iota_G(u) \in N_G(H)$.  Thus $\iota_G$
induces a homomorphism $W(\PSL_2\C) = N_{\PSL_2\C}(H_0) / H_0 \to W(G) =
N_G(H)/H$.

Next we claim that the image of $u$ under this map is the longest
element $w_0 \in W = W(G)$.  This element is uniquely characterized by
the condition that it maps every simple root to a negative root.  Note
that $\Ad(\iota_G(u))(x_0) = -x_0$.  Thus for each $\alpha \in \Delta$
we have
\[
\iota_G(u)(\alpha)(x_0) = \alpha(\Ad(\iota_G(u))(x_0)) = \alpha(-x_0)
= -2
\]
It follows that when expressing $\iota_G(u)(\alpha)$ as a linear
combination of the simple roots, there is exactly one non-zero
coefficient, which is equal to $-1$.  Hence $\iota_G(u)(\alpha)$ is a
negative simple root for all $\alpha \in \Delta$, and we conclude
$\iota_G(u)$ represents $w_0$.

Since the longest element of $W$ maps $B$ to an opposite Borel,
it follows from $\iota_G$-equivariance of the map $f_G$ that if
$z_0 \in \CP^1$ is the unique point such that $f_G(z_0) = eB \in G/B$,
then $f_G(z_0)$ and $f_G(uz_0) = \iota_G(u) f_G(z_0)$ are opposite.  Finally, since
$\PSL_2\C$ acts transitively on pairs of distinct points in $\CP^1$,
equivariance of $f_G$ implies that the same condition holds for all
pairs of distinct points.
\end{proof}

\subsection{Anosov representations}\label{anosov reps}

In this subsection we recall the definition of an Anosov
representation and some related notions that are used extensively in
the sequel.  We follow the exposition of \cite{GGKW15} quite
closely.  Additional background on Anosov representations can be found
in \cite{LAB06}, \cite{GW12}, \cite{KLP13}, and \cite{KLP14.1}.  The principal
distinction in our treatment is that we work exclusively with complex semisimple
Lie groups.

Let $d_{\pi}$ denote the word metric on the Cayley graph of a finitely
generated group $\pi$ corresponding to some finite generating set.
Recall that $\pi$ is \emph{word hyperbolic} if this Cayley graph is a
Gromov hyperbolic metric space. Write $|\cdot|$ for the associated
word length function, i.e.~$|\gamma| = d_\pi(e,\gamma)$.  The
\emph{translation length} of $\gamma\in \pi$ is defined by
\[
\ell_{\pi}(\gamma):=\inf_{\beta\in \pi} \: \lvert \beta \gamma \beta^{-1}\rvert.
\]

We denote by $\partial_\infty \pi$ the Gromov boundary of the Cayley
graph of $\pi$; points in $\partial_\infty \pi$ are equivalence
classes of geodesic rays in the Cayley graph.  The $\pi$-action by
left translation on its Cayley graph extends to a continuous action on
$\partial_{\infty}\pi$.  Under this action, each infinite order
element $\gamma\in \pi$ has a unique attracting fixed point
$\gamma^{+} \in \partial_\infty \pi$ and a unique repelling fixed
point $\gamma^{-} \in \partial_\infty \pi$.

Let $(P^{+}, P^{-})$ be a pair of opposite parabolic subgroups of a complex semisimple
group $G$.
Let $\rho:\pi\rightarrow G$ be a homomorphism and suppose there
exists a pair of continuous, $\rho$-equivariant maps
\[
\xi^{\pm}: \partial_{\infty}\pi \rightarrow G/P^{\pm}.
\]
The pair $(\xi^{+}, \xi^{-})$ is \emph{dynamics preserving} for
$\rho$ if for each infinite order element $\gamma\in \pi$ 
the point $\xi^{+}(\gamma^{+})$ (resp. $\xi^{-}(\gamma^{+}))$ is an
attracting fixed point for the action of $\rho(\gamma)$ on $G/P^{+}$
(resp. $G/P^{-}).$ Here, a fixed point $x\in G/P$ is attracting for
$g\in G$ if the linear map given by the differential
\[
dg_{x}: T_{x}G/P\rightarrow T_{x}G/P
\]
has spectral radius strictly less than one.

We now come to the definition of an \emph{Anosov} representation.

\begin{defn}\label{Anosov definition}
Let $(P^+,P^-)$ be a pair of opposite parabolic subgroups of $G$, and
$\rho:\pi\rightarrow G$ a homomorphism.  Then $\rho$ is
\emph{$(P^+,P^-)$-Anosov} if there exists a pair of
$\rho$-equivariant, continuous maps
\[
\xi^{\pm}: \partial_{\infty}\pi\rightarrow G/{P^{\pm}}
\]
such that the following conditions hold:
\begin{rmenumerate}
\item \label{transverse} For all distinct pairs $t,t'
\in \partial_{\infty}\pi,$ the points $\xi^{+}(t) \in G/P^+$ and
$\xi^-(t') \in G/P^-$ are opposite.
\item \label{dynamics preserving} The pair of maps $(\xi^{+},
\xi^{-})$ is dynamics preserving for $\rho$.
\item \label{divergence} Realize $(P^+,P^-)$ as a pair of standard
opposite parabolics $(P_\Theta,P_\Theta^-)$ for suitable choices of
Cartan subalgebra $\h$, system of positive roots $\Sigma^+$, and subset
$\Theta \subset \Delta$.  Then for each $\alpha \in \Theta$, any
sequence $\{\gamma_{n}\}_{n=1}^{\infty} \subset \pi$ with divergent word length
\[
\displaystyle\limsup_{n\rightarrow \infty}\ell_{\pi}(\gamma_n)\rightarrow \infty,
\]
satisfies the following \emph{$\alpha$-divergence condition} of the Cartan
projections of its $\rho$-images:
\[
\displaystyle\limsup_{n\rightarrow \infty} \langle \alpha, \mu(\rho(\gamma_{n}))\rangle= \infty.
\]
Here $\langle \cdot , \cdot \rangle$ denotes the evaluation pairing
$\mathfrak{a}^* \times \mathfrak{a} \to \R$, we view the root $\alpha$ as an
element of $\mathfrak{a}^{*}$ by restriction, and $\mu$ denotes the Cartan projection.
\end{rmenumerate}
\end{defn}

Due to the work of \cite{GW12}, \cite{GGKW15} and \cite{KLP13},
\cite{KLP14.1}, there are now many equivalent definitions of Anosov
representations.  The definition given above (taken from
\cite[Theorem~1.3]{GGKW15}) is the most economical one for our
purposes.  However, Condition~\ref{divergence} from this definition is
evidently quite technical, and the details of this part of the
definition will not be used at all in what follows.  Most readers can
therefore proceed without careful study of this last condition.  In particular, it is
shown in \cite{GW12} that if $G$ is a real algebraic group and the representation is
Zariski dense, then condition~\ref{divergence} is a consequence of 
conditions~\ref{transverse} and \ref{dynamics preserving}.

The maps
$\xi^{\pm}: \partial_{\infty}\pi\rightarrow G/P^{\pm}$ in the
definition above are called the \emph{limit curves} of the Anosov
representation.

If $P$ is a symmetric parabolic subgroup, we can apply the definition
above with $(P^+,P^-)=(P,gPg^{-1})$ (for suitable $g\in G$) 
as the pair of opposite parabolic
subgroups.  In this case both spaces $G/P^\pm$ are canonically and
$G$-equivariantly identified with $G/P$, and the limit maps $\xi^\pm$
are related to one another by this identification.  We therefore
consider such a representation to have a single limit curve
\[
\xi: \partial_{\infty}\pi\rightarrow G/P,\]
and in this situation we simply say that $\rho$ is \emph{$P$-Anosov}.

The following property of Anosov representations follows quickly
from the definitions.

\begin{prop}\label{anosov exchange}
Let $P,Q<G$ be symmetric parabolic subgroups such that $P<Q.$  
If $\rho: \pi\rightarrow G$ is $P$-Anosov, then $\rho$ is also $Q$-Anosov.
Furthermore, if $\xi: \partial_{\infty}\pi\rightarrow G/P$ is the limit curve
for $\rho$ as a $P$-Anosov representation, then $p\circ \xi: 
\partial_{\infty}\pi\rightarrow G/Q$ is the limit curve for $\rho$ as
a $Q$-Anosov representation where $p: G/P\rightarrow G/Q$ is the 
natural projection.
\end{prop}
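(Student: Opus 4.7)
The plan is to define a candidate limit curve for $\rho$ as a $Q$-Anosov representation by post-composing the $P$-limit curve with the natural projection $p: G/P \rightarrow G/Q$ induced by $P<Q$, and then to verify each of the three conditions in Definition \ref{Anosov definition}. Since $p$ is $G$-equivariant, the composition $p \circ \xi$ is continuous and $\rho$-equivariant, and the analogous projection relates the corresponding candidate maps into $G/Q^{+}$ and $G/Q^{-}$ for any realization of $Q$ as a symmetric parabolic.

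For the transversality condition \ref{transverse}, the key observation is that the opposite pairs form the unique open $G$-orbit in $G/P^{+} \times G/P^{-}$, and likewise in $G/Q^{+} \times G/Q^{-}$. The $G$-equivariant product projection sends the standard base pair $(eP^{+}, eP^{-})$ to $(eQ^{+}, eQ^{-})$, hence carries the open orbit to the open orbit; applied pointwise to $\xi^{\pm}$, this yields transversality for $p\circ \xi^{\pm}$. For the dynamics-preserving condition \ref{dynamics preserving}, fix an infinite-order $\gamma \in \pi$ with attracting fixed point $\gamma^{+}$, set $x := \xi(\gamma^{+})$ and $g := \rho(\gamma)$; because $p$ is $G$-equivariant, the intertwining $dp_x \circ dg_x = dg_{p(x)} \circ dp_x$ shows that $\ker(dp_x)$ is $dg_x$-invariant and that $dg_{p(x)}$ is the induced linear map on the quotient $T_x(G/P)/\ker(dp_x) \simeq T_{p(x)}(G/Q)$. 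The spectrum of such an induced map is contained in the spectrum of $dg_x$, so the strict spectral radius bound passes from $x$ to $p(x)$, whence $p(x)$ is attracting.

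Finally, the divergence condition \ref{divergence} is the easiest: choosing standard parabolics $P=P_{\Theta_P}$ and $Q=P_{\Theta_Q}$, the relation $P<Q$ is equivalent to $\Theta_Q \subset \Theta_P$, so the $\alpha$-divergence required for $Q$-Anosov is a strict subset of what the $P$-Anosov hypothesis already provides. The only step that is not entirely formal is the dynamics-preserving one, where one must invoke the standard fact that the spectral radius cannot increase upon passing to an equivariant quotient; the remaining steps reduce to elementary properties of $G$-equivariant projections between partial flag varieties together with the characterization of opposition as the open $G$-orbit in the product.
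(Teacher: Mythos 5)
Your proof is correct and fills in, cleanly, the verification the paper dismisses as ``follows quickly from the definitions'' (the paper itself supplies no argument for this proposition). You check all three clauses of Definition~\ref{Anosov definition} for the candidate map $p\circ\xi$: transversality via the fact that the equivariant product projection sends the open $G$-orbit of opposite pairs in $G/P^{+}\times G/P^{-}$ into the open orbit of opposite pairs in $G/Q^{+}\times G/Q^{-}$ (pinned down by the image of the base pair); dynamics-preservation via the observation that $\ker(dp_x)$ is $d\rho(\gamma)_x$-invariant and the induced quotient map has spectrum contained in that of $d\rho(\gamma)_x$; and divergence via $\Theta_Q\subset\Theta_P$. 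The only mild caveats are cosmetic: the open orbit is carried \emph{into} (not bijectively onto a priori, though onto does follow) the open orbit, and the dynamics-preservation check should be stated explicitly for both $\xi^{+}$ and $\xi^{-}$, but the argument is identical and this is clearly what you intend. All steps are sound.
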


There is also no loss of generality in considering only $P$-Anosov
representations for symmetric parabolics $P$ rather than the \emph{a
  priori} more general classes of $(P^{+}, P^{-})$-Anosov
representations:
\begin{prop}[{\cite{GW12}}]\label{sym parabolic}
Let $\rho: \pi\rightarrow G$ be $(P^{+}, P^{-})$-Anosov.  Then there
exists a symmetric parabolic subgroup $P<G$ such that $\rho$ is
$P$-Anosov. \noproof
\end{prop}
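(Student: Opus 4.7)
The plan is to exhibit an explicit symmetric parabolic $P$ together with a limit curve and then verify the axioms of Definition~\ref{Anosov definition}. After conjugating by an element of $G$ I may assume that $P^+ = P_\Theta$ is standard and $P^- = P_\Theta^-$ is its standard opposite. Setting $\Theta' = \Theta \cup \iota(\Theta) \subset \Delta$, the set $\Theta'$ is $\iota$-stable, so $P := P_{\Theta'}$ is a symmetric parabolic. Since $\Theta \subset \Theta'$, the conventions of Section~\ref{complex ss groups} give $P \subset P^+$, yielding a $G$-equivariant projection $q\colon G/P \to G/P^+$; similarly, since $\iota(\Theta) \subset \Theta'$, after fixing a $G$-equivariant identification $G/P^- \simeq G/P_{\iota(\Theta)}$ there is a projection $\widetilde{q}\colon G/P \to G/P^-$. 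Using the elementary identity $P_{\Theta'} = P_\Theta \cap P_{\iota(\Theta)}$, the combined map $(q,\widetilde{q})$ identifies $G/P$ with the closed $G$-orbit of $(eP_\Theta,eP_{\iota(\Theta)})$ inside $G/P^+ \times G/P^-$.

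The central computation is the verification of the $\alpha$-divergence condition of Definition~\ref{Anosov definition}(iii) for every $\alpha \in \Theta'$. For $\alpha \in \Theta$ this is part of the hypothesis. For $\alpha \in \iota(\Theta)$, I write $\alpha = \iota(\beta)$ with $\beta \in \Theta$ and use the identities $\iota(\beta) = -w_0(\beta)$ and $\mu(g^{-1}) = -w_0\mu(g)$ to compute
\[
\langle \iota(\beta), \mu(\rho(\gamma_n))\rangle = \langle \beta, -w_0\mu(\rho(\gamma_n))\rangle = \langle \beta, \mu(\rho(\gamma_n^{-1}))\rangle.
\]
Because $\ell_{\pi}(\gamma_n^{-1}) = \ell_{\pi}(\gamma_n)$, applying the hypothesis to $\{\gamma_n^{-1}\}$ for $\beta$ yields the required divergence for $\alpha$ applied to $\{\gamma_n\}$.

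For the limit curve I would set $\xi(t) \in G/P$ to be the unique preimage of $(\xi^+(t),\xi^-(t))$ under $(q,\widetilde{q})$, provided this pair lies in the closed orbit identified above. For any infinite order $\gamma \in \pi$, the divergence condition just verified forces $\rho(\gamma)$ to be $\Theta'$-proximal, whence its unique attracting fixed point on $G/P$ projects to the attracting fixed points $\xi^{\pm}(\gamma^+)$ on $G/P^{\pm}$; thus the pair lies in the orbit at each such $\gamma^+$. Since the set of such attracting fixed points is dense in $\partial_{\infty}\pi$, continuity of $\xi^{\pm}$ together with closedness of the orbit extends this to all $t$, making $\xi$ well-defined and continuous. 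Dynamics preservation is built into the construction, and transversality of $\xi(t),\xi(t')$ for distinct $t,t'$ is deduced from the transversality of $(\xi^+(t),\xi^-(t'))$ and $(\xi^+(t'),\xi^-(t))$.

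The main obstacle is the last step: the claim that opposition of two parabolics of symmetric type $\Theta'$ is detected by the opposition of their projections to $G/P_\Theta \times G/P_{\iota(\Theta)}$. I expect this to reduce to a standard combinatorial statement about the Bruhat order and opposing root subgroups, but a more robust alternative is to appeal to the Cartan-projection characterization of Anosov representations developed in \cite{GW12} and \cite{GGKW15}: that characterization states that, for word-hyperbolic $\pi$, the Anosov property for $P_\Xi$ is equivalent to the $\alpha$-divergence condition for all $\alpha \in \Xi$ (together with the existence of continuous equivariant boundary maps, which can be constructed from the divergence), so once the divergence condition is established for $\Theta'$, $P$-Anosov follows automatically.
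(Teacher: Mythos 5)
The paper gives no proof of this proposition; it is cited from \cite{GW12} with the \noproof\ mark, so there is no source argument to compare against. That said, your blind proof essentially reconstructs the standard argument from that reference. Passing to $\Theta'=\Theta\cup\iota(\Theta)$ (the minimal $\iota$-stable superset, so $P_{\Theta'}=P_\Theta\cap P_{\iota(\Theta)}$ is symmetric and, by the paper's conventions, contained in $P^{+}$) is exactly right, and the transfer of the divergence condition via $\iota(\beta)=-w_0\beta$, the Cartan-projection identity $\mu(g^{-1})=-w_0\mu(g)$, and $\ell_\pi(\gamma^{-1})=\ell_\pi(\gamma)$ is correct and is the crux of the computation.

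The genuine gap is the one you flag yourself: the assertion that $gP_{\Theta'}$ and $g'P_{\Theta'}$ are opposite whenever the cross-projections $(gP_\Theta,g'P_{\iota(\Theta)})$ and $(g'P_\Theta,gP_{\iota(\Theta)})$ are opposite. Translated into Bruhat-cell language, this is the identity $(P_\Theta\,w_0\,P_{\iota(\Theta)})\cap(P_{\iota(\Theta)}\,w_0\,P_\Theta)=P_{\Theta'}\,w_0\,P_{\Theta'}$; the inclusion $\supset$ is obvious but $\subset$ is a nontrivial statement about double cosets that you have not proved, so the direct route is incomplete as written. Your fallback --- invoking the Cartan-projection characterization of Anosovness from \cite{GGKW15} --- does close the gap, with one caveat: the version of that characterization that supplies the boundary maps for free (and hence avoids any transversality check) requires \emph{linear} growth of $\langle\alpha,\mu(\rho(\gamma))\rangle$ in $|\gamma|$, not merely the unbounded divergence in Definition~\ref{Anosov definition}(iii). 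Since $(P^{+},P^{-})$-Anosov supplies exactly that linear estimate for $\alpha\in\Theta$, and your $\gamma\mapsto\gamma^{-1}$ trick transfers the same linear estimate to $\alpha\in\iota(\Theta)$, the fallback route is sound and is probably the cleanest way to finish.
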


Furthermore, the following theorem of Guichard-Wienhard establishes
some basic properties of Anosov representations:

\begin{thm}[{\cite{GW12}}]\label{anosov prop}
Let $\rho:\pi \rightarrow G$ be $(P^{+},P^{-})$-Anosov.  Then the following properties are satisfied:
\begin{rmenumerate}
\item For every $\gamma\in \pi,$ the holonomy $\rho(\gamma)$ is conjugate to an element of $L=P^{+}\cap P^{-}.$ 
\item The representation $\rho$ is discrete, has finite kernel, and is
a quasi-isometric embedding.
\item The set $\A$ of all $(P^+,P^-)$-Anosov representations of $\pi$
is an open set in the representation variety $\Hom(\pi, G)$.
\item The map taking a $(P^+,P^-)$-Anosov representation to either of its limit curves,
\begin{equation*}
\begin{split}
\A & \rightarrow C^{0}(\partial_{\infty}\pi, G/P^{\pm}) \\
\rho &\mapsto  \xi_{\rho}^{\pm}
\end{split}
\end{equation*}
is continuous, where $C^{0}(\partial_{\infty}\pi, G/P^{\pm})$ has the uniform topology. \noproof
\end{rmenumerate}
\end{thm}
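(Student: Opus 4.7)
The plan is to dispatch the four conclusions in order of increasing subtlety. For (i), fix an infinite-order $\gamma\in\pi$ with attracting and repelling fixed points $\gamma^{\pm}\in\partial_{\infty}\pi$. Dynamics preservation (Condition~\ref{dynamics preserving}) forces $\rho(\gamma)$ to fix $\xi^{+}(\gamma^{+})\in G/P^{+}$, and applying the same property to $\gamma^{-1}$ (whose attracting fixed point in $\partial_\infty\pi$ is $\gamma^{-}$) shows $\rho(\gamma)$ also fixes $\xi^{-}(\gamma^{-})\in G/P^{-}$. Transversality (Condition~\ref{transverse}) with $(t,t')=(\gamma^{+},\gamma^{-})$ makes this pair opposite, so its common stabilizer in $G$ is a conjugate of the Levi $L=P^{+}\cap P^{-}$, which therefore contains $\rho(\gamma)$. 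Torsion elements require a separate algebraic argument, which I would handle by approximating $\gamma$ through infinite-order elements in a suitable centralizer, or by invoking the semisimplicity of $\rho(\gamma)$ combined with the fact that every semisimple element of $G$ lies in the Levi of some pair of opposite parabolics of any prescribed type.

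\textbf{Discreteness and quasi-isometry.} For (ii), the divergence hypothesis (Condition~\ref{divergence}) yields discreteness and finite kernel immediately: any sequence $\gamma_{n}\in\pi$ escaping every finite set can, after passing to conjugates, be taken to satisfy $\ell_{\pi}(\gamma_{n})\to\infty$, whereupon $\langle\alpha,\mu(\rho(\gamma_{n}))\rangle\to\infty$ for some $\alpha\in\Theta$, and hence $\rho(\gamma_{n})\to\infty$ in $G$. Promoting this to a quasi-isometric embedding requires a linear lower bound of the shape $\|\mu(\rho(\gamma))\|\geq c|\gamma|-C$; I would establish this by running the divergence condition uniformly along geodesic segments in the Cayley graph of $\pi$, exploiting Gromov hyperbolicity to convert the asymptotic divergence into a linear estimate. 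This is essentially the content of the ``coarse linear increase'' reformulation of the Anosov property developed in \cite{GGKW15}.

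\textbf{Openness and continuity of limit curves (main obstacle).} Parts (iii) and (iv) are the heart of the theorem. The strategy is to trade the definition above for one of its dynamical reformulations: either Labourie's contracting-flow characterization on a $\rho$-twisted flag bundle over the geodesic flow space of $\pi$, or equivalently the uniform contraction/expansion property of \cite{GGKW15}. For a fixed Anosov $\rho$, cocompactness of the geodesic flow modulo $\pi$ lets one extract uniform hyperbolicity constants from the Anosov hypothesis. A small perturbation $\rho'$ induces a $C^{0}$-small perturbation of the associated cocycle, and uniform hyperbolicity persists by the standard structural stability of hyperbolic cocycles, proving (iii). The stable and unstable distributions of the perturbed cocycle depend continuously on $\rho'$ as sections of the corresponding Grassmannian-type bundles, and passing to fiberwise boundary limits over $\partial_{\infty}\pi$ recovers $\xi_{\rho'}^{\pm}$ as continuous functions of $\rho'$, proving (iv). The principal technical difficulty is verifying structural stability of the hyperbolic cocycle under Lie-group-valued perturbations with the necessary uniform estimates; once this is in hand, both openness and continuity follow by standard hyperbolic-dynamics arguments.
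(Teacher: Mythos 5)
The paper does not prove this theorem: it states it with the citation \cite{GW12} and marks it with a \noproof, so there is no proof in this paper to compare your sketch against. Judged on its own terms, your outline is a reasonable reconstruction of the standard argument. Part (i): your argument for infinite-order $\gamma$ is correct, though note that the dynamics-preservation condition as stated in Definition~\ref{Anosov definition} gives $\rho(\gamma)$ fixing $\xi^{+}(\gamma^{+})$ and $\xi^{-}(\gamma^{+})$; you must apply it to $\gamma^{-1}$ to produce the fixed (repelling) point $\xi^{-}(\gamma^{-})$, and then transversality at the \emph{distinct} pair $(\gamma^{+},\gamma^{-})$ gives the opposite pair, exactly as you say. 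For torsion $\gamma$, the clean route is the one you mention last: $\rho(\gamma)$ has finite order, hence is semisimple, hence lies in a maximal torus, and any Levi $L=P^{+}\cap P^{-}$ contains a maximal torus of $G$; the ``approximation by infinite-order elements'' idea is unnecessary and would be awkward to make precise. Part (ii): discreteness and finite kernel do follow from the divergence condition as you say, but promoting this to a quasi-isometric embedding is not an immediate consequence of pointwise divergence; the uniform linear lower bound on Cartan projections is a genuine theorem, and you are right that it is the content of the contraction/coarse-linear-growth reformulations in \cite{GGKW15} (or the original bundle-contraction argument of \cite{GW12}). Parts (iii) and (iv) are where the real difficulty sits, and your outline defers entirely to structural stability of the hyperbolic cocycle on the $\rho$-twisted flag bundle over the geodesic flow. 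That is the correct mechanism, but as written it is an appeal to a black box rather than a proof: the work of \cite{GW12} (and its refinements) consists precisely in setting up the cocycle, establishing the uniform dominated-splitting estimates, and showing these persist under perturbation with the correct topologies on both $\Hom(\pi,G)$ and $C^{0}(\partial_{\infty}\pi,G/P^{\pm})$. In short, the skeleton is right, but (ii)--(iv) are gestures at the genuine content rather than arguments.
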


In the case that $G_{\R}<G$ is a real form of a complex semisimple group $G$ such that
$G_{\R}$ has real rank equal to one, it was also shown in \cite{GW12}
that the Anosov property reduces to the well-known class of
\emph{convex-cocompact} subgroups of $G_{\R}$:
\begin{thm}[{\cite{GW12}}] \label{rank one Anosov}
Suppose $G_{\R}<G$ has real rank one.  Then a representation $\rho:\pi \rightarrow G_{\R}<G$ is Anosov if and only if $\rho$
has finite kernel and its image is convex-cocompact. \noproof
\end{thm}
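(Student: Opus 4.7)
The plan is to combine two facts: Theorem \ref{anosov prop}(ii), which says every Anosov representation is a quasi-isometric embedding with finite kernel; and the CAT$(-1)$ geometry of the symmetric space $X = G_\R/K$ when $G_\R$ has real rank one. Together these reduce the equivalence to a translation between the boundary-map language of Anosov representations and the convex-hull language of convex-cocompactness.

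For the forward direction, suppose $\rho$ is Anosov. Theorem \ref{anosov prop}(ii) immediately yields that $\rho$ has finite kernel and that the orbit map $\pi \to X$, $\gamma \mapsto \rho(\gamma) \cdot x_0$, is a quasi-isometric embedding. Since $X$ is CAT$(-1)$, the image is a quasi-convex subset, and a standard Morse-lemma argument in negative curvature shows that $\rho(\pi)$ acts cocompactly on the closed convex hull of its limit set in $\partial_\infty X$, which is precisely convex-cocompactness.

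For the converse, suppose $\rho$ has finite kernel and $\Gamma := \rho(\pi)$ is convex-cocompact. The \v{S}varc--Milnor lemma applied to the action of $\Gamma$ on the convex hull of its limit set $\Lambda_\Gamma \subset \partial_\infty X$ shows that $\Gamma$ (hence $\pi$) is word hyperbolic, that the orbit map is a quasi-isometric embedding, and that there is a canonical $\Gamma$-equivariant homeomorphism $\eta: \partial_\infty \pi \to \Lambda_\Gamma$. In real rank one, the visual boundary $\partial_\infty X$ is $G_\R$-equivariantly identified with $G_\R/P_\R$ for the minimal parabolic $P_\R < G_\R$, and the inclusion $G_\R \hookrightarrow G$ induces a closed embedding $G_\R/P_\R \hookrightarrow G/P$ for the corresponding symmetric complex parabolic $P < G$. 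Composing $\eta$ with this embedding yields a candidate limit curve $\xi: \partial_\infty \pi \to G/P$. The three conditions of Definition \ref{Anosov definition} now follow from rank-one geometry: transversality \ref{transverse} because distinct points of $G_\R/P_\R$ correspond to opposite minimal parabolics (two distinct ideal points span a geodesic); the dynamics-preserving property \ref{dynamics preserving} from the classical north--south dynamics of loxodromic isometries in $\text{Isom}(X)$; and the $\alpha$-divergence condition \ref{divergence} because $\overline{\mathfrak{a}}^+$ is one-dimensional in rank one, so $\langle \alpha, \mu(\rho(\gamma_n))\rangle \to \infty$ is equivalent to $\|\mu(\rho(\gamma_n))\| \to \infty$, which follows from the quasi-isometric embedding property.

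The principal technical point is the identification of the real visual boundary with a subset of the complex flag variety: one must verify that points of $G_\R/P_\R$, viewed inside $G/P$, are pairwise opposite as complex parabolic subgroups. This reduces, via the Iwasawa decomposition of $G_\R$ and its complexification, to the fact that distinct ideal points in a CAT$(-1)$ space are ``antipodal''---a feature that fails in higher real rank and is ultimately the reason the rank-one case admits this clean equivalence.
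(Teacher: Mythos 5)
The paper does not prove this statement; it records it as a result of Guichard--Wienhard \cite{GW12} without supplying an argument, so there is no in-paper proof to compare against. Your sketch follows the standard route (Morse lemma and quasi-geodesic stability for the forward direction, \v{S}varc--Milnor plus an explicit limit curve for the converse), and the overall structure is correct.

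Two details in the converse deserve more care than the sketch gives them. First, ``Anosov'' in the theorem statement implicitly means $P$-Anosov for a specific symmetric complex parabolic $P < G$, and identifying that $P$ from the minimal real parabolic $P_\R$ requires a choice: for $\SU(n,1) < \SL(n+1,\C)$, for example, the correct $P$ is the incidence parabolic stabilizing a line-hyperplane flag, not the maximal parabolic stabilizing only the isotropic line; your phrase ``the corresponding symmetric complex parabolic'' glosses over this. Second, in the divergence condition the Cartan projection $\mu$ and chamber $\overline{\mathfrak{a}}^{+}$ of Definition~\ref{Anosov definition} are those of the complex group $G$, whose rank can exceed one even when $G_\R$ has real rank one (for instance $\SO(n,1) < \SO(n+1,\C)$). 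Your ``one-dimensional chamber'' remark applies to $G_\R$, not to $G$; one must still verify that divergence of the $G_\R$-Cartan projection forces $\langle\alpha,\mu(\rho(\gamma_n))\rangle \to \infty$ for every $\alpha \in \Theta$. This does hold because the image of the real Cartan ray avoids every wall cut out by $\Theta$, but that is a non-trivial step that the sketch brushes past. Neither point is a fatal gap, only a place where a complete argument needs another sentence or two of verification.
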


In particular, if $\Gamma$ is a uniform lattice in a real rank one Lie
group $G_{\R}<G$ (e.g.~a lattice in $\SO(n,1)<\SO(n+1, \C)$ or $\SU(n,1)<\SL(n+1,\C)$), then
the inclusion $\Gamma \into G$ is an Anosov representation.

\subsection{Fuchsian and Hitchin representations}

Let $S$ be a closed, oriented surface of genus at least two.  For a
Lie group $G$ we define the \emph{character space} of $S$ in $G$ to be the
topological space
$$ \bar{\character}(S,G) = \Hom(\pi_{1}S,G) / G$$
where $G$ acts on $\Hom(\pi_{1}S,G)$ by conjugation\footnote{In this
  paper we do not use the closely related notion of a character
  \emph{variety}, and so we avoid discussion of the subtleties
  necessary to define such algebraic or semialgebraic sets.}.

Identify the hyperbolic plane $\mathbb{H}^2$ with the upper half plane
$\H \subset \C$ (which is oriented by its complex structure).  Then
$\PSL_2\R$ is identified with the group of orientation preserving
isometries of $\H^2$.  A \emph{Fuchsian representation} is an
injective homomorphism
\[
\eta: \pi_{1}S\rightarrow \PSL_{2}\R
\]
with discrete image such that the associated homotopy equivalence $S
\simeq \rho(\pi_{1}S)\backslash\mathbb{H}^{2}$ is
orientation preserving.

Let $G$ be a complex simple Lie group of adjoint type and fix a
principal three-dimensional subgroup (with embedding
$\iota_{G}:\PSL_{2}\C\rightarrow G$).  Let $G_{\R}<G$ be a split real
form which contains $\iota_{G}(\PSL_2\R)$.  A representation
$\rho: \pi_{1}S\rightarrow G$ is \emph{$G_\R$-Fuchsian} if there
exists a Fuchsian representation $\eta$ such that $\rho$ is conjugate
to $\iota_{G}\circ \eta.$ The set of conjugacy classes of $G_\R$-Fuchsian
representations forms a connected subset of $\bar{\character}(S,G_\R)$
that is in natural bijection with the Teichm\"uller space of
hyperbolic structures on $S$.

A \emph{$G_\R$-Hitchin} representation is a homomorphism
$\rho: \pi_{1}S\rightarrow G_{\R}$ whose conjugacy class lies in the
same path component of $\bar{\character}(S,G_{\R})$ as the
$G_{\R}$-Fuchsian representations.  Let
$\mathcal{H}(S, G_\R) \subset \bar{\character}(S, G_\R)$ denote the
set of conjugacy classes of $G_\R$-Hitchin representations.

The following theorem organizes the key properties of Hitchin representations
which we will use.
\begin{thm}
\label{thm:hitchin-space-smooth}
When considered as a subset of the $G_\R$-character space, the set
of $G_{\R}$-Hitchin representations
\[
\mathcal{H}(S, G_{\R})\subset \bar{\character}(S, G_\R)
\]
is a smooth manifold diffeomorphic to a Euclidean space of real
dimension $-\chi(S)\dim_{\R}(G_{\R})$ and is a connected component
of $\bar{\character}(S, G_{\R})$. Moreover, every
$G_{\R}$-Hitchin representation is Anosov with respect to a Borel subgroup 
$B<G$ where $G$ is the complexification of $G_{\R}.$ 

Furthermore, when considered as a representation in the complex group
$G$, each Hitchin representation is a smooth point of the complex
affine variety $\Hom(\pi_1S,G)$.
\end{thm}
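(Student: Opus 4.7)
The statement compiles three essentially independent results, and the plan is to assemble them in sequence. The main obstacle will be the centralizer computation needed to deduce smoothness as a point of the complex representation variety; the rest is a matter of invoking well-known theorems.

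For the first claim---that $\mathcal{H}(S,G_\R)$ is a smooth manifold diffeomorphic to a Euclidean space of real dimension $-\chi(S)\dim_\R G_\R$ and is a connected component of $\bar{\character}(S,G_\R)$---I would appeal to Hitchin's theorem, proved via the nonabelian Hodge correspondence together with the Hitchin section of the Hitchin integrable system. After choosing a complex structure on $S$ with canonical bundle $K$, the Hitchin section provides a smooth parametrization of one connected component of $\bar{\character}(S,G_\R)$ by the Hitchin base
\[
\mathcal{B} \;=\; \bigoplus_{i=1}^{\ell} H^0(S, K^{m_i+1}),
\]
where $\ell=\rank(G_\R)$ and the $m_i$ are the exponents of $\mathfrak{g}_\R$. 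Since this component contains the $G_\R$-Fuchsian locus, it coincides with $\mathcal{H}(S,G_\R)$. Riemann--Roch gives $\dim_\C H^0(S,K^{m_i+1}) = (2m_i+1)(g-1)$, and the standard identity $\sum_{i=1}^\ell(2m_i+1) = \dim_\R \mathfrak{g}_\R$ yields the real dimension $2(g-1)\dim_\R \mathfrak{g}_\R = -\chi(S)\dim_\R G_\R$.

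For the Anosov property, I would invoke Labourie's theorem in the case $G_\R = \PSL_n\R$: each Hitchin representation preserves a hyperconvex Frenet curve in $\RP^{n-1}$ whose osculating flags assemble into a continuous, $\rho$-equivariant map $\xi: \partial_\infty \pi_1 S \to G/B$ satisfying the transversality, dynamics-preserving, and divergence conditions of Definition~\ref{Anosov definition}. For general split real forms $G_\R$, the corresponding $B$-Anosov property follows from the work of Fock--Goncharov (via the theory of positive representations) and Guichard--Wienhard, both of which construct the required limit curve into $G/B$ and verify the defining axioms by dynamical arguments analogous to Labourie's.

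For smoothness at $\rho$ as a point of the complex affine variety $\Hom(\pi_1 S, G)$, I would apply the Weil--Goldman deformation-theoretic criterion: since $\pi_1 S$ is the fundamental group of a closed oriented surface of genus $g\geq 2$, smoothness at $\rho$ is equivalent to the vanishing of $H^2(\pi_1 S, \mathfrak{g}_{\Ad\rho})$. Poincar\'e duality on $S$, combined with the Killing form of $\mathfrak{g}$, identifies this group with $H^0(\pi_1 S, \mathfrak{g}_{\Ad\rho})$, i.e.~the Lie algebra of the centralizer of $\rho(\pi_1 S)$ in $G$. So the task reduces to showing that this centralizer is finite. For a $G_\R$-Fuchsian representation this is immediate: the centralizer of the principal $\mathfrak{sl}_2$-subalgebra $\mathfrak{s}\subset \mathfrak{g}$ is trivial, because the principal nilpotent is regular (with $\ell$-dimensional centralizer equal to $\mathfrak{h}\cap \mathfrak{s}^\perp$, which intersects $\mathfrak{z}_\mathfrak{g}(\mathfrak{s})$ only in $0$). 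For a general Hitchin representation the argument then uses the Zariski-density theorem (Labourie for $\PSL_n\R$, Guichard and others for general split real forms): the image of $\rho$ is Zariski dense in $G_\R$, and $G_\R$ is itself Zariski dense in its complexification $G$, so any element of $G$ centralizing $\rho(\pi_1 S)$ centralizes all of $G$ and hence lies in $Z(G)$, a finite group. The delicate point---and the step where I anticipate the most care is needed---is precisely this Zariski-density input, since in low-rank or exceptional cases one may need to classify the possible Zariski closures and check case-by-case that each gives a centralizer with trivial Lie algebra.
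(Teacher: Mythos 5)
The first two parts of your argument---the Hitchin parametrization giving a cell of the stated dimension, and the Anosov property via Labourie and Fock--Goncharov/Guichard--Wienhard---match the paper's route exactly (the paper simply cites \cite{HIT92}, \cite{LAB06}, \cite{FG06}, and \cite{GW12} rather than re-deriving the dimension count).

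The third part is where you diverge, and your route has a real gap. You first reduce via Weil--Goldman and Poincar\'e duality to showing the centralizer of $\rho(\pi_1 S)$ in $G$ is finite, which is fine, but then you assert ``the image of $\rho$ is Zariski dense in $G_\R$'' for a general Hitchin representation. This is false. The $G_\R$-Fuchsian representations themselves are a counterexample: the image lies inside $\iota_G(\PSL_2\R)$, which is a proper algebraic subgroup whenever $G_\R \neq \PSL_2\R$. More generally, the Zariski closures of Hitchin representations in $\PSL_n\R$ range over a whole family of proper reductive subgroups ($\PSp_{2m}$, $\SO(m,m+1)$, $G_2$, principal $\PSL_2$, $\ldots$). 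You flag this issue in your final caveat, but you frame it as a ``low-rank or exceptional'' worry to be swept up by case analysis; in fact the failure of Zariski density is generic in the sense that it already occurs at the Fuchsian basepoint for every $G_\R$, so the ``Zariski-density theorem'' you invoke is not available. (What is true is that the Zariski closure is always reductive and contains a principal $\PSL_2$, from which one can still conclude finiteness of the centralizer---but establishing that and running the case analysis is substantially more work than the statement suggests.)

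The paper avoids all of this. It uses Goldman's criterion in the form ``$\rho$ is a smooth point of $\Hom(\pi_1 S, G)$ iff its centralizer is discrete'' (\cite[p.~204]{GOL84}), then cites Labourie's Lemma 10.1 of \cite{LAB06} that Hitchin representations are never conjugate into a proper parabolic of $G$ (``irreducible'' in that sense), and finally Sikora's Proposition 15 of \cite{Sikora} that the centralizer of such a representation is a finite extension of $Z(G)$, hence discrete. This is both shorter and unconditionally correct, whereas your route would require either a wrong density hypothesis or a nontrivial classification of Zariski closures.
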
 

\begin{proof}
The statement that
$\mathcal{H}(S, G_{\R})\subset \bar{\character}(S, G_\R)$ is a smooth
manifold of the given dimension was proved by Hitchin in \cite{HIT92}.
When $G_{\R}=\PSL_n\R$, Labourie \cite{LAB06} established that Hitchin
representations are $B$-Anosov.  For general split groups, the
analogous statement was proved by Fock-Goncharov in \cite[Theorem
1.15]{FG06}; also see \cite[Theorem 6.2]{GW12} for further discussion.

By \cite[pg.~204]{GOL84}, a representation
$\rho\in \Hom(\pi_{1}S, G)$ lies in the smooth locus if and only if
it has discrete centralizer.  Hitchin representations are irreducible
(i.e.~not conjugate into a proper parabolic subgroup of $G$, see
\cite[Lemma~10.1]{LAB06}), which implies that their centralizers are finite
extensions of the center of $G$ (\cite[Proposition~15]{Sikora}) and
thus discrete.
\end{proof}

\subsection{Quasi-Fuchsian and quasi-Hitchin representations}\label{quasi-hitchin}
As before, let $S$ be a closed, oriented surface of genus at least
two. A representation $\eta: \pi_{1}S\rightarrow \PSL_2\C$ is
\emph{quasi-Fuchsian} if it is obtained from a Fuchsian representation
by a quasiconformal deformation.  This is equivalent to being a
convex-cocompact representation, or to the existence of a continuous,
equivariant, injective map
$\xi_{\eta}: \partial_{\infty}\pi_{1}S\rightarrow \CP^{1}.$ A
quasi-Fuchsian representation is \emph{Fuchsian} if it is
conjugate to a representation with values in $\PSL_2\R< \PSL_2\C.$ The
space of all quasi-Fuchsian representations up to conjugacy will be
denoted
\[
\mathcal{QF}(S)\subset \bar{\character}(S,\PSL_2\C)
\]
and the set of Fuchsian representations by 
\[
\mathcal{F}(S)\subset \bar{\character}(S,\PSL_2\R).
\]
Now, let $G$ be a complex simple Lie group of adjoint type.  A
\emph{$G$-quasi-Fuchsian representation} $\rho: \pi_{1}S\rightarrow G$ is a
representation which admits a factorization
$\rho = \iota_G \circ \eta$ where $\eta$ is a quasi-Fuchsian
representation.  Similarly, a subgroup $\Gamma<G$ is $G$-quasi-Fuchsian if it is
the image of a $G$-quasi-Fuchsian representation.

The chosen principal three-dimensional embedding
$\iota_{G}: \PSL_2 \C \rightarrow G$ induces a commutative diagram
\begin{equation}
\begin{tikzcd}
\mathcal{F}(S) \arrow{d} \arrow{r}{\iota_{G} \circ}
& \bar{\character}(S, G_{\mathbb{R}}) \arrow{d} \\
\mathcal{QF}(S) \arrow{r}{\iota_{G} \circ}
& \bar{\character}(S, G).
\end{tikzcd}
\end{equation}
Moreover, these maps are independent of the choice of three-dimensional subalgebra and split real form.

We now show that a $G$-quasi-Fuchsian representation is Anosov and identify the limit curve.

\begin{prop}\label{quasifuchsian Anosov}
Every $G$-quasi-Fuchsian representation $\rho$ is $P$-Anosov where
$P<G$ is any symmetric parabolic subgroup.  Furthermore, if $\rho=\iota_{G}\circ \eta$
where $\eta: \pi_{1}S\rightarrow \PSL_2 \C$
is quasi-Fuchsian, and if $\eta$ has limit curve $\xi: \partial_{\infty}\pi_{1}S\rightarrow \CP^{1}$,
then the limit curve of $\rho$ is given by
\[f_{G}\circ\xi: \partial_{\infty}\pi_{1}S\rightarrow G/P
\]
where $f_{G}: \CP^{1}\rightarrow G/P$ is the principal rational
curve. \noproof
\end{prop}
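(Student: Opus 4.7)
My plan is to verify the three conditions of Definition \ref{Anosov definition} in the case $P=B$ for the putative limit curve $f_G\circ\xi$, and then invoke Proposition \ref{anosov exchange} to upgrade to an arbitrary symmetric parabolic $P$. This upgrade is legitimate because any Borel subgroup is symmetric (all Borels are conjugate, hence each is conjugate to any of its opposites), because every parabolic contains a Borel after conjugation, and because the principal rational curve $f_G:\CP^1\to G/P$ is by construction the post-composition of $f_G:\CP^1\to G/B$ with the natural projection $G/B\to G/P$, so the formula for the limit curve carries across the projection as required by Proposition \ref{anosov exchange}. Throughout I write $\rho=\iota_G\circ\eta$ with $\eta$ quasi-Fuchsian and $\xi:\partial_\infty\pi_1 S\to\CP^1$ its limit curve.

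The transversality condition is immediate from injectivity of $\xi$ and Proposition \ref{prop: opposite}: for $t\neq t'$ we have $\xi(t)\neq\xi(t')$ in $\CP^1$, so $f_G(\xi(t))$ and $f_G(\xi(t'))$ are opposite in $G/B$. For the dynamics-preserving condition I would, after conjugation, assume that an infinite-order $\gamma\in\pi_1 S$ satisfies $\eta(\gamma)=\operatorname{diag}(\lambda,\lambda^{-1})$ with $|\lambda|>1$, and that $\xi(\gamma^+)=[1:0]\in\CP^1$ is identified with $eB\in G/B$ via $f_G$. Then $d\rho(\gamma)_{eB}$ acts on $T_{eB}(G/B)\simeq\bigoplus_{\alpha\in\Sigma^-}\g_\alpha$ with eigenvalue $\lambda^{\alpha(x_0)}$ on the $\alpha$-root space, where $x_0\in\mathfrak a$ is the semisimple element of the principal three-dimensional subalgebra (as in the proof of Proposition \ref{prop: opposite}, it satisfies $\alpha_i(x_0)=2$ on every simple root). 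Writing any $\alpha\in\Sigma^-$ as a nonpositive integer combination of simple roots gives $\alpha(x_0)<0$, so each eigenvalue has modulus $|\lambda|^{\alpha(x_0)}<1$, producing the required attracting fixed point; the symmetric argument at $\gamma^-$ handles the dual limit curve in $G/B^-$.

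For the divergence condition I would combine two ingredients: Theorem \ref{rank one Anosov}, which identifies quasi-Fuchsian representations with $B_{\PSL_2\C}$-Anosov ones and hence yields $\mu_{\PSL_2\C}(\eta(\gamma_n))\to\infty$ whenever $\ell_{\pi_1 S}(\gamma_n)\to\infty$, and compatibility of Cartan decompositions under the principal embedding. Choosing the maximal compact $K<G$ so that $\iota_G(\PSU(2))<K$ makes $\iota_G$ carry a $\PSL_2\C$-Cartan decomposition to a $G$-Cartan decomposition, so modulo bounded error $\mu(\rho(\gamma_n))$ is a positive multiple of $x_0$ whose coefficient tends to $\infty$; since $\langle\alpha,x_0\rangle=2>0$ for every simple $\alpha\in\Delta$, the $\alpha$-divergence condition follows. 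I expect the main technical point to be precisely this compatibility of Cartan decompositions---once the maximal compact of $G$ is arranged to contain $\iota_G(\PSU(2))$, every remaining step is a direct translation from the rank-one case.
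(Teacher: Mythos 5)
Your proof is correct and follows essentially the same route as the paper: reduce to $P=B$ via Proposition~\ref{anosov exchange}, get transversality from Proposition~\ref{prop: opposite}, verify the dynamics-preserving condition by computing the eigenvalues $\lambda^{\alpha(x_0)}$ (equivalently $\exp(\zeta\alpha(x_0))$) of $d\rho(\gamma)$ on the root-space decomposition of $T_{eB}(G/B)$, and obtain divergence from compatibility of Cartan projections under $\iota_G$. Your write-up is somewhat more explicit on the reduction step and on the divergence condition, but the mathematical content matches the paper's sketch.
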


This proposition can be proved using the criterion in \cite{GW12}
regarding when an Anosov representation remains Anosov after composing
with a homomorphism to a larger Lie group, but we include a sketch of
a proof here to give some indication of how Definition~\ref{Anosov
  definition} is applied.

\begin{proof}
Firstly, by Proposition~\ref{anosov exchange}, if we show that the above statement is 
true for a Borel subgroup $P=B$, then the result follows for all other
symmetric parabolics.

By Proposition~\ref{prop: opposite}, the composition
\[
f_{G}\circ\xi: \partial_{\infty}\pi_{1}S\rightarrow G/B
\]
satisfies Condition~\ref{transverse} of Definition~\ref{Anosov
  definition}.  For Condition~\ref{dynamics preserving} of the
definition, we use conjugation in $G$ to affect the same normalization
of $\mathfrak{S}$ considered in Proposition~\ref{prop: opposite},
where $\alpha(x_0) = 2$ for all $\alpha \in \Delta$ and $x_{0}\in \g$
is the semisimple element of the $\mathfrak{sl}_{2}$-triple
generating the principal three dimensional subalgebra.  For any
nontrivial element $\gamma\in \pi_{1}S$ we can assume (after
conjugating $\eta$ in $\PSL_2\C$) that
$\eta(\gamma) = \exp \left ( \zeta \left
( \begin{smallmatrix}1&0\\0&-1\end{smallmatrix} \right ) \right )=
\left ( \begin{smallmatrix}e^\zeta& 0\\0 &
e^{-\zeta} \end{smallmatrix}\right )$
for $\zeta \in \C$ with $\Re(\zeta) > 0$.  Thus $\xi(\gamma_+) = z_0$
satisfies $f_G(z_0) = e B$ and $\rho(\gamma) = \exp(\zeta x_0)$.  Then
\[
T_{eB}(G/B)\simeq \bigoplus_{\alpha\in \Sigma^{-}} \g_{\alpha}
\]
and this is a decomposition into eigenspaces for the action of
$\rho(\gamma)$, where the eigenvalue on $\g_\alpha$ is
$\exp(\alpha(\zeta x_0))$.  Since $\alpha(x_0) = 2$ for
$\alpha \in \Delta$, for $\alpha \in \Sigma^-$ we have
$\alpha(x_0) < 0$ and $|\exp(\alpha(\zeta x_0))| < 1$.  This verifies
that $eB$ is the attracting fixed point for $\rho(\gamma)$, 
and Condition~\ref{dynamics preserving} of Definition~\ref{Anosov
  definition} follows.

Finally, for property \ref{divergence} of Definition~\ref{Anosov
  definition}, we note that for any divergent sequence of regular
semisimple elements $\{g_{n}\}\subset\PSL_2 \C$, their images under
the principal three-dimensional embedding $\iota_{G}$ satisfy
\[
\lim_{n\rightarrow \infty}\langle \mu(\iota_{G}(g_{n})), \alpha\rangle= \infty
\]
for every simple root $\alpha\subset \Delta$.  Since every element in
the image of $\eta$ is regular semisimple, this verifies property
\ref{divergence} and completes the proof.
\end{proof}

Using Theorem~\ref{rank one Anosov}
and the equivalence of quasi-Fuchsian and convex-compact for
representations $\pi_{1}S \to \PSL_2\C$ we have the well-known
corollary (which was part of the initial motivation for the study of
Anosov representations):
\begin{cor}
The set of $B$-Anosov representations $\rho: \pi_{1}S\rightarrow \PSL_2 \C$ is equal
to the set of quasi-Fuchsian representations. \noproof
\end{cor}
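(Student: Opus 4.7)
The plan is to derive the corollary by combining Theorem \ref{rank one Anosov} with the classical equivalence recalled in Section \ref{quasi-hitchin} between convex-cocompactness and being quasi-Fuchsian for representations $\pi_1 S \to \PSL_2\C$. Both directions of the desired equivalence will follow in a single step once the hypotheses of Theorem \ref{rank one Anosov} have been set up.

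To apply that theorem, I would first realize $\PSL_2\C$ as a real form of real rank one inside a complex semisimple group via the exceptional isomorphism $\PSL_2\C \simeq \SO_0(3,1) < \SO(4,\C)$, where the inclusion is of $\SO_0(3,1)$ into its complexification. Under this identification, the minimal real parabolic of $\SO_0(3,1)$---the stabilizer of a point on $\partial \H^3 \simeq \CP^1$---coincides with the upper-triangular Borel $B<\PSL_2\C$ of the complex group. Thus the notions of ``Anosov with respect to the minimal real parabolic of $\SO_0(3,1)<\SO(4,\C)$'' and ``$B$-Anosov as a representation into the complex group $\PSL_2\C$'' coincide for representations with image in $\PSL_2\C$; the limit curve $\xi:\partial_\infty \pi_1 S \to \CP^1$ is the same object in both formulations.

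With this identification, Theorem \ref{rank one Anosov} applied to $\SO_0(3,1)<\SO(4,\C)$ gives that $\rho:\pi_1 S \to \PSL_2\C$ is $B$-Anosov if and only if it has finite kernel and convex-cocompact image. Since $\pi_1 S$ (of genus at least two) is torsion-free, finite kernel is equivalent to injectivity. By the characterization recalled in Section \ref{quasi-hitchin}, an injective representation $\pi_1 S \to \PSL_2\C$ with convex-cocompact image is precisely a quasi-Fuchsian representation, and conversely every quasi-Fuchsian representation is by definition injective with convex-cocompact image. This establishes both inclusions.

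The only step requiring genuine attention---everything else is a direct quotation of Theorem \ref{rank one Anosov} and of the classical Kleinian-group dictionary---is the verification that the complex Borel $B<\PSL_2\C$ matches the real minimal parabolic of $\SO_0(3,1)$ under the chosen isomorphism, and that the two ambient notions of limit curve in $G/B$ and in the real flag $\SO(4,\C)/P_{\min}$ agree. This is essentially a bookkeeping exercise using that the Borel fixes a unique point of $\CP^1 \simeq \partial\H^3$ in either description, so I do not expect it to present any real obstacle.
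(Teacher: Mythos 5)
Your proof is correct and combines the same two ingredients the paper cites immediately before this \noproof corollary: Theorem~\ref{rank one Anosov} and the classical equivalence between quasi-Fuchsian and injective-plus-convex-cocompact for surface group representations into $\PSL_2\C$, recalled in Section~\ref{quasi-hitchin}. The one place you add genuine complexity is in fitting $\PSL_2\C$ into the $G_\R < G$ framing of Theorem~\ref{rank one Anosov} via the exceptional isomorphism $\PSL_2\C \simeq \SO_0(3,1) < \SO(4,\C)$. This forces a comparison of ``$B$-Anosov into the complex group $\PSL_2\C$'' with ``Anosov for the composition into $\SO_0(3,1) < \SO(4,\C)$,'' which is a comparison of Anosov conditions across a Lie group morphism --- the kind of thing governed by the Guichard--Wienhard criterion the paper alludes to when discussing Proposition~\ref{quasifuchsian Anosov} --- and is heavier than the bookkeeping you describe: besides matching the limit curve in $\CP^1 \simeq \PSL_2\C/B$ with one valued in a flag variety of the non-simple group $\SO(4,\C)$, one must match the transversality condition and the $\alpha$-divergence condition on Cartan projections across the embedding. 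The more economical reading is that the substance of Theorem~\ref{rank one Anosov} is the intrinsic Guichard--Wienhard rank-one result applied to the real semisimple group $G_\R$ itself; for $G_\R \simeq \PSL_2\C$, since $B$ is the unique proper parabolic up to conjugacy, that result already says $B$-Anosov is equivalent to finite kernel plus convex-cocompact image, with no detour through $\SO(4,\C)$ or any complexification required. Both routes reach the corollary, but yours carries an extra translation step that deserves more care than a throwaway remark.
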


Let $P$ be a symmetric parabolic subgroup of $G$. We define the space
of \emph{$(G,P)$-quasi-Hitchin representations}
\[
\QH(S, G, P)\subset \bar{\character}(S, G)
\]
as the connected component of $P$-Anosov representations which
contains the Hitchin representations.  In case $G=\PSL_2\C$ this
reduces to the set of quasi-Fuchsian representations, i.e.~$\mathcal{QF}(S)=\QH(S, \PSL_2 \C, B)$.

For later use, we denote by $\tilde{\QH}(S, G, P) \subset
\Hom(\pi_{1}S, G)$ the preimage of $\QH(S, G, P)$ under the quotient
mapping $\Hom(\pi_{1}S,G) \to \bar{\character}(S,G)$.

\section{Flag varieties and the KLP construction}\label{KLP machine}

In this section, we will explain in some detail the construction of
Kapovich-Leeb-Porti of domains of discontinuity for Anosov
representations.  Our account differs from that of \cite{KLP13} in
that we focus on complex semisimple Lie groups and their associated
flag varieties and avoid the discussion of visual boundaries of
symmetric spaces.  This presentation is tailored to the application of
Sections \ref{size limit set} and \ref{topology}.

\subsection{Length function and Chevalley-Bruhat order}
\label{subsec:bruhat}
References for the following standard material include
\cite{bourbaki:lie-iv} and \cite{bjorner-brenti}.

Let $G$ be a complex semisimple Lie group.  As in Section \ref{back
  and prelim} let $W$ denote the Weyl group of $G$ associated to a
maximal torus $H < G$.  Fix a system $\Delta$ of simple roots and let
$S = \{ r_\alpha \suchthat \alpha \in \Delta\}$ denote the associated
system of reflection generators for $W$.  Then $(W,S)$ is a Coxeter
system and hence gives rise to a partial order $<$ on $W$, the
\emph{Chevalley-Bruhat order}, which can be defined as follows: A word
in $S$ that has minimum length among all words representing the same
element of $W$ is called a \emph{reduced word}.  Given a word $w$ in
$S$, we say that $z$ is a \emph{subword} of $w$ if $z$ is the result
of deleting zero or more letters from arbitrary positions within $w$.
Then $x < y$ if and only if $x$ can be represented by a subword of a
reduced word for $y$.  It can be shown that this definition gives a
partial order on $W$ (and in particular is transitive); see
e.g.~\cite[Definition~2.1.1 and Corollary~2.2.3]{bjorner-brenti}.

Closely related to this partial order on $W$ is the \emph{length function}
\[
\ell : W \to \Z^{\geq 0}
\]  
where $\ell(x)$ is the length of any reduced word for $x$.  It is
immediate that $x < y$ implies $\ell(x) < \ell(y)$.

Inversion in $W$ preserves both of these structures,
i.e.~$\ell(x^{-1}) = \ell(x)$ and $x < y$ if and only if
$x^{-1} < y^{-1}$.  When $a < b$ for $a,b \in W$, we say that $b$
\emph{dominates} $a$.  In the usual way we use $\leq$ to denote the
associated non-strict comparison operation of the Chevalley-Bruhat
order.

The longest element $w_0 \in W$ was introduced in Section \ref{back
  and prelim} and defined relative to its action on the roots;
equivalently, $w_0$ is the unique element of $W$ on which the function
$\ell$ attains its maximum. Multiplication by $w_0$ on the left defines an
antiautomorphism of the Chevalley-Bruhat order and length function; that is, it
inverts length and comparisons:
\begin{equation}
\label{eqn:reverse-length}
\ell(w_0 w) = \ell(w_0) - \ell(w) \;\; \text{ and } \;\; (a < b) \Leftrightarrow (w_0 b < w_0 a).
\end{equation}

Now let $P<G$ be a standard parabolic subgroup. The \emph{Weyl group of
 $P$} is defined as
\[
W_P =(N_{G}(H) \cap P)/H.
\]
Note that $W_P < W = N_G(H) / H$, and for the Borel subgroup we have
$W_B = \{e\}$.  The space $W/W_P$ of left $W_{P}$-cosets inherits a
partial order from that of $W$ as follows: Each coset $w W_P$ has a
unique minimal element, and letting $W^P$ denote the set of such
minimal elements, we have a canonical bijection $W/W_P \simeq W^P$.
Restricting the Chevalley-Bruhat order to $W^P$ gives the desired
partial order on $W/W_P$.  Extending the previous terminology, we also
call this order on $W/W_P$ the Chevalley-Bruhat order, and we call the
resulting rank function the length function on $W/W_P$.  Explicitly,
the latter function is
\[
\ell : W/W_P \to \Z^{\geq 0},  \;\; \ell(wW_P) := \min_{w' \in w W_P} \ell(w').
\]  
We also note that the length function on $W/W_P$ satisfies
\[
\ell(w_0 w W_P) = \ell(w_0 W_P) - \ell(w W_P),
\]
and $\ell(w_0 W_P)$ is the maximum value of $\ell$ on $W/W_P$.

There is a further extension of the Chevalley-Bruhat order for a pair
$P,Q$ of standard parabolic subgroups of $G$:  Each double coset in
$W_P \backslash W / W_Q$ contains a unique minimal element, and
restricting the Chevalley-Bruhat order to the set $W^{P,Q}$ of such
minimal elements gives a partial order on $W_P \backslash W / W_Q$.

\subsection{Chevalley-Bruhat ideals}
\label{subsec:ideals}

A \emph{Chevalley-Bruhat ideal} (or briefly, an \emph{ideal}) is a
subset $I \subset W$ such that if $b \in I$ and $a < b$, then
$a \in I$.  That is, $I$ is \emph{downward closed} for the partial
order.  (In \cite{KLP13} ideals are called \emph{thickenings},
though several other objects are given that name as well; we reserve
the term thickening for a subset of the flag variety that is defined
below.)  Associated to any element $x \in W$ there is the
\emph{principal ideal} defined as
$\langle x \rangle = \{ w \in W \suchthat w \leq x \}$.  It is easy to
see that every ideal $I \subset W$ is a union of principal ideals, and
in fact has a unique minimal description
$I = \bigcup_{i=1}^r \langle x_i \rangle$ as a union of
principal ideals.  The elements $x_i$ appearing in this minimal
presentation are exactly those which lie in $I$ but are not dominated
by any element of $I$.  We call $\{x_1, \ldots, x_r\}$ the
\emph{minimal generating set} of $I$.

If $I \subset W$ is an ideal, then $I^{-1} = \{ x^{-1} \suchthat x \in
I\}$ is also an ideal.  The complement of a non-empty ideal is never an
ideal, however if we define
\[
I^\perp = w_0(W \setminus I)
\]
then, by the antiautomorphism property of $w \mapsto w_0 w$, we find that
$I^\perp$ is an ideal. We call this the \emph{orthogonal} of $I$.  Note
that it is always the case that 
\[
W = I \sqcup w_0 I^\perp.
\]

Following the terminology of \cite{KLP13}, we say that an ideal
$I\subset W$ is \emph{slim} if $I\subset I^\perp$, \emph{fat} if
$I \supset I^\perp$, and \emph{balanced} if $I = I^\perp$
(equivalently, if it is both fat and slim). Note in particular that a
balanced ideal satisfies $\card{I} = \frac{1}{2} \card{W}$, and that
for slim ideals, this cardinality condition is equivalent to being
balanced.

\subsection{Flag variety and Schubert cells}
We now discuss the cell structures of flag varieties in relation to
the Weyl group and the Chevalley-Bruhat order; this material is
standard and can be found in e.g. \cite{BGG73}
\cite{fulton:young-tableaux} \cite{lakshmibai-gonciulea} \cite{CG10}.

Let $B<G$ be the Borel subgroup associated to the choice $\Delta$ of
simple roots fixed above.  The homogeneous space $G/B$ is the
\emph{full flag variety} of $G$.  If $P \subset G$ is a parabolic
subgroup, then $G/P$ is the \emph{partial flag variety} associated to $P$.
All flag varieties are smooth projective varieties over $\C$, and in
particular are compact oriented manifolds.

The full flag variety $G/B$ has a natural decomposition into a disjoint
union of $B$-orbits called \emph{Schubert cells} 
\[
\{ C_w = BwB \suchthat w \in W \}.
\]  
Each $C_w$ is diffeomorphic to $\C^{\ell(w)}$.  The closure $X_w = \bar{C_w}$ is a
\emph{Schubert variety} and can be described as the union of the cells
that are dominated by $w$ in the Chevalley-Bruhat order:
\[
X_w = \{ C_{w'} \suchthat w' \leq w \}.
\]  
Therefore, there is a bijection between $W$ and the set of Schubert
cells, where ideals $I\subset W$ correspond to unions of Schubert
varieties.  In topological terms, ideals $I\subset W$ are in bijection
with closed, cellular subcomplexes of $G/B$ with respect to the
cellular structure given by the Schubert cells.  In algebraic terms,
Schubert varieties are irreducible projective subvarieties of $G/B$.

For a parabolic $P < G$ containing $B$, we have the projection $\pi :
G/B \to G/P$.  Under this projection, the Schubert cell
decomposition of $G/B$ projects to a cell decomposition of $G/P$, and
the projection of a Schubert cell $C_w$ to $G/P$ depends only on the
coset $wW_P \in W/W_P$.  Thus, the cells in $G/P$ are indexed by the
coset space $W/W_P$, or by the collection of coset representatives
$W^P$.  We define
\begin{equation*}
\begin{split}
C_{w W_P} &:= \pi(C_w)\\
X_{w W_P} &:= \overline{C_{wW_P}}.
\end{split}
\end{equation*}
The set $X_{w W_P}$ is called a Schubert variety in $G/P$, and is an
irreducible projective subvariety.  As before, the Chevalley-Bruhat
order (now on $W/W_P$) is equivalent to the inclusion partial order on
these Schubert varieties.  Note that the real dimension of $G/B$ is
$2\ell(w_0)$, while that of $G/P$ is $2 \ell(w_0 W_P)$.

The Schubert cells are defined as $B$-orbits in flag varieties of $G$.
In what follows, we will also need to understand the structure of the
$P$-orbits on $G/Q$ for $P, Q$ parabolic subgroups.  We
summarize the results in the following (see \cite{PER02} and
\cite{PEC14}):
\begin{thm}\label{P orbits}\mbox{}
\begin{rmenumerate}
\item Every $P$-orbit in $G/Q$ can be written as $PwQ$ for some $w\in W$.
\item This description gives a bijection between the set of
$P$-orbits in $G/Q$ and the double cosets $W_{P}\backslash W/W_{Q}$, where $W_{P}$ and $W_{Q}$ denote the
Weyl groups of $P$ and $Q$.
\item The inclusion partial order on closures of $P$-orbits in $G/Q$
corresponds, under this bijection, to the Chevalley-Bruhat order on
$W_P \backslash W / W_Q$.
\item Each $P$-orbit is a union of $B$-orbits; specifically, we have
\begin{equation}
\label{eqn:p-orbit-decomp}
PwQ=\bigcup_{(w_{P},w_{Q})\in W_{P}\times W_{Q}} Bw_{P}ww_{Q}Q.
\end{equation}
\end{rmenumerate} \noproof
\end{thm}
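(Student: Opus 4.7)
The foundation of the whole argument is the Bruhat decomposition $G = \bigsqcup_{w \in W} BwB$ together with the fact that every standard parabolic subgroup is a union of Bruhat cells indexed by its Weyl subgroup: $P = \bigsqcup_{w_P \in W_P} B w_P B$ and $Q = \bigsqcup_{w_Q \in W_Q} B w_Q B$. One may assume without loss of generality that $P$ and $Q$ are standard (containing $B$), since replacing them by conjugates only relabels orbits on $G/Q$ by an overall $G$-translation. With these preparations, part (i) is essentially immediate: any $P$-orbit on $G/Q$ has the form $P g Q / Q$, and writing $g \in B w B$ gives $P g Q = P w Q$.

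For part (iv), the idea is to expand $PwQ = \bigsqcup_{w_P, w_Q} (B w_P B)\, w\, (B w_Q B)$ and apply the BN-pair axiom $B s B \cdot B x B \subseteq B x B \cup B s x B$ iteratively. A cleaner route is to use that $w_P$ and $w_Q$ normalize $H$, together with the subgroup structure of $W_P$ and $W_Q$, to show that each product $(B w_P B)\, w\, (B w_Q B)$ breaks into a union of Bruhat cells whose indices are exactly the elements of the double coset $W_P w W_Q$. This simultaneously yields the explicit decomposition in \eqref{eqn:p-orbit-decomp} and sets up part (ii): the decomposition $G = \bigsqcup PwQ$ is indexed by $W_P \backslash W / W_Q$, with disjointness following from disjointness of the underlying Bruhat cells. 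A single $P$-orbit is irreducible (as a continuous image of $P \times \{w\} \times Q$), so one obtains a bijection between $P$-orbits and double cosets.

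Part (iii) is the main obstacle. The plan is to combine two facts: (a) the closure relation $\overline{B x B} = \bigsqcup_{v \leq x} BvB$ for Schubert cells, and (b) the decomposition of $PwQ$ from part (iv). Together these imply $\overline{PwQ} = \bigcup_{w_P, w_Q}\overline{B w_P w w_Q B} = \bigsqcup\{ BvB : v \leq w_P w w_Q \text{ for some } w_P, w_Q\}$, and this set is $P$-$Q$ invariant, hence is a union of orbits $Px'Q$. The delicate combinatorial statement to verify is that, for minimal-length coset representatives $x, y \in W^{P,Q}$, containment $PxQ \subseteq \overline{PyQ}$ is equivalent to $x \leq y$ in the Chevalley-Bruhat order on $W$; this rests on the standard lemma that the minimal representative of $W_P y W_Q$ is dominated by $y$ and that the lift-subword property is compatible with passage to double cosets. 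One can either cite this from \cite{PER02, PEC14} or give a short inductive argument on length using the exchange condition.

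Finally, the four parts fit together into the statement of the theorem with no further work: parts (i) and (ii) give the orbit/coset bijection, part (iii) gives the order-theoretic content, and part (iv) records the $B$-orbit refinement which will be used later in Sections~\ref{size limit set} and \ref{topology}. I expect the bulk of the argument to be bookkeeping with the BN-pair axioms; the genuinely non-formal input is the order-theoretic statement in part (iii), which is where I would rely most heavily on the combinatorics of minimal double-coset representatives in $W^{P,Q}$.
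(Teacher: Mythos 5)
Note first that the paper does not prove Theorem~\ref{P orbits} at all; it records the statement with a reference to \cite{PER02} and \cite{PEC14} and marks it with no proof. So there is no argument of the paper's to compare against --- what you have written is a from-scratch sketch along the classical Bruhat-decomposition route, and as such it is broadly correct. Your identification of part (iii) as the genuinely non-formal step is exactly right: that is where the combinatorics of minimal-length representatives in $W^{P,Q}$ must enter, and citing it (as the paper implicitly does) is a reasonable choice.

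A few places in the sketch deserve tightening. In the expansion of $PwQ$ for part (iv), the $\bigsqcup$ should be a plain $\bigcup$: the sets $(B w_P B)\,w\,(B w_Q B) = B w_P B\, w\, B w_Q B$ are each contained in $PwQ$ and overlap heavily, so they are far from disjoint. Moreover, the identity $\bigcup_{w_P \in W_P} B w_P B\, w\, B = \bigcup_{v \in W_P} B v w B$ (and its mirror on the right) is the substantive content of part (iv); it does follow from the BN-pair axiom, but only via an induction on $\ell(w_P)$ using the fact that $W_P$ is a subgroup, and it is worth isolating as a lemma rather than describing it as bookkeeping. Once (iv) is established, part (ii) falls out of disjointness of Bruhat cells alone --- the appeal to irreducibility of the orbit is not needed for the bijection, though it is harmless. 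Finally, for (iii), the assertion that containment $PxQ \subseteq \overline{PyQ}$ for minimal representatives $x, y \in W^{P,Q}$ is equivalent to $x \leq y$ in the Chevalley-Bruhat order requires the two standard monotonicity facts you allude to (the minimal representative of $W_P u W_Q$ is $\leq u$, and passage to minimal representatives is order-preserving); both appear in \cite{bjorner-brenti}, and relying on them is the right call here rather than reproving them.
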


\subsection{Homology and cohomology of the flag variety}
\label{subsec:pd-flag}
First, we fix the following notation for the rest of the paper: If $E$
is a set, then $\Z_E$ denotes the free abelian group on $E$, i.e.~the
set of all formal finite linear combinations of elements of $E$ with
integer coefficients.  Of course if $E$ is itself a group, then $\Z_E$
is the underlying abelian group of the integral group ring of $E$.
However, we will not use any ring structure on $\Z_E$ in the sequel.
Also, we observe that any function $E \to \Z$ gives $\Z_E$ the
structure of a graded abelian group.

As in the previous section, let $P$ be a parabolic subgroup of $G$, a
complex semisimple Lie group.  The integral homology $H_*(G/P, \Z)$ is
naturally isomorphic to $\Z_{W/W_P}$ with grading given by twice the
length function, $2\ell$.  This can be seen using cellular homology
for the Schubert cell decomposition of $G/P$; then $\Z_{W/W_P}$ with
grading $2 \ell$ is the cellular chain complex, and the boundary maps
are zero since all cells have even dimension.  Concretely, in this
isomorphism the element $wW_P \in W/W_P$ corresponds to the cell
$C_{wW_p}$ (in the cellular resolution), or to the fundamental class
$[X_{wW_P}]\in H_{2\ell(wW_{P})}(G/P, \Z)$ of the Schubert variety
$X_{wW_P}$.

Correspondingly, the universal coefficients theorem identifies
$H^*(G/P, \Z)$ with the dual abelian group $\Z^{W/W_P}$ of $\Z_{W/W_P}$; here the
Kronecker function
\[
\delta_{w
  W_P} : W/W_P \to \Z
  \]
   corresponds to a cohomology class $[X^{w
  W_P}]$, and these form the dual basis to 
  \[
  \{[X_{wW_P}] \suchthat
wW_P \in W/W_P \}.
\]

In terms of these models, the Poincar\'e duality isomorphism is given
by left multiplication by $w_0$ (see e.g.~\cite{BGG73}),
\begin{equation*}
\begin{split}
PD: H_k(G/P) &\to H^{2n-k}(G/P)\\
[X_{w W_P}] &\mapsto [X^{w_0 w W_P}]
\end{split}
\end{equation*}
where $n = \dim_\C G/P$.  Equivalently, the intersection pairing
\[
\langle \cdot ,\cdot \rangle :
H_*(G/P) \times H_{*}(G/P) \to \Z
\]
is given by
\[
\langle [X_{w W_P}] , [X_{w' W_P}] \rangle = \begin{cases}
1 & \text{ if } w^{-1} w_0 w' \in W_P\\
0 & \text{ otherwise }.
\end{cases}
\]

\subsection{Relative position}\label{relative position}

In this subsection, we give a more algebraic exposition of \cite[Section 3.3]{KLP13}.

There is a combinatorial, $W$-valued invariant associated to a
pair of points $p,q \in G/B$ called the \emph{relative position} and
denoted by $\pos(p,q)$.  It can be defined as follows: Choose an
element $g \in G$ such that $g \cdot p = eB$. Then $g \cdot q$ lies in
the Schubert cell $C_w \subset G/B$ for a unique $w \in W$, and we define
$\pos(p,q) = w$.  One can check that this is independent of the choice
of $g$.

To generalize this construction, let $P$ and $Q$ be standard parabolic
subgroups of $G$ corresponding to subsets
$\Theta_P, \Theta_Q \subset \Delta$, so that in particular
$B < P \cap Q$ and we have natural surjections $G/B \to G/P$ and
$G/B \to G/Q$.  Given $p \in G/P$ and $q \in G/Q$ we can select
respective preimages $\tilde{p},\tilde{q} \in G/B$ and consider their
relative position $\pos(\tilde{p},\tilde{q}) \in W$.  While this
element will depend on the choices of preimages, its double coset in
$W_P \backslash W / W_Q$ depends only on $p$ and $q$; we therefore
define the relative position of $p$ and $q$ by
\[
\pos_{P,Q}(p,q) = W_P \left( \pos(\tilde{p},\tilde{q}) \right) W_Q \in W_P \backslash W / W_Q.
\]
Our previous definition is the special case $\pos_{B,B} = \pos$.  It
is immediate from the definition that the relative position is
$G$-invariant in the sense that
\begin{equation}
\label{pos G invariant}
\pos_{P,Q}(p, q)=\pos_{P,Q}(g(p), g(q))
\end{equation}
for all $g\in G$.  Moreover, from its construction the relative
position function is closely tied to the decompositions of $G/P$ and
$G/Q$ into Schubert cells.  We summarize its key properties in the
following proposition, which follows easily from Theorem~\ref{P orbits}:
\begin{prop}
\label{prop:rel pos}
Suppose $p \in G/P$, $q \in G/Q$, and $g \in G$ satisfies
$g \cdot p = eP$.  Then we have $\pos_{P,Q}(p,q) = W_P w W_Q$ if and
only if $g \cdot q$ is contained in the $P$-orbit on $G/Q$ which is
labeled by the double coset $W_P w W_Q$ in the sense of Theorem~\ref{P
  orbits}.(ii).  Thus the level set
$\{ q \in G/Q \suchthat \pos_{P,Q}(p,q) = W_P w W_Q\}$ is a $gPg^{-1}$-orbit
on $G/Q$.  Moreover, the closure of this $gPg^{-1}$-orbit is given by
the sublevel set
\[
\{q' \in G/Q \suchthat \pos_{P,Q}(p,q') \leq
\pos_{P,Q}(p,q) \}
\]
where $\leq$ is the Chevalley-Bruhat order on $W_P \backslash W /
W_Q$.

In particular the Schubert cell in $G/Q$ labeled
by coset $wW_Q$ is given by the level set
\[ C_{w W_Q} = \{ q \suchthat \pos_{B,Q}(eB,q) = w W_Q \}\] and the corresponding
Schubert variety $X_{w W_Q}$ is the sublevel set
\[ X_{w W_Q}= \overline{C_{w W_Q}} = \{ q \suchthat \pos_{B,Q}(eB,q) \leq w W_Q \}.\]
\noproof
\end{prop}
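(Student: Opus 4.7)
My approach is to reduce to the case $p = eP$ using $G$-invariance of relative position, and then to identify level sets of $\pos_{P,Q}(eP, \cdot)$ with $P$-orbits on $G/Q$ using the Bruhat decomposition and Theorem~\ref{P orbits}.

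By the $G$-invariance property \eqref{pos G invariant}, $\pos_{P,Q}(p,q) = \pos_{P,Q}(eP, g \cdot q)$, so it suffices to prove: $\pos_{P,Q}(eP, q') = W_P w W_Q$ if and only if $q' \in PwQ$. For the forward implication, lift $eP$ to $\tilde p = eB$ and $q'$ to any $\tilde q' \in G/B$; the Bruhat decomposition provides a unique $w' \in W$ with $\tilde q' \in Bw'B$, so $\pos(\tilde p, \tilde q') = w'$. Projecting to $G/Q$ gives $q' \in Bw'Q \subseteq Pw'Q$, and by Theorem~\ref{P orbits}(ii), the equality of double cosets $W_P w' W_Q = W_P w W_Q$ forces $Pw'Q = PwQ$. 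For the converse, if $q' \in PwQ$, the decomposition \eqref{eqn:p-orbit-decomp} lets me write $q' = b w_P w w_Q \cdot Q$ for some $b \in B$ and $(w_P, w_Q) \in W_P \times W_Q$; then $\tilde q' = b w_P w w_Q B$ is a lift with $\pos(\tilde p, \tilde q') = w_P w w_Q$, whose double coset is $W_P w W_Q$.

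Once this equivalence is in hand, the level set $\{q \in G/Q : \pos_{P,Q}(p,q) = W_P w W_Q\}$ is identified, via $q \mapsto g \cdot q$, with the single $P$-orbit $PwQ$, hence is a single orbit of the parabolic subgroup conjugate to $P$ that stabilizes $p$. The closure assertion then follows immediately from Theorem~\ref{P orbits}(iii): the inclusion order on closures of $P$-orbits corresponds to the Chevalley-Bruhat order on $W_P \backslash W / W_Q$, so $\overline{PwQ} = \bigcup_{W_P w' W_Q \leq W_P w W_Q} Pw'Q$, and transferring back through $g^{-1}$ yields the desired sublevel-set description. The statements about Schubert cells and Schubert varieties in $G/Q$ are then just the special case $P = B$, where $W_P = \{e\}$, double cosets reduce to left cosets $wW_Q \in W/W_Q$, and the level/sublevel sets for $\pos_{B,Q}(eB, \cdot)$ become exactly $C_{wW_Q}$ and $X_{wW_Q}$.

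There is no serious obstacle to this proof; it is essentially a careful unpacking of the definition of relative position combined with the structural content of Theorem~\ref{P orbits}. The only piece requiring care is the bookkeeping involved in lifting from $G/P$ and $G/Q$ to $G/B$ and back, ensuring that the Bruhat index of a chosen lift of $q$ always lands in the desired double coset class $W_P w W_Q$.
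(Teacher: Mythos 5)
Your proof is correct and follows exactly the route the paper indicates---Proposition~\ref{prop:rel pos} is stated without proof (\noproof) after the remark that it ``follows easily from Theorem~\ref{P orbits},'' and your argument is a careful, accurate unwinding of that claim: $G$-invariance reduces to $p = eP$, the Bruhat decomposition and Theorem~\ref{P orbits}(ii) handle the forward direction, the decomposition \eqref{eqn:p-orbit-decomp} handles the converse, Theorem~\ref{P orbits}(iii) handles closures, and $P=B$ specializes to Schubert cells. One small observation, not a gap in your argument: since $g\cdot p = eP$ forces $p = g^{-1}P$, the stabilizer of $p$ is $g^{-1}Pg$, so the level set is a $g^{-1}Pg$-orbit; the paper's ``$gPg^{-1}$'' appears to be a typo, and your phrasing (``the parabolic subgroup conjugate to $P$ that stabilizes $p$'') correctly sidesteps it.
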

\noindent This proposition shows that the ideal $I$ in the Weyl group $W$,
which corresponds to a closed union of Schubert varieties, equally
corresponds to a union of sublevel sets of the relative position
function over the generators of the ideal.

\subsection{Parabolic pairs and thickenings}

We have considered pairs of standard parabolic subgroups $(P,Q)$ and
the corresponding $W_P \backslash W / W_Q$-valued relative position
function.  

Now fix such a pair $(P_A,P_D)$ of parabolics with $P_A$ symmetric,
and consider $P_A$-Anosov representations $\rho : \pi \to G$.  (Recall
that by Proposition~\ref{sym parabolic} there is no loss of generality
in requiring $P_A$ to be symmetric.)  We consider the action of $\pi$
on the partial flag variety $G/P_D$ induced by $\rho$, with the goal
of finding a domain $\Omega \subset G/P_D$ on which the action is
properly discontinuous.  Thus the notation for the parabolics
signifies that $P_A$ is the ``Anosov parabolic'', while $P_D$ is the
``domain parabolic''.

We make corresponding abbreviations $W_A := W_{P_A}$ and
$W_D = W_{P_D}$ for the Weyl groups, and abbreviate the 
relative position function $\pos_{P_A,P_D}$ by $\pos_{A,D}$.

We say that an ideal $I \subset W$ has
\emph{type $(P_A,P_D)$} if $I$ is left $W_A$-invariant and right
$W_D$-invariant.  Equivalently $I$ is a union of double cosets
$W_A w W_D$.  Let $I \subset W$ be such an ideal.  We can define the
associated union of $P_{A}$-orbits
\[ \Phi^I := \bigcup_{W_{A}wW_{D} \in W_{A}\backslash I/W_D}
P_{A}wP_{D}\subset G/P_{D} \]
which we call the \emph{model thickening} associated to $I$.  (In
\cite[Section 3.4.2]{KLP13} this is called a \emph{thickening at
  infinity}.)  By Theorem~\ref{P orbits} the set $\Phi^{I}$ is a union
of Schubert cells, and since $I$ is an ideal, the set $\Phi^{I}$ is in
fact a finite union of Schubert varieties.  In particular it is a
closed set.

In the sequel, the sets obtained from $\Phi^{I}$ by applying an
element of $G$ play a key role.  It is evident from the definition of
$\Phi^{I}$ that the set $g \cdot \Phi^{I}$ depends only on the coset
$g P_A$.  Thus for any $p \in G/P_A$ we have a well-defined subset of
$G/P_D$,
\[
\Phi^I_p := g \cdot \Phi^I \text{ for any } g \in G \text{ such that }
g P_A = p.
\]
We call $\Phi_p^I$ the \emph{thickening of $p$} associated with $I$.
This set can also be characterized in terms of relative position;
using $G$-invariance of the relative position function and
Proposition~\ref{prop:rel pos}, it follows that
\[
\Phi_p^I = \{ q \in G/P_D \suchthat \pos_{A,D}(p,q)
\in W_{A}\backslash I/W_{D} \}.
\]
It is immediate from the definition that the construction of
$\Phi^{I}_p$ is compatible with the $G$-action in the following sense:

\begin{prop}\label{prop: thicktranslate}
For $g\in G$ and $p\in G/P_{A},$ the thickenings satisfy
$\Phi_{g(p)}^{I}=g \cdot \Phi_{p}^{I}$. \noproof
\end{prop}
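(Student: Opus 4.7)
The statement is essentially a tautological consequence of how the thickening $\Phi_p^I$ was defined, so the plan is to simply unwind the two definitions on offer in the text and observe that either yields the result immediately.

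The most direct route is to use the original definition $\Phi_p^I := h \cdot \Phi^I$ for any $h \in G$ with $hP_A = p$, which makes sense because $\Phi^I$ is a union of $P_A$-orbits and is therefore $P_A$-invariant. Given $p \in G/P_A$ and $g \in G$, I would choose a lift $h \in G$ of $p$, i.e.\ $hP_A = p$. Then $gh$ is a lift of $g(p)$, since $(gh)P_A = g(hP_A) = g(p)$. Applying the definition with this lift gives
\[
\Phi_{g(p)}^I = (gh)\cdot \Phi^I = g \cdot (h \cdot \Phi^I) = g \cdot \Phi_p^I,
\]
which is exactly the claim.

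Alternatively, and perhaps more conceptually, one can use the equivalent characterization
\[
\Phi_p^I = \{ q \in G/P_D \suchthat \pos_{A,D}(p,q) \in W_A \backslash I / W_D\}
\]
noted just before the proposition. The $G$-invariance property \eqref{pos G invariant} of the relative position function gives $\pos_{A,D}(g(p), g(q)) = \pos_{A,D}(p,q)$, so $q \in \Phi_p^I$ if and only if $g(q) \in \Phi_{g(p)}^I$; this again yields $\Phi_{g(p)}^I = g\cdot \Phi_p^I$.

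There is no real obstacle here: the only thing to verify is that $\Phi_p^I$ is well defined independently of the chosen lift $h$, and this is built into the definition since $\Phi^I$ is $P_A$-stable. I would present the first argument as the main one, since it matches the definition introduced immediately prior, and briefly note that the relative position viewpoint gives the same conclusion via \eqref{pos G invariant}.
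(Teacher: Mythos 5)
Your proposal is correct, and it matches what the paper intends: the authors mark this with \noproof, stating only that the claim is immediate from the definition, and your first argument is precisely the unwinding they have in mind. Your alternative argument via the $G$-invariance of $\pos_{A,D}$ is also correct and is a nice cross-check.
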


\subsection{Limit sets and domains}\label{limit sets}
Let $P_A$ and $P_D$ be parabolic subgroups, with $P_A$ symmetric. For
any subset $V \subset G/P_A$, define the thickening of $V$, denoted
$\Phi_V^I$, as the union of the thickenings of its points:
\[ \Phi_V^I = \bigcup_{p \in V} \Phi_p^I \]
Let $\rho: \pi\rightarrow G$ be a $P_{A}$-Anosov representation with
limit curve $\xi : \partial_\infty \pi \to G/P_A$, and let $I$ be an
ideal of type $(P_A,P_D).$ The \emph{limit set} of $\rho$ relative to
$I\subset W$ is defined as the thickening of the limit curve, i.e.
\[
\Lambda^I_\rho := \Phi_{\xi(\partial_\infty \pi)}^I = \bigcup_{t \in \partial_{\infty}\pi}
\Phi_{\xi(t)}^I\subset G/P_{D}
\]
The complement
\[
\Omega^I_\rho := G/P_D \setminus \Lambda_{\rho}^I
\]
is the associated \emph{domain}, which by the equivariance of $\xi$ is a
$\rho(\pi)$-invariant open set.  Let $\Gamma:=\rho(\pi).$

The paramount result of \cite{KLP13} establishes that if $I$ is
balanced, then the complement of the limit set furnishes a cocompact
domain of proper discontinuity for the action of $\Gamma$ on $G/P_D$.
More generally:
\begin{thm}[\cite{KLP13}]\mbox{}
\label{thm: klp-domain}
\begin{rmenumerate}
\item If $I$ is a slim, then the action of $\Gamma$ on $\Omega^I_\rho$ is
properly discontinuous.
\item If $I$ is fat, then the action of $\Gamma$ on $\Omega^I_\rho$ is
cocompact.\noproof
\end{rmenumerate}
\end{thm}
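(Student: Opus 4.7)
The plan is to follow Kapovich-Leeb-Porti by analyzing the dynamics of $\Gamma$ on $G/P_D$ through the limit curve $\xi:\partial_\infty \pi \to G/P_A$. The essential ingredient is a contraction estimate derived from the $\alpha$-divergence condition in the definition of Anosov: for any sequence $\gamma_n \to \infty$ in $\Gamma$ whose boundary endpoints satisfy $\gamma_n^{\pm}\to t^{\pm}\in \partial_\infty \pi$, after passing to subsequences the Cartan projections $\mu(\gamma_n)$ escape to infinity deep inside $\overline{\mathfrak{a}}^+$, with the associated $K$-factors encoding the attracting/repelling limits $\xi(t^{\pm})\in G/P_A$. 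Translating this through the weight decomposition of the $H$-action on tangent spaces of $G/P_D$ yields: for any compact $K\subset G/P_D\setminus \Phi^I_{\xi(t^-)}$ drawn from a suitably generic union of Schubert strata, the images $\gamma_n K$ accumulate inside $\Phi^I_{\xi(t^+)}$, with the precise identification of absorbed and expanded cells governed by the ideal $I$ and its orthogonal.

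Given this estimate, proper discontinuity follows in the slim case by contradiction. Assume a compact $K\subset \Omega_\rho^I$ satisfies $\gamma_n K\cap K\neq \emptyset$ for infinitely many distinct $\gamma_n \in \Gamma$, and extract $x_n\in K$ with $\gamma_n x_n\in K$ and subsequences so that $x_n\to x$, $\gamma_n x_n\to y$ in $K$, and $\gamma_n^{\pm} \to t^{\pm}$. Since $x\in \Omega_\rho^I$ and $\Phi^I_{\xi(t^-)}\subset \Lambda_\rho^I$, we have $x$ in the open set $G/P_D\setminus \Phi^I_{\xi(t^-)}$, so the contraction estimate forces $y\in \Phi^I_{\xi(t^+)}\subset \Lambda_\rho^I$, contradicting $y\in K$. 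The role of the slim hypothesis $I\cap w_0 I=\emptyset$ is precisely to make the contraction estimate informative: it guarantees that the Schubert cells into which $\gamma_n K$ is absorbed (indexed by $I$) are genuinely disjoint from those out of which $\gamma_n^{-1}K$ expands (indexed by $w_0 I$), so the accumulation in $\Phi^I_{\xi(t^+)}$ forces accumulation in $\Lambda_\rho^I$ rather than coincident membership in $\Omega_\rho^I$.

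The cocompactness argument in the fat case is more subtle, but the overall strategy is again by contradiction. Suppose a sequence $x_n\in \Omega_\rho^I$ admits no convergent subsequence in $\Omega_\rho^I/\Gamma$. Passing to a subsequence, $x_n\to x_\infty$ with $x_\infty \in \Lambda_\rho^I$, so $x_\infty \in \Phi^I_{\xi(t_\infty)}$ for some $t_\infty \in \partial_\infty \pi$. Using density of pairs $(\gamma^-,\gamma^+)$ of $\Gamma$-fixed endpoints in $\partial_\infty \pi\times \partial_\infty \pi$ (a standard property of non-elementary word hyperbolic groups), choose $\eta_n\in \Gamma$ with $\eta_n^-\to t_\infty$ and $\eta_n^+\to t^*\neq t_\infty$. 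The fat hypothesis $I\cup w_0 I=W$ translates geometrically to the statement that every Schubert cell in $G/P_D$ is absorbed by the $\eta_n^{-1}$-dynamics into either $\Phi^I_{\xi(t_\infty)}$ or $\Phi^I_{\xi(t^*)}$; matching the rate at which $x_n\to x_\infty$ against the rate of $\eta_n^{-1}$-contraction near $\xi(t_\infty)$ then produces a subsequence of $\eta_n^{-1}x_n$ converging to a point of $\Omega_\rho^I$ at bounded distance from the limit set, contradicting non-accumulation.

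The main technical obstacle is the contraction estimate itself: extracting, from the abstract $\alpha$-divergence condition on Cartan projections, a geometric statement identifying which Schubert subvarieties of $G/P_D$ are absorbed along a given direction in $\overline{\mathfrak{a}}^+$ requires carefully tracking the root system of $P_A$ against the cell structure of $G/P_D$ through the projection $G/B\to G/P_D$. The cocompactness half is strictly harder than proper discontinuity: it requires leveraging the covering interpretation of the fat condition to produce a uniform escape route from every limit point, rather than the disjointness interpretation of the slim condition that suffices for the pointwise divergence underlying proper discontinuity.
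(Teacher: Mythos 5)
The paper does not prove this theorem: it is imported from \cite{KLP13} with a no-proof marker, so there is no in-paper argument to compare against. Your sketch is therefore judged against the Kapovich--Leeb--Porti argument itself.

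Your outline of the proper-discontinuity half is in the right spirit: convergence-type dynamics on $G/P_D$, governed by the Schubert stratification and the divergence of Cartan projections, with slimness supplying the disjointness that forces accumulation into the limit set. One caveat is that the contraction estimate you posit (a compact $K$ avoiding $\Phi^I_{\xi(t^-)}$ gets absorbed into $\Phi^I_{\xi(t^+)}$) suppresses a genuine asymmetry in the KLP dynamics: the repelling and attracting thickenings in the precise convergence lemma are controlled by the \emph{dual pair} of ideals $I$ and $I^\perp=w_0(W\setminus I)$, not by $I$ on both sides, and tracking which of the two appears where is exactly the point at which the slim/fat dichotomy enters. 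Your parenthetical ``drawn from a suitably generic union of Schubert strata'' acknowledges this without resolving it; as written, the estimate does not by itself explain why slimness is the right hypothesis for proper discontinuity.

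The cocompactness half has a more serious gap. You select $\eta_n$ from $\Gamma$ by density of axis-endpoint pairs in $\partial_\infty\pi\times\partial_\infty\pi$, but those $\eta_n$ carry no quantitative relation to how fast $x_n$ approaches $x_\infty$; the phrase ``matching the rate at which $x_n\to x_\infty$ against the rate of $\eta_n^{-1}$-contraction'' is precisely the missing argument, not a consequence of what precedes it. The actual KLP proof produces the elements $\eta_n$ by walking along Cayley-graph geodesic rays to $t_\infty$ and then invokes the Morse property of Anosov subgroups --- uniform regularity of Cartan projections and uniform contraction/expansion estimates along those rays --- to synchronize the rate of approach of $x_n$ with the dynamics of the corresponding $\eta_n$. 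That Morse machinery is the technical core of \cite{KLP13} and is not supplied by the density observation you invoke; without it, the contradiction at the end of your second paragraph does not follow.
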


In this construction, there remains the question of whether
the domain $\Omega_{\rho}^{I}$ could be empty.  In \cite{KLP13}
and \cite{GW12}, various conditions are obtained ensuring the
non-emptiness of the domains. In our primary applications, we will
show that the corresponding domains are non-empty.

Regarding the structure of the limit set, the same authors show:
\begin{thm}[{\cite[Lemma~3.38 and Lemma~7.4]{KLP13}}]
\label{thm:klp-local-triviality}
If $I$ is a slim ideal of type $(P_A,P_D)$, then the set
$\Lambda^I_\rho$ is a locally trivial topological fiber bundle over
$\partial_{\infty}\pi$ with typical fiber $\Phi^I$.

More generally, if $V \subset G/P_A$ is a compact set consisting of
pairwise opposite points, then the set $\Phi_V^I$ is a locally trivial
topological fiber bundle over $V$, where the projection
$p : \Phi_V^I \to V$ is given by $p(\Phi_x^I) = x$.  In particular,
the thickenings $\{ \Phi_x^I \suchthat x \in V\}$ are pairwise
disjoint. 
\end{thm}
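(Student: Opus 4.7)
The plan is to prove the more general ``pairwise-opposite compact set'' statement first; the statement about the limit set $\Lambda^I_\rho$ then follows by specializing to $V = \xi(\partial_\infty \pi)$, which is compact (as $\partial_\infty \pi$ is compact and $\xi$ is continuous) and whose points are pairwise opposite by condition \ref{transverse} of Definition~\ref{Anosov definition} applied with $\xi^+ = \xi^- = \xi$.

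The essential step will be to prove disjointness of the thickenings: $\Phi^I_x \cap \Phi^I_{x'} = \emptyset$ whenever $x, x' \in V$ are distinct, hence opposite. Using $G$-invariance of relative position (equation~\ref{pos G invariant}) together with symmetry of $P_A$ and the fact that $P_A$ acts transitively on the points of $G/P_A$ opposite to $eP_A$, we reduce to the case $x = eP_A$ and $x' = w_0 P_A$, so that the stabilizer of $x'$ is $P_A^{-} = w_0 P_A w_0^{-1}$. A point $q$ in the hypothetical intersection then lies in a $P_A$-orbit $P_A u P_D$ with $u \in I$, and also in a $P_A^{-}$-orbit $P_A^{-} v P_D$ with $w_0 v \in I$ (the latter since $\Phi^I_{x'} = w_0 \cdot \Phi^I = \bigcup_{w \in I} P_A^{-} w_0 w P_D$). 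Richardson's theorem on intersections of opposite parabolic orbits implies that $P_A u P_D \cap P_A^{-} v P_D$ is non-empty only if $v \leq u$ in the Chevalley-Bruhat order (for suitable double-coset representatives). Downward closure of $I$ then forces $v \in I$, so $v \in I \cap w_0 I$, contradicting the slimness condition $I \cap w_0 I = \emptyset$ (equivalent to $I \subset I^\perp$).

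Granted disjointness, local triviality over a neighborhood of $x_0 \in V$ will be produced from a continuous local section $\sigma : U \to G$ of the principal $P_A$-bundle $G \to G/P_A$ defined on a neighborhood $U$ of $x_0$, with $\sigma(x_0) = e$. By Proposition~\ref{prop: thicktranslate}, the map $\Psi : (U \cap V) \times \Phi^I_{x_0} \to p^{-1}(U \cap V)$ defined by $\Psi(x,q) = \sigma(x) \cdot q$ lands in $p^{-1}(U \cap V)$, is surjective by the definition of $\Phi^I_V$, and is injective by disjointness. Its inverse $y \mapsto (p(y), \sigma(p(y))^{-1} y)$ is continuous once the projection $p$ is known to be continuous, and the latter will follow from closedness of the set $\{(x,y) \in G/P_A \times G/P_D \suchthat \pos_{A,D}(x,y) \in W_A \backslash I / W_D\}$: this set is a finite union of closed orbit closures under the diagonal $G$-action, indexed by the minimal generators of $I$ (Proposition~\ref{prop:rel pos}), and together with compactness of $V$ this yields continuity of $p$.

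The main technical obstacle will be the verification of the Richardson-type intersection pattern for opposite parabolic orbits on the partial flag variety $G/P_D$, and the careful tracking of the Chevalley-Bruhat comparison on double cosets $W_A \backslash W / W_D$ through the $w_0$-translation to produce the contradiction with slimness; this is the only place where the slimness hypothesis enters essentially.
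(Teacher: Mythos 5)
This theorem is quoted from \cite{KLP13}, so the paper does not contain its own proof; but your outline reproduces the structure of the cited argument, and the local-section trivialization is exactly what the paper recalls from \cite{KLP13} in the proof of Proposition~\ref{prop:lip}. Your disjointness argument is also correct: after translating to $(eP_A, w_0P_A)$, a point $q$ in the intersection lies in a $B$-orbit $B\tilde u P_D/P_D$ with $\tilde u \in I$ and a $B^-$-orbit $B^-\tilde v P_D/P_D$ with $w_0\tilde v\in I$, the Richardson condition gives $\tilde v\leq\tilde u$, downward closure gives $\tilde v\in I$, and then $\tilde v\in I\cap w_0I$ contradicts slimness; this is logically equivalent to the relative-position complementarity for antipodal flags that \cite{KLP13} use. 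Two small remarks: (1) the normalization $\sigma(x_0)=e$ forces $x_0=eP_A$, so you should either first translate $x_0$ to $eP_A$ by the $G$-action or take the trivialization to be $(x,q)\mapsto \sigma(x)\cdot q$ on $(U\cap V)\times\Phi^I$ with $\Phi^I=\Phi^I_{eP_A}$, as in Proposition~\ref{prop:lip}; and (2) the passage from the Richardson condition at the level of $B$-orbits to the double-coset statement for $P_A$- and $P_A^-$-orbits is cleanest if you run the comparison in $W$ itself (using left $W_A$- and right $W_D$-invariance of $I$, and that $w_0 W_A w_0 = W_A$ for symmetric $P_A$), rather than trying to compare minimal double-coset representatives directly.
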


It will be important in what follows to know that this bundle is
trivial for $G$-Fuchsian representations (which, we recall, are
defined when $G$ is simple and of adjoint type).  This follows from
similar considerations as those used in the proof of the theorem
above.

\begin{lem}
\label{thm:triviality}
Let $G$ be a complex simple Lie group of adjoint type.
If $\rho:\pi_{1}S\rightarrow G$ is $G$-Fuchsian and $I$ is a slim ideal of type $(P_A,P_D)$,
then there is a homeomorphism $\Lambda^I_\rho \simeq \Phi^I \times S^1$.
\end{lem}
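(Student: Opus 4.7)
The plan is to upgrade the fiber bundle structure over $\partial_\infty\pi_1S \simeq S^1$ provided by Theorem~\ref{thm:klp-local-triviality} to a fiber bundle over the larger space $\mathbb{CP}^1 \simeq S^2$, and then deduce triviality from the fact that the relevant circle bounds a disk in $S^2$.

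First, since $\rho$ is $G$-Fuchsian, Proposition~\ref{quasifuchsian Anosov} gives $\rho = \iota_G \circ \eta$ for a Fuchsian representation $\eta:\pi_1S \to \PSL_2\C$, with limit curve $\pi\circ f_G \circ \xi_\eta : \partial_\infty\pi_1S \to G/P_A$, where $\pi : G/B \to G/P_A$ is the natural projection and $\xi_\eta$ is a homeomorphism from $\partial_\infty\pi_1S$ onto $\mathbb{RP}^1 \subset \mathbb{CP}^1$. Let $F := \pi \circ f_G : \mathbb{CP}^1 \to G/P_A$.  The goal reduces to analyzing the total thickening $\Phi^I_{F(\mathbb{CP}^1)}$, of which $\Lambda_\rho^I = \Phi^I_{F(\mathbb{RP}^1)}$ is a subset.

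Next I would verify that $F$ is an injection whose image $V := F(\mathbb{CP}^1)$ consists of pairwise opposite points in $G/P_A$, so that the second part of Theorem~\ref{thm:klp-local-triviality} applies. Proposition~\ref{prop: opposite} says distinct images $f_G(z), f_G(z') \in G/B$ correspond to opposite Borel subgroups $B_1, B_2$. Since $P_A$ is symmetric, the unique parabolics of type $P_A$ containing $B_1$ and $B_2$ are opposite in $G/P_A$ (this is a standard consequence of the definition of opposite parabolics: opposite Borels share a common maximal torus $H = B_1 \cap B_2$, and the parabolic of a given type containing a Borel is determined by that Borel, so the parabolics containing $B_1, B_2$ respectively intersect in a common Levi). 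Since opposite parabolics are necessarily distinct (except in the trivial case $P_A = G$), this also shows $F$ is injective, hence a homeomorphism onto $V \simeq S^2$. Applying Theorem~\ref{thm:klp-local-triviality} to the compact set $V$, we obtain a locally trivial fiber bundle
\[
 p : \Phi^I_V \longrightarrow V \simeq S^2
\]
with typical fiber $\Phi^I$.

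Finally, $F(\mathbb{RP}^1) \subset V$ is a circle bounding a closed disk $D$ in $V \simeq S^2$ (namely, the image under $F$ of a closed hemisphere of $\mathbb{CP}^1$ cut out by $\mathbb{RP}^1$). Any fiber bundle over a contractible base is trivial, so $p^{-1}(D) \simeq D \times \Phi^I$; restricting to $\partial D = F(\mathbb{RP}^1)$ gives
\[
 \Lambda_\rho^I = p^{-1}(F(\mathbb{RP}^1)) \simeq S^1 \times \Phi^I,
\]
as required. The only genuine technical point is the oppositeness verification in the second paragraph; once that is established, the topological argument is essentially a restatement of triviality of bundles over a disk.
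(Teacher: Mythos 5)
Your proof is correct and takes essentially the same approach as the paper: both extend the fiber bundle $\Lambda_\rho^I \to X_\R$ over the entire principal curve $f_G(\CP^1) \subset G/P_A$ using Proposition~\ref{prop: opposite}, the fact that opposite Borels project to opposite parabolics, and Theorem~\ref{thm:klp-local-triviality}, then invoke triviality of bundles over a disk bounded by the equator. Your additional check of injectivity of $F$ and the parenthetical justification of the oppositeness-of-projections step are slightly more explicit than the paper's version but do not change the argument.
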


\begin{proof}
Recall that a locally trivial fiber bundle over $S^1$ is trivial if
and only if it extends over the closed $2$-disk.  We show that
$\Lambda_{\rho}^I$ admits such an extension.

By Proposition \ref{prop: opposite}, the entire principal curve in
$G/B$ consists of pairwise opposite points.  Under the projection
$G/B \to G/P_A$, opposite Borel subgroups map to opposite parabolics,
hence the principal curve $X := f_{G}(\CP^{1}) \subset G/P_A$ has the
same property.  By Theorem~\ref{thm:klp-local-triviality}, the set
$\Phi_X^I$ is a fiber bundle over $X$.  By Theorem~\ref{quasifuchsian
 Anosov}, the limit curve of a $G$-Fuchsian representation is the
image of the limit curve of the associated Fuchsian group, which is
simply the extended real line in the principal curve:
\[
\xi(\partial_\infty \pi_{1}S) = f_{G}(\RP^{1}) \subset G/P_{A}
\]
Denoting the image as $X_{\R} := f_{G}(\RP^{1}) \subset X$,
the limit set $\Lambda_{\rho}^I$ is
\[
\Lambda_{\rho}^I = p^{-1}(X_\R) \subset \Phi_X^I
\]
where
\[
p: \Phi_{X}^{I}\rightarrow X
\]
is the aforementioned projection.

We have therefore described the bundle $\Lambda_{\rho}^I$ over base
$S^1\simeq \RP^{1} \simeq X_\R$ as the restriction to the equator of a
bundle over $S^2 \simeq \CP^{1} \simeq X$.  Since $S^1$ bounds a disk
in $\CP^{1}$, the Lemma follows.
\end{proof}

For later use, we record that the domains constructed in Theorem~\ref{thm: klp-domain}
for a $G$-Fuchsian representation are invariant under the full group $\iota_{G}(\PSL_2\R).$  
\begin{prop}\label{PSL invariance}
Let $G$ be a complex simple Lie group of adjoint type and $I \subset W$ an ideal of type
$(B, P_{D}).$  If $\rho: \pi_{1}S\rightarrow G$ is a $G$-Fuchsian representation,
then the domain $\Omega_{\rho}^{I}\subset G/P_{D}$ is invariant under $\iota_{G}(\PSL_2\R)$.
\end{prop}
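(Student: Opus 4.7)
The plan is to reduce the statement to the $\iota_G(\PSL_2\R)$-invariance of the limit set $\Lambda_\rho^I$ and then exploit equivariance of the principal rational curve $f_G$.

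First, since $\Omega_\rho^I = G/P_D \setminus \Lambda_\rho^I$ and the $\iota_G(\PSL_2\R)$-action on $G/P_D$ is by homeomorphisms, it suffices to prove that $\Lambda_\rho^I$ is invariant. By definition,
\[
\Lambda_\rho^I = \bigcup_{t\in \partial_\infty \pi_1 S} \Phi^I_{\xi(t)},
\]
where $\xi : \partial_\infty \pi_1 S \to G/B$ is the limit curve of $\rho$ (recall that $I$ has type $(B,P_D)$, so here $P_A = B$).

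Next, I would identify the image of $\xi$. By Proposition~\ref{quasifuchsian Anosov}, $\xi = f_G \circ \xi_\eta$ where $\eta : \pi_1 S \to \PSL_2\C$ is the underlying Fuchsian representation and $\xi_\eta : \partial_\infty \pi_1 S \to \CP^1$ is its limit curve. Because $\eta$ is Fuchsian (and cocompact), the image of $\xi_\eta$ is exactly $\RP^1 \subset \CP^1$, so
\[
\xi(\partial_\infty \pi_1 S) = f_G(\RP^1) \subset G/B.
\]
The principal three-dimensional embedding $\iota_G$ satisfies $f_G(g \cdot z) = \iota_G(g) \cdot f_G(z)$ for all $g \in \PSL_2\C$, $z \in \CP^1$; restricting this equivariance to $\PSL_2\R$, which preserves $\RP^1$, shows that the set $f_G(\RP^1)$ is $\iota_G(\PSL_2\R)$-invariant.

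Finally, combine this with Proposition~\ref{prop: thicktranslate}: for any $g \in \iota_G(\PSL_2\R)$ and any $t \in \partial_\infty \pi_1 S$,
\[
g \cdot \Phi^I_{\xi(t)} \;=\; \Phi^I_{g \cdot \xi(t)},
\]
and by the previous step $g \cdot \xi(t) \in f_G(\RP^1) = \xi(\partial_\infty \pi_1 S)$, so $g \cdot \xi(t) = \xi(t')$ for some $t' \in \partial_\infty \pi_1 S$. Hence $g \cdot \Phi^I_{\xi(t)} \subset \Lambda_\rho^I$. Taking the union over $t$ gives $g \cdot \Lambda_\rho^I \subset \Lambda_\rho^I$, and applying the same argument to $g^{-1}$ gives equality. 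This proves $\iota_G(\PSL_2\R)$-invariance of $\Lambda_\rho^I$, hence of $\Omega_\rho^I$.

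There is no serious obstacle here; the only point that requires any care is tracking that $\xi$ lands in $G/B$ (where $\iota_G(\PSL_2\R)$ acts via its embedding into $G$) while $\Phi_p^I$ lives in $G/P_D$, and that the thickening construction is $G$-equivariant in the appropriate sense — which is exactly Proposition~\ref{prop: thicktranslate}.
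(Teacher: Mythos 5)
Your proof is correct and follows the same approach as the paper: both reduce to $\iota_G(\PSL_2\R)$-invariance of the limit set and observe that the limit curve $\xi(\partial_\infty\pi_1S) = f_G(\RP^1)$ is an $\iota_G(\PSL_2\R)$-orbit, then conclude via Proposition~\ref{prop: thicktranslate}. You simply spell out the details (the factorization $\xi = f_G\circ\xi_\eta$ and the equivariance of $f_G$) that the paper leaves implicit.
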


\begin{proof}
Since the limit curve $\xi(\partial_\infty \pi_{1}S) =
f_G(\RP^1)$ in this case is an orbit of $\iota_G(\PSL_2\R)$ on
$G/P_A$, this is immediate from Proposition~\ref{prop:
  thicktranslate}.
\end{proof}

\section{Size of the limit set}\label{size limit set}

We now consider combinatorial properties of Weyl ideals and
apply them to estimate the Hausdorff dimension of the limit sets
described above.  The results of this section are not used in Section
\ref{topology}, however they are essential to the complex geometry
results of Section \ref{complex geometry}.

\subsection{Weyl ideal combinatorics}
As before we refer the reader to \cite{bourbaki:lie-iv} or
\cite{bjorner-brenti} for more detailed discussion of the Coxeter
group structure of the Weyl group $W$.  We will also use the
classification of complex simple Lie algebras into Cartan types
$A$--$G$ as described for example in
\cite[Section~VI.2]{bourbaki:lie-iv}.

As in the previous section we assume $G$ is a complex semisimple Lie
group, hence $\g$ decomposes as a direct sum of simple Lie algebras,
which we call the \emph{simple factors}.  There is a corresponding
direct product decomposition of the Weyl group $W = W(G)$.

Our goal in this section is to show:
\begin{thm}\mbox{}
\label{thm:short-small}
Let $I \subset W$ be a fat ideal.
\begin{rmenumerate}
\item If $G$ has no factors of type $A_1$, then $I$ contains each
element $w \in W$ with $\ell(w) \leq 1$.
\item If $G$ has no factors of type $A_1$, $A_2$, $A_3$, or $B_2$,
then $I$ contains each element $w \in W$ with $\ell(w) \leq 2$.
\end{rmenumerate}
\end{thm}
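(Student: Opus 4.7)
The plan is to exploit the two defining properties of a fat ideal simultaneously: downward closure for the Chevalley-Bruhat order, and the containment $I \supseteq w_0(W \setminus I)$. If an element $v$ fails to lie in $I$, fatness forces $w_0 v \in I$, and downward closure then reduces matters to proving $v \leq w_0 v$ in the Chevalley-Bruhat order. Both parts follow this template, so the combinatorial content is the verification of this inequality under the stated hypotheses.

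For part (i), let $r_\alpha$ be a simple reflection and assume for contradiction that $r_\alpha \notin I$. The plan is to use the standard characterization $r_\alpha \leq w$ iff $\alpha \in \mathrm{supp}(w)$ iff $w \notin W_{\Delta \setminus \{\alpha\}}$. Since $\ell(w_0 r_\alpha) = \ell(w_0) - 1$ while the longest element of the maximal parabolic $W_{\Delta \setminus \{\alpha\}}$ has length $\ell(w_0) - d_\alpha$, where $d_\alpha$ is the number of positive roots whose expansion in simple roots has nonzero $\alpha$-coefficient, membership of $w_0 r_\alpha$ in that parabolic would require $d_\alpha \leq 1$. This fails whenever $\alpha$ has a neighbor $\beta$ in the Dynkin diagram of its simple factor, since then $\alpha + \beta$ is a positive root. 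The hypothesis that $G$ has no $A_1$ factor guarantees precisely that such a neighbor exists, completing (i).

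For part (ii), the same reduction leaves us to show $v \leq w_0 v$ for every length-2 element $v = r_\alpha r_\beta$, $\alpha \neq \beta$. I would first reduce to $G$ simple: if $\alpha, \beta$ lie in distinct simple factors they commute, $v$ and $w_0 v$ decompose as products along the factors, and the inequality splits into two instances of part (i) applied to each factor. Assuming $G$ simple, I split further into commuting ($m_{\alpha\beta} = 2$) and non-commuting ($m_{\alpha\beta} \geq 3$) subcases. In the commuting subcase $v = r_\alpha r_\beta = r_\beta r_\alpha$, and $v \leq w$ is equivalent to $\{\alpha, \beta\} \subset \mathrm{supp}(w)$; it then suffices to prove $d_\gamma \geq 3$ for every simple root $\gamma$ in each permitted simple type $A_n (n \geq 4)$, $B_n, C_n (n \geq 3)$, $D_n (n \geq 4)$, $E_6, E_7, E_8, F_4, G_2$, which is a direct inspection of the standard lists of positive roots.

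The non-commuting subcase is the main obstacle; indeed the excluded types $A_2, A_3, B_2$ all exhibit failures precisely here (for instance $r_{\alpha_2} r_{\alpha_1} \not\leq w_0 \cdot r_{\alpha_2} r_{\alpha_1}$ in $A_3$). I would handle $G_2$ by direct computation in its dihedral Weyl group. For simple $G$ of rank at least $3$, I would use the parabolic decomposition $w_0 = u_0 \cdot w_0^{\{\alpha,\beta\}}$, with $u_0$ the minimum-length representative of the coset $w_0 W_{\{\alpha,\beta\}}$, to express $w_0 v = u_0 \cdot p$ with $p \in W_{\{\alpha,\beta\}}$ of small length, and then display a reduced expression for $u_0$ in which $r_\alpha$ and $r_\beta$ both appear in the order required to realize $r_\alpha r_\beta$ as a subword. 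The subtlety requiring care is that a naive reduction to a rank-$3$ parabolic is unavailable, because the rank-$3$ parabolic can itself be of type $A_3$, where the claim already fails; the argument must genuinely use the larger rank, with a type-by-type verification that in every remaining Cartan type there are enough repetitions of $r_\alpha$ and $r_\beta$ in reduced expressions of $u_0$ to accommodate the desired subword.
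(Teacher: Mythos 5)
Your opening reduction coincides with the paper's: call $x$ \emph{small} when $x \leq w_0 x$, and observe a fat ideal must contain every small element, so the task becomes showing the length-$\leq 1$ (resp.\ $\leq 2$) elements are small. For part (i) and for the commuting subcase of part (ii) you substitute a different but valid route using the support characterization $r_\gamma \leq w \Leftrightarrow \gamma \in \mathrm{supp}(w)$ together with the count $d_\gamma$ of positive roots whose $\gamma$-coefficient is nonzero. Those steps are sound, including the reduction of $r_\alpha r_\beta \leq w$ (for commuting $\alpha,\beta$) to $\{\alpha,\beta\}\subset\mathrm{supp}(w)$, and the $d_\gamma \geq 3$ inspection is indeed routine.

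The genuine gap is the non-commuting subcase of (ii), which is exactly where the theorem's content lives and where, as your $A_3$ example shows, it actually fails in low rank. Your plan --- write $w_0 = u_0\,w_0^J$ with $J=\{\alpha,\beta\}$ and exhibit $r_\alpha r_\beta$ as an ordered subword of some reduced expression of $u_0$ --- is not carried out: the deferred ``type-by-type verification'' is precisely the crux, and you offer no uniform mechanism for it to succeed. The paper sidesteps the casework via Bourbaki's reduced expression $w_0 = (ab)^{h/2}$ or $(ab)^{(h-1)/2}a$, where $a,b$ are the products over the two color classes of a bipartition of the Dynkin diagram and $h$ is the Coxeter number. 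When $h\geq 6$ this yields at least three alternating copies of $a$ and $b$; passing from $w_0$ to $w_0 st$ deletes two letters by the exchange condition, which can spoil at most two of the six blocks, so an intact $a$-block still precedes an intact $b$-block and $st$ (or $ts$, in the commuting case) is extracted as a subword. The single leftover case $h=5$ is $A_4$, checked by hand. Since the excluded types $A_1,A_2,A_3,B_2$ are exactly those with $h\leq 4$, the Coxeter number is the clean invariant governing the statement; you would need either to supply the missing type-by-type argument or to adopt this Coxeter-number argument to complete the proof.
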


Note that by the exceptional isomorphisms, this also excludes types
$B_{1}, C_{1}, C_{2}$ and $D_{3}.$ In terms of the classical matrix
groups, representatives of the excluded types are given by
$A_1 = \sl_2\C$, $A_2 = \sl_3\C$, $A_3 = \sl_4\C$,
$B_2 = \mathfrak{so}_5\C$.

Toward the proof of the theorem, we introduce the following
terminology: An element $x \in W$ will be called \emph{small} if
$x \leq w_0x$, where $w_0 \in W$ is the longest element (as in
Section~\ref{complex ss groups}).

\begin{lem}
\label{lem:small-in}
If $I \subset W$ is a fat ideal and $x \in W$ is small, then $x \in I$.
\end{lem}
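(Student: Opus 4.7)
The plan is to argue by contradiction using only the definitions of \emph{ideal}, \emph{fat}, and \emph{small}, together with the fact (noted right after the definition of orthogonal) that $W = I \sqcup w_0 I^\perp$.

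First I would unravel what ``fat'' means in a usable form. By definition $I^\perp = w_0(W \setminus I)$, so $w \in I^\perp$ if and only if $w_0 w \notin I$ (using $w_0^{-1} = w_0$). The condition $I \supseteq I^\perp$ therefore translates into the contrapositive statement: for every $w \in W$,
\[
w \notin I \;\Longrightarrow\; w_0 w \in I.
\]

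Now suppose $x \in W$ is small, meaning $x \leq w_0 x$ in the Chevalley-Bruhat order, and assume for contradiction that $x \notin I$. By the implication above applied to $w = x$, we obtain $w_0 x \in I$. Since $I$ is an ideal, it is downward closed for the Chevalley-Bruhat order, so $x \leq w_0 x \in I$ forces $x \in I$, contradicting the assumption. Hence $x \in I$.

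There is no real obstacle here: the lemma is essentially a tautology once one combines the fatness condition with downward closure, and the proof is a two-line deduction. The genuine content of Theorem \ref{thm:short-small}, for which this lemma is clearly a stepping stone, must lie in verifying that sufficiently short elements $w \in W$ (of length $\leq 1$ in general, or $\leq 2$ outside the low-rank exceptions) are small in the sense $w \leq w_0 w$; that combinatorial verification is where the case analysis on Cartan type $A_1, A_2, A_3, B_2$ will enter.
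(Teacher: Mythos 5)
Your proof is correct and is essentially identical to the paper's: both argue by contradiction, use fatness to put $w_0x$ into $I$, and then use downward closure together with $x \leq w_0x$ to conclude $x \in I$. Your extra paragraph spelling out the fatness condition as an implication is just a more explicit phrasing of the same step; nothing is different in substance.
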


\begin{proof}
Suppose for contradiction that $x$ is small, $I$ is a fat ideal, but
that $x \not \in I$.  Then $w_0 x \in w_0(W \setminus I)$, and since $I$
is fat we have $w_0(W \setminus I) \subset I$, thus $w_0x \in I$.
Since $x$ is small we have $x < w_0x$, and $I$ is an ideal, so we find
$x \in I$, a contradiction.
\end{proof}

Theorem~\ref{thm:short-small} will follow from showing that elements
of $W$ of small length (i.e.~``short'' elements) are small.  To do
this we will require some additional properties of the length
function and Chevalley-Bruhat order on $W$, which we now state.

First, we need a construction of reduced words representing $w_0$.
The description of these will involve a positive integer associated to
$W$, the \emph{Coxeter number}, which is defined as the order in $W$
of any element that is the product of all of the simple root
reflections (in some order).  We denote the Coxeter number by $h$, and
abusing the terminology we will also refer to it as the Coxeter number
of $G$ or $\g$.  (Further discussion of the Coxeter number can be
found in e.g.~\cite[Section V.6.1]{bourbaki:lie-iv}.)

\begin{lem}[{Bourbaki \cite[pp.~150--151]{bourbaki:lie-iv}}]
\label{lem:longest-word}
Suppose $G$ is simple and has Coxeter number $h$.  Let $S = S' \sqcup
S''$ be a partition such that each of $S',S''$ generates an abelian
subgroup of $W$.  Let $a$ (resp.~$b$) denote the product of the
elements of $S'$ (resp.~$S''$).  Then:
\begin{rmenumerate}
\item If $h$ is even, then $w_0 = (ab)^\frac{h}{2}$ is a reduced word.
\item If $h$ is odd, then $w_0 = (ab)^\frac{h-1}{2}a$ is a reduced word.\noproof
\end{rmenumerate}
\end{lem}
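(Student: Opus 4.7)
The plan is to combine two ingredients: a length calculation for the displayed word, and the classical identity $\ell(w_0) = |\Sigma^+| = nh/2$, where $n = |S|$ is the rank. In any Coxeter system, a word representing $w_0$ that has length $\ell(w_0)$ is automatically reduced, so it suffices to verify (a) that the word has exactly $nh/2$ letters and (b) that it actually evaluates to $w_0$.

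For (a), the hypothesis that $S'$ and $S''$ each generate an abelian subgroup means that their elements are pairwise commuting simple reflections, so $a = \prod_{s\in S'}s$ and $b = \prod_{s\in S''}s$ are involutions with the displayed reduced factorizations of lengths $|S'|$ and $|S''|$ summing to $n$. Consequently $(ab)^{h/2}$ has $nh/2$ letters. When $h$ is odd, the classification of complex simple Lie algebras restricts one to type $A_{2k}$, and the unique $2$-coloring of the Dynkin chain gives $|S'|=|S''|=n/2$, so $(ab)^{(h-1)/2}a$ has $n(h-1)/2 + n/2 = nh/2$ letters as well. The identity $\ell(w_0) = |\Sigma^+| = nh/2$ can then be quoted from the spectral theory of a Coxeter element $c = ab$, whose eigenvalues on the reflection representation have the form $e^{2\pi i m_j/h}$, where $m_1,\dots,m_n$ are the exponents, whose sum is $|\Sigma^+|$.

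For (b), I would proceed by an inductive analysis of the inversion set $N(w) = \{\alpha \in \Sigma^+ : w\alpha \in \Sigma^-\}$, which satisfies $|N(w)| = \ell(w)$. The goal is to show $|N((ab)^k)| = kn$ for $0 \leq k \leq h/2$, with an analogous book-keeping when an extra $a$ is appended in the odd case; at $k = h/2$ the inversion set then exhausts $\Sigma^+$, forcing $(ab)^{h/2} = w_0$. The cleanest way to organize this is geometrically, exploiting Coxeter's observation that $c$ leaves invariant a distinguished real $2$-plane (the \emph{Coxeter plane}) on which it acts as rotation by $2\pi/h$, so that $c^{h/2}$ acts there as $-1$; this, combined with the obvious effect of $a$ and $b$ on the simple roots they contain, pins down the action of the partial products on the root system.

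The main obstacle is the inductive step in (b): one must verify that at each application of $a$ or $b$, the factor flips only new positive roots to negative ones without reversing any previously flipped roots. This combinatorial control over the orbits of $c$ on the root system is the technical heart of Bourbaki's treatment in \cite[Ch.~V, §6]{bourbaki:lie-iv}, which is the reference cited in the statement. In this paper the efficient route is simply to invoke that reference rather than reproduce the argument.
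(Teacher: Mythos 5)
The paper provides no proof of this lemma at all — it is stated with a \verb|\noproof| marker and attributed directly to Bourbaki — which is exactly the route your final paragraph recommends, so the proposal is consistent with the paper's treatment. Your sketch of the underlying classical argument is also sound in outline: the letter count is right (including the correct observation that an odd Coxeter number forces type $A_{2k}$, where the unique proper $2$-coloring of the path gives $|S'|=|S''|$), the identity $\ell(w_0)=|\Sigma^+|=nh/2$ is the correct length target, and the inversion-set/Coxeter-plane argument is indeed the technical core of Bourbaki's proof that the word evaluates to $w_0$.
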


Note that the order in the product $a$ does not matter since elements
of $S'$ commute, and similarly for $b$.  Partitions
$S = S' \sqcup S''$ of the type considered here always exist, as
each Dynkin diagram admits a $2$-coloring and non-adjacent vertices
correspond to commuting simple root reflections.

Lemma~\ref{lem:longest-word} also gives reduced words for $w_0$ when
$G$ is semisimple, by taking a product $\prod_i w_0^{(i)}$ of words
of type (i) or (ii) for the longest elements $w_0^{(i)}$ of the Weyl
groups of the simple factors.

Next, we need the following relation between a reduced word for an
element $x \in W$ and for its product $xs$ with a simple root
reflection:

\begin{lem}\mbox{}
\label{lem:exchange}
Suppose $x \in W$ and $s \in S$ satisfy $\ell(xs) = \ell(x)-1$, and
that
$$x = s_1 \cdots s_{\ell(x)}$$
is a reduced word for $x$.  Then for some
$k \in \{1, \ldots, \ell(x)\}$ we have that
$$xs = s_1 \cdots \widehat{s_k} \cdots s_{\ell(x)},$$
and furthermore, $s_k$ is conjugate to $s$. \noproof
\end{lem}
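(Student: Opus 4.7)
The plan is to recognize this statement as the \emph{strong} exchange condition for Coxeter groups and prove it in two stages: first invoke the standard Exchange Condition to produce the deleted reduced word for $xs$, and then derive the conjugacy assertion $s_k \sim s$ by direct word manipulation.

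First I would appeal to the Exchange Condition for the Coxeter system $(W,S)$ (see e.g.~\cite[Theorem~1.5.1]{bjorner-brenti} or the equivalent statement in \cite[Ch.~IV, \S1]{bourbaki:lie-iv}): since $s_1\cdots s_{\ell(x)}$ is a reduced expression for $x$ and $\ell(xs) = \ell(x)-1$, there exists $k \in \{1,\ldots,\ell(x)\}$ such that
\[
xs = s_1\cdots \widehat{s_k}\cdots s_{\ell(x)}.
\]
This disposes of the first half of the statement without further work.

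To obtain the conjugacy of $s_k$ and $s$, multiply the two displayed expressions for $x$ and $xs$ to write
\[
s = x^{-1}(xs) = (s_{\ell(x)}\cdots s_1)\,(s_1\cdots\widehat{s_k}\cdots s_{\ell(x)}).
\]
Now iteratively cancel the adjacent involution pairs $s_1 s_1, s_2 s_2, \ldots, s_{k-1} s_{k-1}$ appearing at the interface between the two factors. After $k-1$ such cancellations the right-hand side collapses to
\[
s = (s_{\ell(x)}\cdots s_{k+1})\, s_k \,(s_{k+1}\cdots s_{\ell(x)}),
\]
so setting $\tau = s_{k+1}\cdots s_{\ell(x)}$ we obtain $s_k = \tau s \tau^{-1}$, which gives the desired conjugacy.

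I expect no real obstacle here. The only nontrivial ingredient is the Exchange Condition itself, which is entirely standard in the theory of Coxeter groups, and the conjugacy refinement is a purely symbolic manipulation; indeed the combined statement is precisely what is usually called the \emph{strong} exchange condition in the literature. Were a self-contained proof preferred, one could work instead in the reflection representation, observing that $\ell(xs)<\ell(x)$ forces $x^{-1}(\alpha_s)$ to be a negative root, tracking which positive root associated to a prefix of the reduced word gets negated along the way, and reading off both the deletion index $k$ and the conjugation $s_k = \tau s\tau^{-1}$ from that identification in a single step.
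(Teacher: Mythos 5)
Your proof is correct, and it matches the paper's intent: the paper states the lemma with \verb|\noproof| and simply points the reader to Bj\"orner--Brenti for these ``standard facts about Coxeter groups,'' so invoking the Exchange Condition and filling in the rest by hand is precisely what the paper expects the reader to do. Both halves of your argument check out. The Exchange Condition (applied on the right, which is legitimate since inversion is an automorphism of the length function, as the paper itself remarks) produces the deletion index $k$, and your cancellation
\[
s = x^{-1}(xs) = (s_{\ell(x)}\cdots s_1)(s_1\cdots\widehat{s_k}\cdots s_{\ell(x)})
= s_{\ell(x)}\cdots s_{k+1}\, s_k\, s_{k+1}\cdots s_{\ell(x)}
\]
is valid term by term and yields $s_k = \tau s\tau^{-1}$ with $\tau = s_{k+1}\cdots s_{\ell(x)}$. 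One small bookkeeping remark: the paper cites Bj\"orner--Brenti Corollary~1.4.4 for this lemma, which is the Exchange Property, whereas Theorem~1.5.1 in that book is Tits' Word Property --- you will want to double-check the reference number you quoted, since the mathematics is right but the pointer is likely off by a section. Your closing sketch of the reflection-representation proof (tracking which positive root of a prefix gets sent negative) is indeed the self-contained route and would work as well; for the paper's purposes the citation plus the explicit conjugacy computation you gave is entirely sufficient.
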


Proofs of these standards facts about Coxeter groups can be found 
for example in \cite[Corollary~1.4.4]{bjorner-brenti}.  Note that
these properties are often stated in terms of left multiplication by a
reflection; the version for right multiplication stated above is
equivalent, however, since the inversion map $w \mapsto w^{-1}$ is an
automorphism of the Chevalley-Bruhat order.

Combining the previous lemmas we can now establish the key
combinatorial property that underlies Theorem~\ref{thm:short-small}:
\begin{lem}\mbox{}
\label{lem:deletion}
\begin{rmenumerate}
\item If each simple factor of $G$ has Coxeter number at least
$3$, then each element of $S$ is small.
\item If each simple factor of $G$ has Coxeter number at least
$5$, then for any $s,t \in S$ the element $st \in W$ is small.
\end{rmenumerate}
\end{lem}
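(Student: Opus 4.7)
The plan is to prove both parts by exhibiting, for each element $w \in \{s, st\}$ in question, a reduced expression of $w_0 w$ that contains $w$ as a subword; by the subword characterization of Chevalley-Bruhat order (Section \ref{subsec:bruhat}), this is equivalent to $w \leq w_0 w$, i.e.~to smallness. The main technical tools will be Lemma \ref{lem:longest-word}, which supplies an explicit reduced expression of $w_0$ adapted to a 2-coloring $S = S' \sqcup S''$ of the Dynkin diagram, and Lemma \ref{lem:exchange}, which controls how such expressions evolve under right multiplication by a simple reflection.

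For part (i), fix $s \in S$ in a simple factor of Coxeter number $h \geq 3$. After swapping $S'$ and $S''$ if necessary, arrange that $s \in S'$; Lemma \ref{lem:longest-word} then gives a reduced expression for the longest element of that factor in which $s$ occurs $\lceil h/2 \rceil \geq 2$ times. Applying Lemma \ref{lem:exchange} to right multiplication by $s$ deletes a single letter (conjugate to $s$) from this expression; since $s$ appears at least twice to begin with, at least one copy of $s$ remains, so $s$ is a subword of the resulting reduced expression for $w_0 s$. Appending this to the contributions from the other simple factors gives $s \leq w_0 s$.

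For part (ii), write $s, t \in S$ with $s \neq t$, and split into three cases. If $s, t$ lie in different simple factors, they commute, $w_0$ factors reducedly across the simple factors, and part (i) applied in each factor combines to give $st \leq w_0 st$. If $s, t$ lie in the same simple factor with $m_{st} \geq 4$, I would use the rank-$2$ dihedral parabolic $W' := W_{\{s,t\}}$: writing $w_0 = v \cdot w_0'$ reducedly, where $w_0'$ is the longest element of $W'$, direct computation in the dihedral group shows $w_0' st$ has length $m_{st} - 2 \geq 2$, and in dihedral Bruhat order any element of length $\geq 2$ dominates $st$; transitivity combined with the reduced factorization $w_0 st = v \cdot (w_0' st)$ then yields $st \leq w_0 st$.

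The remaining case --- $s, t$ in the same simple factor with $m_{st} = 3$ --- is the subtle one and the main obstacle, because the rank-$2$ reduction breaks down: $w_0' st$ has length only $1$ in $W_{\{s,t\}} \simeq S_3$, and is strictly less than $st$ there. Here I would pass to a larger standard parabolic $W'' \supseteq W_{\{s,t\}}$ inside which $st$ is already small, and then conclude exactly as before via the reduced factorization $w_0 = v'' \cdot w_0''$. The hypothesis excluding $A_1, A_2, A_3, B_2$ is what guarantees the existence of such a $W''$, namely a rank-$3$ or rank-$4$ sub-diagram containing $\{s,t\}$ with enough room; in the excluded types one checks directly that certain length-$2$ elements (for instance $s_2 s_1$ in $A_3$, whose only reduced expressions for $w_0 s_2 s_1$ place $s_1$ before every $s_2$) genuinely fail to be small, confirming sharpness. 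Verifying the existence of a workable $W''$ in each remaining simple type requires a short type-by-type analysis --- for the classical types by constructing reduced expressions from Lemma \ref{lem:longest-word} and tracking the pair of deletions produced by Lemma \ref{lem:exchange}, and for the exceptional types $E_6, E_7, E_8, F_4, G_2$ by a bounded direct computation.
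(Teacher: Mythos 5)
Your proof of (i) matches the paper's. For (ii), the dihedral-parabolic reduction when $m_{st} \geq 4$ is correct and is an observation the paper does not use: writing $w_0 = v \cdot w_0'$ reducedly with $w_0'$ the longest element of $W' = W_{\{s,t\}}$, one has $w_0' st \leq w_0 st$ (lengths add since $v$ is the minimal coset representative) and $\ell(w_0' st) = m_{st} - 2 \geq 2$, so $st \leq w_0' st$ in dihedral Bruhat order. But this leaves the crux of the lemma untouched. The case $m_{st} = 2$ (commuting $s \neq t$ in the same simple factor) is never addressed, and the dihedral trick fails there since $w_0' st = e$ has length $0$. More seriously, the case $m_{st} = 3$ --- the only possibility besides $m_{st} = 2$ in types $A$, $D$, $E$, and the generic one elsewhere --- is deferred to a ``short type-by-type analysis'' that is never carried out. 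You neither verify that a suitable parabolic $W''$ exists in each type, nor establish smallness of $st$ in any base case such as $A_4$, so the argument is incomplete precisely where it is needed most.

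Moreover, the mechanism you sketch for the classical types --- ``constructing reduced expressions from Lemma~\ref{lem:longest-word} and tracking the pair of deletions produced by Lemma~\ref{lem:exchange}'' --- is exactly the paper's argument, and it handles all of (ii) at once with no case split on $m_{st}$. The paper uses $h \geq 6$ to write $w_0 = ababab\,z$ reducedly (with $s \in a$), observes that two applications of Lemma~\ref{lem:exchange} delete two letters damaging at most two of the six blocks so that some intact $a$ survives to the left of an intact $b$, and concludes that $st$ (or $ts$, if $t \in S'$) is a subword of $ab$. The remaining $h = 5$ case is $A_4$ and is checked by hand; the semisimple case follows factor by factor. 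Your dihedral reduction, while clean for $m_{st} \geq 4$, therefore does not shorten the proof: it still needs the paper's argument (or some finite verification) to cover $m_{st} \leq 3$.
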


\begin{proof}
First suppose $G$ is simple with Coxeter number $h \geq 3$ and let
$s \in S$.  Note that $\ell(w_0s) = \ell(w_0) - 1$ by
\eqref{eqn:reverse-length}.  Apply Lemma~\ref{lem:longest-word} to a
partition of $S$ with $s \in S'$ to obtain a reduced expression of the
form $w_0 = a b a z$, where $z$ is a (possibly empty) alternating
product of $a$ and $b$.  The simple root reflection $s$ appears at
least twice in this word (once in each copy of $a$), hence by
Lemma~\ref{lem:exchange} we find that $s$ appears at least once in a
reduced expression for $w_0s$.  This shows $s < w_0s$ and thus $s$ is
small.

Now suppose $G$ is simple with Coxeter number $h \geq 6$. (The case
$h=5$ is considered separately below.)  Let $s,t \in S$.  We will show
$st$ is small.  If $s=t$ then $s^2=e$ and this is trivial, so we
assume $s \neq t$.  Then $\ell(st) = 2$,
$\ell(st w_0) = \ell(w_0) - 2$, and $\ell(tw_0) = \ell(w_0) -1$.
Proceeding as before and using $h \geq 6$ we obtain a reduced
expression $w_0 = abababz$, where we can assume $s$ appears in product
$a$.  Applying Lemma~\ref{lem:exchange} twice we find that a reduced
word for $w_0st$ can be obtained from this one for $w_0$ by deleting
two letters, and each such deletion may alter one of the copies of $a$
or $b$ in this word.  However, this leaves at least one unaltered copy
$a$ to the left of an unaltered copy of $b$.  That is, $ab$ is a
subword of a reduced expression for $w_0st$.

The simple root reflection $t$ appears in either $a$ or $b$.  If it
appears in $b$, then $st$ is evidently a subword of $ab$.  If $t$
appears in $a$, then $s$ and $t$ commute and one of the equivalent
words $st = ts$ is a subword of $ab$.  Thus in either case we conclude
$st < w_0st$, hence $st$ is small.

If $G$ is simple and $h=5$ then $G$ is of type $A_4$, hence
$W \simeq S_5$.  In this case it can be checked directly that the nine
non-trivial elements which are products of pairs of simple root
reflections are small.  We omit the details of this verification.

Finally suppose $G$ is semisimple.  We have a reduced expression for
$w_0$ that is a product over the simple factors.  If each simple
factor has Coxeter number at least $3$, we find as before that the
reduced expression for $w_0$ can be constructed to use a given simple
root reflection $s$ at least twice, and hence that $s$ is small.  If
each simple factor has Coxeter number at least $5$, and if $s,t$ are
simple root reflections ($s \neq t$), then a reduced word for $w_0st$
is obtained by deleting two letters from the word for $w_0$, and the
deleted letters are respective conjugates of $s$ and $t$.  If $s$ and
$t$ lie in the same simple factor of $W$, then the deleted letters are
both in the corresponding factor of $w_0$, and the argument above in
the simple case shows that $st$ is a subword of the result.  If $s$
and $t$ lie in distinct simple factors (and hence commute), we recall
that each can be assumed to appear at least twice in its factor and
hence each appears at least once after the deletion.  Thus $st = ts$
is also a subword of a reduced expression for $w_0st$ in this case.
We have therefore shown $st$ is small.
\end{proof}

Using this lemma, the proof of Theorem~\ref{thm:short-small} is straightforward:

\begin{proof}[Proof of Theorem~\ref{thm:short-small}.]
The elements $x \in W$ with $\ell(x) \leq 1$ are the simple root
reflections and the identity element.  The only simple Lie algebra of
Coxeter number less than $3$ is $A_1$, hence if $G$ has no simple
factors of this type then Lemma~\ref{lem:deletion}(i) shows that the
simple root reflections are small.  The identity element is also
small.  By Lemma~\ref{lem:small-in} we find that these elements lie in
any fat ideal $I \subset W$, and part (i) of the theorem follows.

In exactly the same way, part (ii) follows from Lemma
\ref{lem:deletion}(ii) because the elements $x \in W$ with
$\ell(x) \leq 2$ are the products of at most two simple root
reflections, and because the only simple Lie algebras with Coxeter
number less than $5$ are $A_1$, $A_2$, $A_3$, and $B_2$.
\end{proof}

\subsection{Hausdorff dimension of limit sets}

Now we will bound the Hausdorff dimension of the limit
set of an Anosov representation
in terms of the Hausdorff dimension of its limit curve and the
combinatorial size of the ideal defining the thickening.

All of the sets for which we discuss dimension are closed subsets of
compact manifolds.  When regarding such sets as metric spaces (for
example when computing dimensions) we always consider them to be
equipped with the distance obtained by restricting the distance induced
by an arbitrary Riemannian metric on the ambient manifold.  
Since any two Riemannian metrics on a
compact manifold are bi-Lipschitz, our results will not depend on the
particular metric chosen.

Let $P_{A}<G$ be a symmetric parabolic subgroup of a complex
semisimple Lie group $G$. Let $V\subset G/P_{A}$ be a closed subset
consisting of pairwise opposite points.  The property of a pair of points being
opposite is an open condition since it coincides with the unique open
orbit of $G$ acting diagonally on $G/P_A \times G/P_A$.  (Here we are
using the fact that $P_{A}$ is symmetric so that it is conjugate to
any of its opposite parabolic subgroups.)

Let $W$ be the Weyl group of $G$.  We begin with the following general
fact, which is a straightforward generalization of Theorem
\ref{thm:klp-local-triviality}:

\begin{prop}\label{prop:lip}
Let $P_{D}<G$ be a parabolic subgroup and let $I\subset W$ be a slim
ideal of type $(P_{A}, P_{D}).$ Let $V\subset G/P_A$ denote a compact
subset consisting of pairwise opposite points.  Then the fiber bundle $p:
\Phi_V^I \to V$ admits Lipschitz local parameterizations; that is, each point
$x \in V$ has a neighborhood $U_x$ such that there exists a Lipschitz
homeomorphism
\[ U_x \times \Phi^I \to p^{-1}(U_x). \]
\end{prop}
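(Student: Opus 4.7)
The plan is to use a smooth local section of the principal $P_A$-bundle $G \to G/P_A$ to produce the parameterization, and to obtain the homeomorphism property from the disjointness of thickenings already established in Theorem~\ref{thm:klp-local-triviality}.

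Fix $x_0 \in V$. Since $G \to G/P_A$ is a smooth principal $P_A$-bundle, there is an open neighborhood $U$ of $x_0$ in $G/P_A$ and a smooth section $s: U \to G$ with $s(x) P_A = x$ for all $x \in U$. Choose an open neighborhood $U_{x_0}$ of $x_0$ in $V$ whose closure $\overline{U_{x_0}}$ in $V$ is contained in $U \cap V$; since $V$ is compact, such a $U_{x_0}$ exists and $\overline{U_{x_0}}$ is compact. Now define
\[ F: \overline{U_{x_0}} \times \Phi^I \to G/P_D, \qquad F(x, y) = s(x) \cdot y. \]
By Proposition~\ref{prop: thicktranslate}, $F(x, y) \in s(x) \cdot \Phi^I = \Phi_x^I$, so $F$ takes values in $p^{-1}(\overline{U_{x_0}}) = \Phi_{\overline{U_{x_0}}}^I$. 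The section $s$ is smooth and hence Lipschitz on the compact set $\overline{U_{x_0}}$, and the action map $G \times G/P_D \to G/P_D$ is smooth and hence Lipschitz on the compact set $s(\overline{U_{x_0}}) \times \Phi^I$ (noting that $\Phi^I$ is compact, as a finite union of Schubert varieties in the compact manifold $G/P_D$). Consequently $F$ is Lipschitz.

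Injectivity of $F$ has two parts. On a single fiber $\{x\} \times \Phi^I$ the map $y \mapsto s(x) \cdot y$ is the restriction of a $G$-action to an element, hence a homeomorphism, and in particular injective. Across distinct fibers, if $F(x,y) = F(x',y')$ with $x \neq x'$, then $\Phi_x^I \cap \Phi_{x'}^I$ would be non-empty, contradicting the disjointness of thickenings over points of $V$ from Theorem~\ref{thm:klp-local-triviality}. Surjectivity onto $p^{-1}(\overline{U_{x_0}})$ follows directly from $\Phi_x^I = s(x) \cdot \Phi^I$. A continuous bijection from the compact Hausdorff space $\overline{U_{x_0}} \times \Phi^I$ onto the Hausdorff subspace $p^{-1}(\overline{U_{x_0}}) \subset G/P_D$ is automatically a homeomorphism, and restricting to the open set $U_{x_0}$ produces the required Lipschitz local parameterization.

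There is no serious technical obstacle: the existence of smooth local sections of the principal bundle $G \to G/P_A$ and the smoothness of the $G$-action on $G/P_D$ are standard Lie theory, and Theorem~\ref{thm:klp-local-triviality} supplies the crucial disjointness of thickenings that makes $F$ injective. The only delicate point is that smoothness gives only local Lipschitz bounds, so one must restrict to a precompact neighborhood before the estimates become uniform; this is the reason for passing through the compact set $\overline{U_{x_0}}$ before restricting to the open neighborhood $U_{x_0}$.
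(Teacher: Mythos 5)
Your proposal is correct and follows essentially the same approach as the paper: choose a smooth local section $s$ of $G \to G/P_A$ over a relatively compact neighborhood, define the parameterization by $(x,y) \mapsto s(x)\cdot y$, and obtain the Lipschitz estimate from restricting the smooth action map to a relatively compact set. The only difference is that you re-derive the homeomorphism property directly from the disjointness of thickenings in Theorem~\ref{thm:klp-local-triviality} (plus compactness), whereas the paper simply cites the local-triviality result of \cite[Lemma~7.4]{KLP13} for this part; both are valid and the reliance is on the same underlying source.
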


In fact, this proposition follows easily from the proofs of
\cite[Lemmas~3.39 and Lemma~7.4]{KLP13} which we stated
as Theorem~\ref{thm:klp-local-triviality} above.  We will simply
recall enough of the construction used by those authors to make the
Lipschitz property evident.

\begin{proof}
Note that the set $\Phi^I$ is compact.  For $x \in V$ let $U_x$ be a
relatively compact neighborhood of $x$ in $V$ over which there exists
a smooth section $s: U_x \rightarrow G$ of the quotient map
$G \to G/P_A$, and choose such a section. In the proof of
\cite[Lemma~7.4]{KLP13} it is shown that the map
\begin{equation*}
\begin{split}
U_x \times \Phi^{I} &\rightarrow p^{-1}(U_x) = \Phi^I_{U_x} \\
(x, y) &\mapsto  s(x)(y)
\end{split}
\end{equation*}
gives a local trivialization of the bundle $\Phi^I_{V} \to V$.
However, as it is the restriction of the smooth action map
$G \times G/P_D \to G/P_D$ to the relatively compact set
$s(U_x) \times \Phi^I_{V}$, this map is also Lipschitz.
\end{proof}

We now come to the main result of this section.

\begin{thm}\label{thm: hdimbound}
Let $P_{A}, P_{D}<G$ be a pair of parabolic subgroups with $P_{A}$
symmetric.  Let $\rho: \pi\rightarrow G$ be a $P_{A}$-Anosov
representation of a word hyperbolic group with limit curve
$\xi: \partial_{\infty}\pi\rightarrow G/P_A$. Let $I\subset W$ be a
slim ideal of type $(P_{A}, P_{D}).$ Then the limit set
$\Lambda_{\rho}^{I}\subset G/P_D$ satisfies
\[
\hdim(\Lambda_{\rho}^{I})\leq \hdim\left(\xi(\partial_{\infty}\pi)\right)+ 2\max_{w\in I/W_{D}}\ell(w).
\]
Here, the Hausdorff dimensions are computed with respect to any
Riemannian metrics on $G/P_A$ and $G/P_D$, and $\ell$ denotes the
length function associated to the Chevalley-Bruhat order on $W/W_D$.
\end{thm}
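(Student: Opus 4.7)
The plan is to realize $\Lambda_\rho^{I}$ as a bundle over the limit curve with fiber $\Phi^{I}$ whose local trivializations are bi-Lipschitz, and then invoke the product inequality for Hausdorff dimension. First, since $P_A$ is symmetric, the canonical identification $G/P_A^+ \simeq G/P_A^-$ lets us read condition \ref{transverse} of Definition \ref{Anosov definition} as saying that $V := \xi(\partial_{\infty}\pi) \subset G/P_A$ consists of pairwise opposite points; it is compact since $\partial_{\infty}\pi$ is compact and $\xi$ is continuous. Proposition \ref{prop:lip} therefore applies to $V$, giving each $x\in V$ a neighborhood $U_x \subset V$ together with a bi-Lipschitz homeomorphism $U_x \times \Phi^{I} \simeq p^{-1}(U_x) \subset \Lambda_\rho^{I}$, where $p : \Phi_V^{I} \to V$ is the canonical projection.

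Second, I would bound the upper Minkowski dimension of the fiber $\Phi^{I}$. By construction $\Phi^{I}$ is a finite union of Schubert varieties $X_{w W_D}\subset G/P_D$ with $w$ running over representatives of $I/W_D$; each $X_{wW_D}$ is an irreducible complex projective variety of complex dimension $\ell(wW_D)$, hence real topological dimension $2\ell(wW_D)$. Because each Schubert variety is a compact set admitting a finite decomposition into smoothly embedded cells of real dimension at most $2\ell(wW_D)$, its upper Minkowski dimension equals its topological dimension. Taking the maximum over the finitely many Schubert strata,
\[
\umdim(\Phi^{I}) \leq 2 \max_{w \in I/W_D} \ell(w).
\]

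Finally I would combine the two ingredients using the standard inequality $\hdim(A \times B) \leq \hdim(A) + \umdim(B)$ for separable metric spaces together with the bi-Lipschitz invariance of Hausdorff dimension. For each $x\in V$,
\[
\hdim(p^{-1}(U_x)) \;\leq\; \hdim(U_x) + \umdim(\Phi^{I}) \;\leq\; \hdim(V) + 2 \max_{w\in I/W_D}\ell(w).
\]
Covering the compact set $V$ by finitely many neighborhoods $U_{x_1},\ldots,U_{x_n}$ and writing $\Lambda_\rho^{I} = \bigcup_i p^{-1}(U_{x_i})$, the fact that Hausdorff dimension of a finite union equals the maximum of the summands' Hausdorff dimensions yields the claimed bound.

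The main obstacle is the asymmetric role of the two notions of dimension in the product inequality: Hausdorff dimension of a product is only controlled by $\hdim$ on one factor plus \emph{upper Minkowski} dimension on the other, so one must verify that the fiber $\Phi^{I}$ is regular enough for its Minkowski dimension to coincide with its topological dimension. This is where the semi-algebraic/CW structure of Schubert varieties enters essentially. Everything else—the bi-Lipschitz triviality of the bundle, the opposition of points in the limit curve, and the finite cover argument—is either packaged in results already cited (Proposition \ref{prop:lip}) or is standard measure-theoretic bookkeeping.
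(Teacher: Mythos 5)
Your proof is correct and follows essentially the same route as the paper's: identify $V=\xi(\partial_\infty\pi)$ as a compact set of pairwise opposite points, invoke Proposition~\ref{prop:lip} for Lipschitz local trivializations of $\Phi^I_V\to V$, bound $\umdim(\Phi^I)$ by the maximal cell dimension, apply the product inequality $\hdim(A\times B)\leq\hdim(A)+\umdim(B)$, and conclude via finite stability of Hausdorff dimension. One small overstatement: Proposition~\ref{prop:lip} only gives a \emph{Lipschitz} homeomorphism $U_x\times\Phi^I\to p^{-1}(U_x)$, not a bi-Lipschitz one, but this does not affect the argument since only the forward direction (Lipschitz images do not increase Hausdorff dimension) is used.
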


\begin{proof}
Recall $\Lambda_\rho^I = \Phi_{\xi(\partial_\infty \pi)}^I$ and 
$\xi(\partial_\infty \pi)$ is a compact set consisting of pairwise
opposite points (by Theorem~\ref{thm:klp-local-triviality}).  Applying
Proposition~\ref{prop:lip} we obtain a finite open cover $\{U_{i}\}$ of
$\partial_\infty \pi$ by sets whose images by $\xi$ are trivializing
open sets for the bundle $\Lambda_\rho^I$, and over which this bundle has
Lipschitz parameterizations.  Since Lipschitz maps do not increase
Hausdorff dimension, and since Hausdorff dimension is finitely stable,
we find
\begin{equation}
\label{eqn:reduction-to-product}
\hdim(\Lambda_{\rho}^{I}) \leq \max_i \,\hdim(\xi(U_i) \times \Phi^I).
\end{equation}
On the other hand, the Hausdorff dimension of a product can be bounded
in terms of the Hausdorff dimension and upper Minkowski dimension (also known
as upper box counting dimension) of the factors \cite[Formula~7.3]{Falconer}:
\[
\hdim(\xi(U_i) \times \Phi^I) \leq \hdim(\xi(U_i)) + \umdim(\Phi^I)
\]
However, $\Phi^I$ has a finite stratification by manifolds (the
Schubert cells corresponding to elements of $I$), and hence its upper
Minkowski dimension is equal to the maximum real dimension of these
manifolds (see e.g. \cite[Section~3.2]{Falconer}), which is
$2\max_{w\in I/W_{D}}\ell(w)$.  Also, since $\xi(U_i)$ is a subset of
$\xi(\partial_\infty\pi)$ we have
$\hdim(\xi(U_i)) \leq \hdim(\xi(\partial_\infty\pi))$.  We conclude
\[
\hdim(\xi(U_i) \times \Phi^I) \leq
\hdim\left(\xi(\partial_{\infty}\pi)\right) + 2\max_{w\in
  I/W_{D}}\ell(w).
\]
Substituting this bound into \eqref{eqn:reduction-to-product}, the
Theorem follows.
\end{proof}

We note that in case the right hand side of the bound from Theorem
\ref{thm: hdimbound} is less than the real dimension of $G/P_D$
itself, it follows that the limit set has positive ``Hausdorff
codimension'' and that $\Omega_\rho^I$ is non-empty.  We state the
resulting criterion separately:

\begin{thm}
  \label{thm:general-nonempty}
Let $\rho:\pi\rightarrow G$ be a $P_{A}$-Anosov representation for a symmetric parabolic
subgroup $P_{A}<G$, with limit curve $\xi: \partial_{\infty}\pi\rightarrow G/P_{A}$.  Suppose $I\subset W$ is a balanced ideal of type $(P_{A}, P_{D})$ with corresponding domain $\Omega_{\rho}^{I} \subset G/P_{D}$.
Let $n=\dim_{\C} G/P_D$. Then:
\begin{rmenumerate}
\item If $\hdim \xi(\partial_{\infty}\pi) <4$ and $G$ is not
  isomorphic to $\PSL_2\C$, then the domain $\Omega_{\rho}^{I}$ is
  non-empty.
\item If $\hdim \xi(\partial_{\infty}\pi) <6$ and $G$ is not isomorphic to types $A_{1}, A_{2}, A_{3}$ or $B_{2},$ then $\Omega_{\rho}^{I}$ is non-empty.
\item If
$\hdim \xi(\partial_{\infty}\pi) < 2 \left ( n - \max_{w\in I/W_{D}} \ell(w)\right)$,
then $\Omega_{\rho}^{I}$ is non-empty.
\end{rmenumerate}
\end{thm}

\begin{proof}
  For (iii), the assumption on $\hdim \xi(\partial_{\infty}\pi)$ is
  exactly what is needed so that Theorem~\ref{thm: hdimbound} gives
  $\hdim(\Lambda_\rho^I) < 2n = \hdim(G/P_D)$, so the complement of
  $\Lambda_\rho^I$ is non-empty.

  For (ii), by Theorem \ref{thm:short-small} the exclusion of these types gives $\max_{w\in I/W_{D}} \ell(w) \leq n-3$, and thus $2 \left ( n - \max_{w\in I/W_{D}} \ell(w)\right) \geq 6$.  Therefore this case follows from (iii).

  For (i), Theorem \ref{thm:short-small} similarly gives $2 \left ( n - \max_{w\in I/W_{D}} \ell(w)\right) \geq 4$ and hence the claim again follows from (iii).
\end{proof}

Note that the hypothesis $G \not \simeq \PSL_2\C$ in part (i) of
Theorem \ref{thm:general-nonempty} is necessary, as the example of a
cocompact lattice in $\PSL_2\C$ acting on $\CP^1$ with empty domain
of discontinuity shows.

Our main application of Theorem \ref{thm: hdimbound} will be to estimate the Hausdorff
dimension of limit sets for $G$-quasi-Fuchsian groups.  We find:
\begin{thm}\label{thm: QFbound}
Let $G$ be a complex simple Lie group of adjoint type and rank at least two
with Weyl
group $W$.  Let $\rho: \pi_{1}S \rightarrow G$ be a
$G$-quasi-Fuchsian representation and $I\subset W$ a balanced ideal of
type $(B, P_{D})$.  Let $n$ denote the complex dimension of
$G/P_{D}$.  Then the limit set $\Lambda_{\rho}^{I}\subset G/P_{D}$
satisfies
\[
m_{2n-2}(\Lambda_{\rho}^{I})=0.
\]
Furthermore, if $G$ is not of type $A_{2}, A_{3}$ or $B_{2},$ then
\[
m_{2n-4}(\Lambda_{\rho}^{I})=0.
\]
Here $m_k$ denotes the $k$-dimensional Hausdorff measure associated to
any Riemannian metric on $G/P_D$.
\end{thm}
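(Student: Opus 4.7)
I would deduce the measure-vanishing statements from strict Hausdorff dimension bounds using Theorem~\ref{thm: hdimbound}, which gives
\[
\hdim(\Lambda_\rho^I) \leq \hdim\bigl(\xi(\partial_\infty\pi_1 S)\bigr) + 2\max_{w\in I/W_D}\ell(w).
\]
Since the $k$-dimensional Hausdorff measure vanishes on any set of Hausdorff dimension strictly less than $k$, it suffices to show the right-hand side is strictly less than $2n-2$ in case (i), and strictly less than $2n-4$ in case (ii).

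For the first summand, Proposition~\ref{quasifuchsian Anosov} factors the limit curve as $\xi = f_G\circ\xi_\eta$, where $\eta:\pi_1 S\to\PSL_2\C$ is the underlying quasi-Fuchsian representation. Its limit set $\xi_\eta(\partial_\infty\pi_1 S)\subset\CP^1$ is a quasicircle, so classical results on convex cocompact Kleinian groups (e.g.~Sullivan) give Hausdorff dimension strictly less than $2$. The principal rational curve $f_G:\CP^1\to G/B$ is a smooth embedding of a compact manifold, hence bi-Lipschitz, hence preserves Hausdorff dimension; thus $\hdim\bigl(\xi(\partial_\infty\pi_1 S)\bigr)<2$.

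For the second summand, I combine the balanced hypothesis with Theorem~\ref{thm:short-small}. By the identity $\ell(w_0 vW_D) = n - \ell(vW_D)$ from Section~\ref{subsec:bruhat}, every coset of length $n-k$ in $W/W_D$ is of the form $w_0 uW_D$ where $\ell(uW_D) = k$, and in particular its minimum representative $u$ has length $k$ in $W$. Right-$W_D$-invariance gives $w_0 uW_D\subset I$ iff $w_0 u\in I$, and balancedness ($I=I^\perp$) then gives $w_0 u\in I$ iff $u\notin I$. In case (i), the rank-at-least-two hypothesis excludes type $A_1$, so Theorem~\ref{thm:short-small}(i) places every element of length at most $1$ in $I$; it follows that no coset of length $n-1$ or $n$ lies in $I/W_D$, and hence $\max_{w\in I/W_D}\ell(w)\leq n-2$. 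In case (ii), Theorem~\ref{thm:short-small}(ii) additionally places every length-$2$ element in $I$, excluding all cosets of length $n-2$ as well and giving $\max_{w\in I/W_D}\ell(w)\leq n-3$.

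Substituting into the displayed inequality yields $\hdim\Lambda_\rho^I<2+2(n-2)=2n-2$ in case (i), hence $m_{2n-2}(\Lambda_\rho^I)=0$; and $\hdim\Lambda_\rho^I<2+2(n-3)=2n-4$ in case (ii), hence $m_{2n-4}(\Lambda_\rho^I)=0$. There is no serious obstacle in the plan: the combinatorial heart lies in Theorem~\ref{thm:short-small}, already proved, and the remaining work is the bookkeeping that matches coset lengths in $W/W_D$ with element lengths in $W$ under left multiplication by $w_0$, and reduces membership in $I$ of a right-$W_D$-coset to that of a single representative.
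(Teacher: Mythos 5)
Your proposal is correct and follows essentially the same route as the paper: reduce to the strict Hausdorff-dimension bound of Theorem~\ref{thm: hdimbound}, bound the quasicircle dimension by $2$, and combine Theorem~\ref{thm:short-small} with the duality $\ell(w_0 w W_D) = n - \ell(wW_D)$ and balancedness to cap $\max_{w\in I/W_D}\ell(w)$. The paper states the combinatorial conclusion more tersely, but the bookkeeping you supply (translating ``$I$ contains all short elements'' into ``$I/W_D$ omits all long cosets'' via $I = I^\perp$ and right $W_D$-invariance) is exactly what underlies it.
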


\begin{proof}
By Theorem \ref{thm:short-small}, the hypotheses imply
$\max_{w\in I/W_{D}}\ell(w)\leq n-2$.  As the limit curve of a
quasi-Fuchsian group is a quasi-circle in $\CP^1$, its Hausdorff
dimension is strictly less than $2$.  By Theorem \ref{quasifuchsian
  Anosov}, the limit curve of a $G$-quasi-Fuchsian group is the image
of such a quasi-circle by the smooth embedding
$f_G : \CP^{1}\rightarrow G/P_D$, hence $\xi(\partial_\infty \pi_{1}S)$
also has Hausdorff dimension less than $2$.  Applying Theorem
\ref{thm: hdimbound} gives
\[
\hdim(\Lambda_{\rho}^{I})<2+2(n-2)=2n-2.
\]
and thus $m_{2n-2}(\Lambda_{\rho}^{I})=0$.

If we also exclude types $A_{2}, A_{3}$ and $B_{2}$, then Theorem
\ref{thm:short-small} gives $\max_{w\in I/W_{D}}\ell(w)\leq n-3$, and
proceeding as above we find $m_{2n-4}(\Lambda_{\rho}^{I})=0$.
\end{proof}
We note that, in particular, the domains in these cases considered in
Theorem \ref{thm:short-small} are always non-empty.

\section{Topology}\label{topology}

We now begin one of our central investigations of the paper---studying
the topology of the domains and quotient manifolds for
$G$-quasi-Hitchin representations.  We do this by first reducing to
the $G$-Fuchsian case (in Sections \ref{components}--\ref{families})
and then studying the Fuchsian case in Sections
\ref{subsec:homological-balance}--\ref{subsec:homology-m}.

\subsection{Anosov components}\label{components}
Let $\pi$ be a finitely generated group and $G$ a complex semisimple
Lie group.  By choosing a finite generating set of $\pi$, the set
$\Hom(\pi,G)$ can be identified with a complex affine subvariety of
$G^{N}$ for some $N \in \mathbb{N}$.  Thus $\Hom(\pi,G)$ has both the
Zariski topology and the compact-open topology of maps from the
discrete space $\pi$ to the manifold $G$, the latter of which we will
call the analytic topology.  Throughout this section, we use
\emph{component} to mean a connected component of a set with respect
to the analytic topology.

Let $P_A$ be a symmetric parabolic subgroup of $G$.  Given a
$P_A$-Anosov representation $\rho :\pi \rightarrow G,$ let
$\A(\rho, P_{A}) \subset \Hom(\pi, G)$ denote the connected
component of the set of $P_{A}$-Anosov representations that contains
$\rho$.  We call $\A(\rho,P_A)$ the \emph{Anosov component}
of $\rho$.

For example, the quasi-Hitchin set $\widetilde{\mathcal{QH}}(S,G,P_{A})$ for a complex
simple adjoint group $G$, as defined in
Section \ref{quasi-hitchin}, is equivalently described as the Anosov
component $\A(\rho,P_{A})$ of any $G$-Fuchsian representation $\rho :
\pi_{1}S \to G$.  

\subsection{Constant diffeomorphism type}\label{families}

Next we show that the diffeomorphism type of the compact quotient
manifold associated to a balanced ideal is constant on each Anosov
component:

\begin{thm}\label{thm:const-diffeo}
Let $P_A, P_D$ be parabolic subgroups of $G$, with $P_A$ symmetric,
and let $I \subset W$ be a balanced ideal of type $(P_A,P_D)$.  Let
$\rho : \pi \to G$ be a $P_A$-Anosov representation.  Then for any
$\rho' \in \A(\rho,P_A)$, the quotient manifolds $\W_\rho^I$ and
$\W_{\rho'}^I$ are diffeomorphic.
\end{thm}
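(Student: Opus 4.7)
The plan is to promote the topological constancy (which the authors already claim follows from elementary arguments using continuity of limit curves) to a smooth statement by constructing a proper smooth submersion whose fiber over $\rho' \in \A(\rho, P_A)$ is the quotient manifold $\W_{\rho'}^I$, and then invoking Ehresmann's fibration theorem. Since $\A(\rho, P_A)$ is connected and locally path-connected (being a connected open subset of the complex variety $\Hom(\pi,G)$ restricted to a smooth locus), it suffices to show that the diffeomorphism type of $\W_{\rho'}^I$ is locally constant in $\rho'$. I therefore aim to prove: for each $\rho_0 \in \A(\rho,P_A)$, there is an open neighborhood $U \subset \A(\rho,P_A)$ of $\rho_0$ over which a smooth fiber bundle with fiber $\W_{\rho_0}^I$ can be constructed.

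First I would build the total space. Fix a relatively compact, connected open neighborhood $U$ of $\rho_0$ in $\A(\rho, P_A)$ and define
\[
\tilde{\mathcal{E}} = \{(\rho', x) \in U \times G/P_D \suchthat x \in \Omega_{\rho'}^I\}.
\]
The key continuity input is that $\rho' \mapsto \Lambda_{\rho'}^I$ is upper semicontinuous in the Hausdorff topology on closed subsets of $G/P_D$. This follows from two ingredients: Theorem~\ref{anosov prop}(iv), which provides continuous variation of the limit curves $\xi_{\rho'} : \partial_\infty \pi \to G/P_A$, and the continuity of the thickening assignment $p \mapsto \Phi_p^I$, which holds because $\Phi_p^I = g \cdot \Phi^I$ for any $g \in G$ with $gP_A = p$ and $\Phi^I$ is a fixed compact set translated by a continuously varying element of $G$. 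Combining these, if $(\rho_1, x_1) \in \tilde{\mathcal{E}}$ and $V$ is a small neighborhood of $x_1$ disjoint from the closed set $\Lambda_{\rho_1}^I$, then for $\rho'$ near $\rho_1$ the set $\Lambda_{\rho'}^I$ remains disjoint from a still-smaller neighborhood of $x_1$, so $\tilde{\mathcal{E}}$ is open in $U \times G/P_D$ and hence a smooth manifold of dimension $\dim U + 2\dim_\C G/P_D$.

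Next I would quotient by the $\pi$-action. The group $\pi$ acts smoothly on $\tilde{\mathcal{E}}$ by $\gamma \cdot (\rho', x) = (\rho', \rho'(\gamma) x)$, preserving each slice $\{\rho'\} \times \Omega_{\rho'}^I$. On each slice the action is free and properly discontinuous by Theorem~\ref{thm: klp-domain}, and freeness of the global action follows immediately. To upgrade to proper discontinuity on $\tilde{\mathcal{E}}$, one shrinks $U$ and uses uniformity of the Anosov quasi-isometric embedding constants on compact subsets of the Anosov component (Theorem~\ref{anosov prop}(ii,iii)). Let $\mathcal{E} = \tilde{\mathcal{E}} / \pi$ be the smooth quotient manifold, equipped with the natural smooth submersion $f : \mathcal{E} \to U$ whose fiber over $\rho'$ is canonically $\W_{\rho'}^I$.

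The main obstacle, and the only non-formal step, is establishing that $f$ is proper. Given a compact $K \subset U$, one wants every sequence $[(\rho_n, x_n)] \in f^{-1}(K)$ to admit a convergent subsequence after passing to orbits. After extracting $\rho_n \to \rho_\infty \in K$, pick a compact set $F_\infty \subset \Omega_{\rho_\infty}^I$ surjecting onto $\W_{\rho_\infty}^I$ under the quotient by $\rho_\infty(\pi)$. Using the Hausdorff continuity of $\Lambda_{\rho'}^I$ established above, the set $F_\infty$ remains contained in $\Omega_{\rho'}^I$ for $\rho'$ in a neighborhood of $\rho_\infty$. The delicate point is to produce, after possibly thickening $F_\infty$ to a slightly larger compact set $F' \subset \bigcap_{\rho' \in K'} \Omega_{\rho'}^I$ over a subneighborhood $K' \ni \rho_\infty$, a uniform cocompactness statement: that $\rho'(\pi) \cdot F' = \Omega_{\rho'}^I$ for all $\rho' \in K'$. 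This should follow from the KLP construction of the domain (Section~\ref{KLP machine}) applied fiberwise, by noting that each $x \in \Omega_{\rho'}^I$ is pushed by some element of $\rho'(\pi)$ into a bounded region of $G/P_D$ whose size depends continuously on $\rho'$; concretely, one can adapt the proof of cocompactness in \cite{KLP13} to work in families by using the continuous variation of the thickenings. Granted this uniform cocompactness, translates $\rho_n(\gamma_n) x_n$ lie in a fixed compact subset of $G/P_D$ that stays in every $\Omega_{\rho_n}^I$, producing a convergent subsequence in $\tilde{\mathcal{E}}$ and hence in $f^{-1}(K) \subset \mathcal{E}$. Once $f$ is a proper submersion, Ehresmann's fibration theorem promotes it to a smooth locally trivial fiber bundle over $U$; connectedness of $U$ then yields that all fibers $\W_{\rho'}^I$ are mutually diffeomorphic, completing the local statement and hence the theorem.
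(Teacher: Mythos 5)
Your proposal takes the same fundamental approach as the paper---assemble the domains $\Omega_{\rho'}^I$ into a total space, quotient by the $\pi$-action, obtain a proper smooth submersion, and invoke Ehresmann's fibration theorem---but there is a genuine gap in how you handle the base.

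You assert that $\A(\rho,P_A)$ is ``a connected open subset of the complex variety $\Hom(\pi,G)$ restricted to a smooth locus,'' but this is not correct: $\A(\rho,P_A)$ is an open subset of $\Hom(\pi,G)$ in the analytic topology, and nothing prevents it from containing singular points of the affine variety $\Hom(\pi,G)$. If $\rho_0$ is such a singular point, then your neighborhood $U$ is not a smooth manifold, so the space $\tilde{\mathcal{E}} \subset U \times G/P_D$ cannot be given the structure of a smooth manifold ``of dimension $\dim U + 2\dim_\C G/P_D$,'' the quotient $\mathcal{E}$ is not a manifold, $f : \mathcal{E} \to U$ is not a smooth submersion, and Ehresmann's theorem cannot be applied. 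Your reduction to local constancy of the diffeomorphism type is fine, but your mechanism for establishing local constancy simply does not run near singular points, and these are precisely the places where the topology of $\Hom(\pi,G)$ can behave badly. The paper avoids this by parametrizing over $[0,1]$---a manifold regardless of the singularity structure of the representation variety---so Ehresmann's theorem always applies to smooth $1$-parameter families; it then decomposes $\Hom(\pi,G)$ into irreducible components, works on the smooth locus of each (where any two points can be joined by a smooth path), and uses Milnor's curve selection lemma to connect singular points to the smooth locus by smooth paths. This package is exactly what your proposal is missing.

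A secondary concern: you correctly identify properness of $f$ as the crux, but the uniform cocompactness argument you sketch (adapting the proof of cocompactness in \cite{KLP13} to vary continuously in $\rho'$) is asserted rather than proved. The paper's treatment of properness is also terse---asserting that compactness of the fibers $\W_{\rho_t}^I$ gives properness of $\Pi$ over $[0,1]$---so here you are in similar company; but you should be aware that ``compact fibers'' alone does not formally imply properness, and a careful proof does require the kind of uniform cocompactness you gesture at. The substantive difference in correctness between your proposal and the paper is the singularity issue above.
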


In a similar spirit, in \cite{GW12} it was shown that the
homeomorphism type is constant on Anosov components for the quotients
of the domains of discontinuity constructed by those authors.  The
argument given there is quite general, however, and would also apply
in the present situation.  We give a detailed argument in order to
emphasize the smoothness of the resulting map.

In preparation for the proof, we define a \emph{smooth $1$-parameter
  family of representations} to be a collection
$\{ \rho_t \in \Hom(\pi,G) \suchthat t \in [0,1] \}$ such that for
each $\gamma \in \pi$ the map $[0,1] \to G$ defined by
$t \mapsto \rho_t(\gamma)$ is smooth.  This is equivalent to requiring
that $t \mapsto \rho_t$ defines a smooth map of $[0,1]$ into $G^N$
that takes values in the subvariety $\Hom(\pi,G) \subset G^N$.

\begin{lem}\label{lem:interval-family-smoothly-trivial}
Let $P_A$, $P_D$, and $I$ be as in Theorem \ref{thm:const-diffeo}.  If
$\rho$ is a smooth $1$-parameter family of representations, and if for
each $t \in [0,1]$ the representation $\rho_t : \pi \to G$ is
$P_A$-Anosov, then the quotient manifolds $\W_{\rho_0}^I$ and
$\W_{\rho_1}^I$ are diffeomorphic.
\end{lem}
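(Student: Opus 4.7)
The plan is to apply Ehresmann's fibration theorem to a total space assembled from the family. I would define
\[
E := \{(t, x) \in [0,1] \times G/P_D \suchthat x \in \Omega_{\rho_t}^I\},
\]
equipped with the smooth $\pi$-action $\gamma \cdot (t, x) = (t, \rho_t(\gamma) \cdot x)$. My goal will be to show that $E$ is open in $[0,1] \times G/P_D$, that the $\pi$-action is smooth, free, and properly discontinuous, and that the induced projection $\bar p : \pi \backslash E \to [0,1]$ is a proper smooth submersion whose fiber over $t$ is $\W_{\rho_t}^I$. Granted these properties, Ehresmann's theorem provides local smooth trivializations of $\bar p$, and contractibility of $[0,1]$ upgrades these to a global smooth trivialization, giving a diffeomorphism $\W_{\rho_0}^I \simeq \W_{\rho_1}^I$.

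To see that $E$ is open it suffices to see that the total limit set $\{(t, x) \suchthat x \in \Lambda_{\rho_t}^I\}$ is closed. This follows from Theorem \ref{anosov prop}(iv), which gives continuity of $t \mapsto \xi_t$ in the uniform topology on $C^0(\partial_\infty \pi, G/P_A)$, combined with the compactness of $\partial_\infty \pi$ and of the model thickening $\Phi^I$ and the fact that the thickening map $p \mapsto \Phi_p^I = g \cdot \Phi^I$ (for any $g \in G$ with $gP_A = p$) is continuous in $p$. Smoothness of the action on $E$ is immediate from smoothness of the family $t \mapsto \rho_t$, and freeness follows from the assumption implicit in the formation of $\W_{\rho_t}^I$ that each $\rho_t(\pi)$ acts freely on its domain.

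The hard part will be showing that $\bar p$ is proper, equivalently that the $\pi$-action on $E$ is properly discontinuous and that the slicewise quotients $\W_{\rho_t}^I$ vary compactly over each closed subinterval. For a fixed $t_0 \in [0,1]$, I would exploit the openness of the Anosov condition (Theorem \ref{anosov prop}(iii)) together with the continuous variation of the limit curve to produce a neighborhood $U$ of $t_0$ over which the quasi-isometric embedding constants for $\rho_t$ can be chosen uniformly in $t \in U$. From this uniformity and the slicewise proper discontinuity at $t_0$, I would deduce that for a compact set $K_0 \subset \Omega_{\rho_{t_0}}^I$, the collision set $\{\gamma \in \pi \suchthat \gamma \cdot (U \times K_0^{\eps}) \cap (U \times K_0^{\eps}) \neq \emptyset\}$ remains finite for a sufficiently small neighborhood $K_0^\eps$ of $K_0$ and $U$ sufficiently small. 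Covering any compact subinterval of $[0,1]$ by finitely many such neighborhoods and invoking cocompactness slicewise then gives that $\bar p^{-1}$ of a compact set is compact.

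Once $\bar p : \pi \backslash E \to [0,1]$ is established as a proper smooth submersion, Ehresmann's theorem gives local triviality in the smooth category, and contractibility of the base yields a smooth global trivialization, completing the proof. As an alternative to the uniform-quasi-isometry route, one could instead use the families version of the local trivialization in Proposition \ref{prop:lip} combined with a $\pi$-equivariant flow produced by integrating a time-dependent vector field on $E$ that encodes the infinitesimal variation of $\rho_t$; this would directly exhibit local trivializations of $\bar p$ without passing through abstract properness.
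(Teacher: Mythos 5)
Your proposal follows the paper's proof in outline: build the total space $E=\tilde{\V}\subset[0,1]\times G/P_D$, observe it is open using the continuous dependence of the limit curve on the Anosov representation (Theorem~\ref{anosov prop}(iv)), take the quotient by the $\pi$-action $\gamma\cdot(t,x)=(t,\rho_t(\gamma)x)$, and apply Ehresmann's lemma to the induced projection to $[0,1]$. Up through the formation of the quotient and the observation that $\Pi$ is a submersion (its lift to $\tilde{\V}$ is the product projection), your argument and the paper's are essentially identical.

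The difference is in how properness of $\Pi$ is obtained, and here your sketch has a gap. The paper reads properness off from the compactness of each fiber $\W_{\rho_t}^I$ together with the submersion structure: the local Ehresmann flow around a compact fiber gives local triviality over a neighborhood of any $t_0$, and compactness of $[0,1]$ then gives properness globally. This is short precisely because compactness of each $\W_{\rho_t}^I$ is already supplied by Theorem~\ref{thm: klp-domain}. You instead propose to prove properness directly by extracting uniform quasi-isometric-embedding constants over a neighborhood of $t_0$ and then bounding a collision set. As stated this does not suffice: uniform QI constants control the growth of $\mu(\rho_t(\gamma))$, but proper discontinuity of the action on $\Omega_{\rho_t}^I$ depends on the Kapovich--Leeb--Porti thickening mechanism (the slimness/fatness of $I$), not merely on divergence in $G$. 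What you are implicitly calling for is a uniform-in-$t$ version of Theorem~\ref{thm: klp-domain}, which is substantially more work than the uniformity of QI constants, and is exactly what the paper's compact-fiber shortcut sidesteps. Relatedly, properness of $\bar p$ and proper discontinuity of the $\pi$-action on $E$ are not ``equivalent'' as you claim: proper discontinuity (plus freeness) is what makes $\pi\backslash E$ a manifold, while properness of $\bar p$ is an additional uniform compactness statement, and both are needed before Ehresmann can be applied. Your alternative via a family version of Proposition~\ref{prop:lip} and an equivariant time-dependent flow is, in effect, a hands-on re-derivation of the local Ehresmann step, and would likewise need to be carried out in detail.
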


\begin{proof}
First, the domains $\Omega_{\rho_t}^I$ can be assembled into a family; 
define the set $\Tilde{\V} \subset [0,1] \times G/P_D$ by
\[
\tilde{\V}:=\{ (t, x) \suchthat x \in \Omega_{\rho_t}^{I}\}.
\]
By Proposition~\ref{anosov prop}(iv) this is an open subset of
$[0,1]\times G/P_D$.  Let $\tilde{\Pi} : \tilde{V} \to [0,1]$ denote the
projection on to the first factor, so that
$\tilde{\Pi}^{-1}(t) = \{ t \} \times \Omega^I_{\rho_t}$.

The group $\pi$ acts smoothly and properly discontinuously on $\tilde{\V}$ by
\[
\gamma\cdot (t, x)=(t, \rho_t(\gamma)(x)).
\]
Let $\V:=\tilde{\V}/\pi$ denote the quotient by this action, which is
a smooth manifold (with boundary).  Since
$\tilde{\Pi}(\gamma \cdot (t,x)) = \tilde{\Pi}(t,x) = t$, there is an
induced smooth map $\Pi: \V \to [0,1]$ such that
$\Pi^{-1}(t) = \{ t \} \times \W_{\rho_t}^I$.  By compactness of
$\W_{\rho_t}^I$, the map $\Pi$ is proper.  Also, the map $\Pi$ is a
submersion, since its lift to the cover $\Tilde{V}$ is the projection
of the product manifold $[0,1]\times G/P_D$ onto its first
factor.

By Ehresmann's Lemma \cite{Ehresmann}, a proper smooth submersion is a
smoothly locally trivial fiber bundle.  Thus the fibers of $\Pi$ are
pairwise diffeomorphic.
\end{proof}

\begin{proof}[Proof of Theorem \ref{thm:const-diffeo}]
We abbreviate $\A = \A(\rho,P_A)$.  Recall that $\Hom(\pi,G)$ is a
complex affine algebraic variety, and by Proposition~\ref{anosov
  prop}(iv) we have that $\A$ is an open subset of $\Hom(\pi,G)$ in
the analytic topology.

Consider the equivalence relation on $\A$ given by diffeomorphism of
quotient manifolds, i.e.~$\rho' \sim \rho''$ if and only if
$\W_{\rho'}^I$ is diffeomorphic to $\W_{\rho''}^I$.  We will show
show that $\A$ consists of a single equivalence class.

First, let $H$ be an irreducible component of $\Hom(\pi,G)$ and let
$\B$ be a component of $\A \cap H$, so that $\B$ is a connected open
subset of $H$.  The singular locus $H^{\sing}$ of $H$ is a proper
algebraic subvariety, and its complement $H^{\smooth}$ is a connected
complex manifold that is dense in $H$. In the analytic topology, a
subvariety of an irreducible algebraic variety over $\C$ does not locally
separate, and so $\B \cap H^{\smooth}$ is also a connected complex
manifold.  Any two points of $\B \cap H^{\smooth}$ are therefore
joined by a smooth path, and Lemma
\ref{lem:interval-family-smoothly-trivial} shows that
$\B \cap H^{\smooth}$ lies in a single equivalence class.

By Milnor's Curve Selection Lemma \cite[Section~3]{Milnor68} for any
$x \in H^\sing$ there exists a smooth path $\gamma : [0,1] \to H$ so
that $\gamma(0) = x$ and $\gamma(t) \in H^{\smooth}$ for $t>0$.  Thus
for any $x \in \B \cap H^{\sing}$ we have such a path with
$\gamma(t) \in \B \cap H^{\smooth}$ for $0 < t \leq \epsilon$ (using
that $\B$ is open in $H$).  Applying Lemma
\ref{lem:interval-family-smoothly-trivial} to such paths, we find that
each $x \in \B \cap H^{\sing}$ lies in the same equivalence class as
$\B \cap H^{\smooth}$.  That is, $\B$ consists of a single equivalence
class.

Now for any point $x \in \A$, let $H_1, \ldots, H_k$ be the irreducible
components of $\Hom(\pi,G)$ that contain $x$.  The argument above
gives neighborhoods $\B_i$ of $x$ in $\A \cap H_i$ such that each
$\B_i$ lies in a single equivalence class.  Thus the union
$\bigcup_i \B_i$ also lies in a single equivalence class, and it
contains a neighborhood of $x$ in $\A$.

This shows that the equivalence classes in $\A$ are open.  Since $\A$
is connected, there is only one equivalence class.
\end{proof}

Since the set $\widetilde{\mathcal{QH}}(S,G,P_{A})$ is the Anosov
component of a $G$-Fuchsian representation (for $G$ simple and
adjoint), we have the immediate corollary:

\begin{cor}\label{cor: hitchin diff quotient}
The quotient manifold $\W_\rho^I$ obtained from any
$\rho \in \widetilde{\mathcal{QH}}(S,G,P_A)$ is diffeomorphic to the
corresponding quotient manifold for a $G$-Fuchsian representation.
\end{cor}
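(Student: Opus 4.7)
The plan is to deduce this as an immediate consequence of Theorem \ref{thm:const-diffeo}, once we identify $\widetilde{\mathcal{QH}}(S,G,P_A)$ with a single Anosov component containing some $G$-Fuchsian representation.

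First, I would verify that every $G$-Fuchsian representation $\rho_0 : \pi_1 S \to G$ lies in $\widetilde{\mathcal{QH}}(S, G, P_A)$. By construction $\rho_0 = \iota_G \circ \eta$ for a Fuchsian $\eta$, so its image lies in the split real form $G_\R$ and defines a point of the Hitchin component $\mathcal{H}(S, G_\R)$. By Theorem \ref{thm:hitchin-space-smooth}, $\rho_0$ is then $B$-Anosov, and hence $P_A$-Anosov for any symmetric $P_A$ by Proposition \ref{anosov exchange}. Its class in $\bar{\chi}(S,G)$ therefore lies in the connected component of $P_A$-Anosov characters that contains Hitchin representations, which by definition is $\mathcal{QH}(S,G,P_A)$.

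Next, I would show that $\widetilde{\mathcal{QH}}(S,G,P_A) = \A(\rho_0, P_A)$ as subsets of $\Hom(\pi_1 S, G)$. The inclusion $\A(\rho_0, P_A) \subset \widetilde{\mathcal{QH}}(S,G,P_A)$ is immediate since the projection $\Hom(\pi_1 S, G) \to \bar{\chi}(S,G)$ is continuous and sends the Anosov component of $\rho_0$ into the connected component of Anosov characters containing $[\rho_0]$. For the reverse inclusion, the main observation is that Anosov components are $G$-conjugation invariant: if $\rho' \in \A(\rho_0, P_A)$ and $g \in G$, then since $G$ is connected one can choose a smooth path from the identity to $g$, giving a smooth path of representations $t \mapsto g_t \rho' g_t^{-1}$ from $\rho'$ to $g \rho' g^{-1}$, and conjugation preserves the $P_A$-Anosov property (with the limit curve transported accordingly). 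Thus $\A(\rho_0, P_A)$ is a union of $G$-orbits, so it is the full preimage of its image in $\bar{\chi}(S,G)$; that image is a nonempty open, closed, connected subset of the $P_A$-Anosov locus containing $[\rho_0]$, hence equals $\mathcal{QH}(S,G,P_A)$.

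Combining these two steps, for any $\rho \in \widetilde{\mathcal{QH}}(S,G,P_A)$ we have $\rho \in \A(\rho_0, P_A)$ for some $G$-Fuchsian $\rho_0$. Applying Theorem \ref{thm:const-diffeo} to this Anosov component produces a diffeomorphism $\W_\rho^I \simeq \W_{\rho_0}^I$, which is the desired conclusion. The only mildly non-routine step is the $G$-invariance argument that forces the preimage of $\mathcal{QH}(S,G,P_A)$ to coincide with a single Anosov component, but this reduces to the connectedness of $G$ and the openness of the Anosov locus (Proposition \ref{anosov prop}(iii)); all the real work has already been done in Theorem \ref{thm:const-diffeo}.
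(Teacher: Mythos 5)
Your proposal is correct and follows the same route as the paper: identify $\widetilde{\mathcal{QH}}(S,G,P_A)$ with the Anosov component $\A(\rho_0,P_A)$ of a $G$-Fuchsian $\rho_0$ and apply Theorem \ref{thm:const-diffeo}. The paper states that identification as a fact in Section \ref{components} without spelling it out, whereas you supply the (correct) justification via $G$-invariance of Anosov components, openness of the quotient map, and the resulting open--closed--connected argument.
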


\subsection{Homology and cohomology of thickenings}
\label{subsec:homological-balance}
Starting toward our study of the topology of $G$-Fuchsian quotient
manifolds associated to a Chevalley-Bruhat ideal $I$, we
begin by considering the topology of the model thickening
$\Phi^I \subset G/P_D$.
\begin{lem}
\label{lem:homology-of-phi}
Let $I \subset W$ be a right $W_D$-invariant ideal.  
Then in the Schubert cell
basis for $H_*(G/P_D)$, the map 
\[
i : H_*(\Phi^I) \to H_*(G/P_D)
\] induced
by the inclusion $\Phi^I \into G/P_D$ corresponds to the natural
embedding of free abelian groups
\[
\Z_{I/W_D} \into \Z_{W/W_D}.
\]
\end{lem}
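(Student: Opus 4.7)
The plan is to realize $\Phi^{I}$ as a closed CW subcomplex of $G/P_{D}$ with respect to the Schubert cell decomposition, and then compute the induced map by cellular homology. First I would unpack the definition: since $I$ is a right $W_{D}$-invariant ideal, $I/W_{D}$ is a well-defined downward-closed subset of $W/W_{D}$ in the Chevalley--Bruhat order, and in the $B$-Anosov setting of this section (where $P_{A}=B$ and $W_{A}=\{e\}$) the defining description of Section~\ref{KLP machine} gives
\[
\Phi^{I} \;=\; \bigcup_{wW_{D}\in I/W_{D}} C_{wW_{D}}.
\]

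Next I would verify that $\Phi^{I}$ is a closed CW subcomplex. By Proposition~\ref{prop:rel pos}, the closure of the Schubert cell $C_{wW_{D}}$ is the Schubert variety $X_{wW_{D}}=\bigcup_{w'W_{D}\leq wW_{D}} C_{w'W_{D}}$. Since $I/W_{D}$ is downward-closed, the closure of each cell of $\Phi^{I}$ remains in $\Phi^{I}$, so $\Phi^{I}$ is a closed subcomplex whose open cells are exactly the $C_{wW_{D}}$ for $wW_{D}\in I/W_{D}$.

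Finally, I would apply cellular homology. Every Schubert cell has even real dimension $2\ell(wW_{D})$, so the cellular chain complexes of both $\Phi^{I}$ and $G/P_{D}$ are concentrated in even degrees and all differentials vanish for dimension reasons. Hence $H_{*}(\Phi^{I},\Z)\simeq \Z_{I/W_{D}}$ (graded by $2\ell$), in parallel with the description $H_{*}(G/P_{D},\Z)\simeq \Z_{W/W_{D}}$ recalled in Section~\ref{subsec:pd-flag}. The inclusion $i:\Phi^{I}\hookrightarrow G/P_{D}$ is a cellular map that restricts to the identity on each Schubert cell of $\Phi^{I}$, so on cellular chains it is the canonical inclusion $\Z_{I/W_{D}}\hookrightarrow \Z_{W/W_{D}}$; since all differentials vanish, the same description holds on homology. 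There is no substantive obstacle here---the argument is essentially a careful assembly of standard facts about the Schubert CW structure on flag varieties together with the ideal property of $I/W_{D}$.
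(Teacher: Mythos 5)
Your proof is correct and follows essentially the same route as the paper's: identify $\Phi^I$ as a closed cellular subcomplex of $G/P_D$ via the ideal property, observe that the Schubert cells are all even-dimensional so the cellular differentials vanish, and read off the induced map on homology as the natural inclusion $\Z_{I/W_D}\hookrightarrow\Z_{W/W_D}$. The paper is somewhat terser (it does not spell out the closure computation that you do via Proposition~\ref{prop:rel pos}), but the argument is the same.
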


\begin{proof}
The model thickening $\Phi^I$ is a closed set that is a union of
Schubert cells, hence it is a subcomplex of the cell structure on
$G/P_D$. Using the labeling of cells by $W_D$-cosets, the natural map
$\Z_{I/W_D} \into \Z_{W/W_D}$ becomes the map on cellular chain
complexes induced by the inclusion of $\Phi^I$.  Since the boundary
maps of these chain complexes vanish identically (as there are no
odd-dimensional cells), this is naturally isomorphic to the induced
map on homology.
\end{proof}

Taking duals, Lemma~\ref{lem:homology-of-phi} identifies
the cohomology pullback map associated to the inclusion $\Phi^I \into
G/P_D$ with the natural surjective map $\Z^{W/W_D} \to \Z^{I/W_D}$.

Next, we show that the pair of orthogonal ideals $I,I^\perp$
corresponds naturally to a splitting of the homology $H_*(G/P_D)$ as a
direct sum.

\begin{lem}
\label{lem:homology-splitting}
For each right $W_D$-invariant ideal $I$ there is a split exact
sequence
\[
0 \to H_*(\Phi^I) \xrightarrow{i} H_*(G/P_D) \to
H^{2n-*}(\Phi^{I^\perp}) \to 0
\]
where $i$ is the map induced by $\Phi^I \into G/P_D$ and $n = \dim_\C G/P_D$.
\end{lem}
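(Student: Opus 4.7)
The plan is to realize the exact sequence as a filtered/complemented decomposition of the cellular chain complex of $G/P_D$, using Poincaré duality together with the bijection $w \mapsto w_0 w$ that implements the duality $I \leftrightarrow I^\perp$ on right $W_D$-cosets.

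First, by Lemma~\ref{lem:homology-of-phi}, I identify $i$ with the inclusion of free abelian groups $\Z_{I/W_D} \hookrightarrow \Z_{W/W_D}$, graded by $2\ell$. Since this is the inclusion of a subgroup generated by a subset of a basis, $i$ is injective and its cokernel is canonically $\Z_{(W/W_D)\setminus (I/W_D)}$, graded by $2\ell$. Because $I$ is right $W_D$-invariant and $W = I \sqcup w_0 I^\perp$, left multiplication by $w_0$ descends to a bijection $W/W_D \to W/W_D$ carrying $(W/W_D) \setminus (I/W_D)$ onto $I^\perp/W_D$; here I use the identity $\ell(w_0 w W_D) = \ell(w_0 W_D) - \ell(wW_D) = n - \ell(wW_D)$ recalled in Section~\ref{subsec:bruhat}, so this bijection carries length $k/2$ to length $n - k/2$.

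Next, I build the third map as the composition of Poincaré duality on $G/P_D$ with restriction to $\Phi^{I^\perp}$:
\[
H_k(G/P_D) \xrightarrow{PD} H^{2n-k}(G/P_D) \xrightarrow{j^*} H^{2n-k}(\Phi^{I^\perp}),
\]
where $j : \Phi^{I^\perp} \hookrightarrow G/P_D$. By the formulas recalled in Section~\ref{subsec:pd-flag}, $PD$ sends $[X_{wW_D}]$ to $[X^{w_0 w W_D}]$, and by the cohomological version of Lemma~\ref{lem:homology-of-phi}, $j^*$ is the projection $\Z^{W/W_D} \twoheadrightarrow \Z^{I^\perp/W_D}$ that keeps the dual basis elements indexed by $I^\perp/W_D$ and discards the rest. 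Thus the composition $j^* \circ PD$ sends $[X_{wW_D}]$ to $[X^{w_0 w W_D}]$ if $wW_D \notin I/W_D$, and to $0$ otherwise.

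This description makes the remaining verifications mechanical: the kernel of $j^* \circ PD$ is exactly the subgroup spanned by $\{[X_{wW_D}] : wW_D \in I/W_D\}$, which equals the image of $i$; and $j^* \circ PD$ is surjective since its image contains every basis element $[X^{vW_D}]$ with $vW_D \in I^\perp/W_D$ (take $w = w_0 v$). Finally, because the quotient $H^{2n-*}(\Phi^{I^\perp})$ is free abelian, the sequence splits; an explicit splitting is given by $[X^{vW_D}] \mapsto [X_{w_0 v W_D}]$ for $vW_D \in I^\perp/W_D$. The only mild subtlety is bookkeeping with gradings and with the fact that left multiplication by $w_0$ really does preserve right $W_D$-cosets (because $w(W_D) = (w')W_D$ implies $(w_0 w)W_D = (w_0 w')W_D$), which is where the right-$W_D$-invariance hypothesis is used; once that is in hand, there is no substantive obstacle.
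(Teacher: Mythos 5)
Your proof is correct and follows essentially the same route as the paper: both define the quotient map as the composition of the Poincar\'e duality isomorphism with the restriction $H^*(G/P_D) \to H^*(\Phi^{I^\perp})$, and both identify its kernel with $\Z_{I/W_D}$ via the $w_0$-twist $I^\perp = w_0(W\setminus I)$. The only cosmetic difference is that the paper phrases the kernel computation in terms of the intersection pairing formula from Section~\ref{subsec:pd-flag}, whereas you compute $j^*\circ PD$ directly on the Schubert basis and also record an explicit splitting; these amount to the same calculation.
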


\begin{proof}
Splitting is automatic since $H^{2n-*}(\Phi^{I^\perp})$ is free
abelian (by the previous lemma).  To construct the exact sequence, let
$j : H_*(G/P_D) \to H^{2n-*}(\Phi^{I^\perp})$ denote the composition of
the Poincar\'e duality map with the pullback map on cohomology from
the inclusion $\Phi^{I^\perp} \to G/P_D$.  As a composition of an
isomorphism and a surjection (the latter using the previous lemma), we
see $j$ is itself surjective.  Its kernel consists of classes that are
orthogonal (with respect to the intersection pairing) to
$H_{2n-*}(\Phi^{I^\perp})$.  Identifying $H_{2n-*}(\Phi^{I^\perp})$ with
the subgroup $\Z_{I^\perp/W_D}$ of $\Z_{W/W_D}$, the description of the
intersection pairing from Section \ref{subsec:pd-flag} shows that this
subgroup pairs non-trivially with basis elements in $w_0 I^\perp$, and
is zero otherwise.  That is, the orthogonal is $\Z_{(W \setminus w_0I^\perp)/W_D}$.
Recalling that $I^\perp = w_0 (W \setminus I)$ and $w_0^2 = e$ we see
that this is simply $\Z_{I/W_D} \simeq i(H_*(\Phi^I))$ as required.
\end{proof}

We remark that this lemma essentially describes the (co)homologial consequence
of the disjoint union decomposition $(W/W_D) = (I/W_D) \sqcup
(w_0I^\perp/W_D)$.  In case $I$ is slim the description of the
intersection pairing on $G/P_D$ from Section \ref{subsec:pd-flag}
shows that the image of $H_*(\Phi^I)$ is an isotropic space for this
pairing (i.e.~the restriction of the intersection form vanishes
identically).  Therefore, for a \emph{balanced} ideal $I$ the exact
sequence of Lemma~\ref{lem:homology-splitting} represents an associated
``Lagrangian splitting'' of the homology $H_*(G/P_D)$.

\subsection{Homology of domains of proper discontinuity}
\label{subsec:omega-pd}
We now turn to the topology of domains $\Omega_\rho^I$.

\begin{thm}
\label{thm:omega-homology}
Let $G$ be a complex simple Lie group of adjoint type and let
$\rho\in \tilde{\QH}(S, G, P_A)\subset \Hom(\pi_{1}S,G)$.
If $I$ is a slim ideal of type $(P_A, P_{D})$ with associated model
thickening $\Phi^I$ and domain $\Omega_{\rho}^I \subset G/P_D$, then
there is a split short exact sequence
\begin{equation}
\label{exact seq domain}
0\rightarrow H^{2n-2-k}(\Phi^{I}, \Z)\rightarrow H_{k}(\Omega_{\rho}^{I}, \Z)\rightarrow  H_{k}(\Phi^{I^{\perp}}, \Z) \rightarrow 0
\end{equation}
where $n = \dim_\C G/P_{D}$.  In particular, the homology groups of $\Omega_{\rho}^{I}$ are free abelian.  In addition:
\begin{rmenumerate}
\item The odd homology groups of $\Omega_{\rho}^I$ vanish,
\item If $I$ is balanced, then the homology of $\Omega_{\rho}^I$ satisfies
\[ H_k(\Omega_{\rho}^I, \Z) \simeq H^{2n-2-k}(\Omega_{\rho}^I,
\Z).
\]
\end{rmenumerate}
\end{thm}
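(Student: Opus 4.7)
The plan is to use Lefschetz duality on the pair $(M, \Omega_\rho^I)$ where $M = G/P_D$ is a closed oriented $2n$-manifold, together with the bundle structure of the limit set. First I would reduce to the $G$-Fuchsian case: by Theorem~\ref{thm:klp-local-triviality}, $\Lambda_\rho^I$ is a fiber bundle over $\partial_\infty \pi_1 S \simeq S^1$ with fiber $\Phi^I$, and this bundle varies continuously with $\rho$ by Theorem~\ref{anosov prop}(iv). Since $\Phi^I$-bundles over $S^1$ have a discrete set of isomorphism classes (classified by $\pi_0 \Isom(\Phi^I)$) and $\tilde{\QH}(S, G, P_A)$ is connected, the bundle class is constant on $\tilde{\QH}$. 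Lemma~\ref{thm:triviality} gives triviality in the $G$-Fuchsian case, so $\Lambda_\rho^I \simeq \Phi^I \times S^1$ for every $\rho \in \tilde{\QH}$.

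Next I would run the long exact sequence of the pair $(M,\Omega_\rho^I)$ in homology, using Lefschetz duality $H_k(M, M \setminus \Lambda_\rho^I) \simeq H^{2n-k}(\Lambda_\rho^I)$ and the Künneth decomposition $H^j(\Lambda_\rho^I) \simeq H^j(\Phi^I) \oplus H^{j-1}(\Phi^I)$. The connecting map $\psi_k : H_k(M) \to H^{2n-k}(\Lambda_\rho^I)$ is Poincar\'e duality on $M$ followed by restriction along the inclusion $\iota : \Lambda_\rho^I \hookrightarrow M$. The key point is that the image of $\psi_k$ sits in the first Künneth summand $H^{2n-k}(\Phi^I)$: for a class of the form $\beta \times [S^1]$ with $\beta \in H_{k-1}(\Phi^I)$, the pairing equals $\langle \gamma, \iota_*(\beta \times [S^1]) \rangle$, and the pushforward must vanish by parity, since $H_{k-1}(\Phi^I) \neq 0$ forces $k-1$ to be even and hence $H_k(M) = 0$ (the flag variety has only even cells). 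Applying Lemma~\ref{lem:homology-splitting} with the roles of $I$ and $I^\perp$ interchanged identifies $\psi_k$ as a surjection onto $H^{2n-k}(\Phi^I)$ with kernel $H_k(\Phi^{I^\perp})$. Extracting the short exact sequence at degree $k$ yields
\[
0 \to H^{2n-k-2}(\Phi^I) \to H_k(\Omega_\rho^I) \to H_k(\Phi^{I^\perp}) \to 0,
\]
and splitting is automatic because $H_k(\Phi^{I^\perp})$ is free abelian by Lemma~\ref{lem:homology-of-phi}.

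Finally I would deduce parts (i) and (ii). Part (i) is immediate: both outer terms in the sequence are only nonzero in even degree, being cohomology and homology of spaces with only even-dimensional Schubert cells. For part (ii), I would run the parallel argument using the cohomology long exact sequence of the pair $(M, \Omega_\rho^I)$ together with Lefschetz duality in the dual form $H^k(M, \Omega_\rho^I) \simeq H_{2n-k}(\Lambda_\rho^I)$; this produces the analogous split short exact sequence
\[
0 \to H^k(\Phi^{I^\perp}) \to H^k(\Omega_\rho^I) \to H_{2n-k-2}(\Phi^I) \to 0.
\]
In the balanced case $I = I^\perp$, substituting $k \mapsto 2n-2-k$ in this cohomology sequence and comparing with the homology sequence term-by-term (using that $H^j(\Phi^I) \simeq H_j(\Phi^I)$ are free abelian of equal rank) shows that $H_k(\Omega_\rho^I)$ and $H^{2n-2-k}(\Omega_\rho^I)$ are both free abelian of the same rank in each degree, yielding the abstract isomorphism.

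The main obstacle I expect is the identification in step two of the connecting map $\psi_k$: confirming both the factorization through the first Künneth summand and the compatibility of the restriction to $\Phi^I$ (through an arbitrary fiber of $\Lambda_\rho^I \to S^1$) with the restriction considered in Lemma~\ref{lem:homology-splitting}. The reduction to the Fuchsian case is subtler for slim but non-balanced ideals, where Corollary~\ref{cor: hitchin diff quotient} does not directly apply and one must appeal instead to the discrete classification of bundles over $S^1$.
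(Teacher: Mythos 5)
Your proof is correct and lands on the same essential structure as the paper's, but you take the Poincar\'e--Alexander--Lefschetz dual route: you work with the long exact sequence of the pair $(M,\Omega)$ in homology and identify $H_k(M,\Omega)$ with $H^{2n-k}(\Lambda)$, while the paper instead works with the long exact sequence of $(M,\Lambda)$ in cohomology, uses $H^{2n-j}(M,\Lambda)\simeq H_j(\Omega)$, and extracts five-term sequences centered on the even cohomology of $M$. The two routes are formally equivalent, and you correctly recognize that your connecting map $\psi_k$ (Poincar\'e duality followed by restriction to $\Lambda$, then projection to the fiber) is exactly the dual of the paper's $(*)$-map, which is why Lemma~\ref{lem:homology-splitting} enters for both of you (with $I$ and $I^\perp$ swapped in your case). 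Your parity argument for why $\psi_k$ lands in the first K\"unneth summand is sound but slightly roundabout: for $k$ even the second summand $H^{2n-k-1}(\Phi^I)$ is already zero because $\Phi^I$ has no odd cohomology, and for $k$ odd $H_k(M)=0$, so no separate vanishing argument is required.

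Two points worth flagging. First, your reduction to the Fuchsian case is a genuine improvement in hygiene over the paper's: the paper invokes Corollary~\ref{cor: hitchin diff quotient}, which as stated is a consequence of Theorem~\ref{thm:const-diffeo} and is proved only for \emph{balanced} ideals (cocompactness is essential for the Ehresmann argument), whereas the present theorem is stated for merely slim ideals. You instead argue directly that the bundle $\Lambda_\rho^I\to S^1$ has locally constant (hence constant, on a connected set) isomorphism class via continuity of the limit curve (Theorem~\ref{anosov prop}(iv)) and then quote Lemma~\ref{thm:triviality} to pin down the class at a Fuchsian basepoint; this reasoning survives the slim case intact. Second, the compatibility worry you raise at the end --- whether restriction through an arbitrary fiber $\Phi^I_{\xi(t)}\subset\Lambda$ agrees with restriction to the model $\Phi^I$ appearing in Lemma~\ref{lem:homology-splitting} --- does need a sentence, but it resolves easily: any two such fibers differ by a $G$-translate, and the connected group $G$ acts trivially on $H^*(G/P_D)$, so the induced restriction maps on cohomology coincide. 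For part (ii), the paper simply dualizes \eqref{exact seq domain} and applies universal coefficients; your parallel cohomology long exact sequence works too but is more machinery than needed.
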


Observe that when applied to a balanced ideal $I$, this theorem
incorporates the results stated as Theorems
\ref{introthm:fuchsian-domain-homology} and
\ref{introthm:fuchsian-domain-properties} in the introduction, with
the exception of statement (iii) of Theorem
\ref{introthm:fuchsian-domain-properties}.

In the proof, we will omit the $\Z$-coefficients to simplify notation.

\begin{proof}
By Corollary~\ref{cor: hitchin diff quotient} it suffices to consider
the case when $\rho$ is $G$-Fuchsian.  Assume this from now on.
Poincar\'{e}-Alexander-Lefschetz duality yields a canonical
isomorphism
\begin{equation}
\label{PAL iso}
H^{2n-j}(G/P_D, \Lambda_{\rho}^{I})\simeq H_{j}(\Omega_{\rho}^{I}).
\end{equation}
Since the cohomology of $G/P_D$ vanishes in odd degrees, the long exact sequence
in cohomology of the pair $(G/P_D,\Lambda_{\rho}^I)$
decomposes into five-term sequences centered on the even degree
cohomology groups of $G/P_D$:
\begin{multline}
0\rightarrow H^{2n-2j-1}(\Lambda_{\rho}^I) \rightarrow H^{2n-2j}(G/P_D,\Lambda_{\rho}^I)
\rightarrow\\
\rightarrow H^{2n-2j}(G/P_D) \xrightarrow{*} H^{2n-2j}(\Lambda_{\rho}^I)\rightarrow H^{2n-2j+1}(G/P_D,\Lambda_{\rho}^I)
\rightarrow 0.
\label{exact sequence}
\end{multline}
Using Theorem \ref{thm:triviality}, the K\"unneth Theorem implies
$H^{2n-2j}(\Lambda_{\rho}^I) \simeq H^{2n-2j}(\Phi^I)$. 
Post-composing with this isomorphism, the map labeled
$(*)$ becomes the pullback map on cohomology of degree $(2n-2j)$
induced by the inclusion $\Phi^I \into G/P_D$.  Taking the dual of the
exact sequence from Lemma~\ref{lem:homology-splitting}, we find that
this map is surjective with kernel isomorphic to
$H_{2j}(\Phi^{I^\perp})$.

By the surjectivity of $(*)$ and the Poincar\'{e}-Alexander-Lefschetz 
isomorphism \eqref{PAL iso}, the exactness of \eqref{exact sequence} at the right implies that
\[
0=H^{2n-2j+1}(G/P_D,\Lambda_{\rho}^I)\simeq H_{2j-1}(\Omega_{\rho}^{I})
\]
which is statement (i) of the theorem.  Since the (co)homology of
$\Phi^{I}$ and $\Phi^{I^{\perp}}$ vanish in odd degrees (by Lemma
\ref{lem:homology-of-phi}), this also trivially verifies the existence
of the exact sequence \eqref{exact seq domain} when the degree is odd.

For even degrees, since the map labeled by $(*)$ has kernel isomorphic to $H_{2j}(\Phi^{I^\perp})$,
the five-term exact sequence restricts to a short exact sequence
\begin{equation}
\begin{split}
\label{ses1}
0\rightarrow H^{2n-2j-1}(\Lambda_{\rho}^I)&\rightarrow H^{2n-2j}(G/P_D,\Lambda_{\rho}^I)
\rightarrow H_{2j}(\Phi^{I^\perp})\rightarrow 0.
\end{split}
\end{equation}
The K\"unneth Theorem, Theorem \ref{thm:triviality},
and the vanishing of the odd-dimensional cohomology of $\Phi^I$ 
imply $H^{2n-2j-1}(\Lambda_{\rho}^I) \simeq H^{2n-2j-1}(\Phi^I \times
S^1) \simeq H^{2n-2j-2}(\Phi^I).$  Using
this isomorphism to replace the initial term in \eqref{ses1} and the Poincar\'{e}-Alexander-Lefschetz duality isomorphism \eqref{PAL iso} to replace the central term with
$H_{2j}(\Omega_{\rho}^{I})$ yields the desired short exact sequence
\[
0\rightarrow H^{2n-2j-2}(\Phi^I)\rightarrow H_{2j}(\Omega_{\rho}^{I})\rightarrow
H_{2j}(\Phi^{I^\perp})\rightarrow 0.
\]
Since $H_{2j}(\Phi^{I^\perp})$ is a free abelian group, the
sequence splits. 

Finally, statement (ii) follows immediately by taking the dual of the
exact sequence \eqref{exact seq domain} and applying the universal
coefficients theorem.
\end{proof}

As a corollary of this result, we find a simple formula for the Betti
numbers of the domain of discontinuity, which we state only for the case
when $I$ is balanced.  Note that Lemma~\ref{lem:homology-of-phi} shows that
$b_{2k}(\Phi)$ is the number of elements of $I/W_D$ of length $k$.
Thus if $I = I^\perp$, the theorem above gives:
\begin{cor}
\label{cor:omega-betti}
Under the hypotheses of Theorem \ref{thm:omega-homology}, if $I$ is a
balanced ideal, then the Betti numbers of the domain of discontinuity
in $G/P_D$ are given by
\[ b_{2k}(\Omega_\rho^I) = r_k + r_{n-1-k} 
\]
where $r_k$ is the number of elements of $I/W_D$ of length $k$ and $n
= \ell(w_0 W_D) = \dim_\C G/P_D$. \noproof
\end{cor}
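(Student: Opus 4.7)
The plan is to read off the Betti numbers directly from the split short exact sequence
\[
0 \to H^{2n-2-k}(\Phi^I,\Z) \to H_k(\Omega_\rho^I,\Z) \to H_k(\Phi^{I^\perp},\Z) \to 0
\]
supplied by Theorem \ref{thm:omega-homology}, once the outer terms are identified combinatorially and the balanced hypothesis $I=I^\perp$ is used. Since the sequence splits, the rank of $H_k(\Omega_\rho^I,\Z)$ is simply the sum of the ranks of the two outer terms, so the whole corollary reduces to computing those ranks.

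First I would specialize to $k=2j$ even; the odd Betti numbers of $\Omega_\rho^I$ vanish by Theorem \ref{thm:omega-homology}(i), so there is nothing to check there (and the outer terms vanish in odd total degree anyway because of parity reasons below). Next, Lemma \ref{lem:homology-of-phi} identifies $H_*(\Phi^I,\Z)$ with the free abelian group $\Z_{I/W_D}$ graded by twice the length function on $W/W_D$, and by universal coefficients $H^*(\Phi^I,\Z)$ is the dual $\Z^{I/W_D}$ with the same grading. Consequently $H^{2i}(\Phi^I)$ and $H_{2i}(\Phi^I)$ are both free abelian of rank equal to the number $r_i$ of elements of $I/W_D$ of length $i$, while both groups vanish in odd degree.

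Now applying the split exact sequence with $k=2j$ and invoking the balance hypothesis $I=I^\perp$, the right-hand term $H_{2j}(\Phi^{I^\perp},\Z)=H_{2j}(\Phi^I,\Z)$ has rank $r_j$, and the left-hand term $H^{2n-2-2j}(\Phi^I,\Z)=H^{2(n-1-j)}(\Phi^I,\Z)$ has rank $r_{n-1-j}$. Adding these gives $b_{2j}(\Omega_\rho^I)=r_j+r_{n-1-j}$, which is the stated formula. Finally, the identification $n=\ell(w_0 W_D)=\dim_\C G/P_D$ is exactly the standard description of the top length in $W/W_D$ as the complex dimension of the partial flag variety recalled in Section \ref{subsec:bruhat}.

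There is no real obstacle here: the content is entirely packaged into Theorem \ref{thm:omega-homology} and Lemma \ref{lem:homology-of-phi}. The only mild subtlety is bookkeeping the degree shift $2n-2-k$ versus $k$ correctly so that both outer groups are simultaneously nonzero exactly in the range $0 \le j \le n-1$ (matching the expected real dimension $2n-2$ of the ``fiber'' in Conjecture \ref{conjecture fiber}), and keeping track of the fact that balance $I=I^\perp$ is what lets the two distinct ideals appearing in Theorem \ref{thm:omega-homology} collapse to the single counting function $r_\bullet$.
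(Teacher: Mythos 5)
Your proposal is correct and matches the paper's intended argument exactly: the paper leaves the corollary unproved because it follows directly from reading off ranks in the split exact sequence of Theorem \ref{thm:omega-homology} together with the identification $b_{2i}(\Phi^I) = r_i$ from Lemma \ref{lem:homology-of-phi} and the balance hypothesis $I = I^\perp$. Your degree bookkeeping ($2n-2-2j = 2(n-1-j)$ giving rank $r_{n-1-j}$ on the left, and rank $r_j$ on the right) is precisely the computation the paper sketches in the sentence preceding the corollary.
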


As this corollary is statement (iii) of Theorem
\ref{introthm:fuchsian-domain-properties} , we have now completed the
proofs of Theorems \ref{introthm:fuchsian-domain-homology} and
\ref{introthm:fuchsian-domain-properties}.  Using the corollary above
to calculate the Euler characteristic of $\Omega_\rho^I$, we also
obtain:
\begin{cor}
\label{cor:chi-omega}
Under the hypotheses of Theorem \ref{thm:omega-homology}, if $I$ is a
balanced ideal, then the Euler characteristic of the domain of
discontinuity is given by
\[
\chi(\Omega^I_\rho) = \chi(G/P_D) = \card{W/W_D}.
\]
\end{cor}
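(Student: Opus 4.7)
The plan is direct: both Euler characteristics will be counted from cell or Betti number data and will equal $\card{W/W_D}$. The identity $\chi(G/P_D) = \card{W/W_D}$ is immediate from the Schubert cell decomposition discussed in Section \ref{subsec:pd-flag}: each cell $C_{wW_D}$ has even real dimension $2\ell(wW_D)$, so the Euler characteristic reduces to a count of cells, indexed by $W/W_D$.

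To compute $\chi(\Omega_\rho^I)$, I would first invoke Theorem \ref{thm:omega-homology}(i) to kill all odd Betti numbers, reducing the Euler characteristic to
\[
\chi(\Omega_\rho^I) \;=\; \sum_{k \geq 0} b_{2k}(\Omega_\rho^I) \;=\; \sum_{k=0}^{n-1}\bigl(r_k + r_{n-1-k}\bigr),
\]
by Corollary \ref{cor:omega-betti}, where $r_k$ counts elements of $I/W_D$ of length $k$ and $n = \dim_\C G/P_D$. Re-indexing via $j = n-1-k$ shows $\sum_k r_{n-1-k} = \sum_j r_j = \card{I/W_D}$; the first sum is the same, so $\chi(\Omega_\rho^I) = 2\,\card{I/W_D}$.

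The balanced hypothesis then enters in a single counting step: since $I = I^\perp = w_0(W \setminus I)$, one has the disjoint union $W = I \sqcup w_0 I$. Because $I$ is right $W_D$-invariant, so is $w_0 I$, and hence both are unions of right $W_D$-cosets. Thus this disjoint union descends to $W/W_D = (I/W_D) \sqcup (w_0 I/W_D)$, which gives $\card{I/W_D} = \tfrac12\card{W/W_D}$. Combining with the previous display yields $\chi(\Omega_\rho^I) = \card{W/W_D} = \chi(G/P_D)$, as required.

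I do not anticipate any real obstacle: the entire corollary is essentially bookkeeping on top of the Betti number formula in Corollary \ref{cor:omega-betti} and the defining symmetry of a balanced ideal. The only care needed is to confirm that the disjoint decomposition $W = I \sqcup w_0 I$ genuinely descends to a disjoint decomposition of $W/W_D$, which is exactly the content of the right $W_D$-invariance assumption on $I$.
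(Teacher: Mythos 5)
Your proof is correct and follows essentially the same route as the paper's: use the vanishing of odd homology and Corollary \ref{cor:omega-betti} to reduce $\chi(\Omega_\rho^I)$ to $2\,\card{I/W_D}$, then invoke the balanced and right $W_D$-invariance conditions to conclude $2\,\card{I/W_D} = \card{W/W_D}$. You merely spell out more explicitly the descent of the disjoint decomposition $W = I \sqcup w_0 I$ to $W/W_D$, which the paper states without elaboration.
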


\begin{proof}
Since $\Omega^I$ has only even-dimensional homology, the Euler
characteristic is the sum of its Betti numbers.  Using the formula of
Corollary \ref{cor:omega-betti}, each term $r_k$ appears twice in this
sum, hence $\chi(\Omega_{\rho}^I) = 2 \card{I/W_D}$.  Since a balanced
ideal satisfies $2 \card{I} = \card{W}$, a balanced
$W_D$-invariant ideal satisfies $2 \card{I/W_D} = \card{W/W_D}$, and
the desired formula for $\chi(\Omega_\rho^I)$ follows.
\end{proof}

\subsection{Homology of quotient manifolds}
\label{subsec:homology-m}
Next we show that Serre spectral sequence for the covering
$\Omega_\rho^I \to \W_\rho^I$ degenerates, yielding:
\begin{thm}
\label{thm:homology-product}
Let $G$ be a complex simple Lie group of adjoint type and
$\rho\in \tilde{\QH}(S, G, P_A)$ where $P_A < G$ is a symmetric
parabolic subgroup.

If $I$ is a balanced ideal of type $(P_{A}, P_{D})$ with
associated domain $\Omega_{\rho}^I \subset G/P_D,$ let $\W_{\rho}^{I}$
denote the compact quotient manifold.  Then, there is an isomorphism of
graded abelian groups
\[ H_*(\W_{\rho}^{I}, \Z) \simeq H_*(S, \Z) \tensor
H_*(\Omega_{\rho}^I, \Z). \]
\end{thm}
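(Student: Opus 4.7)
The plan is to first reduce to the $G$-Fuchsian case and then run the Leray--Serre spectral sequence of an appropriate fibration. By Corollary \ref{cor: hitchin diff quotient}, the diffeomorphism type of $\W_\rho^I$ is constant on the quasi-Hitchin component, so we may assume $\rho$ is $G$-Fuchsian. Since $\pi_1 S$ acts freely and properly discontinuously on $\Omega_\rho^I$ with compact quotient $\W_\rho^I$, and since $S$ is a $K(\pi_1S,1)$, the Borel construction $E\pi_1 S \times_{\pi_1 S} \Omega_\rho^I$ is homotopy equivalent to $\W_\rho^I$ and fits into a fiber bundle $\Omega_\rho^I \to \W_\rho^I \to S$. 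The Leray--Serre spectral sequence then gives
\[
E_2^{p,q} = H_p\bigl(S,\,H_q(\Omega_\rho^I, \Z)\bigr) \Rightarrow H_{p+q}(\W_\rho^I, \Z).
\]

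The first main step is to show that the local coefficient system on $S$ is trivial, i.e.~that $\pi_1 S$ acts trivially on $H_*(\Omega_\rho^I, \Z)$. For a $G$-Fuchsian representation $\rho = \iota_G \circ \eta$, the image $\rho(\pi_1 S)$ sits inside the path-connected subgroup $\iota_G(\PSL_2\R)$, and by the argument proving Proposition \ref{PSL invariance} (applied to the limit set rather than to $\Omega_\rho^I$ alone) the entire group $\iota_G(\PSL_2\R)$ preserves $\Omega_\rho^I$. Since this group is connected, each of its elements is isotopic to the identity through self-homeomorphisms of $\Omega_\rho^I$ and therefore acts as the identity on homology. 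Combined with the fact that $H_q(\Omega_\rho^I, \Z)$ is free abelian (Theorem \ref{thm:omega-homology}), the universal coefficient theorem then identifies $E_2^{p,q} \simeq H_p(S, \Z) \otimes_\Z H_q(\Omega_\rho^I, \Z)$.

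The second step is degeneration at $E_2$. Since $S$ has cohomological dimension two, $E_2^{p,q}$ vanishes unless $p \in \{0,1,2\}$; by Theorem \ref{thm:omega-homology}(i), it also vanishes unless $q$ is even. The differential $d_r : E_r^{p,q} \to E_r^{p-r,\,q+r-1}$ always shifts the parity of $q$, so for $r = 2$ it lands in a trivial group, and for $r \geq 3$ its source vanishes since $p - r < 0$. Hence $E_\infty = E_2$. Finally, the resulting filtration of $H_n(\W_\rho^I, \Z)$ has free abelian associated graded (both $H_p(S,\Z)$ and $H_q(\Omega_\rho^I,\Z)$ are free abelian), so every extension in the filtration splits, yielding
\[
H_n(\W_\rho^I, \Z) \simeq \bigoplus_{p+q=n} H_p(S,\Z) \otimes_\Z H_q(\Omega_\rho^I, \Z),
\]
which is exactly the graded K\"unneth decomposition claimed.

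The main obstacle is the triviality of the monodromy action: both the degeneration and the splitting are formal consequences of the parity and freeness results already contained in Theorem \ref{thm:omega-homology}, whereas triviality of local coefficients requires the $G$-Fuchsian reduction (so that $\rho(\pi_1 S)$ lies in the connected group $\iota_G(\PSL_2\R)$) together with the $\iota_G(\PSL_2\R)$-invariance of $\Omega_\rho^I$.
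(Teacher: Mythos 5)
Your proof is correct and follows essentially the same route as the paper: reduce to the $G$-Fuchsian case via Corollary~\ref{cor: hitchin diff quotient}, identify the $E_2$ page of the Cartan--Leray/Serre spectral sequence for the covering $\Omega_\rho^I \to \W_\rho^I$ (over $S = K(\pi_1 S,1)$), kill the monodromy by observing that $\rho(\pi_1 S) \subset \iota_G(\PSL_2\R)$ is contained in a connected group preserving $\Omega_\rho^I$ (Proposition~\ref{PSL invariance}), force degeneration from the two-column, even-row shape of $E_2$, and resolve extensions by freeness. One small slip: in the degeneration step, for $r\geq 3$ it is the \emph{target} $E_r^{p-r,q+r-1}$ of $d_r$ that vanishes because $p - r < 0$ (and dually, any incoming $d_r$ has source with $p+r>2$), not the source of the outgoing differential as written; the conclusion is unaffected.
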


As in Corollary \ref{cor:omega-betti}, this shows
$H_k(\W_{\rho}^{I}, \Z)$ is free abelian for each $k$ and its rank is
computable from the combinatorial data of the ideal $I$ and the length
function $\ell$ on $W/W_D$.  Also, using Corollary \ref{cor:chi-omega}
we obtain the result stated in the introduction as Corollary \ref{introcor:euler}:

\begin{cor}
For $\W_{\rho}^{I}$ as above we have $\chi(\W_{\rho}^{I}) = \chi(S)
\chi(G/P_D)$, and so in particular $\chi(\W_{\rho}^{I})=(2-2g)\lvert W/W_{D}\rvert<0$
where $g\geq 2$ is the genus of $S.$
\noproof
\end{cor}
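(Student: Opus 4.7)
The plan is to analyze the Cartan--Leray (Serre) spectral sequence of the regular covering $\Omega_{\rho}^I \to \W_{\rho}^I$ (with deck group $\Gamma := \rho(\pi_1 S)$), after first reducing to the $G$-Fuchsian case. By Theorem \ref{thm:const-diffeo} the diffeomorphism type of $\W_{\rho}^I$ is constant along the Anosov component $\tilde{\QH}(S, G, P_A)$, and by Theorem \ref{thm:omega-homology} the group $H_*(\Omega_{\rho}^I, \Z)$ is determined by the combinatorial data $(I, P_D)$ and is independent of $\rho$ within this component; both sides of the desired isomorphism are therefore constant along $\tilde{\QH}(S, G, P_A)$, so I may assume $\rho$ is $G$-Fuchsian. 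The classifying map $\W_{\rho}^I \to B\Gamma \simeq S$ then produces a Serre spectral sequence
\[
E^2_{p,q} = H_p(\pi_1 S,\, H_q(\Omega_{\rho}^I, \Z)) \Rightarrow H_{p+q}(\W_{\rho}^I, \Z).
\]

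Next I would verify that the local coefficient system is trivial. By Proposition \ref{PSL invariance} the connected Lie group $H := \iota_G(\PSL_2\R)$ preserves $\Omega_{\rho}^I$, and in the $G$-Fuchsian case $\Gamma < H$. For each $\gamma \in \Gamma$, any continuous path in $H$ from the identity to $\gamma$ yields an isotopy of $\Omega_{\rho}^I$ from $\id$ to the action of $\gamma$, so $\gamma$ acts as the identity on $H_*(\Omega_{\rho}^I, \Z)$. This identifies the $E^2$-page with $H_p(S, \Z) \tensor H_q(\Omega_{\rho}^I, \Z)$.

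The crux is that degeneration at $E^2$ is then automatic on parity grounds. Since $H_p(S, \Z) = 0$ for $p \not\in \{0, 1, 2\}$, every differential $d_r : E^r_{p,q} \to E^r_{p-r, q+r-1}$ with $r \geq 3$ vanishes because $p - r < 0$ on the potentially non-trivial part of the page. The remaining differential $d_2$ maps any potentially nonzero source (for which $q$ must be even, by Theorem \ref{thm:omega-homology}(i)) into a group whose second index $q+1$ is odd, hence zero by the same theorem. Thus $E^\infty = E^2$, and since $H_*(S, \Z)$ and $H_*(\Omega_{\rho}^I, \Z)$ are both free abelian, the filtration extensions on $H_*(\W_{\rho}^I, \Z)$ all split, producing the tensor product decomposition $H_*(\W_{\rho}^I, \Z) \simeq H_*(S, \Z) \tensor_\Z H_*(\Omega_{\rho}^I, \Z)$. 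The final corollary on Euler characteristics then follows at once from multiplicativity of $\chi$ under tensor products together with Corollary \ref{cor:chi-omega}, and the sign statement from $\chi(S) = 2 - 2g < 0$ for $g \geq 2$.

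I do not anticipate a serious obstacle: the substantive homological input (the vanishing of $H_q(\Omega_{\rho}^I, \Z)$ for $q$ odd) has already been established in Theorem \ref{thm:omega-homology}, and the remaining pieces --- invariance along Anosov components, connectedness of $\iota_G(\PSL_2\R)$, and the Cartan--Leray formalism --- are standard. The conceptual content of the argument is the recognition that these ingredients combine to force degeneration of the spectral sequence for purely parity-based reasons, so that no delicate computation of $k$-invariants or transgressions is needed.
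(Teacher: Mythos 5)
Your argument is correct and is essentially the paper's own route: the paper leaves this corollary marked as needing no proof because it follows immediately from Theorem~\ref{thm:homology-product} (the tensor-product decomposition of $H_*(\W_\rho^I,\Z)$) together with Corollary~\ref{cor:chi-omega} and multiplicativity of $\chi$, and your proposal simply unfolds the paper's proof of Theorem~\ref{thm:homology-product} in-line (reduction to the $G$-Fuchsian case, triviality of the local system via $\iota_G(\PSL_2\R)$-invariance, degeneration at $E^2$ by the parity/dimension argument, no extension issues since everything is free abelian). The only inefficiency is that you reprove the tensor-product isomorphism rather than citing it; otherwise the ingredients and their assembly match the paper exactly, including the appeal to Corollary~\ref{cor:chi-omega} for $\chi(\Omega_\rho^I)=|W/W_D|$ and the final sign observation from $\chi(S)=2-2g<0$.
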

This corollary indicates the importance of the (co)homology calculation since we cannot 
distinguish the quotient manifolds for different choices of ideals $I\subset W$ using 
the Euler characteristic.

\begin{proof}[Proof of Theorem \ref{thm:homology-product}.]
As before, Corollary~\ref{cor: hitchin diff quotient} reduces the
statement to the case of $G$-Fuchsian $\rho$.  Let
$E^2_{p,q} = H_{p}(S, H_{q}(\Omega_{\rho}^{I}, \Z))$ denote the $E^2$
page of the Serre spectral sequence for homology of the regular
covering $\Omega_\rho^I \to \W_\rho^I$.  Since $S$ is a
$K(\pi_{1}S, 1)$, there is an isomorphism
\[
E^2_{p,q} \simeq H_{p}(\pi_{1}S, H_{q}(\Omega_{\rho}^{I}, \Z)_{\rho})
\]
where the right hand side is group homology, and where the $\pi_{1}S$-action
on $H_{q}(\Omega_{\rho}^{I}, \Z)$ is prescribed by $\rho$.
Furthermore, we claim
\begin{equation}
\label{eqn:e2-tensor}
H_p(\pi_{1}S, H_q(\Omega_{\rho}^I, \Z)_{\rho}) \simeq 
H_p(S, \Z) \tensor H_q(\Omega_{\rho}^I, \Z).
\end{equation}
which follows if we show $H_*(\Omega_{\rho}^I, \Z)$ is a trivial
$\pi_{1}S$-module.
However, by Proposition~\ref{PSL invariance} the domain
$\Omega_{\rho}^I$ associated to a $G$-Fuchsian representation is
invariant under the action of the real principal three-dimensional
subgroup $\iota_{G}(\PSL_2\R):=\mathfrak{S}_\R$ on $G/P_D$.  Since $\mathfrak{S}_\R$ is a
connected Lie group, the action of any element of this group on
$\Omega_{\rho}^I$ is homotopic to the identity and hence acts
trivially on $H_*(\Omega_{\rho}^I, \Z)$.  Since
$\rho(\pi_{1}S) \subset \mathfrak{S}_\R$, this gives the desired triviality of the
$\pi_{1}S$-module $H_*(\Omega_{\rho}^I, \Z)$.

Next, we claim that the spectral sequence degenerates at the
$E^{2}$-page.  First, from \eqref{eqn:e2-tensor} we find $E_{p,q}^{2} = 0$
if $p > 2$ (since $S$ has real dimension $2$) or if $q$ is odd (by vanishing of odd
homology of $\Omega_{\rho}^I$).  The condition on $p$ leaves the
$E^2$-differentials $\partial^2_{p,q} : E_{p,q}^{2} \to E_{p-2,q+1}^{2}$
as the only potentially non-trivial maps, however these change the
parity of $q$ and hence either the domain or codomain is trivial.
Thus all differentials vanish at the $E^2$-page.

Finally, since all groups on the $E^{2}$-page are free abelian (which
follows from the homology of both $\Omega_{\rho}^{I}$ and $S$ being free
abelian), there is no extension problem to solve and we conclude that
$H_*(\W_{\rho}^{I}, \Z)$ is isomorphic to the total complex
of the $E^2$-page, which by \eqref{eqn:e2-tensor} is simply
$H_*(S, \Z) \tensor H_*(\Omega_{\rho}^I, \Z)$.
\end{proof}

\section{Complex geometry}\label{complex geometry}
In this section, we will study some fundamental features of the
complex geometry of the manifolds $\W_{\rho}^I$ arising from quotients
of domains in flag varieties by images of Anosov representations.  As
mentioned in the introduction, it is natural to work in a slightly
more general setting.

Recall that if $N = G/H$ is a complex homogeneous space of $G$, then
we say a complex manifold $\W$ is a uniformized $(G,N)$-manifold with data
$(\Omega, \Gamma)$ and limit set $\Lambda:=N\setminus \Omega$
if $\Gamma < G$ acts freely, properly
discontinuously, and cocompactly on $\Omega \subset N$ and there is
a biholomorphism $\W \simeq \Gamma \backslash \Omega$.  For example,
 if $\rho:\pi \rightarrow G$ is
$P_A$-Anosov (with $\pi$ torsion-free) and $I$ is a balanced ideal of type $(P_A,P_D)$, then the
manifold $\W_\rho^I$ is a uniformized $(G,G/P_D)$-manifold with data
$(\Omega_\rho^I, \rho(\pi))$ and limit set $\Lambda_{\rho}^{I}$.
 
\subsection{Non-existence of K\"ahler metrics and maps to Riemann surfaces}
Let $m_\alpha$ denote the $\alpha$-dimensional Hausdorff measure on
$N$ associated to any Riemannian metric.  As in Section \ref{size
  limit set} the particular metric will not matter.

The following classical extension theorem in several complex variables
is due to Shiffman:
\begin{thm}[{\cite[Lemma 3]{SHI68}}]
\label{shiffman}
Let $Z$ be a complex manifold of dimension $n$ and $A \subset Z$ a
closed set satisfying $m_{2n-2}(A) = 0$.  Then any holomorphic
function on $Z \setminus A$ extends to a unique holomorphic function
on $Z$. \noproof
\end{thm}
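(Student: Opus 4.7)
The plan is to reduce the problem to a local one on a polydisc in $\C^n$, then combine a slicing argument with the classical one-variable Riemann extension theorem to produce a candidate extension via the Cauchy integral formula. First, I would localize: the statement is local on $Z$, so it suffices to treat the case $Z = \Delta^n$ (the unit polydisc) with $A \subset \Delta^n$ closed and $m_{2n-2}(A)=0$, and to fix $f \in \O(\Delta^n \setminus A)$. Since $m_{2n-2}(A)=0$ forces the topological dimension of $A$ to be at most $2n-2$, the complement $\Delta^n \setminus A$ is connected, so any extension is automatically unique, and I only need to construct it near an arbitrary point $p \in A$.

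Next I would invoke a Fubini-type slicing inequality for Hausdorff measure (Marstrand slicing): after possibly applying a unitary change of coordinates, the projection $\pi:\Delta^n \to \Delta^{n-1}$ onto the last $n-1$ coordinates has the property that for $m_{2n-2}$-almost every $w \in \Delta^{n-1}$, the slice $A_w := A \cap \pi^{-1}(w)$ has vanishing $0$-dimensional (excess) measure in $\Delta$, and in particular is at most countable and closed, hence discrete in $\Delta$. On each such good slice, $\zeta \mapsto f(\zeta,w)$ is holomorphic on $\Delta$ minus a discrete set, so the one-variable Riemann removable singularity theorem extends it across $A_w$ once local boundedness is known. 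I would then define the candidate extension by a fiberwise Cauchy integral
\[
\tilde f(z_1,w) \;=\; \frac{1}{2\pi i}\int_{|\zeta|=r} \frac{f(\zeta,w)}{\zeta-z_1}\,d\zeta,
\]
where, for each $(z_1,w)$ close to $p$, the radius $r$ is chosen so that the circle $\{|\zeta|=r\}\times\{w\}$ lies in $\Delta^n \setminus A$. Such an $r$ exists for \emph{every} $w$ near $\pi(p)$ because the closed set $A \cap (\Delta \times \{w\})$ has $m_1$-measure zero and so cannot contain a full circle; varying $r$ continuously then gives holomorphy of $\tilde f$ in $(z_1,w)$ by Morera's theorem. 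Uniqueness on $\Delta^n \setminus A$ and connectedness of the complement force $\tilde f = f$ there, yielding the desired extension.

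The main obstacle is establishing enough local boundedness of $f$ near a point of $A$ to justify both the one-variable extension on good slices and the passage to the Cauchy integral on circles avoiding $A$. The slicing argument gives excellent one-dimensional behavior on a full-measure family of slices, but upgrading this to a uniform bound in a full open neighborhood of $p$ is the delicate step: it uses the $m_{2n-2}(A)=0$ hypothesis together with a maximum-principle argument on the family of slices (or equivalently, a Cauchy estimate on circles in a full-measure family of complex lines through $p$, exploiting that a set of vanishing $m_{2n-2}$-measure cannot meet every such circle). Once this bound is in hand, the remaining assembly into a globally holomorphic extension is routine.
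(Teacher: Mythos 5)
The paper does not prove this result --- it cites Shiffman's paper directly and marks the statement with no proof --- so I evaluate your proposal on its own. The overall strategy (localize, slice, extend fiberwise by a Cauchy integral) is the right one, but there is a genuine gap at the crux. Your justification that a circle $\{|\zeta| = r\} \times \{w\}$ avoiding $A$ exists for \emph{every} $w$ near $\pi(p)$ --- ``because the closed set $A \cap (\Delta \times \{w\})$ has $m_1$-measure zero and so cannot contain a full circle'' --- is false. The hypothesis $m_{2n-2}(A)=0$ gives no control on the $1$-dimensional Hausdorff measure of an individual slice: for $n \geq 2$, take $A$ to be a single round circle contained in one fiber $\Delta \times \{w_0\}$; then $m_{2n-2}(A) = 0$ (a rectifiable curve has $m_s$-measure zero for every $s>1$), yet the slice $A_{w_0}$ \emph{is} a full circle. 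What the Eilenberg--Marstrand slicing inequality for $\pi$ actually gives, since $\Delta^{n-1}$ has real dimension $2n-2$, is that $m_0(A_w) = 0$, i.e.\ $A_w = \emptyset$, for $m_{2n-2}$-\emph{almost every} $w$. So ``at most countable, hence discrete'' undersells the conclusion on good slices (and is not valid reasoning in any case, since a countable closed set need not be discrete), while ``for every $w$'' overreaches on bad slices. With the radius $r$ depending on $(z_1,w)$ you moreover need the local boundedness of $f$ near $A$ to make $\tilde f$ well-defined, holomorphic, and comparable with $f$; you flag this as the delicate step but do not establish it.

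The missing idea --- which repairs both difficulties at once --- is to first normalize coordinates so that the complex line $L = \C\times\{0\}$ through $p$ meets $A$ only at $p$ near the origin. This is where $m_{2n-2}(A)=0$ is really used: applying the slicing inequality to the radial projection onto the $(2n-2)$-dimensional space of complex lines through $p$ shows that almost every such line has this property. Since $A$ is closed, compactness then produces a \emph{fixed} radius $r$ and a \emph{fixed} small polydisc $\Delta_\delta^{n-1}$ such that the entire compact torus $\{|z_1|=r\}\times \bar{\Delta}_\delta^{n-1}$ is disjoint from $A$. The Cauchy integral over this fixed circle is automatically holomorphic in $(z_1,w)$ --- $f$ is holomorphic, and in particular bounded, on a neighborhood of that torus, so no separate boundedness lemma is needed --- and it agrees with $f$ on the dense set of slices with $A_w = \emptyset$, hence on the whole complement of $A$ in the small polydisc by continuity. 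Without this initial normalization your argument does not close.
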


An immediate consequence of this extension theorem adapted to our
situation is:
\begin{lem}
\label{lem:adapted-shiffman}
Let $\W$ be a uniformized $(G,N)$-manifold with data
$(\Omega, \Gamma)$ and limit set $\Lambda$.
Suppose that $N$ is compact and connected, and that $m_{2n-2}(\Lambda)=0$ where
$n=\dim_{\C}N$.  Then any holomorphic map $\Omega \to \C^k$ is
constant. \noproof
\end{lem}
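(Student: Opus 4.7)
The plan is a direct combination of Shiffman's extension theorem with the maximum principle for compact connected complex manifolds. First I would reduce to the scalar case: a holomorphic map $F : \Omega \to \C^k$ is given by $k$ holomorphic component functions $f_1, \ldots, f_k : \Omega \to \C$, and $F$ is constant if and only if each $f_i$ is constant, so it suffices to treat $k=1$.

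Next, given a holomorphic function $f : \Omega \to \C$, observe that $\Omega = N \setminus \Lambda$ where $\Lambda$ is a closed subset of the complex $n$-manifold $N$ with $m_{2n-2}(\Lambda) = 0$. These are exactly the hypotheses of Theorem \ref{shiffman} applied with $Z = N$ and $A = \Lambda$. That theorem therefore produces a (unique) holomorphic extension $\tilde{f} : N \to \C$.

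Finally, since $N$ is compact and connected and $\tilde{f}$ is holomorphic on $N$, the continuous function $|\tilde{f}|$ attains its maximum on $N$. The maximum modulus principle for holomorphic functions on a connected complex manifold then forces $\tilde{f}$ to be constant on $N$, and restricting back to $\Omega$ shows that $f$ itself is constant.

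There is no genuine obstacle here: the statement is essentially a packaging of Shiffman's theorem in the uniformized setting, where the role of the codimension-two removable set is played by the limit set $\Lambda$ and the role of the ambient manifold is played by the compact homogeneous space $N$. The only thing to check is that the Hausdorff measure hypothesis on $\Lambda$ is precisely what is needed to invoke Theorem \ref{shiffman}, and that compactness together with connectedness of $N$ (not needed in Shiffman's theorem itself) is what upgrades ``extends holomorphically'' to ``is constant''.
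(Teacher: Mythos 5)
Your proof is correct and is exactly the argument the paper intends: the lemma is stated with the \noproof marker and described as ``an immediate consequence'' of Shiffman's theorem, and the intended immediate consequence is precisely your chain of reduction to $k=1$, extension of each component across the measure-zero set $\Lambda = N \setminus \Omega$ via Theorem~\ref{shiffman}, and then the maximum modulus principle on the compact connected manifold $N$. The only minor point worth making explicit (which you implicitly use) is that $\Lambda$ is closed in $N$ because $\Omega$ is by definition open, so the hypotheses of Shiffman's theorem are literally satisfied.
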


Using this theorem, we now prove Theorem \ref{introthm:embedded-no-holo}
from the introduction.  We recall the statement:

\embeddednoholo*

\begin{proof}
By the Koebe-Poincar\'{e} uniformization theorem, a Riemann surface
$X \not \simeq \CP^1$ has universal cover biholomorphic to a domain
in $\C$, so it suffices to prove the second assertion.

Since $N$ is $1$-connected, the condition $m_{2n-2}(\Lambda)=0$
implies that $\Omega$ is also $1$-connected (see e.g.~\cite[Chapter 7]{HW41})
and hence is biholomorphic to the universal cover of $\W$.  Using 
the Hausdorff dimension assumption again,
Lemma~\ref{lem:adapted-shiffman}
shows that every holomorphic map $\Tilde{\W} \to \C^k$ is constant.

If $Y$ is a complex manifold whose universal cover is biholomorphic
to a domain in $\C^k$, then lifting a holomorphic map $f : \W \to Y$
to the universal covers gives a map
$\Tilde{f} : \Tilde{\W} \to \Tilde{Y} \subset \C^k$, which is
therefore constant, and $f$ is constant as well.
\end{proof}

Next, we establish the obstruction to the existence of K\"ahler
metrics which was stated in the introduction:

\embeddednotkahler*

\begin{proof}
As in the preceding proof, we conclude that $\Tilde{\W} \simeq \Omega$
has no non-constant holomorphic maps to $\C^k$.  However, Eyssidieux
shows in \cite{eyssidieux} that if the fundamental group of a compact
\emph{K\"{a}hler} manifold has an infinite linear quotient, then its
universal cover admits a non-constant map to $\C^k$ for some $k$.  Therefore
$\W$ is not K\"{a}hler.
\end{proof}

Applying these theorems to the study of manifolds which are quotients
by $G$-quasi-Fuchsian groups and using the Hausdorff dimension bounds
of Section \ref{size limit set}, we now give the proof of:

\anosovquotientnegatives*

Note that for consistency of notation with the introduction, we are
now considering the parabolic pair $(P_A,P_D) = (B,P)$.

\begin{proof}
By Theorem \ref{thm: QFbound}, for such $\rho$ and $I$ the limit set
satisfies $m_{2n-2}(\Lambda_\rho^I)=0$.  The flag variety $G/P$ is
compact and $1$-connected.  Thus, statement (i) follows from Theorem
\ref{introthm:embedded-no-holo}.

Since $\Omega_{\rho}^{I} \to \W^I_\rho$ is a $\pi_1S$-covering, we
have a surjection $\pi_1 \W^I_\rho \to \pi_1S$.  Since $\pi_1S$ is an
infinite linear group, statement (ii) follows from Theorem
\ref{introthm:embedded-not-kahler}.
\end{proof}

\subsection{Picard group}

The following theorem of Harvey is an analogue of Shiffman's extension
theorem (Theorem \ref{shiffman}) for holomorphic line bundles and
their cohomology:
\begin{thm}[{\cite[Theorems 1 and 4]{HAR74}}]\label{harvey}
Let $Y$ be a complex manifold of dimension $n$ and $A \subset Y$ a
closed subset satisfying $m_{2n-4}(A)=0$.  Then, every holomorphic
line bundle $L \rightarrow (Y \setminus A)$ extends uniquely to a
holomorphic line bundle on $Y$.

Furthermore, if $m_{2n-2k-2}(A)=0$, then the inclusion map
$(Y \setminus A) \into Y$ induces an isomorphism
\[ H^i(Y,L) \to H^i(Y \setminus A,L) \]
for all $0 \leq i \leq k$.
\end{thm}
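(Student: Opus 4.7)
The plan is to deduce both assertions from Shiffman's extension theorem (Theorem \ref{shiffman}), applied first directly to transition cocycles and then through a local cohomology argument.

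For the extension of $L$, I would cover $Y$ by small Stein open sets $\{U_\alpha\}$ on which $L\big|_{U_\alpha\setminus A}$ is trivial, and represent $L$ by holomorphic transition cocycles $g_{\alpha\beta}\colon (U_\alpha\cap U_\beta)\setminus A \to \C^*$. Since $m_{2n-4}(A)=0$ implies $m_{2n-2}(A)=0$, Theorem \ref{shiffman} extends each $g_{\alpha\beta}$ uniquely to a holomorphic function $\widetilde g_{\alpha\beta}$ on $U_\alpha\cap U_\beta$; applying Shiffman again to $g_{\alpha\beta}^{-1}$ yields a holomorphic inverse $\widetilde h_{\alpha\beta}$, and because $\widetilde g_{\alpha\beta}\widetilde h_{\alpha\beta}$ extends the constant $1$ from the complement of $A$, uniqueness forces $\widetilde g_{\alpha\beta}\widetilde h_{\alpha\beta}\equiv 1$, so $\widetilde g_{\alpha\beta}$ is nowhere vanishing. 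The cocycle relation propagates by continuity from $Y\setminus A$, so the $\widetilde g_{\alpha\beta}$ assemble to a holomorphic line bundle $\widetilde L$ on $Y$ extending $L$. Uniqueness of the extension follows by applying the same reasoning to the transition cocycle of $\widetilde L_1\otimes \widetilde L_2^{-1}$ for any two candidate extensions.

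For the cohomology statement, I would use the long exact sequence of cohomology with support in $A$,
\[
\cdots \to H^i_A(Y,L)\to H^i(Y,L)\to H^i(Y\setminus A,L)\to H^{i+1}_A(Y,L)\to\cdots,
\]
which reduces the desired isomorphism in the range $0\leq i\leq k$ to the vanishing $H^i_A(Y,L)=0$ for $0\leq i\leq k+1$. Since local cohomology is computed locally and $L$ is locally trivial, the claim reduces further to showing that for any small polydisk $\Delta\subset Y$ the restriction map $H^i(\Delta,\mathcal{O})\to H^i(\Delta\setminus A,\mathcal{O})$ is an isomorphism for $0\leq i\leq k$. The case $i=0$ is again Shiffman. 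For $i\geq 1$ one would pass to the Dolbeault complex on $\Delta$ and show that any $\bar\partial$-closed $(0,i)$-form on $\Delta\setminus A$ can be corrected by a $\bar\partial$-exact form so as to extend across $A$; this is where the hypothesis on $m_{2n-2i-2}(A)$ would be used quantitatively to construct the required local $\bar\partial$-primitives.

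The main obstacle is establishing the higher-degree vanishing $H^i_A(Y,\mathcal{O})=0$ under only a Hausdorff measure hypothesis on $A$: since $A$ is not assumed to be a complex-analytic subvariety, the classical coherent-sheaf vanishing theorems of Scheja, Siu, and others do not apply directly. The natural route is an inductive filtration of $A$ by nested closed subsets of strictly smaller $(2n-2i-2)$-dimensional Hausdorff measure, propagating the isomorphism through each filtration step, and the delicate technical ingredient is a Hartogs-type extension theorem for Dolbeault cocycles with quantitative control measured in terms of Hausdorff content, which is the technical heart of Harvey's paper.
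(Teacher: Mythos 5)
This theorem is stated in the paper as a citation of Harvey's results (Theorems~1 and~4 of \cite{HAR74}); the paper offers no proof of its own, so there is nothing internal to compare your sketch against. Evaluated on its own terms, your outline captures the general shape of an argument but has one concrete gap in the first part and, as you acknowledge, does not actually establish the second.

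For the line bundle extension, you begin by choosing a cover $\{U_\alpha\}$ of $Y$ by Stein open sets on which $L\big|_{U_\alpha\setminus A}$ is trivial, but the existence of such a cover is precisely the nontrivial point, and it is where the hypothesis $m_{2n-4}(A)=0$ (rather than the weaker $m_{2n-2}(A)=0$, which is all that the cocycle-extension step uses) actually enters. The sets $U_\alpha\setminus A$ are not Stein, since $A$ is merely a closed set of small Hausdorff measure and not an analytic subvariety, and a line bundle on such a set need not be trivial \emph{a priori}. One would have to argue via the exponential sequence that $H^1(U_\alpha\setminus A,\O^*)=0$ for small polydisks $U_\alpha$, which requires both $H^2(U_\alpha\setminus A,\Z)=0$ (a topological fact following from $\dim_{H}A\leq 2n-4$) and $H^1(U_\alpha\setminus A,\O)=0$ (the $k=1$, $i=1$ case of the second assertion). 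So the first assertion actually rides on the analytic $H^1$-vanishing of the second; extending the transition cocycle by Shiffman's theorem alone does not close the loop. Once local triviality is secured, the remainder of your cocycle argument (extending $g_{\alpha\beta}$ and $g_{\alpha\beta}^{-1}$ and concluding nonvanishing by uniqueness) is correct and clean.

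For the cohomology statement, the reduction to vanishing of local cohomology and then to a Dolbeault-theoretic Hartogs extension for $\bar\partial$-closed $(0,i)$-forms is the standard structure, but you explicitly leave the quantitative extension result—the assertion that the hypothesis $m_{2n-2i-2}(A)=0$ allows $(0,i)$-Dolbeault classes to extend across $A$—unproved, describing it as the technical heart of Harvey's paper. That is indeed the substance of the theorem, so the proposal as written is a plan rather than a proof. It would be worth noting that Harvey's argument in \cite{HAR74} is carried out at the level of currents and distributional $\bar\partial$ rather than smooth Dolbeault representatives, which is what makes the Hausdorff measure hypotheses bite; if you pursue a genuine reconstruction, that is the framework to adopt.
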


Let $\W$ be a uniformized $(G,N)$-manifold with data $(\Omega, \Gamma).$
A line bundle $\mathcal{L}$ on $N$ is \emph{$\Gamma$-equivariant} if
it carries an action of $\Gamma$ by bundle automorphisms lifting the
action of $\Gamma$ on $N$.

Let $p : \Omega \to \Omega/\Gamma \simeq \W$ be the covering map.
Given a $\Gamma$-equivariant line bundle $\mathcal{L}$ on $N$, there
is a naturally associated line bundle $p_*^\Gamma \mathcal{L}$ on $\W$
which, as a sheaf, is defined by setting $p_*^\Gamma \mathcal{L}(U)$
to be the space of $\Gamma$-invariant sections of
$\restrict{\mathcal{L}}{p^{-1}(U)}$.  This prescription defines the
\emph{invariant direct image} homomorphism
\begin{equation}
\label{descent}
p_{*}^{\Gamma}: \Pic^{\Gamma}(N)\rightarrow \Pic(\W)
\end{equation}
where $\Pic(\W)$ is the Picard group of isomorphisms classes of
holomorphic line bundles on $\W$, and where $\Pic^{\Gamma}(N)$ is the
group of $\Gamma$-equivariant isomorphism classes of
$\Gamma$-equivariant line bundles on $N$.

Using Theorem \ref{harvey} we obtain a sufficient condition 
for the homomorphism \eqref{descent} to admit a section:
\begin{prop} \label{class lb} Let $\W$ be a uniformized $(G,N)$-manifold
with data $(\Omega, \Gamma)$ and limit set $\Lambda$.  Suppose that
$m_{2n-4}(\Lambda)=0$ where $n=\dim_{\C}N$.  Then for any
holomorphic line bundle $L$ on $\W$, we have:
\begin{rmenumerate}
\item The pullback of $L$ to $\Omega$ extends uniquely to a $\Gamma$-equivariant line bundle on $N$.  
\item If $N$ is compact and connected, and if the pullback of $L$ to
$\Omega$ is holomorphically trivial, then
$L\simeq \Omega\times_{\chi} \mathbb{C}$ where
$\chi: \Gamma\rightarrow \C^{*}$ is a homomorphism.
\end{rmenumerate}
\end{prop}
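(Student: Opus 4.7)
The plan is to pass to the universal cover side: pull $L$ back to $\Omega$ via the covering map $p: \Omega \to \W$, extend the resulting bundle and all auxiliary equivariant data from $\Omega$ to $N$ using Harvey's extension results, and then transfer back to $\W$. As setup, I would form $p^*L$ on $\Omega$, which carries a canonical $\Gamma$-equivariant structure: for each $\gamma \in \Gamma$ there is a tautological isomorphism $\phi_\gamma : p^*L \to \gamma^* p^*L$ witnessing $L \simeq \Gamma\backslash p^*L$.

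For part (i), applying Theorem \ref{harvey} to the line bundle $p^*L$ on $\Omega = N \setminus \Lambda$ produces a unique holomorphic extension $\tilde{L}$ of $p^*L$ to $N$, since $m_{2n-4}(\Lambda) = 0$. To extend the equivariant structure, I would observe that $\phi_\gamma$ is a nowhere-vanishing section of the line bundle $(p^*L)^{-1} \otimes \gamma^* p^*L$ on $\Omega$, whose unique extension to $N$ is $\tilde{L}^{-1} \otimes \gamma^* \tilde{L}$ by uniqueness and functoriality of the bundle extension. This section then extends across $\Lambda$ by the $i=0$ case of the second part of Theorem \ref{harvey}, which applies since $m_{2n-2}(\Lambda) = 0$ follows from $m_{2n-4}(\Lambda) = 0$. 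Denote the extension by $\tilde\phi_\gamma$. Extending $\phi_\gamma^{-1}$ by the same procedure and invoking uniqueness of section extension (the identity on $\Omega$ extends uniquely to the identity on $N$) shows that $\tilde\phi_\gamma$ is itself an isomorphism; the cocycle relation $\tilde\phi_{\gamma\delta} = (\gamma^* \tilde\phi_\delta) \circ \tilde\phi_\gamma$ likewise propagates from $\Omega$ to $N$ by uniqueness. Hence $\tilde{L}$ inherits a $\Gamma$-equivariant structure extending that of $p^*L$, and this extended structure is itself unique.

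For part (ii), I would exploit the following rigidity: if $p^*L$ is holomorphically trivial, then the trivial bundle $N \times \C$ restricts to the trivial bundle on $\Omega$, providing another extension of $p^*L$ to $N$; hence by uniqueness $\tilde{L} \simeq N \times \C$. Fixing such a trivialization and transporting the equivariant structure from part (i), the $\Gamma$-action on $N \times \C$ takes the form $(x, z) \mapsto (\gamma x, \tilde{f}_\gamma(x) z)$ for holomorphic maps $\tilde{f}_\gamma : N \to \C^*$. Since $N$ is compact and connected, each $\tilde{f}_\gamma$ must be a constant $\chi(\gamma) \in \C^*$, and the cocycle condition forces $\chi : \Gamma \to \C^*$ to be a group homomorphism. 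Descending to quotients yields $L \simeq \Omega \times_\chi \C$. The main technical obstacle in this approach is arranging that the extended morphisms $\tilde\phi_\gamma$ in (i) remain isomorphisms and satisfy the cocycle identity on $N$; both issues reduce to the uniqueness clause of Theorem \ref{harvey}, which allows identities of holomorphic sections of line bundles on $\Omega$ to be promoted to identities on all of $N$.
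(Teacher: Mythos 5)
Your proposal is correct and follows essentially the same route as the paper: apply Harvey's extension theorem to $p^*L$ on $\Omega = N\setminus\Lambda$, invoke uniqueness of the extension to propagate the $\Gamma$-equivariant structure, and in (ii) promote the transition data to holomorphic functions on all of $N$ which are then constant by compactness and connectedness. The only difference is that you spell out the equivariance argument (extending each $\phi_\gamma$ as a section of $\tilde L^{-1}\otimes\gamma^*\tilde L$ via the $H^0$ clause of Harvey, then verifying invertibility and the cocycle identity by uniqueness), whereas the paper simply asserts this follows from uniqueness; both amount to the same reasoning.
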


\begin{proof}
As before let $p : \Omega \to \W$ denote the quotient by $\Gamma$.
Under the given hypotheses, Theorem \ref{harvey} shows that
$p^{*}L$ extends uniquely to a holomorphic line bundle
$\mathcal{L}$ on $N$.  By the uniqueness of the extension,
$\mathcal{L}$ is $\Gamma$-equivariant, and (i) follows.

Suppose $p^{*}L$ is holomorphically trivial.  Then the canonical $\Gamma$-action
on $p^{*}L$ is transported by the trivialization to a
holomorphic function $q_\gamma : \Omega \to \C^*$.  By Shiffman's
extension theorem (Theorem \ref{shiffman}) $q_\gamma$ extends
holomorphically to $N$.  Therefore, if $N$ is compact and connected,
this map is constant. Thus the map
$\chi : \Gamma \to \C^*$, $\chi(\gamma) = q_\gamma$ is a homomorphism
such that $L \simeq \Omega \times_\chi \C$, and (ii) follows.
\end{proof}

Using the previous theorem, we can now establish the classification of
holomorphic line bundles on uniformized $(G, G/P)$-manifolds with
sufficiently ``small'' limit sets which was given in the introduction;
we recall the statement:

\picard*

\begin{proof}
Let $L$ be a holomorphic line bundle on $\W$. By Proposition
\ref{class lb}(i), the pullback $p^{*}L$ extends to a
$\Gamma$-equivariant holomorphic line bundle $\mathcal{L}$ on $G/P$.
It is easily checked that the \emph{lift-extend} map
$\textnormal{Pic}(\mathcal{W})\rightarrow
\textnormal{Pic}^\Gamma(G/P)$ thus constructed is a homomorphism.
Since $p^{*}\circ p_{*}^{\Gamma}(\mathcal{L})=\mathcal{L}$, the
lift-extend homomorphism is surjective and split by the invariant
direct image.

Next, suppose $\mathcal{L}$ is a $\Gamma$-equivariant line bundle on $G/P$ and 
$\phi: \mathcal{L}\rightarrow G/P\times \mathbb{C}$ is an isomorphism.  Then, there exists
a holomorphic automorphic function $j: \Gamma\times G/P\rightarrow \mathbb{C}^{*}$ 
and a $\Gamma$-action on $G/P\times \mathbb{C}$ specified by $\gamma\cdot
(x, v)=(\gamma\cdot x, j(x,\gamma)v)$ for which $\phi$ is $\Gamma$-equivariant.
Since $G/P$ is compact and connected, $j(-, \gamma): G/P\rightarrow \mathbb{C}^{*}$ is constant, and therefore $j\in \textnormal{Hom}(\Gamma, \mathbb{C}^{*})$ is a character.  This proves that
the kernel of \eqref{Picard kernel} contains $\textnormal{Hom}(\Gamma, \mathbb{C}^{*}).$

Finally, if $\chi\in \textnormal{Hom}(\Gamma, \mathbb{C}^{*})$, then $p^{*}(\Omega\times_{\chi} \mathbb{C})\simeq \Omega\times \mathbb{C}$, and therefore $\textnormal{Hom}(\Gamma, \mathbb{C}^{*})$ contains the kernel of $\eqref{Picard kernel},$ completing the proof.
\end{proof}

The term $\Pic^\Gamma(G/P)$ appearing in Theorem
\ref{introthm:picard} is often easy to compute in practice.  For
example, if $G$ is simply connected then every line bundle on $G/P$ is
$G$-equivariant, and hence $\Gamma$-equivariant by restriction.  In this case, 
there is a short exact sequence
\begin{equation}\label{pic exact seq}
1\rightarrow \textnormal{Hom}(\Gamma, \mathbb{C}^{*})\rightarrow \Pic(\W)\rightarrow \Pic(G/P)\rightarrow 1,
\end{equation}
which is split by the invariant direct image.

Finally, we prove Theorem \ref{introthm:quasi-fuchsian-picard} from the introduction.
\picardqf*

\begin{proof}
Any quasi-Fuchsian representation
$\eta: \pi_{1}S\rightarrow \PSL(2, \mathbb{C})$ can be lifted to a
representation
$\widetilde{\eta}: \pi_{1}S\rightarrow \SL(2, \mathbb{C})$ (see
e.g.~\cite{culler}).  Such a lift $\widetilde{\eta}$ determines lift
$\widetilde{\rho}: \pi_{1}S\rightarrow \widetilde{G}$ of $\rho$, where
$\widetilde{G}$ is the simply connected cover of $G$.

The covering map $\widetilde{G}\rightarrow G$ induces an equivariant
biholomorphic map $\widetilde{G}/\widetilde{P}\simeq G/P$ where
$\widetilde{P}<\widetilde{G}$ is the corresponding parabolic subgroup.
Therefore, if
$\widetilde{\Omega}_{\widetilde{\rho}}^{I} \subset
\widetilde{G}/\widetilde{P}$ is the corresponding domain whose
quotient by $\widetilde{\rho}(\pi_{1}S)$ is denoted
$\widetilde{\W}_{\widetilde{\rho}}^{I},$ then there is an induced
biholomorphic map
$\widetilde{\W}_{\widetilde{\rho}}^{I}\simeq \W_{\rho}^{I}.$

By Theorem \ref{thm: QFbound}, the exclusion of types $A_1$, $A_2$,
$A_3$, and $B_2$ guarantees that the hypotheses of Theorem
\ref{introthm:picard} are met.  Hence, by \eqref{pic exact seq} and
Theorem \ref{introthm:picard} there is an exact sequence
\[
1\rightarrow \textnormal{Hom}(\pi_1S, \mathbb{C}^{*})\rightarrow \Pic(\widetilde{\W}_{\tilde{\rho}}^I)\rightarrow \Pic(\tilde{G}/\tilde{P})\rightarrow 1.
\]
Since $\widetilde{G}/\widetilde{P}\simeq G/P$ and
$\widetilde{\W}_{\widetilde{\rho}}^{I}\simeq \W_{\rho}^{I}$, this
gives the desired exact sequence.
\end{proof}

\subsection{Cohomology of holomorphic line bundles}\label{cohomology line bundles}

Next we consider the calculation of cohomology of line bundles on
uniformized $(G, G/P)$-manifolds where $G$ is a connected complex semisimple Lie
group.  We will restrict to the case $P=B$ to
simplify the discussion.

Our results are based on reducing these calculations to the
Borel-Bott-Weil theorem, whose statement we recall before proceeding.
Fix a Cartan subalgebra $\h \subset \g$ and a system of simple roots
$\Delta \subset \h^*$; let $L \subset \h^*$ denote the lattice of algebraically integral
weights and $\weylvec \in \h^*$ half the sum of the positive roots.  Finally, let 
$L^{\textnormal{an}}\subset L$ denote the sub-lattice of analytically integral weights
consisting of those $\lambda\in L$ which integrate to a character $\Tilde{\lambda}:B\rightarrow \C^{*}.$  Note that $L^{\textnormal{an}}=L$ if $G$ is simply connected.

To each $\lambda \in L^{\textnormal{an}}$ there is an associated right
action of $B$ on $G \times \C$ given by
$(g,t)\cdot b = (gb, \Tilde{\lambda}(b) t)$.  We denote by
$\L_\lambda$ the quotient of $G \times \C$ by this action of $B$.  The
projection $G \times \C \to G$ is $B$-equivariant and hence descends
to a map $\pi : \L_\lambda \to G/B$, which gives $\L_\lambda$ the
structure of a $G$-equivariant holomorphic line bundle over $G/B$.
Define $\mathcal{L}^\lambda := \mathcal{L}_{\weylvec - \lambda}$.

The co-roots $\{H_{\alpha}\}_{\alpha\in \Delta}\subset \h$ are
elements uniquely defined by the conditions
$H_{\alpha}\in [\g_{-\alpha}, \g_{\alpha}]$ and
$\alpha(H_{\alpha})=2.$ A weight $\lambda\in L$ is \emph{dominant} if
$\lambda(H_{\alpha}) \geq 0$ for all $\alpha \in \Delta$,
\emph{strictly dominant} if $\lambda(H_{\alpha}) > 0$ for all
$\alpha \in \Delta$, and \emph{regular} if its $W$-orbit contains a
strictly dominant weight.

The Borel-Bott-Weil theorem is the following:
\begin{thm}[\cite{BOT57}]
\label{thm:BBW}
The map $\lambda \mapsto \mathcal{L}_\lambda$ is an isomorphism of abelian
groups $L^{\textnormal{an}} \simeq \Pic^{G}(G/B)$.  Furthermore, the cohomology of
$\mathcal{L}^\lambda$ satisfies:
\begin{rmenumerate}
\item If $\lambda$ is not regular, then $H^{i}(G/B,
\mathcal{L}^{\lambda})=0$ for all $i\geq 0$.  
\item If $\lambda$ is regular, let $w \in W$ be the unique element
such that $w(\lambda)$ is strictly dominant.  Then
$H^i(G/B,\mathcal{L}^{\lambda}) =0$ for all $i \neq \ell(w)$, while
$H^{\ell(w)}(G/B,\mathcal{L}^{\lambda}) \neq 0$ and as a $G$-module
this cohomology space is dual to the irreducible representation of $G$
with highest weight $w(\lambda)-\weylvec.$  \noproof
\end{rmenumerate}
\end{thm}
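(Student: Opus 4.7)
The plan is to prove the theorem in two parts: the identification $L^{\textnormal{an}} \simeq \Pic^G(G/B)$ and the cohomology computation. For the first part, note that a $G$-equivariant line bundle on $G/B$ is determined up to equivariant isomorphism by the one-dimensional representation of the stabilizer $B$ on the fiber at $eB$; since the unipotent radical of $B$ admits no non-trivial holomorphic characters, any such character factors through $H \simeq B/[B,B]$, and the holomorphic characters of the torus $H$ are, by definition, precisely the analytically integral weights. Tracing through the construction $\lambda \mapsto \mathcal{L}_\lambda$ recalled in the excerpt realizes the inverse bijection.

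For the cohomology, I would proceed by induction on the Chevalley-Bruhat length of the unique Weyl element $w$ sending $\lambda$ to a strictly dominant weight (in the regular case). The base case $w = e$ is the Borel-Weil theorem: I would identify $H^0(G/B, \mathcal{L}^\lambda)$ with $B$-transforming algebraic functions on $G$, locate inside this space the irreducible $G$-module of highest weight $\lambda - \weylvec$ via matrix coefficients, and apply a Peter-Weyl type density argument to show no other irreducibles contribute. The vanishing of $H^i$ for $i > 0$ then follows from Kodaira vanishing, since $\mathcal{L}^\lambda$ is ample precisely when $\lambda - \weylvec$ is strictly dominant. For the inductive step, I would use the $\CP^1$-bundle $\pi_\alpha : G/B \to G/P$, where $P$ is the minimal parabolic strictly containing $B$ associated to a simple root $\alpha$. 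Computing the restriction of $\mathcal{L}^\lambda$ to a fiber and applying the Leray spectral sequence yields a shift isomorphism
\[
H^i(G/B, \mathcal{L}^\lambda) \simeq H^{i-1}(G/B, \mathcal{L}^{s_\alpha(\lambda)})
\]
when applying $s_\alpha$ strictly decreases the length of the Weyl transformation needed to reach the dominant chamber, and total vanishing when $s_\alpha$ stabilizes $\lambda$ (which is what defines the singular case). Iterating along a reduced expression for $w$ reduces the regular case to Borel-Weil with the predicted degree shift of $\ell(w)$.

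The principal obstacle is the detailed analysis of the derived pushforward $R^\bullet \pi_{\alpha *} \mathcal{L}^\lambda$: one must establish its concentration in a single cohomological degree (read off from the cohomology of $\mathcal{O}(n)$ on the $\CP^1$-fiber), identify the surviving direct image with a specific equivariant line bundle on $G/P$ after an appropriate twist, and verify that the Leray spectral sequence degenerates at $E_2$. Tracking the shift by $\weylvec$ encoded in the definition $\mathcal{L}^\lambda = \mathcal{L}_{\weylvec - \lambda}$ carefully through this calculation is what produces the linear $W$-action visible in the final statement, rather than the dot action more commonly seen in other formulations, and managing these conventions consistently is the main source of technical care.
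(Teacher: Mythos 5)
The paper does not prove this theorem: it cites \cite{BOT57} and marks the statement as cited without proof, so there is no argument of record to compare against. What you have written is a reasonable outline of the standard Demazure-style proof of Borel--Bott--Weil, together with the usual identification $\Pic^G(G/B) \simeq L^{\textnormal{an}}$ via characters of $B$ factoring through $H$; that first part is correct, and the $\CP^1$-bundle induction is the right mechanism.

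One step in the cohomology sketch, as written, would fail. Your base case covers all strictly dominant $\lambda$, and you assert vanishing of $H^{>0}(G/B, \mathcal{L}^\lambda)$ ``from Kodaira vanishing, since $\mathcal{L}^\lambda$ is ample precisely when $\lambda - \weylvec$ is strictly dominant.'' But for $\lambda$ strictly dominant the weight $\lambda - \weylvec$ is only dominant and may be singular; the extreme case $\lambda = \weylvec$ gives $\mathcal{L}^\lambda = \O_{G/B}$, which is not ample. For such $\lambda$ the bundle $\mathcal{L}^\lambda$ is nef but not ample (it is pulled back from an ample bundle on some $G/P$), and Kodaira vanishing in the raw form $H^{>0}(L)=0$ for $L$ ample does not apply. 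The repair is routine: apply Kodaira in the form $H^{>0}(X, L \otimes K_X)=0$ for $L$ ample, taking $L = \mathcal{L}^\lambda \otimes K_{G/B}^{-1}$; since $K_{G/B} = \mathcal{L}^{-\weylvec}$, this $L$ is ample exactly when $\lambda + \weylvec$ is strictly dominant, which holds for every dominant $\lambda$. (Kempf's vanishing theorem is the standard algebraic substitute.) A secondary point to watch in the inductive step: $\pi_{\alpha*}\mathcal{L}^\lambda$ is in general a higher-rank homogeneous vector bundle on $G/P_\alpha$, not a line bundle, and the input one actually needs is the sheaf isomorphism $\pi_{\alpha*}\mathcal{L}_\mu \simeq R^1\pi_{\alpha*}\mathcal{L}_{s_\alpha\cdot\mu}$ (dot action), together with the degeneration of the Leray spectral sequence, which holds automatically because the relative cohomology on the $\CP^1$-fibers is concentrated in a single degree. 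Both of these are worth making explicit when filling in the computation you flag as the principal obstacle.
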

Expositions of this Theorem and associated background material can be
found in \cite{baston-eastwood}, \cite{jantzen} (focusing on algebraic
groups), or \cite{sepanski} (focusing on compact groups).

Returning to our discussion of a uniformized $(G,G/B)$ manifold $\W$, we
can cast the problem of determining cohomology of a line bundle on
$\W$ in the more general framework of relating the cohomology of a
locally free sheaf $\mathcal{F}$ on $Y$ and that of the pullback
$p^*\mathcal{F}$ to the universal cover $\tilde{Y}$.  Here the
Grothendieck spectral sequence (\cite{GRO57}) can be applied to the
composition of the $\Gamma$-invariants and global sections functors,
giving a cohomology spectral sequence with $E_2$-page
\begin{equation}
\label{spec sequence}
E_2^{p,q} = H^{p}(\Gamma, H^{q}(\tilde{Y}, p^{*}\mathcal{F}))
\end{equation}
and which converges to the cohomology of $\mathcal{F}$. Using this
spectral sequence, we show:
\begin{thm}\label{thm: sheaf cohomology}
Let $G$ be a connected semisimple complex Lie group, $B<G$ a
Borel subgroup, and $\W$ a uniformized $(G,G/B)$-manifold with data
$(\Omega, \Gamma)$ and limit set $\Lambda$.  Suppose that
$m_{2n-2k-2}(\Lambda)=0$ where $n=\dim_{\C}G/B$ and $k\geq 1$.

Let $\lambda \in L^{\textnormal{an}}$ be an algebraically integral weight and let
$p_{*}^{\Gamma}: \Pic^{G}(G/B)\rightarrow \Pic(\W)$ denote the invariant
direct image functor.  
\begin{rmenumerate}
\item \label{not-regular} If $\lambda$ is not regular, then
$H^{i}(\W, p_{*}^{\Gamma}(\mathcal{L}^{\lambda}))=0$ for all
$0\leq i< k.$

\item \label{regular-long} If $\lambda$ is regular and $w(\lambda)$ is
dominant for $w\in W$ with $\ell(w)>k$, then
$H^{i}(\W, p_{*}^{\Gamma}(\mathcal{L}^{\lambda}))=0$ for all $0\leq i< k.$

\item \label{regular-short} If $\lambda$ is regular and $w(\lambda)$
is dominant for $w\in W$ with $\ell(w)< k$, then 
\[
H^{i}(\W, p_{*}^{\Gamma}(\mathcal{L}^{\lambda}))\simeq
\begin{cases}
0 & 0\leq i<\ell(w) \\
H^{i-\ell(w)}\left(\Gamma, H^{\ell(w)}(G/B, \mathcal{L}^{\lambda})\right) & \ell(w)\leq i < k \\
\end{cases}
\]
In particular, the group
\[
H^{\ell(w)}(\W, p_{*}^{\Gamma}(\mathcal{L}^{\lambda}))\simeq
H^{0}\left(\Gamma, H^{\ell(w)}(G/B, \mathcal{L}^{\lambda})\right)
\]
is equal to the space of $\Gamma$-invariants in the dual of the
irreducible $G$-representation with highest weight $w(\lambda)-\weylvec$.

\item \label{dominant} In particular, if $\lambda$ is a regular,
dominant weight then
\[
H^{i}(\W, p_{*}^{\Gamma}(\mathcal{L}^{\lambda}))\simeq
H^{i}\left(\Gamma, H^{0}(G/B, \mathcal{L}^{\lambda})\right)
\]
for all $0\leq i< k$.
\end{rmenumerate}
\end{thm}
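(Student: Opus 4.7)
The approach combines three inputs: the Grothendieck spectral sequence \eqref{spec sequence}, Harvey's extension theorem (Theorem \ref{harvey}), and the Borel-Bott-Weil theorem (Theorem \ref{thm:BBW}). First I set up the spectral sequence. Since $p : \Omega \to \W$ is a principal $\Gamma$-bundle and $p^{*}p_{*}^{\Gamma}(\mathcal{L}^{\lambda}) = \mathcal{L}^{\lambda}|_{\Omega}$, the spectral sequence of \eqref{spec sequence} takes the form
\[
E_2^{p,q} = H^p\bigl(\Gamma,\, H^q(\Omega,\, \mathcal{L}^{\lambda}|_{\Omega})\bigr) \;\Longrightarrow\; H^{p+q}(\W,\, p_{*}^{\Gamma}(\mathcal{L}^{\lambda})).
\]
The hypothesis $m_{2n-2k-2}(\Lambda) = 0$ together with Theorem \ref{harvey} yields natural isomorphisms
\[
H^q(\Omega,\, \mathcal{L}^{\lambda}|_{\Omega}) \;\simeq\; H^q(G/B,\, \mathcal{L}^{\lambda}), \quad 0 \leq q \leq k,
\]
so the lower $k+1$ rows of $E_2$ are determined by Borel-Bott-Weil.

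In cases \ref{not-regular} ($\lambda$ not regular) and \ref{regular-long} ($\lambda$ regular with $\ell(w) > k$), Theorem \ref{thm:BBW} gives $H^q(G/B, \mathcal{L}^{\lambda}) = 0$ for every $0 \leq q \leq k$. Hence the entire strip $\{E_2^{p,q} : 0 \leq q \leq k\}$ vanishes, and for $i < k$ every $E_{\infty}^{p,\,i-p}$ on the total-degree-$i$ diagonal is a subquotient of zero. This gives $H^{i}(\W, p_{*}^{\Gamma}(\mathcal{L}^{\lambda})) = 0$ for $0 \leq i < k$.

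In case \ref{regular-short}, Borel-Bott-Weil leaves a single nonzero row inside the strip, namely $q = \ell(w)$, on which $E_2^{p,\ell(w)} = H^{p}(\Gamma,\, H^{\ell(w)}(G/B, \mathcal{L}^{\lambda}))$. For total degree $i < k$, the only diagonal entry that can be nonzero at $E_2$ sits at $(p,q) = (i-\ell(w), \ell(w))$, and only when $\ell(w) \leq i$; in particular, no extension problem arises, so the task reduces to showing this entry survives to $E_{\infty}$. For outgoing differentials, the target of $d_r$ sits at $q$-coordinate $\ell(w) - r + 1 < \ell(w) \leq k$, which lies inside Harvey's strip and vanishes at $E_2$ by Borel-Bott-Weil. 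For incoming differentials, the source sits at $q$-coordinate $\ell(w) + r - 1$: either $\ell(w) + r - 1 \leq k$ and the source vanishes by the same reasoning, or $\ell(w) + r - 1 > k$, in which case $r \geq k - \ell(w) + 2$ together with the first-quadrant constraint $r \leq i - \ell(w)$ forces $i \geq k + 2$, contradicting $i < k$. Hence $E_{\infty}^{i-\ell(w),\,\ell(w)} = E_2^{i-\ell(w),\,\ell(w)}$, yielding the claimed isomorphism for $\ell(w) \leq i < k$ and the vanishing for $0 \leq i < \ell(w)$. Case \ref{dominant} is then the specialization $w = e$, $\ell(w) = 0$.

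The main obstacle is precisely this bookkeeping in case \ref{regular-short}: Harvey's theorem controls cohomology only through degree $k$, so rows of $E_2$ with $q > k$ are a priori unknown and could in principle feed into the relevant diagonal via long differentials. The numerical estimate above is what isolates that diagonal from those uncontrolled rows, and I expect it to be the only genuinely nontrivial step.
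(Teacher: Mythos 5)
Your proof is correct and takes essentially the same route as the paper: Grothendieck (Cartan--Leray) spectral sequence $E_2^{p,q}=H^p(\Gamma,H^q(\Omega,\mathcal{L}^\lambda))$, with Harvey's theorem identifying $H^q(\Omega,\mathcal{L}^\lambda)\simeq H^q(G/B,\mathcal{L}^\lambda)$ for $q\le k$, and Borel--Bott--Weil to control those rows. Where the paper simply asserts ``only the $\ell(w)$-th row is non-zero, so all relevant differentials are zero,'' you spell out the degree bookkeeping that rules out interaction with the uncontrolled rows $q>k$ for total degree $i<k$ --- a genuine clarification of a step the paper leaves implicit. One small point you should add: the spectral sequence \eqref{spec sequence} is set up for the \emph{universal} cover, so you need $\Omega$ to be simply connected; this follows from $k\ge 1$ (hence $m_{2n-4}(\Lambda)=0$) exactly as in the paper's argument, and without that remark the identification of the deck group with $\Gamma$ in your $E_2$-page is not justified.
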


Note that statement \ref{dominant} of this theorem is exactly Theorem
\ref{introthm:cohomology-line-bundles} from the introduction, since
effective $G$-equivariant line bundles on $G/B$ are exactly those of the form
$\mathcal{L}^\lambda$ for regular, dominant $\lambda\in L^{\textnormal{an}}$.

\begin{proof}
By Harvey's extension theorem (Theorem \ref{harvey}), the
hypothesis on Hausdorff dimension gives an isomorphism
\[
H^{i}(G/B, \mathcal{L}^{\lambda})\simeq H^{i}(\Omega, \mathcal{L}^{\lambda})
\]
for all $0\leq i\leq k.$ Since $k\geq 1$, the same hypothesis ensures
that $\Omega$ is simply connected, and thus is the universal cover of
$\W$.  Thus the spectral sequence \eqref{spec sequence} applies and 
its $E_2$-page is determined up to the $k$-th row:

\hspace{-14mm}\begin{tikzpicture}
\matrix (m) [matrix of math nodes,
             nodes in empty cells,
             nodes={minimum width=10ex,
                    minimum height=10ex,
                    outer sep=-5pt},
             column sep=1ex, row sep=1ex,
             text centered,anchor=center]{
    k   &  H^{0}(\Gamma, H^{k}(G/B, \mathcal{L}^{\lambda})) &  H^{1}(\Gamma, H^{k}(G/B, \mathcal{L}^{\lambda}))  & \cdots & H^{\cohdim(\Gamma)}(\Gamma, H^{k}(G/B, \mathcal{L}^{\lambda}))  \\
    \vdots&  \vdots   &  \vdots         & \vdots         & \vdots      \\                   
     1      &  H^{0}(\Gamma, H^{1}(G/B, \mathcal{L}^{\lambda}))  & H^{1}(\Gamma, H^{1}(G/B, \mathcal{L}^{\lambda})) &  \cdots  & H^{\cohdim(\Gamma)}(\Gamma, H^{1}(G/B, \mathcal{L}^{\lambda}))     \\
   0      &  H^{0}(\Gamma, H^{0}(G/B, \mathcal{L}^{\lambda}))    & H^{1}(\Gamma, H^{0}(G/B, \mathcal{L}^{\lambda})) & \cdots  & H^{\cohdim(\Gamma)}(\Gamma, H^{0}(G/B, \mathcal{L}^{\lambda}))   \\
  \quad\strut &  0 &  1  &  \cdots &  \cohdim(\Gamma)  \\};
\draw[thick] (m-1-1.north east) -- (m-5-1.east) ;
\draw[thick] (m-5-1.north) -- (m-5-5.north east) ;
\end{tikzpicture}

\noindent Here $\cohdim(\Gamma) \in \Z^{\geq 0}$ denotes the cohomological
dimension of $\Gamma$; by definition of this integer, entries in the
$E_2$ page to the right of those indicated here are zero.  Meanwhile,
entries above the $k$-th row involve groups of the form
$H^{j}(\Omega, \mathcal{L}^{\lambda})$ we do not know how to compute.

The entire proposition now follows simply by applying the
Borel-Bott-Weil theorem.  For instance, if $\lambda$ is not regular,
then all the coefficients appearing in the above rectangle of the
$E_{2}$-page vanish, which immediately yields statement
\ref{not-regular}.  The same is true if $\lambda$ is regular, but the
$w\in W$ such that $w(\lambda)$ is dominant satisfies $\ell(w)>k$,
from which statement \ref{regular-long} follows.

In the case that $\ell(w) < k$, only the $\ell(w)$-th row is non-zero,
so all relevant differentials are zero.  Using the description of
the entries in this row from the Borel-Bott-Weil theorem, statements
\ref{regular-short} and \ref{dominant} follow.  This completes the
proof.
\end{proof}

We now explain a connection between these computations and classical
questions in geometric invariant theory (a theme which is also
explored in \cite{KLP13} and \cite{ST15}).  Note that the complex semisimple
group $G$ is an affine algebraic group over $\C$. For a $G$-equivariant
line bundle $\mathcal{L},$ the representation
$\nu$ of $G$ on $H^{0}(G/B, \mathcal{L})$ is a rational
representation.  Therefore, given a subspace
$V\subset H^{0}(G/B, \mathcal{L})$, its stabilizer
\[
\{g\in G \suchthat \nu(g)s-s=0 \text{ for all } s\in V\}
\]
is Zariski closed.  We record this in the following proposition.

\begin{prop}
\label{prop:stab-zariski-closed}
Let $G$ be a connected complex semisimple Lie group and
$\lambda \in L^{\textnormal{an}}$ a regular dominant weight.  If $\Gamma<G$ is a subgroup with
Zariski closure $Q<G,$ then
\[
H^{0}\left(\Gamma, H^{0}(G/B, \mathcal{L}^{\lambda})\right)
=H^{0}\left(Q, H^{0}(G/B, \mathcal{L}^{\lambda})\right),
\]
where the right hand side is the space of $Q$-invariant sections of $\mathcal{L}^{\lambda}.$\noproof
\end{prop}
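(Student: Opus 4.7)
The proof should be essentially a direct application of the observation immediately preceding the proposition statement, namely that stabilizers of subspaces of a rational $G$-representation are Zariski closed. I would split the argument into the two containments.

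One inclusion is tautological: since $\Gamma \subseteq Q$, every $Q$-invariant section of $\mathcal{L}^{\lambda}$ is automatically $\Gamma$-invariant, so
\[
H^{0}\bigl(Q, H^{0}(G/B, \mathcal{L}^{\lambda})\bigr) \subseteq H^{0}\bigl(\Gamma, H^{0}(G/B, \mathcal{L}^{\lambda})\bigr).
\]
For the reverse inclusion, I would argue as follows. Let $s \in H^{0}(G/B, \mathcal{L}^{\lambda})$ be $\Gamma$-invariant and consider its pointwise stabilizer
\[
\stab_G(s) = \{ g \in G \suchthat \nu(g)s - s = 0 \},
\]
where $\nu$ denotes the rational $G$-representation on $H^{0}(G/B, \mathcal{L}^{\lambda})$. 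By the remark just before the proposition (applied to the one-element subset $V = \{s\}$), the set $\stab_G(s)$ is Zariski closed in $G$. By hypothesis $\Gamma \subseteq \stab_G(s)$, and since the Zariski closure of $\Gamma$ is the smallest Zariski-closed subset containing $\Gamma$, we conclude $Q = \overline{\Gamma}^{\,\mathrm{Zar}} \subseteq \stab_G(s)$. Hence $s$ is $Q$-invariant, proving the opposite containment.

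There is no real obstacle here beyond invoking the correct fact: $H^{0}(G/B, \mathcal{L}^{\lambda})$ is a finite-dimensional rational $G$-module (being the global sections of a $G$-equivariant line bundle on a projective $G$-variety, as guaranteed by Borel--Bott--Weil), so the orbit and stabilizer maps are morphisms of algebraic varieties and the stabilizer of a vector is cut out by polynomial equations in the matrix entries of any faithful linear realization of $G$. Thus the only ``content'' of the proposition is the translation between the invariant-theoretic language and the group-cohomological notation $H^{0}(\Gamma, -)$ and $H^{0}(Q, -)$, which denote spaces of invariants in degree zero.
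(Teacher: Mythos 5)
Your proof is correct and follows exactly the argument the paper intends: the proposition is stated with no proof, and the paragraph immediately preceding it supplies the key fact (that the stabilizer of any subset of the rational $G$-module $H^{0}(G/B,\mathcal{L}^{\lambda})$ is Zariski closed), from which the equality follows just as you argue. Your expansion into the two containments and the use of minimality of the Zariski closure is precisely the intended reasoning.
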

This leads to the following result.

\begin{thm}\label{thm: zariski closure}
Let $G$ be a connected semisimple complex Lie group, $B<G$ a
Borel subgroup, and $\W$ a uniformized $(G,G/B)$-manifold with data
$(\Omega, \Gamma)$ and limit set $\Lambda$.  Let $Q < G$ denote the
Zariski closure of $\Gamma$.  Suppose that $m_{2n-4}(\Lambda)=0$
where $n=\dim_{\C} G/B$.

Let $\lambda \in L^{\textnormal{an}}$ be a regular dominant weight and let
$p_{*}^{\Gamma}: \Pic^{G}(G/B)\rightarrow \Pic(\W)$ denote the invariant
direct image homomorphism.  Then:
\[
H^{0}(\W, p_{*}^{\Gamma}(\mathcal{L}^{\lambda})) \simeq H^{0}\left(Q, H^{0}(G/B, \mathcal{L}^{\lambda})\right),
\]
where the latter is the space of $Q$-invariant sections.  In
particular, if $\Gamma$ is Zariski dense in $G$, then $H^{0}(\W,
p_{*}^{\Gamma}(\mathcal{L}^{\lambda})) = 0$.
\end{thm}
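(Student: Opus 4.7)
The plan is to deduce the theorem directly from Theorem \ref{thm: sheaf cohomology}(iv) combined with Proposition \ref{prop:stab-zariski-closed}; essentially, this statement packages the already-established computation of $\Gamma$-invariants and upgrades it to $Q$-invariants using the algebraicity of the $G$-representation on sections.

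First I would set $k = 1$ in Theorem \ref{thm: sheaf cohomology}(iv). The required Hausdorff measure hypothesis $m_{2n-2k-2}(\Lambda) = 0$ becomes $m_{2n-4}(\Lambda) = 0$, which is exactly what is assumed, and $\lambda$ is regular dominant by hypothesis. Statement (iv) then gives
\[
H^{0}(\W, p_{*}^{\Gamma}(\mathcal{L}^{\lambda})) \simeq H^{0}\bigl(\Gamma, H^{0}(G/B, \mathcal{L}^{\lambda})\bigr),
\]
since the permitted range $0 \leq i < 1$ leaves only $i = 0$. The degree-zero group cohomology with coefficients in a $\Gamma$-module $V$ is canonically the invariant subspace $V^{\Gamma}$, so this identifies $H^{0}(\W, p_{*}^{\Gamma}(\mathcal{L}^{\lambda}))$ with the $\Gamma$-invariant sections of $\mathcal{L}^{\lambda}$ over $G/B$.

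Second, I would invoke Proposition \ref{prop:stab-zariski-closed}: because $G$ is an affine algebraic group and the bundle $\mathcal{L}^{\lambda}$ is $G$-equivariant, the $G$-representation on $H^{0}(G/B, \mathcal{L}^{\lambda})$ is rational, so the stabilizer of any vector is Zariski closed in $G$. Therefore the $\Gamma$-invariant subspace agrees with the $Q$-invariant subspace, where $Q$ is the Zariski closure of $\Gamma$. Composing the two isomorphisms yields the stated identification
\[
H^{0}(\W, p_{*}^{\Gamma}(\mathcal{L}^{\lambda})) \simeq H^{0}\bigl(Q, H^{0}(G/B, \mathcal{L}^{\lambda})\bigr).
\]

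For the final assertion, assume $\Gamma$ is Zariski dense, so that $Q = G$. By the Borel-Bott-Weil theorem (Theorem \ref{thm:BBW}(ii)), $H^{0}(G/B, \mathcal{L}^{\lambda})$ is the dual of the irreducible $G$-representation of highest weight $\lambda - \weylvec$; for $\lambda \neq \weylvec$ this representation is non-trivial, and for any non-trivial irreducible representation of a semisimple group the space of $G$-invariants is zero. I do not anticipate any real obstacle in this argument: the substantive work has already been absorbed into the preparatory results, and the proof amounts to matching hypotheses—in particular, recognizing the Hausdorff bound as the $k = 1$ instance of Theorem \ref{thm: sheaf cohomology} and using rationality to pass from $\Gamma$ to its Zariski closure.
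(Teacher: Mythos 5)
Your proposal follows the paper's own proof exactly: set $k=1$ in Theorem~\ref{thm: sheaf cohomology}(iv) to identify $H^0(\W, p_*^\Gamma(\mathcal{L}^\lambda))$ with $H^0(\Gamma, H^0(G/B, \mathcal{L}^\lambda))$, apply Proposition~\ref{prop:stab-zariski-closed} to replace $\Gamma$ by its Zariski closure $Q$, and then invoke irreducibility via Borel--Bott--Weil for the vanishing assertion. Worth noting: your caveat ``for $\lambda \neq \weylvec$'' is actually sharper than the paper's argument, which just cites irreducibility --- but the trivial representation is irreducible, and for $\lambda = \weylvec$ (which is regular dominant, and analytically integral whenever $G$ is simply connected) one has $\mathcal{L}^\lambda = \mathcal{L}_0 = \O_{G/B}$, giving $H^0(\W, \O_\W) = \C \neq 0$; so the final ``in particular'' clause of the theorem genuinely requires excluding $\lambda = \weylvec$, a point the paper's proof glosses over.
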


\begin{proof}
The isomorphisms
\[
H^{0}(\W, p_{*}^{\Gamma}(\mathcal{L}^{\lambda})) \simeq
H^{0}\left(\Gamma, H^{0}(G/B, \mathcal{L}^{\lambda})\right) = H^{0}\left(Q, H^{0}(G/B, \mathcal{L}^{\lambda})\right)
\]
follow from  Theorem 
\ref{thm: sheaf cohomology} and Proposition
\ref{prop:stab-zariski-closed}, respectively.  If $Q=G,$ then the
irreducibility $H^{0}(G/B, \mathcal{L}^{\lambda})$ as a
$G$-representation implies that the space of $G$-invariants is
trivial.
\end{proof}

In the ensuing applications, we will give explicit examples where
$H^{0}(\W, p_{*}^{\Gamma}(\mathcal{L}^{\lambda}))$ is non-vanishing.

\subsection{Applications}
\label{subsec:meromorphic}
We will now present some applications of the previous calculations: in
particular we show that, excluding some low dimensional cases, every
manifold arising from a $G$-quasi-Fuchsian representation admits a
meromorphic function.  In this section, we will return to the notation
$\mathcal{L}_{\lambda}=G\times_{\lambda}\C$ and note that
$\mathcal{L}_{k\lambda}=\mathcal{L}_{\lambda}^{k}$ where the latter
is the $k$-th tensor power.
Given a subgroup $H<G$, we say that
$\mathcal{L}_{\lambda}$ is \emph{twice $H$-ample} if some power
$\mathcal{L}_{k\lambda}$ admits a pair of non-proportional
$H$-invariant sections.

We begin with the following, which follows quickly from results in \cite{ST15}.
\begin{thm}\label{S-ample}
Let $G$ be an adjoint complex simple Lie group not of type $A_{1}$,
$A_{2}$, or $B_{2}$ with principal three-dimensional embedding
$\iota_{G}: \PSL_2\C \rightarrow G$.  Let
$\mathfrak{S} = \iota_G(\PSL_2\C)$.  Then every ample, $G$-equivariant line bundle
$\mathcal{L}$ on $G/B$ is
twice $\mathfrak{S}$-ample.
\end{thm}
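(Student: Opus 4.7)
The plan is to reduce the statement to a representation-theoretic one by Borel-Bott-Weil, and then invoke the structural results of \cite{ST15} concerning invariants under the principal $\mathfrak{S}$.

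First, by Theorem \ref{thm:BBW}, any ample $G$-equivariant line bundle on $G/B$ is isomorphic to $\mathcal{L}_\lambda$ for a regular dominant weight $\lambda \in L^{\textnormal{an}}$, and its tensor powers are $\mathcal{L}_\lambda^{k}=\mathcal{L}_{k\lambda}$. The space of global sections $H^{0}(G/B, \mathcal{L}_{k\lambda})$ is (dual to) the irreducible $G$-module $V_{k\lambda}$ of highest weight $k\lambda$. Because $\mathfrak{S}$ is connected, the subspace of $\mathfrak{S}$-invariant sections coincides with the space of $\mathfrak{S}$-fixed vectors in $V_{k\lambda}$, which equals the multiplicity of the trivial summand in the restriction $V_{k\lambda}|_{\mathfrak{S}}$ under $\iota_{G}$. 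The theorem therefore reduces to the assertion
\[
\text{there exists } k\geq 1 \text{ with } \dim V_{k\lambda}^{\mathfrak{S}} \geq 2.
\]

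Second, the branching of irreducible $G$-representations under the principal $\mathfrak{S}$ is exactly the analysis carried out in \cite{ST15}. Their results give a construction of non-zero $\mathfrak{S}$-invariant vectors in $V_{\mu}$ for large $\mu$, and more quantitatively a lower bound on $\dim V_{\mu}^{\mathfrak{S}}$ in terms of the combinatorics of the weights of $V_\mu$. Via Kostant's theorem on principal subalgebras, $\dim V_{k\lambda}^{\mathfrak{S}}$ is identified with the dimension of the zero eigenspace of the principal semisimple element $x_0$ on $V_{k\lambda}$ minus the dimension of the $x_0$-weight $2$ subspace; both can be estimated by lattice point counts in the weight polytope of $k\lambda$. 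The plan is to combine the Weyl dimension/volume asymptotics for these two polytope slices with the exclusion of the three Cartan types to conclude that the resulting difference tends to infinity with $k$, and in particular exceeds $1$ for some $k$.

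Third, given such a $k$, pick two linearly independent elements of $V_{k\lambda}^{\mathfrak{S}}$; under the Borel-Bott-Weil identification these are two non-proportional $\mathfrak{S}$-invariant global sections of $\mathcal{L}_{\lambda}^{k}$, which exhibits twice $\mathfrak{S}$-ampleness.

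The step I expect to be the main obstacle is the quantitative comparison of the $x_0$-weight-$0$ and $x_0$-weight-$2$ subspaces of $V_{k\lambda}$. The excluded types $A_1$, $A_2$, $B_2$ are precisely those in which the weight polytope of $\lambda$ is too thin (of dimension $\leq 2$ with insufficient depth around the origin) to force the zero weight space to strictly dominate the weight-$2$ space; for every other simple type, the principal semisimple element $x_0$ has a rich enough spectrum on $V_{k\lambda}$ to yield the required second invariant. In practice the required growth statement is essentially contained in \cite{ST15}, so the final proof amounts to quoting their branching analysis and checking that the multiplicity bound it provides is in fact at least two for some $k$.
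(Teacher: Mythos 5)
Your reduction of the statement to ``$\dim V_{k\lambda}^{\mathfrak{S}} \geq 2$ for some $k$'' via Borel--Bott--Weil and connectedness of $\mathfrak{S}$ is sound, and your step~3 correctly converts two independent invariant vectors into two non-proportional invariant sections of a single power of $\mathcal{L}$. However, the core of your argument---the assertion that the required multiplicity lower bound is ``essentially contained in \cite{ST15}'' and follows from lattice-point asymptotics in the weight polytope---is a genuine gap, and not merely a technicality you are electing to suppress. The result of \cite{ST15} that the paper invokes is not a branching-multiplicity estimate: it is the \emph{geometric} statement that the common vanishing locus
\[
Y(\lambda)=\{x\in G/B \suchthat s(x)=0\ \text{for every}\ s\in R(\lambda)^{\mathfrak{S}}\}
\]
has complex codimension at least two in $G/B$ once types $A_1$, $A_2$, $B_2$ are excluded. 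The paper then deduces twice-ampleness by an elementary divisor-geometry argument: if \emph{all} invariant sections (across all powers) had the same codimension-one zero locus, $Y(\lambda)$ would be a divisor, contradicting the codimension bound; so there exist invariants $s_1\in H^0(G/B,\mathcal{L}_{-k_1\lambda})$ and $s_2\in H^0(G/B,\mathcal{L}_{-k_2\lambda})$ with distinct zero sets, and then $s_1^{k_2}$, $s_2^{k_1}$ are non-proportional invariant sections of a common power. No asymptotics are needed.

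Your proposed quantitative argument---comparing $x_0$-eigenspaces via Kostant's formula and estimating lattice points---is not obviously recoverable from \cite{ST15}, and it is far from clear that the naive volume asymptotics correctly isolate the excluded types. Note in particular that $\dim_{\C}G/B - \dim_{\C}\mathfrak{S} = 4 - 3 = 1 > 0$ for type $B_2$, so a generic ``the quotient has positive dimension, hence invariants grow'' argument would not exclude $B_2$; whatever obstruction rules out $B_2$ in \cite{ST15} is a more delicate geometric fact about the principal orbit structure, not something you can read off from the dimension of a polytope slice. Your sketch flags this as ``the main obstacle,'' and it is: as written, the proof does not go through, because the step you defer to \cite{ST15} is not the result that reference actually proves. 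To repair it, either quote the codimension-two result and finish as the paper does, or genuinely carry out a multiplicity estimate that explains the failure in $B_2$---which would be a new and substantially harder argument than the one in the paper.
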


\begin{proof}
First, recall that ample, $G$-equivariant line bundles on $G/B$ are of the form $\mathcal{L}_{-\lambda}$
for $\lambda\in L^{\textnormal{an}}$ some regular, dominant weight.
Consider the graded ring $R(\lambda)=\bigoplus_{k>0} H^{0}(G/B, \mathcal{L}_{-k\lambda})$
and the subring $R(\lambda)^{\mathfrak{S}}$ of $\mathfrak{S}$-invariant elements.  
Define the subset $Y(\lambda)\subset G/B$ by
\[
Y(\lambda):=\{x\in G/B \suchthat s(x)=0\ \text{for every}\ s\in R(\lambda)^{\mathfrak{S}} \}.
\]
Under the hypotheses, it is shown in \cite{ST15} that the complex
codimension of $Y(\lambda)$ is at least two.  Since the vanishing
locus of a non-zero holomorphic section has complex codimension one,
this implies that there exists a pair of $\mathfrak{S}$-invariant
sections $s_{i}\in H^{0}(G/B, \mathcal{L}_{-k_{i}\lambda})$ for $i=1,2$
with distinct vanishing loci.  Then $s_1^{k_2}$ and $s_2^{k_1}$ are
non-proportional sections of $\mathcal{L}_{-(k_1+k_2)\lambda}$.
\end{proof}

Specializing now to the case of $G$-quasi-Fuchsian representations,
this leads to a proof of the following theorem stated in the
introduction:
\meromorphic*

\begin{proof}
As before we have $\mathcal{L} \simeq \mathcal{L}_{-\lambda}$ for
$\lambda\in L^{\textnormal{an}}$ regular and dominant.  By Theorem \ref{thm: QFbound},
the exclusion of types $A_{1}$, $A_{2}$, $A_{3}$ and $B_{2}$ implies
that $m_{2n-4}(\Lambda_\rho^I)=0$.  By Theorem \ref{thm: sheaf
  cohomology}(iv),
\[
H^{0}(\W_{\rho}^{I}, p_{*}^{\Gamma}(\mathcal{L}_{-k\lambda}))\simeq H^{0}(\Gamma, H^{0}(G/B, \mathcal{L}_{-k\lambda}))
\]
which is non-vanishing for some $k>0$ by Theorem \ref{S-ample}.  Here,
we have used that every $\mathfrak{S}$-invariant section is
$\Gamma$-invariant (since $\Gamma \subset \mathfrak{S}$).  Thus
statement (i) follows.

By Theorem \ref{S-ample} the ample bundle $\mathcal{L}_{-\lambda}$ is twice
$\Gamma$-ample, thus there exists $k>0$ such that
$\mathcal{L}_{-k\lambda}$ has a pair of $\Gamma$-invariant
non-proportional holomorphic sections.  The quotient of these sections
is a non-constant $\Gamma$-invariant meromorphic function on $G/B$,
hence its restriction to $\Omega_\rho^I$ descends to a non-constant
meromorphic function on $\W_{\rho}^{I}$, giving (ii).
\end{proof}

As a final application of our sheaf cohomology calculations, we
consider the Kodaira dimension of uniformized $(G, G/B)$-manifolds.
Recall that a compact complex manifold $Y$ with canonical bundle
$K_Y$ is said to have \emph{Kodaira dimension $-\infty$}, denoted
$\kappa(Y) = -\infty$, if $H^{0}(Y, K_{Y}^{d})$ vanishes for all
$d>0$.
Because the flag variety $G/B$ is rational, it has $\kappa(G/B) =
-\infty$.  The same holds for uniformized $(G,G/B)$-manifolds with
sufficiently small limit sets:
\begin{thm}
Let $G$ be a connected semisimple complex Lie group of rank at
least two.  Let $B<G$ a Borel subgroup, and $\W$ a uniformized
$(G,G/B)$-manifold with data $(\Omega, \Gamma)$ and limit set
$\Lambda$.  Suppose $m_{2n-4}(\Lambda)=0$ where $n$ is the complex
dimension of $G/B$.  Then $\kappa(\W)=-\infty$.
\end{thm}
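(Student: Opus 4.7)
The plan is to show directly that $H^{0}(\W, K_{\W}^{d}) = 0$ for every $d \geq 1$. Since the covering $p : \Omega \to \W$ is \'etale, the canonical bundle pulls back as $p^{*}K_{\W} \simeq K_{\Omega} = K_{G/B}|_{\Omega}$, and the canonical $G$-equivariant structure on $K_{G/B}$ (arising from pullback of holomorphic top forms under the $G$-action on $G/B$) restricts to a $\Gamma$-equivariant structure. Under the invariant direct image this descends to $p_{*}^{\Gamma}(K_{G/B}^{d}) \simeq K_{\W}^{d}$ for every $d$, so it is enough to compute $H^{0}(\W, p_{*}^{\Gamma}(K_{G/B}^{d}))$ via the Borel--Bott--Weil machinery of Section \ref{cohomology line bundles}.

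A standard computation shows that in the conventions of that section, $K_{G/B} = \mathcal{L}_{2\weylvec} = \mathcal{L}^{-\weylvec}$, and hence $K_{G/B}^{d} = \mathcal{L}_{2d\weylvec} = \mathcal{L}^{(1-2d)\weylvec}$. For every $d \geq 1$ the weight $\lambda := (1-2d)\weylvec$ is regular (since $\weylvec$ is), but strictly anti-dominant; the Weyl-group element that sends it into the strictly dominant chamber is the longest element $w_{0}$, with $w_{0}(\lambda) = (2d-1)\weylvec$ and $\ell(w_{0}) = \dim_{\C}G/B = n$. Because $G$ has rank at least two, the number of positive roots is at least $3$ (the minimum being attained in type $A_{2}$), so $\ell(w_{0}) \geq 3$.

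The hypothesis $m_{2n-4}(\Lambda) = 0$ is exactly the $k=1$ case of Theorem \ref{thm: sheaf cohomology}. Since $\ell(w_{0}) \geq 3 > 1 = k$, part (ii) of that theorem applies to $\lambda = (1-2d)\weylvec$ and yields $H^{i}(\W, p_{*}^{\Gamma}(\mathcal{L}^{\lambda})) = 0$ for all $0 \leq i < 1$; specialising to $i = 0$ gives $H^{0}(\W, K_{\W}^{d}) = 0$ for every $d \geq 1$, which is precisely $\kappa(\W) = -\infty$. I do not anticipate any serious obstacle: the statement is essentially the combination of Harvey's extension theorem (already packaged into Theorem \ref{thm: sheaf cohomology}) with the Fano-ness of $G/B$, and one could equally well argue more directly by invoking Theorem \ref{harvey} with $k=1$ to get $H^{0}(\Omega, K_{G/B}^{d}|_{\Omega}) \simeq H^{0}(G/B, K_{G/B}^{d})$, observing that the right-hand side vanishes by Borel--Bott--Weil, and embedding $H^{0}(\W, K_{\W}^{d})$ into the left-hand side as the $\Gamma$-invariants.
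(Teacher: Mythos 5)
Your proof is correct and follows essentially the same route as the paper: identify $K_{G/B} \simeq \mathcal{L}_{2\weylvec} = \mathcal{L}^{-\weylvec}$, hence $K_\W^d \simeq p_*^\Gamma(\mathcal{L}^{(1-2d)\weylvec})$, and apply Theorem~\ref{thm: sheaf cohomology}(ii) with $k=1$, using that the weight $(1-2d)\weylvec$ is regular and brought to the dominant chamber by $w_0$. One small inaccuracy: you claim that rank at least two forces $\ell(w_0) \geq 3$, ``the minimum being attained in type $A_2$,'' but the group is only assumed semisimple, and $\PSL_2\C \times \PSL_2\C$ has rank two with exactly two positive roots, so $\ell(w_0) = 2$ there. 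This does not affect your argument, since the theorem only needs $\ell(w_0) > k = 1$, which is exactly the weaker bound the paper uses; but the parenthetical justification should be corrected. Your alternative sketch at the end (Harvey's extension giving $H^0(\Omega, K_{G/B}^d) \simeq H^0(G/B, K_{G/B}^d) = 0$ by Borel--Bott--Weil, then taking $\Gamma$-invariants) is also valid and is a slightly more direct unpacking of the same machinery.
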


\begin{proof}
The canonical line bundle of $G/B$ is isomorphic to
$\mathcal{L}^{-\delta}=\mathcal{L}_{2\delta}$ where as before $\delta$ is half the sum of the positive
roots.  Therefore, we have
\[
K_{\W}^{d} \simeq p_{*}^{\Gamma}(\mathcal{L}^{(1-2d)\delta}).
\]
For any integer $d>0$, the weight $(1-2d)\delta$ is regular and $w_0((1-2d)\delta)$ is
dominant, where $w_{0}$ is the longest element of the Weyl group.
Therefore, by Theorem \ref{thm: sheaf cohomology}(ii) we have
\[
H^0(\W, K_\W^d) \simeq H^{0}(\W, p_{*}^{\Gamma}(\mathcal{L}^{(1-2d)\delta}))=0
\]
for all $d>0$ provided that $\ell(w_{0})>1$, which is the case since
the rank of $G$ is at least two.
\end{proof}
Note that the corresponding statement fails for $G \simeq \SL_2\C$
since Riemann surfaces of higher genus can be obtained as uniformized
$(G,G/B) = (\SL_2\C, \CP^1)$ manifolds, and the canonical bundle of
such a Riemann surface has non-trivial sections.

\section{Examples and complements}\label{examples}

In this final section we return to the topological considerations of
Section \ref{topology} and discuss some specific examples of balanced
ideals, domains, and quotient manifolds for various complex simple Lie
groups $G$ and parabolic pairs $(P_A,P_D)$.  (The survey \cite{KL17}
also gives examples of balanced ideals, including some that belong to
the infinite families constructed below.)

\subsection{The lower half of \texorpdfstring{$W$}{W}}

Certain ideals can be constructed easily from the length function on
the Weyl group $W$.  Since $x < y$ implies $\ell(x) < \ell(y)$, the
set
\[ W_{\leq L} := \{ x \suchthat \ell(x) \leq L \} \]
is an ideal in $W$ for any integer $L$, and this ideal is minimally generated
by $\ell^{-1}(L)$.  Generalizing this, if $J$ is a subset of
$\ell^{-1}(L+1)$, then $W_{\leq L} \cup J$ is also an ideal, and the
minimal generating set of this ideal contains $J$.

This construction can always be used to produce a balanced
ideal. Define the \emph{lower half} of $W$ to be the ideal
\[ I_{\half} = W_{\leq \half\ell(w_0)}.\]
Since $\ell(w_0x) = \ell(w_0) - \ell(x)$, it is immediate that this
ideal is balanced if $\ell(w_0) = \dim_\C G/B$ is odd, which is the
case for all simple $G$ of type $B_{n} = \PO_{2n+1}\C$,
$C_n = \PSp_{2n}\C$, or $E_7$, and for type $A_n = \PSL_{n+1}\C$ when
$n$ is $1$ or $2$ mod $4$.

In such cases, considering $I_{\half}$ as an ideal of type $(B,B)$, it
gives a model thickening $\Phi_{\half} := \Phi^{I_{\half}}
\subset G/B$ and domain of
discontinuity $\Omega_\half \subset G/B$ for $B$-Anosov
representations.  Suppose $\ell(w_0) = 2k+1$ for $k \in \Z$.  Then the
model thickening has the same Betti numbers as $G/B$ itself in the
range $1\ldots 2k$, i.e.
\[ r_i = b_i(\Phi_\half) = b_i(G/B) = \card{\ell^{-1}(i)} \text{ for } i \leq
2k.\]
Applying Corollary \ref{cor:omega-betti} gives a particularly simple expression
for the Betti numbers of the domain of discontinuity:
\[
b_i(\Omega_\half) = \begin{cases}
b_i(G/B) & \text{if }i < 2k\\
2 b_{2k}(G/B) & \text{if }i = 2k\\
b_{4k-i}(G/B) &\text{if }i>2k 
\end{cases}
\]
By Theorem \ref{thm:homology-product} there is a corresponding
formula for the homology of the compact quotient manifolds.

If $\ell(w_0) = 2k$ is even, the construction can be modified to
produce a balanced ideal.  Note that the ``middle'' length
$W_{\text{mid}} := \ell^{-1}(k)$ is mapped to itself under left
multiplication by $w_0$.  Let $J \subset W_{\text{mid}}$ be a subset
containing one element of each $w_0$-orbit.  Then the set
\[ I_{\half,J} = W_{\leq (k-1)} \cup J \]
is a balanced ideal whose minimal generating set contains $J$.  (In
some examples, $I_{\half,J}$ is in fact generated by $J$, while in
other cases there are additional generators of length $k-1$.)

Since there are $2^{\card{W_\text{mid}}/2}$ such sets $J$, this gives a
large collection of balanced ideals, all of which have the same number
of elements of each length.  The corresponding generalizations of the
Betti number formulas given above are
\[
r_i = b_i(\Phi_{\half,J}) = \begin{cases}
b_i(G/B) & i < 2k\\
\frac{1}{2}b_i(G/B) & i = 2k\\
0 & \text{else}
\end{cases}
\]
and by Corollary \ref{cor:omega-betti},
\begin{equation}
\label{eqn:omega-betti-odd-lower-half}
 b_i(\Omega_{\half,J}) = \begin{cases}
b_i(G/B) & \text{if }i < 2k-2\\
b_{2k-2}(G/B) + \frac{1}{2}b_{2k}(G/B)& \text{if }i \in \{ 2k-2, 2k\} \\
b_{4k - 2 - i}(G/B) & \text{if }i > 2k.
\end{cases}
\end{equation}

\subsection{Constructions for \texorpdfstring{$\PSL_n \C$}{PSL(n,C)}}
\label{sec:sln-constructions}

In preparation for the next two types of examples, we recall how some
of the combinatorial and Lie-theoretic notions specialize to the case
$G = A_{n-1} = \PSL_n\C$; general references for this material
include \cite{bjorner-brenti} (concerning Weyl groups), 
\cite{lakshmibai-gonciulea} \cite{BRI05} (concerning flag varieties),
and \cite{fulton:young-tableaux} (concerning both).

We choose the Borel $B < G = \PSL_n\C$ consisting of the
upper-triangular matrices.  The manifold $G/B$ is $G$-equivariantly
identified with the set of complete flags $F = (F_1, \ldots F_{n-1})$,
i.e.~$F_1 \subset \ldots \subset F_{n-1} \subset \C^n$ and
$\dim_\C F_k = k$.  We denote by $E$ the standard flag of $\C^n$ in
which $E_k = \Span \{ e_1, \ldots, e_k \}$, which corresponds to
$eB \in G/B$; here $e_1, \ldots, e_n$ is the standard ordered basis of
$\R^n$.

Standard parabolic subgroups $P < G$ are stabilizers of partial
flags within $E$, with associated quotients $G/P$ parameterizing all
flags of that type. An example we will focus on is $P_{1,n-1}$, the
\emph{incidence parabolic}, which is defined as the stabilizer of
$(E_1,E_{n-1})$.  Thus $G/P_{1,n-1}$ is the set of pairs $(\ell,H)$ of
a line and a containing hyperplane.

The Weyl group $W = W(\PSL_n\C)$ is isomorphic to the symmetric
group $S_{n}$, with the roots (respectively, simple roots) of $G$
corresponding to transpositions (respectively, transpositions of
adjacent elements).  We identify a permutation $x\in S_n$ with the
tuple $(x(1), x(2), \ldots, x (n))$.

The Weyl group $W_{1,n-1}$ of $P_{1,n-1}$ consists of permutations
$w \in S_n$ with $w(1) = 1$ and $w(n) = n$.  Thus, the cosets space
$W / W_{1,n-1}$ consists of classes of permutations
$W(i,j) = \{ (i,*, \ldots, *, j) \} \subset S_n$ for $i \neq j$.

The Chevalley-Bruhat order has a simple description in terms of permutations.
For $w \in S_n$ we define the set of \emph{ascents} of $w$ to be
\[A(w) := \{ i \suchthat w(i) < w(i+1) \}.\]
This is a subset of $\{1,2,\ldots,n-1\}$.  We also denote by $w_{i,j}$
the $j$-th smallest element of the set $\{ w(1), \ldots,
w(i) \}$.  Then:

\begin{thm}[{\cite[Theorem~2.6.3(iii)]{bjorner-brenti}}]
\label{thm:rank-comparison}
Elements $x,y \in S_n$ satisfy $x \leq y$ if and only if $x_{i,j} \leq
y_{i,j}$ for all $i \in A(y)$ and all $j \leq i$. \noproof
\end{thm}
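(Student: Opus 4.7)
The plan is to reduce this refined tableau criterion to the unrestricted one, which asserts that $x \leq y$ in Chevalley-Bruhat order if and only if $x_{i,j} \leq y_{i,j}$ for every $1 \leq j \leq i \leq n-1$. The unrestricted version is itself standard: it follows by induction on $\ell(y)$ using the subword characterization of Bruhat order, once one verifies directly that each elementary covering relation $x \lessdot y$ changes each sorted tuple $(w_{i,1},\dots,w_{i,i})$ only by replacing at most one entry with a strictly smaller one. Granting the unrestricted version, the forward implication of the refined criterion is immediate, since restricting attention to indices with $i \in A(y)$ merely discards some of the inequalities.

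For the converse I would show that the inequality $x_{i,j} \leq y_{i,j}$ at a descent position $i \notin A(y)$ is a formal consequence of the inequalities at the nearest ascents of $y$ on either side. The basic combinatorial input is the insertion identity: for any permutation $w$, the sorted tuple at row $i+1$ is obtained from the sorted tuple at row $i$ by inserting $w(i+1)$ at its rank $k_w$, so that $w_{i,j} = w_{i+1,j}$ for $j < k_w$ and $w_{i,j} = w_{i+1,j+1}$ for $j \geq k_w$; an analogous identity relates rows $i-1$ and $i$. At a descent of $y$ the inserted value $y(i+1)$ is small, so row $i$ of $y$ agrees with row $i+1$ on its tail entries and with row $i-1$ on its head entries. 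Combining this structure with the same identities applied to $x$, together with the general interlacing inequalities $x_{i+1,j} \leq x_{i,j} \leq x_{i+1,j+1}$ (and their row-$(i-1)$ analogues) valid for any permutation $x$, one chains together the hypotheses at the flanking ascents of $y$ to conclude $x_{i,j} \leq y_{i,j}$.

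The principal obstacle is the bookkeeping: since the descents of $y$ can occur in consecutive runs, one must iterate this reduction, establishing the tableau inequality at an interior descent using inequalities at its neighbors which themselves may need to be established first. I would organize this as an induction on the distance from $i$ to the nearest ascent of $y$, with base case either a position in $A(y)$ (where the inequality is hypothesized) or an endpoint of the permutation (where the trivial boundary conditions at row $1$, or the fact that row $n-1$ differs from the full set $\{1,\dots,n\}$ by a single entry, supply the needed bound). Once the full tableau inequalities have been reconstructed in this way, the unrestricted criterion delivers $x \leq y$ and the refined criterion follows.
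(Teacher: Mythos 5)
The paper does not prove this statement; it is quoted directly from Bj\"orner--Brenti (Theorem~2.6.3(iii) of that reference) with a \texttt{\textbackslash noproof} marker, so there is no internal proof to compare against.

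That said, your outline is correct and in fact mirrors the standard argument in the cited source: one first establishes the ``full'' Ehresmann tableau criterion (all $1\leq j\leq i\leq n-1$) by induction on length via the subword characterization, and then reduces the refined criterion to it by showing the inequalities at descent positions of $y$ are consequences of those at flanking ascents. Your key combinatorial inputs --- the insertion identity relating rows $i$ and $i\pm1$, the interlacing $x_{i+1,j}\leq x_{i,j}\leq x_{i+1,j+1}$, and the observation that a descent $y(i)>y(i+1)$ forces row $i$ of $y$ to agree with row $i-1$ on an initial segment and with row $i+1$ on a terminal segment (with the two segments overlapping so as to cover all $j\leq i$) --- are exactly what is needed. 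One point you flag as ``bookkeeping'' is worth making precise, since as stated the induction on distance to the nearest ascent is slightly off: the reduction sends $(i,j)$ either to $(i-1,j)$ when $j$ is below the cutoff $m_i$ (the rank of $y(i+1)$ in row $i+1$), or to $(i+1,j+1)$ when $j\geq m_i$, and the direction depends on $j$, not just $i$. The recursion nevertheless terminates because the direction is fixed once chosen: if $j\geq m_i$ and $i+1$ is again a descent, then $y(i+2)<y(i+1)\leq y_{i,j}=y_{i+1,j+1}$ forces $m_{i+1}\leq j+1$, so one again moves right; dually, if $j<m_i$ one keeps moving left. Thus each chain runs monotonically toward the nearest ascent of $y$ (or to the trivial boundary rows $0$ and $n$), and the induction closes. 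With this adjustment your argument is complete and is essentially Bj\"orner--Brenti's proof.
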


Note that this characterizes elements of the ideal
$\langle y \rangle = \{ x \suchthat x \leq y\}$ by an explicit set of
inequalities.

There is a corresponding formula for the length of an element
$w \in S_n$ as its number of \emph{inversions} (see
\cite[Proposition~1.5.2]{bjorner-brenti}):
\begin{equation}
\label{eqn:length-formula}
\ell(x) = \bigcard{ \{(i,j) \suchthat i<j \text{ and } \sigma(i) >
\sigma(j) \} }.
\end{equation}
Thus the longest element is $w_0 = (n,n-1,\ldots,1)$.

The Schubert variety $X_w = \overline{BwB} \subset G/B$ is defined by an explicit set
of dimension inequalities depending on the permutation $w$; precisely,
we have:
\begin{thm}[{\cite[Section 10.5]{fulton:young-tableaux}}]
\label{thm:schubert-variety-as-flags}
The Schubert variety $X_w$ consists of the flags $(F_1, \ldots, F_n)$ such that
\[ \dim(F_p \cap E_q) \geq \bigcard{ \{ (i,j) \suchthat i \leq p, \: w(j) \leq q
\}}. \tag*{\noproof} \]
\end{thm}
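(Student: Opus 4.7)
The plan is to identify the rank function $r_F(p,q) := \dim(F_p \cap E_q)$ as the complete $B$-invariant of a flag $F$ on $G/B$, and to show that Schubert varieties are cut out by lower bounds on this function. First I will compute $r_F$ explicitly when $F \in C_w$: writing $F = bwE$ for some $b \in B$, the fact that upper-triangular matrices preserve each $E_q$ gives $b^{-1}E_q = E_q$, whence
\[
\dim(F_p \cap E_q) \;=\; \dim(wE_p \cap E_q) \;=\; \#\{j \leq p \suchthat w(j) \leq q\},
\]
using that $wE_p = \Span(e_{w(1)}, \ldots, e_{w(p)})$ and $E_q = \Span(e_1, \ldots, e_q)$. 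Read as the count of positions of the permutation matrix of $w$ lying in the upper-left $p \times q$ rectangle, this is the quantity on the right of the theorem.

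Next I would address the closure. The intersection-dimension of a variable subspace with a fixed one is upper semi-continuous on the Grassmannian, so each condition $\dim(F_p \cap E_q) \geq k$ cuts out a closed subset of $G/B$. Because these inequalities hold as equalities on the cell $C_w$, they persist on the closure $X_w$, giving one containment. For the reverse containment, I would observe that the rank-inequality locus is $B$-invariant (since $B$ fixes the reference flag $E$), hence is a union of Schubert cells $C_v$. A cell $C_v$ lies in the locus if and only if $r_v \geq r_w$ pointwise as rank matrices. By the rank-matrix characterization of the Bruhat order on $S_n$, this holds precisely when $v \leq w$, so the locus equals $\bigsqcup_{v \leq w} C_v = X_w$, as desired.

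The main obstacle is the rank-matrix criterion for Bruhat order: $v \leq w$ in $S_n$ if and only if $\#\{i \leq p \suchthat v(i) \leq q\} \geq \#\{i \leq p \suchthat w(i) \leq q\}$ for all $p, q$. This is a classical but non-trivial equivalence. A standard proof proceeds by induction on the length difference $\ell(w) - \ell(v)$, using the covering relations of the Bruhat order (right multiplication by a reflection that decreases length by one) and tracking how a single such swap of entries in the one-line notation alters the rank matrix; alternatively it can be derived from the tableau criterion of Theorem \ref{thm:rank-comparison} after an unpacking of definitions. In a self-contained treatment I would import this lemma as a black box from \cite{bjorner-brenti} and keep the focus on the geometric reductions outlined above, in line with the fact that the theorem under discussion is itself cited from \cite{fulton:young-tableaux}.
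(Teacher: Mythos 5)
The paper itself gives no proof here; it cites the statement to Fulton's \emph{Young Tableaux}, Section~10.5, and marks it with a QED box. Your argument is correct and is essentially the standard one that appears in that reference: compute the rank function $r_F(p,q)=\dim(F_p\cap E_q)$ on a single cell $C_w$ using the $B$-invariance of the reference flag, observe upper semicontinuity of intersection dimension to get $X_w\subset\{r_F\geq r_w\}$, note that this locus is $B$-invariant and hence a union of Schubert cells, and finish with the rank-matrix characterization of Bruhat order ($v\leq w$ iff $r_v\geq r_w$ pointwise), which correctly identifies the locus as $\bigsqcup_{v\leq w}C_v=X_w$. You are also right to flag that the rank-matrix criterion is the nontrivial combinatorial input; importing it from \cite{bjorner-brenti} or from the tableau criterion is appropriate given that the theorem itself is cited rather than reproved in the paper.
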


Finally, we note that the partial flag variety
$G/P_{1,n-1} = \{ (\ell,H)\}$ can be embedded as a hypersurface in
$\CP^{n-1} \times (\CP^{n-1})^*$, which we call the \emph{incidence
  variety}, consisting of pairs of a vector $x \in \C^n$ and a linear
form $\xi \in (\C^n)^*$ such that $\xi(x) = 0$, modulo the action of
$\C^* \times \C^*$.  Here $(x,\xi)$ corresponds to the flag
$(\C \cdot x, \ker \xi)$.  Using the theorem above, one can check that
in this realization the Schubert variety
$X_{W(i,j)} \subset G/P_{1,n-1}$ is cut out by the equations
$x_{i+1} = \ldots = x_{n} = \xi_1 = \ldots = \xi_{j-1} = 0$.

\subsection{The \texorpdfstring{$(1,n-1)$}{(1,n-1)}-examples}
\label{subsec:incidence}

In this section we describe how certain domains studied by
Guichard-Wienhard in \cite[Section 10.2.2]{GW12} are represented in
the Kapovich-Leeb-Porti formalism (i.e.~by Chevalley-Bruhat ideals),
and what is obtained by applying the results of Section \ref{topology}
to these examples.

We define the \emph{incidence ideal} to be the subset of $S_n$ given
by 
\[
I_{1,n-1} = \{ x \in S_n \suchthat x(1) < x(n) \}.
\]
Equivalently, this is a union of $W_{1,n-1}$ cosets,
$I_{1,n-1} = \bigcup_{i < j} W(i,j)$.

For $1 \leq k \leq n-1$, let $z_k \in S_n$ be defined by
\begin{equation*}
z_k(i) = \begin{cases}
k & \text{ if } i=1\\
k+1 & \text{ if } i = n\\
n-i+2 & \text{ if } 1 < i \leq n-k\\
n-i & \text{ otherwise.}
\end{cases}
\end{equation*}
Equivalently (and perhaps more transparently) $z_k$ is defined by
the unique tuple $(k, \ldots, k+1)$ in which the omitted
elements appear in decreasing order.  Note that $z_k \in I_{1,n-1}$,
and that $z_k$ is the unique longest element in the coset $W(k,k+1)$.

\begin{thm}
The set $I_{1,n-1} \subset S_n$ is a balanced and right
$W_{1,n-1}$-invariant ideal of the Chevalley-Bruhat order on $S_n$.  It is
minimally generated by $\{ z_1, z_2, \ldots, z_{n-1}\}$.
\end{thm}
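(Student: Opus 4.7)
The plan is to break the statement into four claims and handle them in order: (i) right $W_{1,n-1}$-invariance, (ii) the ideal property, (iii) balance, and (iv) the description of the minimal generators. The whole argument should proceed via the coset space $W/W_{1,n-1}$, which admits a very concrete description.

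I would begin by describing cosets: since $W_{1,n-1}$ is exactly the subgroup of $S_n$ fixing $1$ and $n$, the coset of $x \in S_n$ is determined by the pair $(x(1),x(n))$, and we write $W(i,j)$ for the coset with $x(1)=i$, $x(n)=j$ (where $i \neq j$). Right $W_{1,n-1}$-invariance of $I_{1,n-1}$ is then immediate because the condition $x(1) < x(n)$ depends only on the coset. So $I_{1,n-1}$ is a union of cosets $W(i,j)$ with $i<j$.

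Next, I would identify the Chevalley-Bruhat order on $W/W_{1,n-1}$ via the geometry of $G/P_{1,n-1}$ realized as the incidence variety in $\CP^{n-1} \times (\CP^{n-1})^*$. By Theorem \ref{thm:schubert-variety-as-flags} (and the explicit description given at the end of Section \ref{sec:sln-constructions}), $X_{W(i,j)}$ is the locus cut out by $x_{i+1}=\cdots=x_n=0$ and $\xi_1=\cdots=\xi_{j-1}=0$, i.e.\ pairs $(\ell,H)$ with $\ell \subset E_i$ and $H \supset E_{j-1}$. Comparing these inclusion conditions yields
\[
W(i,j) \leq W(i',j') \iff i \leq i' \text{ and } j \geq j'.
\]
From this, if $W(i,j) \in I_{1,n-1}$ (so $i<j$) and $W(i',j') \leq W(i,j)$, then $i' \leq i < j \leq j'$, so $W(i',j') \in I_{1,n-1}$. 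This proves (ii).

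For (iii), I observe that $w_0 = (n,n-1,\ldots,1)$ satisfies $(w_0 x)(1) = n+1-x(1)$ and $(w_0 x)(n) = n+1-x(n)$, so $(w_0 x)(1) < (w_0 x)(n) \iff x(1) > x(n)$. Since $x(1) \neq x(n)$ for any permutation, this gives $w_0 I_{1,n-1} = S_n \setminus I_{1,n-1}$, and hence $I_{1,n-1}^\perp = w_0(S_n \setminus I_{1,n-1}) = I_{1,n-1}$. Finally, for (iv), since $I_{1,n-1}$ is a union of $W_{1,n-1}$-cosets, its minimal generators (i.e.\ maximal elements under the Chevalley-Bruhat order) are precisely the longest elements of the cosets $W(i,j)$ that are maximal in $I_{1,n-1}/W_{1,n-1}$. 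From the order above, the maximal pairs $(i,j)$ with $i<j$ are those with $j=i+1$, giving the cosets $W(k,k+1)$ for $k=1,\ldots,n-1$. The longest element of $W(k,k+1)$ maximizes the number of inversions \eqref{eqn:length-formula} over all permutations with fixed first entry $k$ and last entry $k+1$, which forces the remaining $n-2$ entries to be arranged in strictly decreasing order; this is exactly the permutation $(k, n, n-1, \ldots, k+2, k-1, \ldots, 1, k+1) = z_k$. The main technical step is the explicit identification of the order on $W/W_{1,n-1}$; everything else then falls out combinatorially.
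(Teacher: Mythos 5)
Your proof is correct but takes a genuinely different route from the paper's. The paper never determines the full Chevalley--Bruhat order on $W/W_{1,n-1}$; instead it applies the rank comparison criterion (Theorem~\ref{thm:rank-comparison}) directly to the single-ascent (or double-ascent) permutations $z_k$ to establish the characterization $x \leq z_k \iff x(1)\leq k$ and $x(n)>k$, from which everything (ideal, balance, minimal generators) drops out in one stroke. You instead work geometrically: using the incidence-variety realization of $G/P_{1,n-1}$ to read off the coset partial order as $W(i,j)\leq W(i',j') \iff i\leq i'$ and $j\geq j'$, and then descending to the $W$-level. This is cleaner conceptually and makes the structure of the ideal more transparent.

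Two steps in your argument are, however, left implicit and deserve a sentence each. First, your derivation of the coset order from the Schubert incidence conditions gives the implication $(i\leq i',\, j\geq j') \Rightarrow X_{W(i,j)}\subset X_{W(i',j')}$ immediately, but the converse requires an extra observation (e.g.\ projecting $X_{W(i,j)}$ to $\CP^{n-1}$ and $(\CP^{n-1})^*$ shows the images are exactly $\P(E_i)$ and the hyperplanes through $E_{j-1}$, which recovers $i$ and $j$). Second, and more importantly, your arguments for the ideal property and for the minimal generating set move between the order on $W$ and the order on $W/W_{1,n-1}$, which relies on two standard Coxeter-group facts that you do not state: (a) the projection $W\to W/W_J$ is order-preserving (\cite[Prop.~2.5.1]{bjorner-brenti}), which is what turns your downward-closure of $I/W_{1,n-1}$ into downward-closure of $I$ in $W$; and (b) if $u\leq w$ in $W^J$ then $u w_{0,J}\leq w w_{0,J}$, which is what makes the maximal elements of $I$ in $W$ exactly the longest representatives of the maximal cosets in $I/W_{1,n-1}$ and hence justifies your identification of the generating set $\{z_1,\ldots,z_{n-1}\}$. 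Both facts are standard and easily proved, but without them your passage from coset-level statements to $W$-level statements is not logically complete. With these two lemmas cited or proved, your argument is a valid alternative to the paper's.
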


\begin{proof}
Since $(w_0 x)(i) = n + 1 - x(i)$ it is immediate that left
multiplication by $w_0$ exchanges $I_{1,n-1}$ with its complement.
Thus if this set is an ideal, then it is balanced.  We have already
seen that $I_{1,n-1}$ is a union of left $W_{1,n-1}$-cosets (and hence
right-$W_{1,n-1}$-invariant).

Next, we claim that the Chevalley-Bruhat order satisfies
\begin{equation}
\label{eqn:1n-technical}
x \leq z_k \;\; \text{ if and only if } \; x(1) \leq k \text{ and } x(n) > k.
\end{equation}
Before proving this, we derive the rest of the statements of the
Theorem from it.  An element $x \in W$ satisfies the right hand side
of \eqref{eqn:1n-technical} for some $k$ if and only if $x(1) < x(n)$,
hence the condition above is equivalent to the statement that
$I_{1,n-1}$ is the union of the principal ideals $\langle z_k \rangle$
for $k = 1, \ldots, n-1$, and in particular is an ideal.  It is
straightforward to calculate from \eqref{eqn:length-formula} that
$\ell(z_k) = \frac{1}{2}(n-1)(n-2)$ for all $k$, so these elements are
pairwise incomparable and of maximal length within $I_{1,n-1}$.  This
shows $\{ z_1, z_2, \ldots, z_{n-1}\}$ is the minimal generating set.

Finally we prove \eqref{eqn:1n-technical} using Theorem
\ref{thm:rank-comparison}.  First suppose that $1 < k < n-1$. Then
$A(z_k) = \{ 1,n-1 \}$ and we find $x \leq z_k$ if and only if
\[ x(1) = x_{1,1} \leq (z_k)_{1,1} = z_k(1) = k \] and
\[ x_{n-1,j} \leq (z_k)_{n-1,j} \: \text{ for } \: j \leq n-1.\]
Since $\{x(1), \ldots, x(n-1) \} = \{1, \ldots, n\} \setminus x(n)$
(and similarly for $z_k$), the second set of inequalities is
equivalent to $x(n) \geq z_k(n) = k+1$, or equivalently $x(n) > k$, as
desired.  The cases $k=1$ and $k=n-1$ are similar, except that $z_k$
then has only one ascent.  We omit the straightforward verification
that the argument above still applies in these cases.
\end{proof}

Using the right-invariance of $I_{1,n-1}$ we can apply the
Kapovich-Leeb-Porti construction with $P_A = B$ and $P_D = P_{1,n-1}$
to obtain a limit set $\Lambda_{1,n-1} := \Lambda^{I_{1,n-1}}_\rho$
and cocompact domain of discontinuity
$\Omega_{1,n-1} := \Omega^{I_{1,n-1}}_{\rho}$ in the incidence variety
$G/P_{1,n-1}$ for a $B$-Anosov representation $\rho : \pi \to G$ of a
word hyperbolic group $\pi$.

Applying Theorem \ref{thm:schubert-variety-as-flags} to $z_k$ we find
that the associated Schubert variety $X_{z_k} \subset G/B$ is
characterized by dimension inequalities $\dim(F_1 \cap E_k) \geq 1$
and $\dim(E_k \cap F_{n-1}) \geq k$.  Projecting to $G/P_{1,n-1}$ we
obtain the Schubert variety
\[ X_{W(k,k+1)} = X_{z_kW_{1,n-1}} = \{ (F_1,F_{n-1}) \suchthat F_1
\subset E_k \subset F_{n-1} \}.\]
Taking the union of these sets over $k$ gives the model thickening
$\Phi_{1,n-1} := \Phi^{I_{1,n-1}}$ in $G/P_{1,n-1}$, and the limit set itself is given
by
\[ \Lambda_{1,n-1} = \bigcup_{t \in \partial_\infty \pi} \{
(F_1,F_{n-1}) \suchthat \exists k, \: F_1 \subset \xi_k(t) \subset
F_{n-1} \},\]
where $\xi_k(t)$ is the $k$-dimensional component of the flag
corresponding to $\xi(t) \in G/B$.  This is the domain constructed in
\cite[Section 10.2.2]{GW12}.  Using the results of Section \ref{topology} we can now
derive a closed formula for the Betti numbers of
$\Omega_{1,n-1}$ in the case of a $G$-Fuchsian representation.
\begin{thm}
\label{thm:example-incidence-betti}
This domain of discontinuity $\Omega_{1,n-1} \subset G/P_{1,n-1}$ in
the incidence variety associated to a $G$-Fuchsian representation
$\rho : \pi_{1}S \to \PSL_n\C$ satisfies
\[ b_{2k}(\Omega_{1,n-1}) = \begin{cases}
2n-2 & \text{ if } k = n-2 \\
\max \left ( 0,n-1- \left|n-k-2\right| \right ) & \text{ else.}
\end{cases} \]
Hence its Poincar\'e polynomial is
\[ p(x) = \sum_i b_{i}x^i = \frac{\left (1-t^{2(n-1)} \right )^2}{\left
  (1-t^2 \right )^2} + (n-1)t^{2n-4}. \]
\end{thm}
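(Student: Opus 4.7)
The plan is to apply Corollary~\ref{cor:omega-betti}, which reduces the computation of the Betti numbers of $\Omega_{1,n-1}$ to a purely combinatorial count in the Weyl group. Since $G/P_{1,n-1}$ is a $\CP^{n-2}$-bundle over $\CP^{n-1}$, its complex dimension is $N := 2n-3$, and the corollary yields
\[
b_{2k}(\Omega_{1,n-1}) = r_k + r_{N-1-k} = r_k + r_{2n-4-k},
\]
where $r_k$ counts cosets in $I_{1,n-1}/W_{1,n-1}$ of Chevalley-Bruhat length $k$. The odd Betti numbers vanish by Theorem~\ref{thm:omega-homology}(i), so these data determine the full Poincar\'e polynomial.

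Next I would identify the cosets and their lengths. The cosets in $I_{1,n-1}/W_{1,n-1}$ are precisely the sets $W(i,j)$ with $1 \leq i < j \leq n$. The minimum-length representative of $W(i,j)$ is the permutation $x \in S_n$ with $x(1) = i$, $x(n) = j$, and the remaining values placed in increasing order in positions $2, \ldots, n-1$. Applying the inversion formula~\eqref{eqn:length-formula}, all inversions involve position $1$ or position $n$: there are $i-1$ values smaller than $i$ sitting to the right of position $1$, and $n-j$ values larger than $j$ sitting to the left of position $n$, while the pair $(1,n)$ contributes no inversion since $i<j$. Hence
\[
\ell(W(i,j)) = (i-1) + (n-j).
\]

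Substituting $a = i-1$ and $b = n-j$, counting cosets of length $k$ amounts to counting pairs $(a,b)$ with $a,b \geq 0$, $a+b = k$, and $a+b+2 \leq n$. Thus $r_k = k+1$ for $0 \leq k \leq n-2$ and $r_k = 0$ otherwise. Substituting into $b_{2k} = r_k + r_{2n-4-k}$ and splitting into the three cases $k<n-2$, $k = n-2$, $k>n-2$ produces the piecewise formula of the statement; the exceptional value $2n-2$ at $k=n-2$ arises because the two summands $r_{n-2}$ coincide, whereas for $k \neq n-2$ at most one of $r_k, r_{2n-4-k}$ is nonzero.

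For the Poincar\'e polynomial, I would observe that $\sum_{k=0}^{n-2}(k+1)x^{2k}$ is the square of the Poincar\'e polynomial of $\CP^{n-2}$, namely $\bigl(\tfrac{1-x^{2(n-1)}}{1-x^2}\bigr)^2$; its coefficients agree with $b_{2k}$ in every degree except at the midpoint $k=n-2$, where the coefficient is $n-1$ rather than $2n-2$. The correction $(n-1)x^{2n-4}$ repairs precisely this discrepancy, yielding the closed form. The only nontrivial step is the length computation in the second paragraph; once the formula $\ell(W(i,j)) = (i-1) + (n-j)$ is in hand, everything else is bookkeeping.
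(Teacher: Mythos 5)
Your proposal is correct and follows the same approach as the paper: both apply Corollary~\ref{cor:omega-betti} with $\dim_\C G/P_{1,n-1}=2n-3$, identify the minimal-length representatives of the cosets $W(i,j)$, compute their length via the inversion count (your $(i-1)+(n-j)$ is the paper's $n+i-j-1$), and conclude $r_k = k+1$ for $0 \leq k \leq n-2$. One small slip in the last paragraph: the polynomial $\sum_{k=0}^{n-2}(k+1)x^{2k}$ is not itself the square $\bigl(\tfrac{1-x^{2(n-1)}}{1-x^2}\bigr)^2$ (the latter runs to degree $4(n-2)$ and has symmetric coefficients $c_k = \min(k+1,\,2n-3-k)$), but your subsequent observation that the square's coefficients agree with $b_{2k}$ except for the midpoint defect of $n-1$ is exactly right, so the closed form follows as you say.
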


\begin{proof}
Recall that $r_k$ is the number of elements of $I/W_{1,n-1}$ of length
$k$, and that $I/W_{1,n-1}$ consists of the cosets $W(i,j)$ with $i<j$.
By \eqref{eqn:length-formula}, the element of $W(i,j)$ of minimal length
\[ (i,1,2,\ldots, \hat{i}, \ldots, \hat{j}, \ldots, n,j) \in W(i,j) \]
has length $n+i-j-1$, hence $r_k$ is the number of pairs $(i,j)$ with
$1\leq i<j\leq n$ and $n+i-j-1=k$.  Such pairs exist for $0 \leq k
\leq n-2$, and enumerating them we find
\[ r_k = \begin{cases}
k+1 & \text{ if } 0 \leq k \leq n-2\\
0 & \text{ else.}
\end{cases} \]
Since $\dim_\C \F_{1,n-1} = 2n-3$, Corollary \ref{cor:omega-betti}
gives $b_{2k}(\Omega) = r_k + r_{2n-4-k}$.  Substituting the formula
for $r_k$ we find that for all $k$ except $n-2$, only one of the
terms is non-zero.  Considering the various cases for $k$ we find
\[ b_{2k}(\Omega_{1,n-1}) = \begin{cases}
k+1 & \text{ if } 0 \leq k < n-2\\
2n-2 & \text{ if } k = n-2\\
2n - 3 - k & \text{ if }  n-2<k\leq 2n-4 \\
0 & \text{ if } k> 2n-4
\end{cases}
\]
which is easily seen to be equivalent to the formula in the theorem.
We omit verification of the corresponding closed form for $p(x)$.
\end{proof}

\subsection{The \texorpdfstring{$2n$}{2n} examples: Principal balanced ideals}

All of the ideals discussed so far in this section have large minimal
generating sets; this follows, for example, from their having many
elements of maximal length.  In this subsection we describe a family
of examples of balanced ideals that are also principal, i.e.~generated
by a single element.  In more geometric terms, these correspond to
model thickenings given by a single Schubert variety.

Let $G = \PSL_{2n}\C$, so that $W \simeq S_{2n}$.  We have:
\begin{thm}
The set 
$I_{2n} := \{ w \in S_{2n} \suchthat w(2n) > n \}$
is a principal, balanced ideal.  In fact,
$I_{2n} = \langle \lambda \rangle$
where
$ \lambda = (2n, 2n-1, \ldots, \widehat{n+1}, \ldots, 2, 1, n+1).$
\end{thm}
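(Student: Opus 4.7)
The plan is to verify three claims: (a) $I_{2n}$ is downward-closed in the Chevalley-Bruhat order, (b) $I_{2n}$ is balanced, and (c) $\lambda$ is the unique element of maximal length in $I_{2n}$, so that $I_{2n} = \langle \lambda \rangle$. In fact I would prove (a) and (c) simultaneously by showing the key claim
\[
x \leq \lambda \quad\Longleftrightarrow\quad x(2n) > n,
\]
since this immediately gives $I_{2n} = \langle \lambda\rangle$, which is automatically an ideal. Property (b) can be dispatched separately by a direct symmetry argument.

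For the balanced property, I would use that the longest element $w_0 \in S_{2n}$ acts by $(w_0 w)(i) = 2n+1-w(i)$. In particular $w(2n) > n$ if and only if $(w_0 w)(2n) \leq n$, so left multiplication by $w_0$ interchanges $I_{2n}$ with $S_{2n}\setminus I_{2n}$. This exhibits $I_{2n} = w_0(W\setminus I_{2n})$, which is the defining condition for being balanced (assuming $I_{2n}$ is an ideal, which follows from step (a)).

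For the key claim I would apply Theorem \ref{thm:rank-comparison}. The first step is to compute $A(\lambda)$: the values of $\lambda$ are strictly decreasing on positions $1,\dots,2n-1$ (taking the values $2n,2n-1,\dots,n+2,n,\dots,1$ in order) and then jump up from $1$ at position $2n-1$ to $n+1$ at position $2n$. Hence $A(\lambda)=\{2n-1\}$, and the Chevalley-Bruhat criterion reduces to the single block of inequalities
\[
x_{2n-1,j} \leq \lambda_{2n-1,j} \quad\text{for all } j \leq 2n-1.
\]
The set $\{\lambda(1),\dots,\lambda(2n-1)\}$ equals $\{1,\dots,2n\}\setminus\{n+1\}$, giving $\lambda_{2n-1,j}=j$ for $j\leq n$ and $\lambda_{2n-1,j}=j+1$ for $j\geq n+1$. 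Similarly, if $k := x(2n)$, then $\{x(1),\dots,x(2n-1)\} = \{1,\dots,2n\}\setminus\{k\}$, giving $x_{2n-1,j}=j$ for $j<k$ and $x_{2n-1,j}=j+1$ for $j\geq k$.

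The comparison $x_{2n-1,j} \leq \lambda_{2n-1,j}$ is automatic whenever $x_{2n-1,j}=j$, so the only potentially obstructing indices are $j\geq k$ with $j\leq n$, where the inequality becomes $j+1\leq j$, which fails. Thus all inequalities hold if and only if there is no such $j$, i.e.\ iff $k\geq n+1$, i.e.\ iff $x(2n)>n$. This establishes the key claim, completing (a) and (c) together. Combined with the symmetry argument for (b), the proof is finished.

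The main obstacle will be the bookkeeping in the final paragraph: tracking the $j$-th smallest element in an ordered $(2n-1)$-subset obtained by deleting a single entry from $\{1,\dots,2n\}$, and cleanly identifying which inequalities can obstruct $x\leq \lambda$. Everything else is routine once one has guessed that $\lambda$ has a unique ascent, which reduces the Chevalley-Bruhat criterion to a one-parameter family of inequalities indexed by $j$.
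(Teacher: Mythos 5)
Your proposal is correct and follows essentially the same route as the paper: exhibit the balanced property via the $w_0$-symmetry $(w_0 w)(2n) = 2n+1-w(2n)$, compute $A(\lambda)=\{2n-1\}$ to reduce the Chevalley-Bruhat criterion (Theorem \ref{thm:rank-comparison}) to the single row of inequalities $x_{2n-1,j}\leq\lambda_{2n-1,j}$, and then evaluate both sides using the fact that deleting a single entry $k$ from $\{1,\dots,2n\}$ yields the pattern $j$ (for $j<k$) then $j+1$ (for $j\geq k$). Your final bookkeeping is clean and your conclusion $x\leq\lambda\iff x(2n)>n$ is the correct one (the paper's last sentence appears to contain a sign typo, writing $x(2n)<n$; your version is what is actually wanted and what the displayed inequalities give).
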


\begin{proof}
Since $(w_0 x)(i) = 2n + 1 - x(i)$, it is immediate from the definition
that $I_{2n}$ and its complement are exchanged by left multiplication
by $w_0$.  Thus if $I_{2n}$ is an ideal, it is balanced, and it
suffices to show $I_{2n} = \langle \lambda \rangle$.

Examining the explicit form of $\lambda$ we see there is a single
ascent, $A(\lambda) = \{2n-1\}$.  Applying Theorem
\ref{thm:rank-comparison} and computing $\lambda_{2n-1,j}$ we find
that $x \in \langle \lambda \rangle $ if and only if
\begin{equation}
\begin{split}
\label{eqn:rank-ineq}
x_{2n-1, j} &\leq j \;\;\;\;\;\;\;\;\; \text{ for } j \leq n \hfill\\
x_{2n-1, j} &\leq j+1 \;\;\; \text{ for }  j > n
\end{split}
\end{equation}
But note that
$\{ x(1), \ldots, x(2n-1) \} = \{ 1, \ldots, 2n\} \setminus \{
x(2n)\}$, hence for all $x$ we have
\[ x_{2n-1,j} = \begin{cases}
j & \text{if } j < x(2n)\\
j+1 & \text{if } j \geq x(2n)\\
\end{cases}
\]
Comparing this to \eqref{eqn:rank-ineq}, we see that $x \in
\langle \lambda \rangle$ if and only if $x(2n) < n$, as desired.
\end{proof}

As mentioned above, because $I_{2n}$ is principal, the associated
model thickening $\Phi_{2n} := \Phi^{I_{2n}} \subset G/B$ is the
Schubert variety $X_\lambda$.  While Schubert varieties can in general
have singularities, this one is smooth: This is immediate from the
pattern avoidance criterion of Lakshmibai-Sandhya
\cite{LS90}, or it can be verified from the
description of $X_\lambda$ using dimension inequalities for flags.
The latter will give a more detailed description and allow us to
compute the Poincar\'e polynomial of $\Omega_{2n} := \Omega^{I_{2n}}$:

\begin{thm}
The domain of discontinuity $\Omega_{2n}$ has Poincar\'e polynomial
\[ \frac{(1+t^{2n-2})(1-t^{2n})}{(1-t^2)^{2n-1}} \prod_{i=1}^{2n-2}
  \left (1-t^{2(i+1)} \right ). \]
\end{thm}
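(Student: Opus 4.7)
The plan is to combine Corollary~\ref{cor:omega-betti} with a direct enumeration of $I_{2n}$ by inversion count. Since $P_D = B$, the group $W_D$ is trivial and $r_k$ is simply the number of elements of $I_{2n}$ of length $k$. Writing $N := \dim_\C G/B = n(2n-1)$ and $P_\Phi(t) := \sum_{w \in I_{2n}} t^{2\ell(w)}$, the corollary translates into the generating function identity
\[
P_{\Omega_{2n}}(t) = P_\Phi(t) + t^{2(N-1)} P_\Phi(t^{-1}).
\]

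The central task is an explicit computation of $P_\Phi$. I would partition $I_{2n} = \{w \in S_{2n} : w(2n) > n\}$ according to the value $k = w(2n)$, writing $I_{2n} = \bigsqcup_{k=n+1}^{2n} I_{2n}^{(k)}$. For $w \in I_{2n}^{(k)}$, the first $2n-1$ entries form an arbitrary permutation of $\{1,\dots,2n\} \setminus \{k\}$, and the inversions of $w$ split cleanly into those internal to positions $1,\dots,2n-1$ and those involving position $2n$; the latter count equals the number of entries of $w(1),\dots,w(2n-1)$ exceeding $k$, which is always $2n-k$ regardless of the chosen permutation. Applying the classical $q$-factorial identity $\sum_\sigma q^{\mathrm{inv}(\sigma)} = [2n-1]_q!$ to the permutations of a fixed $(2n-1)$-element set, and summing the geometric series $\sum_{k=n+1}^{2n} q^{2n-k} = [n]_q$, yields
\[
P_\Phi(t) = [n]_{t^2} \cdot [2n-1]_{t^2}!.
\]

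This factored form immediately exhibits the palindromic symmetry $P_\Phi(t) = t^{4n(n-1)} P_\Phi(t^{-1})$ (which also follows from Poincar\'e duality for the smooth Schubert variety $X_\lambda$, together with the calculation $\ell(\lambda) = 2n(n-1)$ from the inversion formula~\eqref{eqn:length-formula}). Since $2(N-1) - 4n(n-1) = 2n-2$, substitution produces
\[
P_{\Omega_{2n}}(t) = (1 + t^{2n-2}) \cdot [n]_{t^2} \cdot [2n-1]_{t^2}!.
\]
Expanding the $q$-integers via $[n]_{t^2} = (1-t^{2n})/(1-t^2)$ and $[2n-1]_{t^2}! = \prod_{j=1}^{2n-1}(1-t^{2j})/(1-t^2)^{2n-1}$, and absorbing the $j=1$ factor of the factorial numerator into the denominator to leave $\prod_{j=2}^{2n-1}(1-t^{2j}) = \prod_{i=1}^{2n-2}(1-t^{2(i+1)})$, reproduces the claimed closed form. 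The only step requiring combinatorial input is the decomposition by $w(2n)$ and the counting of inversions at that position; every other manipulation is symbolic, so no substantive obstacle is expected.
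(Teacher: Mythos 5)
Your proof is correct and takes a genuinely different route from the paper's. The paper establishes the same intermediate identity $p[\Omega_{2n}](t) = (1+t^{2n-2})\,p[\Phi_{2n}](t)$ and the same closed form for $p[\Phi_{2n}]$, but it does so geometrically: it first invokes smoothness of the Schubert variety $X_\lambda$ (via the Lakshmibai--Sandhya pattern-avoidance criterion), then realizes $\Phi_{2n}$ as a homologically trivial fiber bundle over $\F(1,\dots,n;2n-1)$ with fiber $\F(n)$ using Theorem~\ref{thm:schubert-variety-as-flags}, applies the Serre spectral sequence to express $p[\Phi_{2n}]$ as a ratio of flag-variety Poincar\'e polynomials, and finally uses Poincar\'e duality for the smooth compact $\Phi_{2n}$ to obtain the symmetry $r_k = r_{L-n-k}$. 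Your proof bypasses all of this: the decomposition of $I_{2n}$ by the value $w(2n)$, the observation that inversions at position $2n$ contribute exactly $2n-k$, and the $q$-factorial identity give $P_\Phi(t) = [n]_{t^2}\,[2n-1]_{t^2}!$ directly, from which the palindromic symmetry is an immediate consequence of the product formula (your aside about Poincar\'e duality of $X_\lambda$ is accurate but not needed). The combinatorial route is more elementary and self-contained, needing no smoothness argument or spectral sequence; the paper's route is heavier but illuminates the geometry of $\Phi_{2n}$ as an iterated flag bundle. Both are valid, and the final algebraic simplification matches the stated formula after absorbing the $j=1$ factor of $[2n-1]_{t^2}!$ into the denominator as you describe.
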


\begin{proof}
For brevity, in this proof we denote by $\F(m)$ the full flag variety of $\C^m$
and by $\F(i_1, \ldots, i_k; m)$ the variety of partial flags in
$\C^m$
with components of dimensions $i_1 < i_2 < \ldots < i_k$.  Each such
space is a smooth manifold.  We write $p[X](t)$ for the Poincar\'e
polynomial of a space $X$.

The projection $\pi: (F_1, \ldots, F_{m-1}) \mapsto (F_1,\ldots,F_k)$
is a smooth fibration of $\F(m)$ over $\F(1, \ldots, k; m)$ with fiber
diffeomorphic to $\F(m-k)$.  Furthermore, applying the Serre spectral
sequence shows that this bundle is homologically trivial.  Thus the
Poincar\'{e} polynomial of the base of this bundle satisfies
\begin{equation}
\label{eqn:flag-fibration}
p[\F(1,\ldots,k;m)] = \frac{p[\F(m)]}{p[\F(m-k)]}.
\end{equation}

Applying Theorem \ref{thm:schubert-variety-as-flags} to the
permutation $\lambda$ we find
\[ \Phi_{2n} = X_\lambda = \{ (F_1, \ldots, F_{2n-1}) \suchthat F_n \subset
E_{2n-1} \}.\]
Considering the fibration $\F(2n) \to \F(1\,\ldots,n;2n)$ (i.e.~taking
$m=2n$ and $k=n$ above), this description of $\Phi_{2n}$ is equivalent
to identifying it with the preimage $\pi^{-1}(Y)$ of
$Y = \{ (F_1, \ldots, F_n) \suchthat F_n \subset E_{2n-1} \} \simeq
\F(1, \ldots, n; 2n-1)$.
Thus $\Phi_{2n}$ is a smooth fiber bundle over
$\F(1, \ldots, n; 2n-1)$ with fiber $\F(n).$  Again applying the Serre
spectral sequence shows this bundle is homologically trivial and we obtain
\[p[\Phi_{2n}] = p[\F(1, \ldots, n; 2n-1)] p[\F(n)]\]
Using \eqref{eqn:flag-fibration} with $m=2n-1$ and $k=n$ we find
$p[\F(1, \ldots, n; 2n-1)] = p[\F(2n-1)] / p[\F(n-1)]$ and thus
\[ p[\Phi_{2n}] = \frac{p[\F(2n-1)] p[\F(n)]}{p[\F(n-1)]}.\]
Substituting the classical formula for the Poincar\'e polynomial of
the flag variety itself (see e.g. \cite{macdonald}),
\[ p[\F(m)](t) = (1-t^2)^{1-n} \displaystyle\prod_{i=1}^{m-1} (1-t^{2(i+1)}),\]
and simplifying we obtain
\[p[\Phi_{2n}](t) = \frac{(1-t^{2n})}{(1-t^2)^{2n-1}}
\prod_{i=1}^{2n-2} (1-t^{2(i+1)}).\]

It follows from \eqref{eqn:length-formula} that $\ell(\lambda)
= \ell(w_0) - n$.  Since it is a smooth manifold, the model thickening
$\Phi_{2n}$ satisfies Poincar\'{e} duality in this dimension.
In terms of the number $r_k$ of elements of $I$ of length
$k$, this means
\[ r_{k} = r_{L-n-k} \]
where $L = \ell(w_0)$, and the formula of Corollary
\ref{cor:omega-betti} simplifies in this case to
\[ b_{2k}(\Omega_{2n}) = r_k + r_{k-(n-1)}. \]
Returning to Poincar\'e polynomials, this shows
\[ p[\Omega_{2n}](t) = (1+t^{2n-2}) p[\Phi_{2n}](t), \]
and substituting the expression for $p[\Phi_{2n}](t)$ obtained above,
the theorem follows.
\end{proof}

\subsection{Homotopy types}

For most complex adjoint groups $G$ there are many balanced ideals in
$I \subset W$; it is natural to ask whether these correspond to topologically
distinct quotient manifolds $\W^I$.  We will verify this for two of the
Chevalley-Bruhat ideal examples studied thus far, applied to $G$-Fuchsian representations:

\begin{thm}
Let $G = \PSL_{2n}\C$ where $n = 2j+1$, $j \in \Z$.  Let
$I_\half ,I_{2n} \subset W$ denote, respectively, the lower half and
principal balanced ideals constructed above. Let $\rho : \pi_{1}S \to G$
be a $G$-Fuchsian representation.  Then the quotient manifolds
$\W_\half$ and $\W_{2n}$ associated to $\rho$ are not homotopy
equivalent.
\end{thm}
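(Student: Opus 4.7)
The plan is to distinguish $\W_\half$ and $\W_{2n}$ by their integral Betti numbers, rather than by any finer invariant. By Theorem~\ref{thm:homology-product} we have
\[
H_*(\W_\rho^I,\Z)\simeq H_*(S,\Z)\otimes_\Z H_*(\Omega_\rho^I,\Z),
\]
and since the homology of $\Omega_\rho^I$ vanishes in odd degrees by Theorem~\ref{thm:omega-homology}, one obtains $b_{2k+1}(\W_\rho^I)=2g\cdot b_{2k}(\Omega_\rho^I)$ for all $k$, where $g\geq 2$ is the genus of $S$. A homotopy equivalence $\W_\half\simeq\W_{2n}$ would therefore force $b_{2k}(\Omega_\half)=b_{2k}(\Omega_{2n})$ for every $k\geq 0$, so it suffices to exhibit a single $k$ at which these disagree.

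Write $N=\dim_\C G/B=n(2n-1)$, which is odd because $n=2j+1$ is odd, so $I_\half$ is balanced without modification. By Corollary~\ref{cor:omega-betti}, $b_{2k}(\Omega_\rho^I)=r_k(I)+r_{N-1-k}(I)$ where $r_k(I)$ counts elements of $I$ of Chevalley--Bruhat length $k$. From its definition, $r_k(I_\half)=N_k(W)$ (the $k$-th Mahonian number of $W\simeq S_{2n}$) for $k\leq (N-1)/2$ and $r_k(I_\half)=0$ otherwise. Two facts about $I_{2n}$ will be the heart of the argument. First, the minimal generator $\lambda$ has length $N-n$, so $r_{N-n}(I_{2n})\geq 1$. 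Second, the inversion formula \eqref{eqn:length-formula} gives, for every $w\in S_{2n}$,
\[
\ell(w)\geq \bigcard{\{i<2n : w(i)>w(2n)\}}=2n-w(2n),
\]
so $\ell(w)\leq n-1$ forces $w(2n)\geq n+1$ and hence $w\in I_{2n}$. In particular $r_k(I_{2n})=N_k(W)$ for all $k\leq n-1$.

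The comparison is then made at $k=n-1$. Since $N-n>(N-1)/2$ whenever $n\geq 2$, one has $r_{N-n}(I_\half)=0$, so
\[
b_{2(n-1)}(\Omega_\half)=N_{n-1}(W).
\]
Combining the two facts about $I_{2n}$ gives
\[
b_{2(n-1)}(\Omega_{2n})=r_{n-1}(I_{2n})+r_{N-n}(I_{2n})\geq N_{n-1}(W)+1,
\]
so the Betti numbers of $\Omega_\half$ and $\Omega_{2n}$ at degree $2(n-1)$ differ, and the theorem follows. The main subtlety is in choosing the right degree of comparison: for $0\leq k\leq n-2$ both Poincar\'e polynomials of $\Omega$ agree, because the contributions $r_{N-1-k}$ vanish for each (the index $N-1-k$ lies strictly above $N-n$, the top length of $\Phi_{2n}$), and $k=n-1$ is precisely the first degree at which the long generator $\lambda$ of the principal ideal $I_{2n}$ can contribute.
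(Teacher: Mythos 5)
Your argument is correct, and it takes a slightly different route from the paper's. Both proofs reduce to comparing Betti numbers of $\Omega_\half$ and $\Omega_{2n}$ via Theorem~\ref{thm:homology-product} and Corollary~\ref{cor:omega-betti}. The paper compares at the \emph{middle} degree $k=(N-1)/2$, where the balancedness of either ideal forces $b_{2k}(\Omega^I)=2r_k(I)$, and then exhibits an explicit permutation $\mu$ of length $k$ with $\mu(2n)=n$ (hence $\mu\notin I_{2n}$), concluding $b_{2k}(\Omega_{2n})<b_{2k}(\Omega_\half)$. You instead compare at degree $n-1$, observing that both ideals contain every element of length $\leq n-1$ (your inversion bound $\ell(w)\geq 2n-w(2n)$ is the clean way to see this for $I_{2n}$), while in the Poincar\'e-dual slot $N-1-(n-1)=N-n$ the principal ideal $I_{2n}$ has at least its generator $\lambda$ but $I_\half$ is empty, yielding $b_{2(n-1)}(\Omega_{2n})>b_{2(n-1)}(\Omega_\half)$. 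Your choice of degree avoids constructing a new bespoke permutation: everything you need about $I_{2n}$ is already available from the two facts the paper establishes ($\ell(\lambda)=N-n$, and the description $I_{2n}=\{w:w(2n)>n\}$). The observation at the end, that the Betti numbers in fact agree in degrees $0$ through $n-2$ and first diverge at $n-1$, is a nice structural bonus the paper's middle-degree comparison does not exhibit. Both arguments are equally rigorous; yours is marginally more economical.
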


\begin{proof}
In this case $L = \ell(w_0) = 2k+1$ where $k = j(4j+3)$.  By Corollary
\ref{cor:omega-betti} we have for any balanced ideal $I$ that
\[ b_{2k}(\Omega^I) = 2 r_k(I) = 2 | \ell^{-1}(k) \cap I |. \]
Applying this to $I_\half$ and using
\eqref{eqn:omega-betti-odd-lower-half} we have
\[ b_{2k}(\Omega_\half) = 2 b_{2k}(G/B) = 2 |\ell^{-1}(k)|. \]
Now consider the element $\mu \in S_{2n}$ given by the tuple
\[ \mu = (2j, \ldots, j+1, 4j+2, j, \ldots 1, 4j+1, \ldots, 2j+1 ),\]
where in this expression $a\ldots b$ denotes the integers between $a$
and $b$ in decreasing order.  Then $\mu \not \in I_{2n}$ since
$\mu(2n) = n = 2j+1$.  A straightforward application of
\eqref{eqn:length-formula} shows $\ell(\mu) = k$.  As
$\mu \in \ell^{-1}(k) \setminus I_{2n}$ we have
$\card{\ell^{-1}(k) \cap I_{2n}} < 2\card{\ell^{-1}(k)}$, which by the
formulas above gives
\begin{equation}
\label{eqn:omega-betti-different}
b_{2k}(\Omega_{2n}) < b_{2k}(\Omega_\half).
\end{equation}

Applying Theorem \ref{thm:homology-product}, and using the vanishing of odd
homology groups of $\Omega^I$ from Theorem \ref{thm:omega-homology},
we have for any balanced ideal $I$ that
\[ b_{2k+1}(\W^I) = b_1(S) b_{2k}(\Omega^I). \]
Combining this with \eqref{eqn:omega-betti-different} we find
$b_{2k+1}(\W_{2n}) < b_{2k+1}(\W_\half)$, and these manifolds are
not homotopy equivalent.
\end{proof}

\subsection{The \texorpdfstring{$\PSL_3\C$}{PSL(3,C)} case}
\label{subsec:sl3}

In this final subsection, we consider $G = \PSL_3\C$ and give an
alternative description of the limit set and domain of discontinuity
in $G/B$ for a $G$-Fuchsian group.  This allows us to verify
Conjecture \ref{conjecture fiber} in this case.  Chronologically, our
study of this example preceded the other results of this paper, and
indeed, the main results of Sections \ref{topology}--\ref{complex
  geometry} resulted from an attempt to generalize aspects of the
picture described below to other complex Lie groups.

For $G = \PSL_3\C$ there is unique balanced ideal
$I = I_{\half} = I_{1,2}$ in the Weyl group $W \simeq S_3$.  Here
$I = \{ e, \alpha_1, \alpha_2 \}$ where $\alpha_i$ are the simple root
reflections, or in the permutation model,
$I = \{ (1,2,3), (2,1,3), (1,3,2) \}$.  Since $I$ is fixed we write
$\Phi,\Lambda,\Omega,\W$, for the model thickening, limit set, domain,
and quotient manifold, dropping the decoration by $I$ from our notation.

Let $\rho : \pi_{1}S \to \PSL_3\C$ be a $\PSL_3\C$-Fuchsian
representation, and in the rest of this section let
$\F = G/B = \{ (\ell,H) \suchthat \ell \subset H \}$ denote the flag
variety.  Let $X \subset \F$ denote the principal curve and
$\tilde{\phi} = f_{\PSL_3\C} : \CP^1 \to X$ its holomorphic
parameterization.  Let $Y \subset \CP^2$ denote the projection of the
principal curve under the map $(\ell,H) \mapsto \ell$, and
$\phi : \CP^1 \to Y$ the composition of $\tilde{\phi}$ with the same
projection.

In what follows we regard an element $\ell \in \CP^2$ as a point in
a complex surface, rather than as a $1$-dimensional subspace of a
$3$-dimensional vector space.  Also, we identify the symmetric product
$\Sym^d(\CP^1)$ with the set of effective divisors of degree $d$ on
$\CP^1$, so for example an element of $\Sym^2(\CP^1)$ is
expressible as $p+q$, for $p,q \in \CP^1$.

There is a biholomorphic map $\CP^2 \simeq \Sym^2(\CP^1)$ which maps
$\ell \in \CP^2$ to $p+q$ if $\ell$ lies on distinct tangent lines
$T_{\phi(p)}Y$ and $T_{\phi(q)}Y$, and to $2p$ if $\ell = \phi(p)$.
Dually there is an identification $(\CP^2)^*$ with $\Sym^2(\CP^1)$,
where we regard $H \in (\CP^2)^*$ as a projective line in $\CP^2$, and
map $H$ to the sum (with multiplicity) of the $\phi$-preimages of its
intersection points with $Y$.

Since $P_{1,n-1} = B$ for this group, following the discussion at the
end of Section \ref{sec:sln-constructions} we have the embedding
$\F \into \CP^2 \times (\CP^2)^*$.  Composing with the maps introduced
above we then have $\F \into \Sym^2(\CP^1) \times \Sym^2(\CP^1)$.  It
is easy to check that the principal curve $X\subset \F$ maps to the
set $\{ (2p,2p) \suchthat p \in \CP^1 \}$ and that
$\tilde{\phi}(p) = (2p,2p)$.  Recall that the limit curve of $\rho$ is
the circle $\tilde{\phi}(\RP^1)\subset \F$.

In order to give a geometric description of the limit set and domain
of discontinuity, we further identify $\CP^1$ with the boundary at
infinity of the $3$-dimensional real hyperbolic space $\H^3$, for
example using stereographic projection\footnote{More intrinsically, we
  could view $\H^3 \simeq \SL_2 \C / \SU(2)$ as the space of hermitian
  forms on the vector space $H^0(Y,\O(1))$ that induce a
  given volume form---a space which is compactified by $Y$ itself.} to
map $\CP^1$ to the unit sphere in $\R^3$ considered as the boundary of
the unit ball model of $\H^3$.  Let $\gamma_{p,q}$ denote the
hyperbolic geodesic with ideal endpoints $p,q \in \CP^1$.

\begin{lem}
\label{lem:divisor-flag-model}
A point $x$ in $\Sym^2(\CP^1) \times \Sym^2(\CP^1)$ lies in
the image of $\F$ if and only if it satisfies one of the following
mutually exclusive conditions:
\begin{itemize}
\item $x = (p+q,r+s)$ where $p,q,r,s$ are pairwise distinct and the
hyperbolic geodesics $\gamma_{p,q}$ and $\gamma_{r,s}$ intersect orthogonally,
or
\item $x = (2p, p + q)$ where $p \neq q$, or
\item $x = (p+q,2q)$ where $p \neq q$, or
\item $x = (2p,2p) \in X$.
\end{itemize}
\end{lem}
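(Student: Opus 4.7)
The plan is to work through a case analysis based on the multiplicity structure of the divisors $D_1, D_2 \in \Sym^2(\CP^1)$ composing $x$, since each is either of the form $p + q$ with $p \neq q$ or $2p$. After fixing coordinates so that $Y$ is the image of the Veronese embedding $\phi(t) = [1:t:t^2]$, I would unpack the identifications $\CP^2 \simeq \Sym^2(\CP^1)$ and $(\CP^2)^* \simeq \Sym^2(\CP^1)$ explicitly: $D_1 = p+q$ with $p \neq q$ corresponds to $\ell = T_{\phi(p)} Y \cap T_{\phi(q)} Y = [2 : p+q : 2pq]$, while $D_1 = 2p$ corresponds to $\ell = \phi(p)$; dually, $D_2 = r+s$ with $r \neq s$ corresponds to the secant line with equation $rs\, X_0 - (r+s)\, X_1 + X_2 = 0$, while $D_2 = 2r$ corresponds to the tangent $T_{\phi(r)}Y$.

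With these formulas in hand, the cases (ii)--(iv) of the lemma are immediate, since $\phi(p)$ lies on $T_{\phi(p)}Y$ and on $\overline{\phi(p)\phi(q)}$, and $T_{\phi(p)}Y \cap T_{\phi(q)}Y$ lies on both $T_{\phi(p)}Y$ and $T_{\phi(q)}Y$. I would then rule out the remaining mixed-type configurations by B\'ezout's theorem together with a direct calculation: a tangent line meets $Y$ only at its point of tangency with multiplicity two and a secant meets $Y$ only at its two prescribed points, so $\phi(p) \in T_{\phi(q)}Y$ or $\phi(p) \in \overline{\phi(r)\phi(s)}$ forces the evident coincidence; likewise the pole of a secant line does not lie on its own polar (since $Y$ is a smooth conic), nor on a secant or tangent sharing exactly one point with the secant's two contact points, which can be checked by substituting $[2:p+q:2pq]$ into the corresponding linear equation.

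The main computation is case (i), with $p, q, r, s$ pairwise distinct. Substituting $\ell = [2 : p+q : 2pq]$ into the equation for the secant line gives the incidence condition
\[
2rs - (r+s)(p+q) + 2pq = 0,
\]
which factors as $(p-r)(q-s) + (p-s)(q-r) = 0$, equivalently as the harmonic cross-ratio condition $[p, q ; r, s] = -1$. The final step is to identify this harmonic condition with orthogonality of $\gamma_{p, q}$ and $\gamma_{r, s}$ in $\H^3$, a classical fact: using the $\PSL_2\C$-invariance of both conditions, I would normalize so that $p = 0$ and $q = \infty$, in which case $[0, \infty ; r, s] = -1$ becomes $s = -r$, and in the upper half-space model the Euclidean semicircle from $r$ to $-r$ then meets the vertical geodesic $\gamma_{0, \infty}$ at its apex with horizontal tangent, hence orthogonally. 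The main content lies in deriving the pole formula $[2 : p+q : 2pq]$ and recognizing the cross-ratio identity; the remaining work is bookkeeping for the boundary cases.
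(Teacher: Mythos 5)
Your proposal is correct, and it takes precisely the route the paper explicitly declines: after analyzing the degenerate cases (shared points), the paper says of the key orthogonality/incidence equivalence, ``This can be done with an elementary explicit calculation, but we prefer to give a coordinate-free proof,'' and then constructs a half-turn $\hat{\tau}_{u,v}\colon Y\to Y$ by projecting from the pole $T_uY\cap T_vY$, shows it is $\phi$-conjugate to the hyperbolic half-turn $\tau_{p,q}$ on $\CP^1$, and reads off the equivalence from the fact that orthogonality of $\gamma_{p,q}$ and $\gamma_{r,s}$ means $\{r,s\}$ is a $\tau_{p,q}$-orbit. You instead fix the Veronese parameterization, derive the pole $[2:p+q:2pq]$ and the secant $rsX_0-(r+s)X_1+X_2=0$, obtain the incidence polynomial $2pq+2rs-(p+q)(r+s)=0$, recognize it as the harmonic condition $[p,q;r,s]=-1$, and check that harmonicity is orthogonality by normalizing $p=0$, $q=\infty$. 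Both proofs handle the degenerate cases by essentially the same elementary incidence arguments. Your route is more computational but entirely self-contained given standard cross-ratio facts; the paper's route avoids choosing coordinates on the rational curve and makes the structural reason for the equivalence (a conjugacy of two naturally defined involutions, one hyperbolic and one projective) more transparent, at the cost of setting up that extra bit of machinery.
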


\begin{proof}
Suppose that $x = (\xi,\eta)$ corresponds to a flag $(\ell,H)$ where
the divisors $\xi, \eta \in \Sym^2(\CP^1)$ have a point in common, say
$p$.  By the construction of the embedding given above, this means
\begin{itemize}
\item The projective line $H \subset \CP^2$ passes through $Y$ at $\phi(p)$, and
\item The tangent line $T_{\phi(p)}Y$ contains $\ell$.
\end{itemize}
Since $\ell \in H$, both $\phi(p)$ and $\ell$ lie in
$T_{\phi(p)}Y \cap H$.  Since distinct projective lines intersect in a
single point, we have either $\ell = \phi(p)$, in which case
$\xi = 2p$, or $\T_\phi(p)Y = H$, in which case $\eta = 2p$, or both.

This shows that $x$ has one of the given forms, with the exception of
the orthogonality condition in the first case.  Hence we must show
that for distinct $p,q,r,s$ the geodesics $\gamma_{p,q}$ and
$\gamma_{r,s}$ intersect orthogonally in $\H^3$ if and only if the
corresponding pair of a point and projective line in $\CP^2$ form a
flag, i.e.~the projective line spanned by $\phi(r)$ and $\phi(s)$ is
concurrent with the tangents $T_{\phi(p)}Y$ and $T_{\phi(q)}Y$.  This
can be done with an elementary explicit calculation, but we prefer to
give a coordinate-free proof.

Given two points $p,q \in \CP^1$, the \emph{half turn}
$\tau_{p,q} : \CP^1 \to \CP^1$ is the unique non-trivial holomorphic
involution fixing $p$ and $q$.  Geometrically, $\tau_{p,q}$ is the
extension to the ideal boundary of the isometry $\H^3 \to \H^3$ which
rotates about $\gamma_{p,q}$ by angle $\pi$.  Thus geodesics $\gamma_{p,q}$
and $\gamma_{r,s}$ intersection orthogonally if and only if $\{r,s\}$
is an orbit of $\tau_{p,q}$.

\begin{figure}
\begin{center}
\begingroup%
  \makeatletter%
  \providecommand\color[2][]{%
    \errmessage{(Inkscape) Color is used for the text in Inkscape, but the package 'color.sty' is not loaded}%
    \renewcommand\color[2][]{}%
  }%
  \providecommand\transparent[1]{%
    \errmessage{(Inkscape) Transparency is used (non-zero) for the text in Inkscape, but the package 'transparent.sty' is not loaded}%
    \renewcommand\transparent[1]{}%
  }%
  \providecommand\rotatebox[2]{#2}%
  \ifx\svgwidth\undefined%
    \setlength{\unitlength}{320bp}%
    \ifx\svgscale\undefined%
      \relax%
    \else%
      \setlength{\unitlength}{\unitlength * \real{\svgscale}}%
    \fi%
  \else%
    \setlength{\unitlength}{\svgwidth}%
  \fi%
  \global\let\svgwidth\undefined%
  \global\let\svgscale\undefined%
  \makeatother%
  \begin{picture}(1,0.46505771)%
    \put(0,0){\includegraphics[width=\unitlength,page=1]{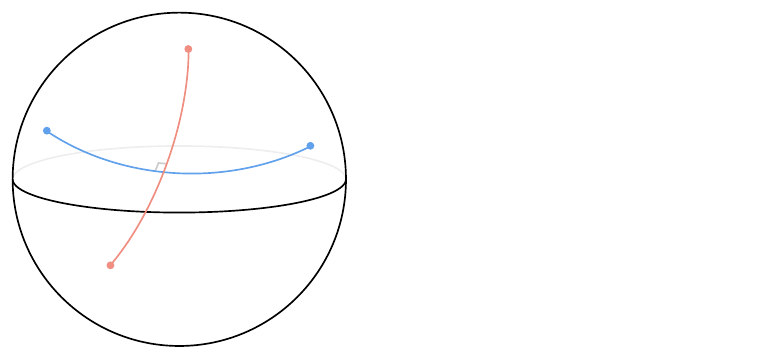}}%
    \put(0.0346,0.283){\color[rgb]{0,0,0}\makebox(0,0)[lb]{$p$}}%
    \put(0.415,0.265){\color[rgb]{0,0,0}\makebox(0,0)[lb]{$q$}}%
    \put(0.235,0.416){\color[rgb]{0,0,0}\makebox(0,0)[lb]{$x$}}%
    \put(0.083,0.071){\color[rgb]{0,0,0}\makebox(0,0)[lb]{$\tau_{p,q}(x)$}}%
    \put(0,0){\includegraphics[width=\unitlength,page=2]{halfturns.pdf}}%
    \put(0.576,0.250){\color[rgb]{0,0,0}\makebox(0,0)[lb]{$u$}}%
    \put(0.922,0.352){\color[rgb]{0,0,0}\makebox(0,0)[lb]{$v$}}%
    \put(0.734,0.322){\color[rgb]{0,0,0}\makebox(0,0)[lb]{$w$}}%
    \put(0.540,0.018){\color[rgb]{0,0,0}\makebox(0,0)[lb]{$\hat{\tau}_{u,v}(w)$}}%
  \end{picture}%
\endgroup%
\caption{Hyperbolic and projective models of a half turn on $\CP^1$.}
\label{fig:halfturns}
\end{center}
\end{figure}

Given a pair of points $\{u,v\} \subset Y$, we can define a map
$\hat{\tau}_{u,v} : Y \to Y$ as follows: Let $H^* = T_uY \cap T_vY$,
which is a point not on $Y$.  The projective line joining $H^*$ to
$w \in Y$ intersects $Y$ in a second point, which is
$\hat{\tau}_{u,v}(w)$.  (See Figure \ref{fig:halfturns}.)  Since this
defines an involutive, non-trivial holomorphic automorphism of $Y$
fixing $u$ and $v$, it is $\phi$-conjugate to a half turn of $\CP^1$,
i.e.
\[ \hat{\tau}_{u,v}(\phi(t)) = \phi(\tau_{p,q}(t)).\]
On the other hand, by definition of $\hat{\tau}_{u,v}$ the points
$\phi(r),\phi(s)$ form an orbit if and only if the projective line they
span is concurrent with $T_uY$ and $T_vY$.  Hence the $\phi$-conjugacy
of $\hat{\tau}$ and $\tau$ gives the desired equivalence between
orthogonality and incidence.
\end{proof}

We now analyze the Kapovich-Leeb-Porti construction in terms of the
\emph{divisor model} of $\F$ given by the Lemma.  First we note that
the model thickening in this case is the union of the complex
$1$-dimensional Schubert varieties,
$\Phi = X_{(2,1,3)} \cup X_{(1,3,2)}$, and it is easily checked that
$X_{(2,1,3)} = \{ (E_1,H) \suchthat H \in (\CP^2)^* \}$ while
$X_{(1,3,2)} = \{ (\ell,E_2) \suchthat \ell \in \CP^2 \}$.  The
corresponding description of $\Lambda$ is that it consists of flags
$\{ (\ell,H)\}$ in which either $\ell \in \phi(\RP^1)$ or $H$ is
tangent to $Y$ along $\phi(\RP^1)$.  In terms of divisors, then,
$\Lambda$ consists of pairs $(\xi,\eta)$ where either $\xi = 2p$ or
$\eta = 2p$, for $p \in \RP^1$.

Let $\H_+, \H_-$ denote the connected components of
$\CP^1 \setminus \RP^1$, and $X_\pm$ the compact Riemann surfaces that
are the quotients of $\tilde{\phi}(\H_\pm) \subset Y$ by the $\rho$-action of
$\pi_{1}S$.  Considering each of the cases from Lemma
\ref{lem:divisor-flag-model}, we find that
$\Omega = \F \setminus \Lambda$ can be described in the divisor model
as $\Omega_0 \cup \tilde{E}_+ \cup\tilde{E}_+^* \cup \tilde{E}_- \cup \tilde{E}_-^*$ where
\begin{itemize}
\item $\Omega_0 = \{ (p+q,r+s) \suchthat p \neq q, \: r \neq s\} \cap \mathcal{F}$,
\item $\tilde{E}_\pm = \{ (2p,p+q) \suchthat p \in \H_\pm \}$, and
\item $\tilde{E}_\pm^* = \{ (p+q,2p) \suchthat p \in \H_\pm \}$.
\end{itemize}
Note that these sets are pairwise disjoint except for
\[ \tilde{E}_\pm \cap \tilde{E}_\pm^* = \{ (2p,2p) \suchthat p \in \H_{\pm} \} = \tilde{\phi}(\H_\pm).\]

Now we arrive at the desired hyperbolic-geometric description of
$\W$.  Let
$\rho_0 : \pi_{1}S \to \PSL_2\R < \PSL_2\C \simeq \Isom^+(\H^3)$
be the Fuchsian representation through which $\rho$ factors, or equivalently,
so that $\tilde{\phi} : \CP^1 \to X$ intertwines $\rho_0$ acting on
$\CP^1$ with $\rho$ acting on $\F$.  Let $N_0$ denote the oriented
orthonormal frame bundle of the quotient $\rho_0(\pi_{1}S) \backslash \H^3$ and
define $N = N_0 / (\Z/2 \times \Z/2)$ where
$(i,j) \in \Z/2 \times \Z/2$ acts on an orthonormal frame
$(v_1,v_2,v_3) \in T_x \H^3$ by
\[ (v_1, v_2, v_3) \mapsto ((-1)^i v_1, (-1)^j v_2, (-1)^{i+j}v_3).\]
Since $\rho_0(\pi_{1}S) \backslash \H^3   \simeq S \times \R$, we have $N_0 \simeq
S \times \R \times \SO(3)$ and $N \simeq S \times \R \times B$ where
$B = \SO(3) / (\Z/2 \times \Z/2)$.

\begin{thm}
\label{thm:sl3-compactification}
The quotient $\rho(\pi_{1}S) \backslash \Omega_0$ is diffeomorphic to $N$, and hence $\W
=  \rho(\pi_{1}S) \backslash \Omega$ is a compactification of $N$.  The boundary
of this compactification is the union of the four complex surfaces
\[ E_\pm := \rho(\pi_{1}S) \backslash \tilde{E_\pm} \text{ and } E_\pm^* :=
\rho(\pi_{1}S) \backslash \tilde{E_\pm^*} ,\]
each of which is biholomorphic to a $\CP^1$ bundle over $X_+$ or
$X_-$, and which intersect only in the complex curves $E_+ \cap E_+^* = X_+$ and $E_-
\cap E_-^* = X_-$.
\end{thm}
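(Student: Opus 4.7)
The plan is to construct an explicit diffeomorphism $\Omega_0 \to \widetilde{N}$, where $\widetilde{N}$ denotes the bundle over $\H^3$ of ordered unoriented orthonormal frames (i.e.~$\mathrm{Fr}(\H^3)/(\Z/2\times\Z/2)$ with the sign action described in the text), and then descend it. The definition of the map is dictated by Lemma~\ref{lem:divisor-flag-model}: given $(p+q,r+s)\in\Omega_0$, the geodesics $\gamma_{p,q}$ and $\gamma_{r,s}$ meet orthogonally at a unique point $x\in\H^3$; take $v_1,v_2\in T_x\H^3$ to be unit tangents to these two geodesics (each determined up to sign), and let $v_3$ be the unit vector making $(v_1,v_2,v_3)$ positively oriented. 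The unordered nature of $p+q$ and $r+s$ is exactly the $(\Z/2\times\Z/2)$-ambiguity appearing in the definition of $N$.

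Next I would check equivariance: because $\iota_G$ identifies $\PSL_2\C$ with the automorphism group of $Y$ and thereby with the stabilizer of the divisor model $\CP^2\simeq\Sym^2(\CP^1)$, the action of $\rho(\pi_1S)=\iota_G\circ\rho_0(\pi_1S)$ on $\F$ in divisor coordinates is simply the diagonal symmetric-product action induced by $\rho_0$, which is the boundary action of the isometry subgroup $\rho_0(\pi_1S)<\mathrm{Isom}^+(\H^3)$. Hence the map above intertwines the $\rho$-action on $\Omega_0$ with the lifted isometric action on $\widetilde N$, and passes to a map $\rho(\pi_1S)\backslash\Omega_0\to N$. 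Its inverse is immediate: an element of $\widetilde{N}$ recovers two ordered unoriented perpendicular geodesics through $x$, whose unordered pairs of ideal endpoints form the two divisors. Both assignments are smooth, so this is a diffeomorphism and the first assertion follows.

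For the boundary strata, note that the definition of $\tilde E_+$ includes the case $q=p$ (yielding $(2p,2p)\in\tilde\phi(\H_+)$), so the projection $(2p,p+q)\mapsto p$ realizes $\tilde E_+\to\H_+$ as a trivial $\CP^1$-bundle with fiber coordinate $q\in\CP^1$. This structure is equivariant for the $\rho$-action (again via the divisor model); quotienting yields a holomorphic $\CP^1$-bundle $E_+\to X_+$. The cases $E_-$, $E_+^*$, $E_-^*$ are identical after swapping the two factors or replacing $\H_+$ by $\H_-$. Since any element of $\F$ uniquely determines a pair of divisors, the intersection $\tilde E_+\cap\tilde E_+^*$ is exactly $\{(2p,2p):p\in\H_+\}=\tilde\phi(\H_+)$, which descends to $X_+$; similarly $E_-\cap E_-^*=X_-$. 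All remaining intersections $\tilde E_{\epsilon}\cap\tilde E_{\epsilon'}^{(*)}$ (with $\epsilon\neq\epsilon'$, or matching star and non-star types across signs) are empty because the double point $p$ occurring in any element of $\tilde E_\pm$ or $\tilde E_\pm^*$ must lie in exactly one of $\H_+$ or $\H_-$.

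The only step requiring real care, and hence the expected main obstacle, is justifying the first step's geometric correspondence: one must verify that the bijection between $\Omega_0$ and $\widetilde N$ is smooth with smooth inverse (rather than merely set-theoretic), and that the $(\Z/2\times\Z/2)$-quotient in the target precisely accounts for the symmetrization $(p,q)\leftrightarrow(q,p)$ and $(r,s)\leftrightarrow(s,r)$ while being compatible with the orientation constraint on $v_3$. Once this is pinned down, everything else is a direct unpacking of the definitions and the divisor model of Lemma~\ref{lem:divisor-flag-model}.
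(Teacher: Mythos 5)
Your proposal takes essentially the same approach as the paper: the same divisor-to-frame map $(p+q,r+s)\mapsto(v_1,v_2,v_3)$ modulo $\Z/2\times\Z/2$, equivariance for the descent, the trivialization $\tilde E_+\simeq\H_+\times\CP^1$ via $(2p,p+q)\mapsto(p,q)$, and the same bookkeeping of intersections among the boundary strata. The one point you flag as "requiring real care"---showing that the bijection $\Omega_0\to\widetilde N$ is smooth with smooth inverse rather than merely set-theoretic---is dispatched in the paper without any local computation by observing that the map is equivariant for the \emph{full} $\PSL_2\C$-actions (not merely the $\rho(\pi_1 S)$-action), both of which are transitive with the same isotropy, so an equivariant bijection is automatically a diffeomorphism.
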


\begin{proof}
Using the divisor model, map $(p+q,r+s) \in \Omega_0$ to the
positively oriented orthonormal frame $(v_1,v_2,v_3)$ at
$\gamma_{p,q} \cap \gamma_{r,s} \in \H^3$ such that $v_1$ is a unit
vector along $\gamma_{p,q}$ and $v_2$ is a unit vector along
$\gamma_{r,s}$.  While there are two choices for each of $v_1$ and
$v_2$, the result is a well-defined point in the quotient of the frame
bundle of $\H^3$ by $\Z/2 \times \Z/2$.  This map is easily seen to be
a $\PSL_2\C$-equivariant, and both spaces have transitive, smooth
$\PSL_2\C$ actions with the same isotropy, so it is a
diffeomorphism. By equivariance it descends to the desired map
$ \rho(\pi_{1}S)  \backslash \Omega_0 \to N$.

Lemma~\ref{lem:divisor-flag-model} describes $\Omega_0$ as an open,
dense, and $\rho$-invariant subset of the cocompact domain of
discontinuity $\Omega$, hence $\W$ is a compactification of
$ \rho(\pi_{1}S) \backslash \Omega_0$.  It remains to verify the given
descriptions of the quotients of $\tilde{E}_\pm$.  We have already
seen that $\tilde{E}_\pm \cap \tilde{E}_\pm^* = \tilde{\phi}(H_\pm)$
which has quotient $X_\pm$.  To see that $E_{+}$ is a $\CP^1$ bundle
over $X_+$, note first that $\tilde{E}_+ \simeq \H_+ \times
\CP^1$ by the map $(2p,p+q) \mapsto (p,q)$.  Thus $\tilde{E}_+$ is a
trivial $\CP^1$ bundle over $\H_+$, and the projection $(2p,p+q)
\mapsto p$ intertwines the $\rho$-action on $\tilde{E}_+$ with the
$\rho_0$-action on $\H_+$, and $\rho$ acts on $\tilde{E}_+$ by a
discontinuous group of bundle automorphisms.  The quotient $E_+$ is
therefore a locally trivial $\CP^1$ bundle over $\rho_0(\pi_{1}S)
\backslash \H_+ \simeq \rho(\pi_{1}S) \backslash \tilde{\phi}(\H_+) = X_+$.  The cases $E_-$ and
$E_{\pm}^*$ are handled similarly.
\end{proof}

The decomposition of $\W$ described above is pictured schematically in
Figure \ref{fig:stratification}.

\begin{figure}
\begin{center}
\begingroup%
  \makeatletter%
  \providecommand\color[2][]{%
    \errmessage{(Inkscape) Color is used for the text in Inkscape, but the package 'color.sty' is not loaded}%
    \renewcommand\color[2][]{}%
  }%
  \providecommand\transparent[1]{%
    \errmessage{(Inkscape) Transparency is used (non-zero) for the text in Inkscape, but the package 'transparent.sty' is not loaded}%
    \renewcommand\transparent[1]{}%
  }%
  \providecommand\rotatebox[2]{#2}%
  \ifx\svgwidth\undefined%
    \setlength{\unitlength}{320.14453317bp}%
    \ifx\svgscale\undefined%
      \relax%
    \else%
      \setlength{\unitlength}{\unitlength * \real{\svgscale}}%
    \fi%
  \else%
    \setlength{\unitlength}{\svgwidth}%
  \fi%
  \global\let\svgwidth\undefined%
  \global\let\svgscale\undefined%
  \makeatother%
  \begin{picture}(1,0.46505771)%
    \put(0,0){\includegraphics[width=\unitlength,page=1]{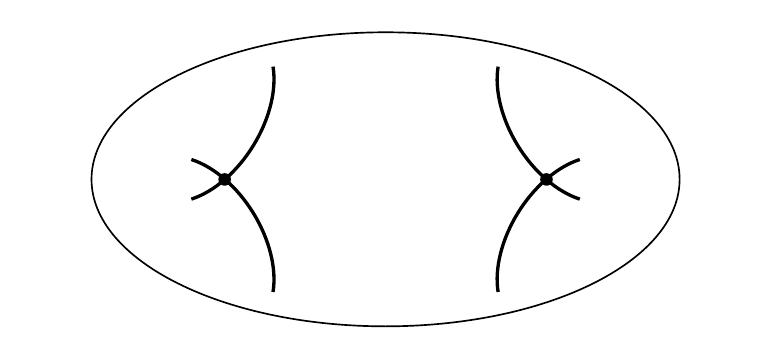}}%
    \put(0.290,0.330){\color[rgb]{0,0,0}\makebox(0,0)[lb]{$E_+$}}%
    \put(0.290,0.105){\color[rgb]{0,0,0}\makebox(0,0)[lb]{$E_+^*$}}%
    \put(0.135,0.215){\color[rgb]{0,0,0}\makebox(0,0)[lb]{$X_+$}}%
    \put(0,0){\includegraphics[width=\unitlength,page=2]{stratification.pdf}}%
    \put(0.820,0.215){\color[rgb]{0,0,0}\makebox(0,0)[lb]{$X_-$}}%
    \put(0.660,0.330){\color[rgb]{0,0,0}\makebox(0,0)[lb]{$E_-$}}%
    \put(0.660,0.104){\color[rgb]{0,0,0}\makebox(0,0)[lb]{$E_-^*$}}%
    \put(0.49,0.220){\color[rgb]{0,0,0}\makebox(0,0)[lb]{$N$}}%
  \end{picture}%
\endgroup%
\caption{Stratification of the $\PSL_3\C$ quotient manifold
  $\W$ consisting of the open stratum $N$, the $\CP^1$-bundles $E_\pm$,
  $E_\pm^*$, and the Riemann surfaces $X_\pm$.}
\label{fig:stratification}
\end{center}
\end{figure}

Since the oriented orthonormal frame bundle of $3$-dimensional
hyperbolic space is $\PSL_2\C$-equivariantly isomorphic to $\PSL_2\C$,
Theorem \ref{thm:sl3-compactification} equivalently describes $\W$ as a
compactification of the quotient
$\rho_0(\pi_{1}S) \backslash \PSL_2\C /(\Z/2 \times \Z/2)$.

Finally, we will show that the divisor model and hyperbolic picture of
$\W$ lead to a verification of Conjecture \ref{conjecture fiber} (on
the existence of a fiber bundle structure) in this case.  Such a fiber
bundle structure is easy to construct for the open, dense set
$N \subset \W$: There is a map from the frame bundle of $\H^3$ to
$\H^2$ which composes the projection of the frame bundle to its base
with the orthogonal projection from $\H^3$ to the totally geodesic
$\H^2$ preserved by $\PSL_2\R$.  This map is
$(\Z/2 \times \Z/2)$-invariant and $\PSL_2\R$-equivariant; taking the
quotient by $\Z/2 \times \Z/2$ and using the identification of Theorem
\ref{thm:sl3-compactification} we obtain an induced
$\PSL_2\R$-equivariant map
\[\tilde{\pi} : \Omega_0 \to \H^2.\]
Taking a further quotient by $\rho_0(\pi_{1}S)$, a map
$\pi : N \to S \simeq (\rho_0(\pi_{1}S) \backslash \H^2)$ is obtained.  The
identification of $N$ with a product, $N \simeq S \times \R \times B$,
can be made in such a way that the map $\pi$ is simply projection onto the first
factor.

To show that $\W$ is also a fiber bundle, we extend $\tilde{\pi}$ and
$\pi$ to $\Omega$ and $\W$, respectively:
\begin{thm}
\label{thm:sl3-fibering}
The map $\tilde{\pi} : \Omega_0 \to \H^2$ extends to a proper
$\PSL_2\R$-equivariant continuous map $\hat{\pi} : \Omega \to \H^2$.  Therefore,
\begin{rmenumerate}
\item $\Omega$ has the structure of $\PSL_2\R$-equivariant continuous
fiber bundle over $\H^2$ with fiber a compact topological space $F$,
\item $\Omega$ is homeomorphic to $\H^2 \times F$, and
\item The quotient manifold $\W = \Gamma \backslash \Omega$ is a continuous fiber
bundle over $S$ with fiber $F$.
\end{rmenumerate}
\end{thm}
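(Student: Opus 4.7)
My plan is to define $\hat{\pi}$ explicitly on each stratum appearing in Lemma~\ref{lem:divisor-flag-model} and Theorem~\ref{thm:sl3-compactification}, verify continuity across stratum boundaries (the nontrivial case being approach from the open stratum $\Omega_0$), establish properness by ruling out accumulation of fibers at the limit set $\Lambda$, and finally assemble the bundle structure from $\iota_G(\PSL_2\R)$-equivariance together with contractibility of $\H^2$.

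Let $\pi_{\H^2} \colon \H^3 \cup \H_+ \cup \H_- \to \H^2$ denote the continuous extension of orthogonal projection that sends an ideal point $p \in \H_\pm$ to the foot of the unique hyperbolic geodesic from $p$ perpendicular to $\H^2$. Using the divisor description of the strata, I extend $\tilde{\pi}$ by
\[
\hat{\pi}(2p,\,p+q) := \pi_{\H^2}(p), \qquad \hat{\pi}(p+q,\,2p) := \pi_{\H^2}(p)
\]
for $p \in \H_+ \cup \H_-$ and any $q \in \CP^1$. The overlaps $\tilde{E}_\pm \cap \tilde{E}_\pm^* = \tilde{\phi}(\H_\pm)$ consist of points $(2p,2p)$, on which both formulas agree; the remaining pairs of strata are pairwise disjoint (since $\H_+$ and $\H_-$ are disjoint in $\CP^1 \setminus \RP^1$); and $\iota_G(\PSL_2\R)$-equivariance is immediate because $\pi_{\H^2}$ is $\PSL_2\R$-equivariant and the divisor model is set up equivariantly.

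Continuity on each open stratum is clear, so the content is continuity across stratum boundaries, and the main case is a sequence $(p_n + q_n,\, r_n + s_n) \in \Omega_0$ converging in $\F$ to $(2p,\, p+q) \in \tilde{E}_+$ with $p \in \H_+$. After passing to a subsequence, $p_n, q_n \to p$ and $\{r_n, s_n\} \to \{p, q\}$ in $\CP^1$. Since both endpoints of $\gamma_{p_n, q_n}$ tend to the same ideal point $p$, the closure of $\gamma_{p_n, q_n}$ in $\overline{\H^3}$ collapses to $\{p\}$. The orthogonal intersection $x_n := \gamma_{p_n, q_n} \cap \gamma_{r_n, s_n}$ lies on $\gamma_{p_n, q_n}$, so $x_n \to p$ in $\overline{\H^3}$; continuity of $\pi_{\H^2}$ at the non-real boundary point $p$ then yields $\tilde{\pi}(p_n + q_n,\, r_n + s_n) = \pi_{\H^2}(x_n) \to \pi_{\H^2}(p) = \hat{\pi}(2p,\, p+q)$. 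The remaining boundary cases (approach within $\tilde{E}_\pm^*$ to $\tilde{\phi}(\H_\pm)$, or from $\Omega_0$ to a point of $\tilde{\phi}(\H_\pm)$) are similar and in fact simpler.

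For properness, let $K \subset \H^2$ be compact; it suffices to show the closure of $\hat{\pi}^{-1}(K)$ in the compact flag variety $\F$ avoids $\Lambda$. A potential accumulation point in $\Lambda$ would have the form $(2p,\eta)$ with $p \in \RP^1$ (or symmetrically), and the same geodesic-shrinking argument applies: the relevant reference point in $\H^3 \cup \H_\pm$ tends to $p \in \partial \H^2$, forcing $\hat{\pi}$ to leave every compact subset of $\H^2$, contrary to $\hat{\pi}(\xi_n, \eta_n) \in K$. Given continuity, properness, and $\iota_G(\PSL_2\R)$-equivariance, transitivity of this action on $\H^2 = \PSL_2\R/\SO(2)$ combined with smooth local sections of the coset projection trivializes $\hat{\pi}$ locally, producing a locally trivial fiber bundle with compact fiber $F := \hat{\pi}^{-1}(x_0)$; contractibility of $\H^2$ gives the global trivialization in (i) and (ii). Finally, since $\hat{\pi}$ intertwines the $\pi_1 S$-actions on $\Omega$ (via $\rho$) and on $\H^2$ (via the Fuchsian factor $\rho_0$), passing to the quotient yields the bundle $F \to \W \to S$ of (iii). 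The main obstacle is the shrinking-geodesic claim: once one verifies that $x_n \to p$ in $\overline{\H^3}$ whenever both endpoints of $\gamma_{p_n, q_n}$ tend to $p$, the rest is a routine equivariant bundle construction.
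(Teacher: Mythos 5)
Your proposal is correct and takes essentially the same approach as the paper: define $\hat{\pi}$ on the boundary strata via $\Pi$ (the paper's notation for your $\pi_{\H^2}$), check continuity on approach to the strata, establish properness by showing that accumulation at $\Lambda$ forces $\hat{\pi}$ out of compact sets, and then assemble the bundle structure from equivariance and contractibility of $\H^2$. The one stylistic difference worth noting: where you argue continuity by observing that $\gamma_{p_n,q_n}$ collapses to $\{p\}$ in $\overline{\H^3}$ and then invoke continuity of the extended nearest-point projection at a non-real ideal point, the paper instead gives a direct quantitative estimate---the orthogonal projection to $\H^2$ of any geodesic with ideal endpoints in a Poincar\'e $\eps$-disk $D\subset\H_+$ centered at $p$ lies in the $\eps$-disk about $\Pi(p)$---and the paper also uses the same convex-hull/half-space estimate in its properness argument via the explicit exhaustion $N_\eps(\Lambda)$. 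Both routes rest on the same elementary fact about shrinking geodesics; yours is slightly cleaner to state once you grant continuity of $\Pi$ on $\H^3\cup\H_+\cup\H_-$ (a standard fact about nearest-point projection to a totally geodesic $\H^2\subset\H^3$, which you should at least flag), while the paper's quantitative version has the minor advantage that the same estimate serves both the continuity and properness steps. Your sketch of properness is terser than the paper's but the case analysis you indicate (limit $\omega_\infty\in\Lambda$ forces a reference point to tend to $\RP^1=\partial\H^2$) is the right one and does close the argument.
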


\begin{proof}
Statements (i)-(iii) are simple consequences of the existence of
such a map $\hat{\pi}$: Because $\H^2$ is a homogeneous space of $\PSL_2\R$, a
continuous equivariant map from a $\PSL_2\R$-space to $\H^2$ is
necessarily an equivariant locally trivial fibration.  The fiber is
compact by properness of $\hat{\pi}$, so (i) follows.  Since $\H^2$ is
contractible this bundle is trivial, giving (ii).  Finally, using the
equivariant structure of bundle $\hat{\pi} : \Omega \to \H^2$ we can
take the quotient by $\rho_0(\pi_{1}S)$ to obtain (iii).

Now we construct $\hat{\pi}$.  Let
$\Omega' = \Omega \setminus \Omega_0$, which is a closed set.  Since
we seek an extension of the map $\tilde{\pi}$, it suffices to define
$\hat{\pi}$ on the set $\Omega'$, which in the divisor model consists
of pairs of the form $(2p,p+q)$ or $(p+q,2p)$ with $p \not \in \RP^1$.
Let $\Pi : \CP^1 \setminus \RP^1 \to \H^2$ be the extension to the
ideal boundary of orthogonal projection $\H^3 \to \H^2$; equivalently
$\Pi$ is the union of the natural $\PSL_2\R$-equivariant
diffeomorphisms $\H_+ \to \H^2$ and $\H_- \to \H^2$. Define:
\begin{equation*}
\begin{split}
\hat{\pi}(2p,p+q) &= \Pi(p)\\
\hat{\pi}(p+q,2p) &= \Pi(p)
\end{split}
\end{equation*}
This is evidently a continuous and $\PSL_2\R$-equivariant map
$\Omega' \to \H^2$, since the map $\Pi$ itself has these properties
and the two definitions above agree on their common domain $\{(2p,2p)
\suchthat p \in \CP^1 \setminus \RP^1\}$.

It remains to show that $\hat{\pi}$ is continuous on the entire domain
$\Omega$, and that it is proper.  Both will follow by elementary geometric
arguments.

For continuity, since $\Omega'$ is closed it suffices to consider a
sequence $\omega_n \in \Omega_0$ converging to
$\omega_\infty \in \Omega'$ and to show
$\tilde{\pi}(\omega_n) \to \hat{\pi}(\omega_\infty)$.  We suppose the
limit point has the form $\omega_\infty = (2p, p+q)$ with
$p \in \H_+$, the argument in the other cases being completely
analogous.  Since $\omega_{n}\in \Omega_{0},$ we can 
write $\omega_n = (p_n + p_n', p_n'' + q_n)$
with each of the sequences $\{p_n\}, \{p_n'\}, \{p_n''\}$ converging
to $p$, and with $q_n \to q$.  Recalling the construction of
$\tilde{\pi}$ and the map from the frame bundle to $\Omega_{0}$ from the
proof of Theorem \ref{thm:sl3-compactification}, we see that
$\tilde{\pi}(\omega_n)$ is the orthogonal projection to $\H^2$ of the
point $\gamma_{p_n,p_n'} \cap \gamma_{p_n'',q_n} \in \H^3$

Consider the disk $D \subset \H_+$ of radius $\epsilon$ centered at
$p$ with respect to the Poincar\'e metric of $\H_+$.  The orthogonal
projection to $\H^2$ of any geodesic in $\H^3$ with ideal endpoints in
$D$ is contained in the $\epsilon$-disk centered at
$\Pi(p) = \hat{\pi}(\omega_\infty)$.  For large enough $n$ we have
$p_n,p_n',p_n'' \in D$, and $\tilde{\pi}(\omega_n)$ is the projection
to $\H^2$ of a point on $\gamma_{p_n,p_n'}$, hence
$d_{\H^{2}}(\tilde{\pi}(\omega_n), \hat{\pi}(\omega_\infty)) < \epsilon$.  Thus
$\tilde{\pi}(\omega_n) \to \hat{\pi}(\omega_\infty)$ as $n \to \infty$, and
$\hat{\pi}$ is continuous.

To see that $\hat{\pi}$ is proper, we consider a compact exhaustion of
$\Omega$ constructed by taking complements of small open neighborhoods
of $\Lambda$.  Recall $\Lambda$ consists of divisor pairs of the form
$(2p,p+q)$ or $(p+q,2p)$ where $p$ lies on $\RP^1$. Fix an auxiliary
metric on $\CP^1$ and define $N_\eps(\Lambda)$ to consist of divisor
pairs $(p+q,r+s)$ in which there is a disk of radius $\epsilon$ in
$\CP^1$ with center in $\RP^1$ which contains at least three of the
points $p,q,r,s$.

Fix a basepoint $x_0$ in $\H^2$ (which we could take to be the origin
in the unit ball model of $\H^3$). Then for each $R > 0$ there exists
$\eps = \eps(R) > 0$ such that if $y \in \H^3$ lies in the hyperbolic
convex hull of a disk on $\CP^1$ of radius $\eps$, then
$d_{\H^3}(x_0,y) > R$.  That is, a half space in $\H^3$ bounded by a
sufficiently small circle is far from $x_0$.

We claim that if $\omega \in N_\eps(\Lambda) \cap \Omega$, then
$\hat{\pi}(\omega)$ lies in such a half-space, and thus is far from
$x_0$ for $\eps$ small enough.  To see this, first consider
$\omega \in N_\eps(\Lambda)\cap \Omega_{0}$ which we can 
write as $\omega = (p+q,r+s)$ with
$p,q,r,s$ distinct, and so that $p,q,r$ lie in an $\eps$-disk $D$
which is centered on $\RP^1$.  Let $B$ be the half-space in $\H^3$
with ideal boundary $D$; note $B$ is invariant by reflection in $\H^2$
and $D$ is invariant by inversion in $\RP^1$.  Then
$\hat{\pi}(\omega) = \tilde{\pi}(\omega)$ is the orthogonal projection
to $\H^2$ of a point $x \in \gamma_{p,q} \subset \H^3$.  Since both
$x$ and its reflection $\bar{x}$ in $\H^2$ lie in $B$, so does the
segment joining them.  The intersection of this segment with $\H^2$ is
the orthogonal projection of $x$ to $\H^{2}$, which is $\hat{\pi}(\omega)$, so
$\hat{\pi}(\omega) \in B$.

The remaining case is that $\omega \in \Omega'$, in which case
we can write $\omega = (2p,p+q)$ or $\omega = (p+q,2p)$, with $p$ in
an $\eps$-disk $D$ of the type considered above.  Then
$\hat{\pi}(\omega) = \Pi(p)$, and $\Pi(p) \in B$ because it lies on
the geodesic $\gamma_{p,\bar{p}}$, where $\bar{p}$ is the inversion of
$p$ in $\RP^1$, and $p,\bar{p} \in D$.

Now if $\omega_n \in \Omega$ satisfies $\omega_n \to \infty$, then for
each $R>0$ we have for all sufficiently large $n$ that $\omega_n \in N_{\eps(R)}(\Lambda)$.
The argument above shows $d_{\H^2}(x_0,\hat{\pi}(\omega_n)) > R$ for
such $n$.  Thus $\hat{\pi}(\omega_n) \to \infty$ in $\H^2$, and
$\hat{\pi}$ is proper.
\end{proof}

\nocite{}

\vspace{1.5em}

\noindent Department of Mathematics, Statistics, and Computer Science\\
University of Illinois at Chicago\\
\texttt{david@dumas.io}\\

\medskip

\noindent Mathematisches Institut\\
Ruprecht-Karls-Universit\"{a}t Heidelberg\\
\texttt{asanders@mathi.uni-heidelberg.de}\\

\end{document}